\documentclass[11pt]{amsart}
\usepackage{amsmath,amssymb,amscd}
\usepackage{mathrsfs}
\usepackage{amsrefs}
\usepackage{graphicx,color}
\usepackage{amsthm}
\usepackage{latexsym}
\usepackage[all]{xy}
\usepackage{hyperref}
\usepackage[mathscr]{eucal}
\usepackage{comment}
\usepackage{tikz}
\usepackage{tikz-cd}
\usepackage{fancyhdr}
\usepackage{mathtools}


\theoremstyle{definition}
\newtheorem{defn}[equation]{Definition}

\theoremstyle{plain}
\newtheorem{thm}[equation]{Theorem}
\newtheorem{prp}[equation]{Proposition}
\newtheorem{lem}[equation]{Lemma}
\newtheorem{cor}[equation]{Corollary}

\newtheorem{notn}[equation]{Notation}
\theoremstyle{remark}
\newtheorem{rmk}[equation]{Remark}
\newtheorem{exmp}[equation]{Example}

\DeclareMathOperator{\id}{id}
\DeclareMathOperator{\rank}{rank}

\newcommand{\bB}{\mathbb{B}}
\newcommand{\bC}{\mathbb{C}}

\newcommand{\bF}{\mathbb{F}}

\newcommand{\bK}{\mathbb{K}}

\newcommand{\bN}{\mathbb{N}}

\newcommand{\bP}{\mathbb{P}}

\newcommand{\bR}{\mathbb{R}}

\newcommand{\bT}{\mathbb{T}}

\newcommand{\bX}{\mathbb{X}}

\newcommand{\bZ}{\mathbb{Z}}
\newcommand{\cA}{\mathcal{A}}

\newcommand{\cC}{\mathcal{C}}

\newcommand{\cF}{\mathcal{F}}

\newcommand{\cH}{\mathcal{H}}
\newcommand{\cI}{\mathcal{I}}
\newcommand{\cJ}{\mathcal{J}}
\newcommand{\cK}{\mathcal{K}}

\newcommand{\cM}{\mathcal{M}}

\newcommand{\cP}{\mathcal{P}}
\newcommand{\cQ}{\mathcal{Q}}

\newcommand{\cT}{\mathcal{T}}
\newcommand{\cU}{\mathcal{U}}
\newcommand{\cV}{\mathcal{V}}
\newcommand{\cW}{\mathcal{W}}
\newcommand{\cX}{\mathcal{X}}

\newcommand{\fD}{\mathfrak{D}}

\newcommand{\fF}{\mathfrak{F}}

\newcommand{\fH}{\mathfrak{H}}

\newcommand{\fL}{\mathfrak{L}}
\newcommand{\fM}{\mathfrak{M}}

\newcommand{\fc}{\mathfrak{c}}

\newcommand{\fh}{\mathfrak{h}}

\newcommand{\fs}{\mathfrak{s}}
\newcommand{\ft}{\mathfrak{t}}

\newcommand{\fv}{\mathfrak{v}}
\newcommand{\fw}{\mathfrak{w}}

\newcommand{\rmS}{\mathrm{S}}
\newcommand{\rmP}{\mathrm{P}}
\newcommand{\rmU}{\mathrm{U}}
\newcommand{\sfD}{\mathsf{D}}

\newcommand{\sA}{\mathscr{A}}

\newcommand{\sE}{\mathscr{E}}

\newcommand{\sG}{\mathscr{G}}
\newcommand{\sH}{\mathscr{H}}

\newcommand{\sK}{\mathscr{K}}

\newcommand{\sV}{\mathscr{V}}
\newcommand{\sW}{\mathscr{W}}

\newcommand{\K}{\mathrm{K}}
\newcommand{\KO}{\mathrm{KO}}
\newcommand{\KK}{\mathrm{KK}}

\newcommand{\KKR}{\mathrm{KKR}}
\newcommand{\Cl}{\mathit{Cl}}

\newcommand{\ev}{\mathrm{ev}}

\DeclareMathOperator{\Hom}{Hom}

\DeclareMathOperator{\End}{End}

\DeclareMathOperator{\Ind}{Ind}
\DeclareMathOperator{\Ad}{Ad}
\DeclareMathOperator{\Euc}{Euc}
\DeclareMathOperator{\Fred}{Fred}
\newcommand{\PD}{\mathrm{PD}}
\newcommand{\pt}{\mathrm{pt}}

\DeclareMathOperator{\hotimes}{\hat{\otimes}}

\newcommand{\Thom}{\mathrm{Thom}}
\newcommand{\blank}{\text{\textvisiblespace}}
\newcommand{\pr}{\mathrm{pr}}
\DeclareMathOperator{\qAut}{\mathrm{Aut}_{\mathrm{qtm}}}

\makeatletter
\@addtoreset{equation}{section}
\makeatother

\title[Twisted crystallographic T-duality]{Twisted crystallograpic T-duality via the Baum--Connes isomorphism}
\date{\today}

\author{Kiyonori Gomi}
\address{Department of Mathematics, 
Tokyo Institute of Technology,
2-12-1 Ookayama, Meguro-ku, Tokyo, 152-8551, 
Japan.}
\email{kgomi@math.titech.ac.jp}

\author{Yosuke Kubota}
\address{Department of Mathematical Sciences, Shinshu University, 3-1-1 Asahi, Matsumoto, Nagano, 390-8621, Japan / RIKEN iTHEMS Program, 2-1 Hirosawa, Wako, Saitama, 351-0198, Japan}
\email{ykubota@shinshu-u.ac.jp}

\author{Guo Chuan Thiang}
\address{Beijing International Center for Mathematical Research, Peking University, 5 Yiheyuan Rd, Beijing, China}
\email{thgchuan@gmail.com}

\subjclass[2010]{19L50, 46L60, 19K35}
\keywords{T-duality, twisted equivariant K-theory, topological phases of matter.}
\begin{document}
\maketitle
\begin{abstract}
We establish the twisted crystallographic T-duality, which is an isomorphism between Freed-Moore twisted equivariant K-groups of the position and momentum tori associated to an extension of a crystallographic group. 
The proof is given by identifying the map with the Dirac homomorphism in twisted Chabert--Echterhoff KK-theory.
We also illustrate how to exploit it in K-theory computations.
\end{abstract}
\tableofcontents

\section{Introduction}
T-duality arose as a certain equivalence between string theories compactified on a circle and on a dual circle \cite{buscher1987symmetry}. That a rich mathematical structure lies behind \emph{topological T-duality} became apparent from \cite{bouwknegtDualityTopologyChange2004}, where it was found that the K-theories of a circle bundle twisted by an $H$-flux, coincided with those of a dually fibered one with a dual twist. Furthermore, a $C^*$-algebraic formulation \cite{mathaiDualityTorusBundles2005} revealed the connection of T-duality to the Baum--Connes assembly map \cite{baumClassifyingSpaceProper1994}.

The simplest example comes from the abelian group $\bZ$, whose classifying space $\bR/\bZ$ and the Pontrjagin dual $\hat{\bZ}$ are circles dual to each other. The basic T-duality isomorphisms $\K^*(\bR/\bZ)\cong \K^{*-1}(\hat{\bZ})$ can be formulated as a Fourier--Mukai transform, or as the Baum--Connes assembly map for $\bZ$ composed with Poincar\'{e} duality. In \cite{gomiCrystallographicTduality2019}, it was shown that every (ordinary) crystallographic space group $G$ acting on $d$-dimensional Euclidean space $V$, induces \emph{crystallographic T-duality} isomorphisms of twisted K-theories,
\begin{equation}
    \mathrm{T}_G:\K^{*+d,- \fv}_{P}(V/\Pi) \to \K_P^{*,\sigma}(\hat{\Pi}) . \label{eqn:old.crystal.T.duality}
\end{equation}
Here, $\Pi$ is the abelian subgroup of translations, $P=G/\Pi$ is the \emph{point group}, while $\fv$ and $\sigma$ are equivariant twists canonically associated to the $P$-spaces $V/\Pi$ and $\hat{\Pi}$ respectively. The key point is that $V/\Pi$ and $\hat{\Pi}$ are generally inequivalent $P$-spaces with different untwisted cohomologies, yet there exists a suitable twist $\fv$ and $\sigma$ on each side, such that the twisted equivariant K-theories are in perfect duality.

The purpose of this paper is to clarify the mathematical structure underlying (\ref{eqn:old.crystal.T.duality}), and establish a vast generalization, (\ref{eq:twisted.crystal}), which we call \emph{twisted crystallographic T-duality}. These dualities are mediated by \emph{twisted crystallographic groups}, whose associated twisted equivariant K-theories arise in modern studies of topological phases in physics, as we outline below.

From the standpoint of condensed matter physics, the topological T-duality map \eqref{eqn:old.crystal.T.duality} is a topological analogue of the Fourier transform revealing a duality between the position space torus $V/\Pi$ and the momentum space torus $\hat{\Pi}$.
This is relevant to the theory of topological insulators, because the K-group of the momentum torus (Brillouin torus) classifies topological phases protected by a given symmetry \cites{kitaevPeriodicTableTopological2009,schnyderClassificationTopologicalInsulators2008,freedTwistedEquivariantMatter2013}.

According to Wigner's theorem \cite{freedWignerTheorem2012}, a symmetry of quantum mechanics is a unitary/antiunitary $\bZ_2$-graded projective representation of a group, which is controlled by a triple $(\phi,c,\tau)$ of group cocycles called a twist. We call such a representation a $(\phi,c,\tau)$-twisted unitary representation. Therefore quantum mechanical symmetries go beyond the standard theory of unitary group representations. A powerful framework to treat topological phases protected by such quantum mechanical symmetries is the Freed--Moore K-theory \cite{freedTwistedEquivariantMatter2013}, see also \cites{thiangKtheoreticClassificationTopological2016,kubotaControlledTopologicalPhases2017,gomiFreedMooreKtheory2017}. 
Here we consider a merger of the crystallographic and quantum mechanical symmetries, which is precisely described in Section \ref{section:2}. Roughly speaking, a twisted crystallographic group is a quadruple $(G,\phi,c,\tau)$, where $G$ is a discrete group acting on the Euclidean space $V$ and $(\phi,c,\tau)$ is a twist of $G$. Moreover we assume that there is a normal subgroup $\Pi$ of $G$ acting on $V$ by translations and $(\phi,c,\tau)$ is a twist pulled back from $P:=G/\Pi$.
Freed--Moore~\cite{freedTwistedEquivariantMatter2013} proposed that topological phases protected by the twisted crystallographic group $(G,\phi,c,\tau)$ are classified by the twisted equivariant $\K$-group $\prescript{\phi}{}{\K}_P^{*,\ft + \sigma}(\hat{\Pi})$. Here, $\ft$ is short for $(c,\tau)$, and $\sigma$ is another twist arising from viewing $G$ as an extension of $P$ by $\Pi$. This is reviewed in Section \ref{section:3.3}.

The twisted crystallographic T-duality for a twisted crystallographic group $(G,\phi,c,\tau)$, the main concern of this paper, is an isomorphism
\begin{align}
\prescript{\phi}{}{\mathrm{T}} _G^{\ft } \colon \prescript{\phi}{} \K^{*+d,\ft - \fv}_{P}(V/\Pi) \to \prescript{\phi}{} \K_P^{*,\ft + \sigma}(\hat{\Pi}).    \label{eq:twisted.crystal}
\end{align}
This map is defined as a generalization of the Fourier-Mukai transform in K-theory and is constructed as the composition of standard operations in Freed--Moore twisted equivariant K-theory. 
The proof of (\ref{eq:twisted.crystal}) being isomorphic consists of two steps. First, we construct an isomorphism of the groups in both sides of (\ref{eq:twisted.crystal}) by using the technology of Kasparov's KK-theory \cite{kasparovEquivariantKKTheory1988} and the Baum--Connes isomorphism \cite{baumClassifyingSpaceProper1994}. 
Second, we show that the isomorphism constructed in the first step coincides with the twisted crystallographic T-duality map of our interest. 
We remark that the second step is first considered in this paper even in untwisted cases, which is studied in the previous work \cite{gomiCrystallographicTduality2019}. In \cite{gomiCrystallographicTduality2019}, both the Fourier--Mukai and the Baum--Connes maps  are independently used to formulate untwisted crystallographic T-duality, and only the latter was shown to be an isomorphism.

This strategy is known to work in the non-equivariant topological T-duality. It is a well-known fact in index theory that the (non-equivariant) topological T-duality map is nothing but the Baum--Connes assembly map for the lattice group $\mathbb{Z}^d$ (which is pointed out in \cite{baumClassifyingSpaceProper1994}*{Section 3}).  
Hence the Dirac-dual Dirac method \cite{kasparovEquivariantKKTheory1988} provides an alternative proof of the isomorphism of the T-duality map, which is essentially different from the one given in e.g.~\cite{mathaiTdualityCircleBundles2014}.
In order to extend this approach to (twisted) crystallographic T-duality, it is convenient to work in Chabert--Echterhoff twisted equivariant KK-theory \cite{chabertTwistedEquivariantKK2001}. 
For our use, we consider its $\phi$-twisted generalization, which is summarized in Appendix \ref{section:app}. 
This is a bivariant homology theory for $\phi$-twisted $(G,\Pi)$-C*-algebras, i.e., a C*-algebra $A$ with a $\phi$-twisted $G$-action whose restriction to $\Pi$ is implemented by a unitary representation (cf.\ Definition \ref{def:GCst}), constructed for the purpose of understanding the permanence properties of the Baum--Connes conjecture. 
A typical example of a $\phi$-twisted $(G,\Pi)$-C*-algebra is the crossed product $A \rtimes \Pi$ of a $\phi$-twisted $G$-C*-algebra $A$. 
Indeed, the crossed product functor $A \mapsto A \rtimes \Pi$ extends to the $\phi$-twisted partial descent homomorphism $\prescript{\phi}{}{j}_{G,\Pi} \colon \prescript{\phi}{}\KK^G(A,B) \to \prescript{\phi}{}\KK^{G,\Pi}(A \rtimes \Pi,B \rtimes \Pi)$.
The isomorphism considered in the first step explained above is the Kasparov product with the partial descent of the Dirac homomorphism $\prescript{\phi}{}{j}_{G,\Pi}(\sfD)$. 
For comparing the Freed--Moore and Chabert--Echterhoff K-theories, a point is that this comparison does not work in the level of categories. Indeed, the categories of $\phi$-twisted $(G,\Pi)$-C*-algebras and the twisted $P$-spaces have very small overlaps.

In the rest of the paper, Sections \ref{section:6} and \ref{section:7}, we introduce applications of the twisted crystallographic T-duality isomorphisms. The first is concerned with the computation of the twisted equivariant K-groups. A standard approach to the calculation of these groups is the Atiyah--Hirzebruch spectral sequence (AHSS), which is well-studied in the physics paper \cite{shiozakiAtiyahHirzebruchSpectral2018}. It is a problem of the AHSS approach that it does not determine the group in general because of the extension problem. 
In Section \ref{section:6} we show that the twisted crystallographic T-duality isomorphism solves the extension problem in some examples.

The second is concerned with the induction of topological insulators. If one has a subgroup $H$ of $G$, we can define the `induction of topological insulator' map 
\[\Ind_H^G \colon \prescript{\phi}{}{\K}_Q^{*,\ft+\sigma }(\hat{\Sigma }) \to \prescript{\phi}{}{\K}_P^{*,\ft+\sigma }(\hat{\Pi}), \]
where $\Sigma := H \cap \bR^d$ is the subgroup of translations in $H$ and $Q:=H/\Sigma$.
When $H$ is a finite subgroup, an element of the image of this map is called an atomic insulator and studied in the context of condensed-matter physics.
In Section \ref{section:7} we show that, through the twisted crystallographic T-duality isomorphism, this induction corresponds to the map $\K_Q^{*,\ft-\fw}(W/\Sigma) \to \K_P^{*,\ft-\fv}(V/\Pi)$ induced from the wrong-way functoriality of the twisted equivariant K-theory, where $W \subset V$ is an $H$-invariant affine subspace on which $H$ acts cocompactly.

\subsection*{Acknowledgments}
The author KG was supported by JSPS KAKENHI Grant Numbers JP15K04871, 20K03606 and JP17H06461. YK was supported by RIKEN iTHEMS and JSPS KAKENHI Grant Numbers 19K14544, JP17H06461 and JPMJCR19T2. GCT was supported by Australian Research Council grants DE170100149 and DP200100729, and thanks the University of Adelaide for hosting him as a visitor.

\section{Twisted crystallographic group}\label{section:2}
In this section we introduce the notion of twisted crystallographic group, which is a merger of quantum mechanical and crystallographic symmetries protecting topological phases of matter.

\subsection{Definition}
Let $V$ denote the $d$-dimensional Euclidean space, i.e., the real vector space $\bR^d$ equipped with the standard inner product. The set of isometries of the space $V$ forms the Euclidean motion group $\Euc (V)$. This group is isomorphic to the semi-direct product $\bR ^d \rtimes O(d)$, where $\bR^d$ and $O(d)$ acts on $V$ by translations and orthogonal transformations respectively. 
The subgroup $S \subset \Euc(V)$ is called a $d$-dimensional \emph{crystallographic group} (or a \emph{space group}) if
\begin{itemize}
\item the subgroup $\Pi:=S \cap \bR^d \subset \bR^d$ is a full-rank lattice of translations, and
\item the point group $P = S/\Pi \subset O(d)$ is a finite group.
\end{itemize}
That is, the diagram
\[
\xymatrix@R=1em{
1 \ar[r] & \bR^d \ar[r]\ar@{}[d]|{\cup} & \Euc(V) \ar[r]\ar@{}[d]|{\cup} & O(d) \ar[r]\ar@{}[d]|{\cup} & 1 \\
1 \ar[r] & \Pi \ar[r] & S \ar[r] &P \ar[r] & 1,
}
\]
commutes.

The Euclidean group and its subgroups act naturally on $L^2(V)$, but for physical reasons, we also need to consider on-site, or internal, quantum mechanical symmetries.
Let $\sK$ denote the Hilbert space of internal degrees of freedom (e.g.\ spinor, gauge, etc.). 
This is a finite rank complex Hilbert space which is decomposed as $\sK=\sK^0 \oplus \sK^1$ by a $\bZ_2$-grading $\gamma$, i.e., $\gamma = \gamma^*$ and $\gamma^2=1$. 
We write $\qAut (\sK)$ for the group of unitary/antiunitary and even/odd operators on $\sK$. Let $\rmU^0(\sK)$ denote the group of even unitaries on $\sK$, which is isomorphic to $\rmU(\sK^0) \times \rmU(\sK^1)$.
Once we fix an antilinear even involution $T$ and a linear odd involution $S$ such that $[T,S]=0$, there is a canonical isomorphism
\[\qAut(\sK) \cong \mathrm{U}^0(\sK) \rtimes (\bZ_2 \times \bZ_2),\]
where $\bZ_2 \times \bZ_2$ in the right hand side corresponds the subgroup generated by $T$ and $S$.
Wigner's theorem \cite{freedWignerTheorem2012} states that the quotient group 
\[ \qAut (\sK) /\bT \cong \rmP \rmU^0(\sK) \rtimes (\bZ_2 \times \bZ_2), \]
where $\rmP \rmU ^0 (\sK):= \rmU ^0 (\sK)/\bT$ and $\bT$ is the scalar subgroup, is canonically isomorphic to the automorphism group of the set of quantum states $\bP\sK:=(\sK \setminus \{ 0 \}) / \bC^\times $ which preserves the transition probabilities and commutes with $\gamma$. Hence it is denoted by $\qAut (\bP \sK)$ hereafter.

\begin{defn}\label{defn:twcry}
A subgroup $G$ of the direct product $\Euc(V) \times  \qAut (\bP \sK)$ is said to be a $d$-dimensional \emph{twisted crystallographic group} if
\begin{enumerate}
\item the subgroup $\Pi:= G \cap (\bR^d \times 1)$ is a full-rank lattice of translations, 
\item The quotient $P = G/\Pi $, called the twisted point group, is a finite subgroup of $O(d) \times \qAut(\bP \sK)$.
\end{enumerate}
That is, there is a commutative diagram of exact sequences
\[
\xymatrix@R=1em{
1 \ar[r] & \bR^d \ar@{}[d]|{\cup} \ar[r]  & \Euc (V ) \times  \qAut (\bP \sK) \ar[r]\ar@{}[d]|{\cup}  & O(d)\times  \qAut (\bP \sK) \ar[r]\ar@{}[d]|{\cup}  & 1 \\
1 \ar[r] & \Pi \ar[r] & G \ar[r] & P  \ar[r] & 1.
}
\]
\end{defn}

We write $i \colon G \to \mathop{\mathrm{Euc}}(V ) \times \qAut(\bP\sK )$ for the inclusion, as well as $j:=\pr_{\Euc (V)} \circ i$ and $k:=\pr_{\qAut(\bP\sK )} \circ i$.

The group $\qAut (\bP \sK)$ is naturally equipped with the following three structures:
\begin{itemize}
\item The (anti)linearity indicator; the homomorphism $\tilde{\phi} \colon \qAut(\bP \sK) \to \bZ_2$ mapping unitaries/antiunitaries to $0$/$1$ respectively.
\item The grading indicator; the homomorphism $\tilde{c} \colon \qAut(\bP\sK) \to \bZ_2$ mapping even/odd unitaries to $0$/$1$ respectively.
\item A group extension
\[ 1 \to \bT \to \mathrm{U}^0(\sK) \rtimes (\bZ_2 \times \bZ_2) \to \rmP \mathrm{U}^0(\sK) \rtimes (\bZ_2 \times \bZ_2) \to 1, \]\label{eq:twisted.extension}
which is $\tilde{\phi}$-twisted, i.e., the adjoint action of $\qAut(\bP \sK)$ on $\bT$ is
\begin{align} g\cdot z := \begin{cases} z & \text{ if $\tilde{\phi}(g) = 0 $}, \\ \bar{z } & \text{ if $\tilde{\phi}(g) =1$.} \end{cases} \label{eq:phi} \end{align}
\end{itemize}
Each of them corresponds to a group cocycle, 
\begin{itemize}
\item $\tilde{\phi} \in H^1(\qAut (\bP \sK);\bZ_2)$, 
\item $\tilde{c} \in H^1(\qAut (\bP \sK);\bZ_2)$ and 
\item $\tilde{\tau} \in H^2(\qAut (\bP \sK);\prescript{\tilde{\phi}}{}{\bT})$ 
\end{itemize}
respectively. Here we write $\prescript{\tilde{\phi}}{}{\bT}$ for the abelian group $\bT$ with the $\qAut(\bP\sK)$-action as (\ref{eq:phi}).

Let us denote by $\prescript{\tilde{\phi}}{}{\bZ}_N$ the cyclic group $\{ z \in \bC \mid z^N=1\} \subset \bT$ with the $\qAut(\bP \sK)$-action given as (\ref{eq:phi}).
We remark that the element of $H^2(\qAut(\bP \sK);\prescript{\tilde{\phi}}{}{\bZ}_N)$ corresponding to the extension 
\[ 1 \to \bZ_N \to \rmS\mathrm{U}^0(\sK) \rtimes (\bZ_2 \times \bZ_2) \to \rmP \mathrm{U}^0(\sK) \rtimes (\bZ_2 \times \bZ_2) \to 1 \]
is mapped to $\tilde{\tau}$ by the homomorphism induced from $\prescript{\tilde{\phi}}{} \bZ_N \to \prescript{\tilde{\phi}}{} \bT$.

A twisted crystallographic group $G$ is also equipped with the cocycles $(\phi,c,\tau)$ obtained by the pull-back of $(\tilde{\phi},\tilde{c},\tilde{\tau})$ with respect to $k \colon G \to \qAut(\bP \sK) $. 
Note that $(\phi,c,\tau)$ are pulled back through $P$, in other words the restrictions $\phi|_\Pi$, $c|_{\Pi}$, $\tau|_{\Pi}$ are trivial (see Subsection \ref{section:5.4} for a generalization). 

\begin{prp}\label{prp:twcry}
Let $(G, \phi,c,\tau)$ be the following data;
\begin{itemize}
\item $j \colon G \to \Euc (V)$ is a group homomorphism with finite kernel such that $S:= j(G)$ is a space group, 
\item $\phi, c \in H^1(G; \bZ_2)$ and $\tau \in \mathrm{Im} (H^2(G ; \prescript{\phi}{} \bZ_N)) \subset H^2(G;\prescript{\phi}{} \bT )$ for some $N \in \bN$.
\end{itemize}
Then there is a homomorphism $k \colon G \to \qAut(\bP\sK)$ such that
\begin{enumerate}
    \item the image $(j,k)(G)$ is a twisted crystallographic group,
    \item $k^*(\tilde{\phi},\tilde{c},\tilde{\tau}) = (\phi,c,\tau)$.
\end{enumerate}
\end{prp}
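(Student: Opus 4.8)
The plan is to realize the cocycle data by an explicit finite-dimensional unitary/antiunitary representation, reducing first to the finite twisted point group and then invoking the representation theory of a finite-dimensional twisted group $*$-algebra. Recall from the definition of a twisted crystallographic group that $(\phi,c,\tau)$ is pulled back along $\pi\colon G\to P$, where $\Pi:=j^{-1}(S\cap\bR^d)$, $P:=G/\Pi$, and $\pi$ is $j$ followed by $S\to S/(S\cap\bR^d)$; write $(\phi,c,\tau)=\pi^*(\phi_P,c_P,\tau_P)$ with $\phi_P,c_P\in H^1(P;\bZ_2)$ and $\tau_P\in H^2(P;\prescript{\phi_P}{}{\bT})$. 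It is enough to produce a homomorphism $k_P\colon P\to\qAut(\bP\sK)$ with $k_P^*(\tilde\phi,\tilde c,\tilde\tau)=(\phi_P,c_P,\tau_P)$: setting $k:=k_P\circ\pi$, property (2) is then immediate, while for (1) one observes that $(j(g),k(g))$ lies in $\bR^d\times 1$ precisely when $j(g)$ is a translation, i.e.\ $g\in\Pi$, in which case $k(g)=k_P(1)=1$; hence $(j,k)(G)\cap(\bR^d\times 1)=j(\Pi)\times 1=(S\cap\bR^d)\times 1$ is a full-rank lattice of translations, and $(j,k)(G)/\bigl((S\cap\bR^d)\times 1\bigr)\cong G/\Pi=P$ is a finite subgroup of $O(d)\times\qAut(\bP\sK)$, so $(j,k)(G)$ satisfies Definition~\ref{defn:twcry}. (The finite kernel of $j$ lies in $\Pi$ and is killed by $(j,k)$.)

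To build $k_P$ I first replace $\tau_P$ by a finite central extension. Since $P$ is finite, $H^2(P;\prescript{\phi_P}{}{\bT})$ is annihilated by $|P|$, so taking $M$ a common multiple of $N$ and $|P|$ and using the long exact sequence of $0\to\prescript{\phi_P}{}{\bZ}_M\to\prescript{\phi_P}{}{\bT}\xrightarrow{z\mapsto z^M}\prescript{\phi_P}{}{\bT}\to 0$ we may lift $\tau_P$ to $\bar\tau_P\in H^2(P;\prescript{\phi_P}{}{\bZ}_M)$. This class defines a finite group extension $1\to\bZ_M\to\hat P\xrightarrow{q}P\to 1$ in which $P$ acts on $\bZ_M$ through $\phi_P$; set $\hat\phi:=\phi_P\circ q$ and $\hat c:=c_P\circ q$, both vanishing on $\bZ_M$, and note that an element $x\in\hat P$ conjugates $\bZ_M\triangleleft\hat P$ trivially when $\hat\phi(x)=0$ and by inversion when $\hat\phi(x)=1$.

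To produce $k_P$ I invoke the realization of twists of finite groups (cf.\ \cite{freedTwistedEquivariantMatter2013}): there exist a finite-dimensional $\bZ_2$-graded complex Hilbert space $\sK$ and a homomorphism $\rho\colon\hat P\to\qAut(\sK)$ such that $\rho$ restricts on $\bZ_M$ to the tautological inclusion $\bZ_M\hookrightarrow\bT\subset\rmU^0(\sK)$, $\rho(x)$ is $\bC$-antilinear iff $\hat\phi(x)=1$, and $\rho(x)$ is odd iff $\hat c(x)=1$. Concretely, $\sK$ may be taken to be a finite-dimensional $\bZ_2$-graded $*$-module over the twisted group $C^*$-algebra $\cA:=(\bC\rtimes_{\hat\phi}\hat P)/(u_\zeta-\zeta:\zeta\in\bZ_M)$ — where $\bC\rtimes_{\hat\phi}\hat P$ is the $\bZ_2$-graded $*$-algebra spanned by unitaries $u_x$, $x\in\hat P$, with $u_xu_y=u_{xy}$, $u_x\lambda=\lambda^{(-1)^{\hat\phi(x)}}u_x$ for $\lambda\in\bC$, $u_x^*=u_{x^{-1}}$, $\deg u_x=\hat c(x)$, and $\zeta\in\bZ_M$ is identified with its tautological image in $\bC^\times$ — which is a nonzero finite-dimensional real $C^*$-algebra, with complex structure given by the action of $i\in\cA$; the antilinearity of $\rho(x)$ for $\hat\phi(x)=1$ is then forced by the relation $u_xi=-iu_x$ in $\cA$. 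Since $\rho(u_\zeta)=\zeta\cdot\id_\sK$ lies in $\bT$ for $\zeta\in\bZ_M$, the map $\rho$ descends to a homomorphism $k_P\colon P=\hat P/\bZ_M\to\qAut(\sK)/\bT=\qAut(\bP\sK)$. By construction $k_P^*\tilde\phi=\phi_P$ and $k_P^*\tilde c=c_P$, and $k_P^*\tilde\tau=\tau_P$: indeed $\rho$ and $k_P$, together with the inclusion $\bZ_M\hookrightarrow\bT$ on kernels, form a morphism over $\id_P$ from $1\to\bZ_M\to\hat P\to P\to1$ to $1\to\bT\to\qAut(\sK)\to\qAut(\bP\sK)\to1$, so the pullback of the second extension along $k_P$ is the pushout of the first along $\bZ_M\hookrightarrow\bT$, whose class is the image of $\bar\tau_P$, namely $\tau_P$. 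Pulling back along $\pi$ gives $k^*(\tilde\phi,\tilde c,\tilde\tau)=(\phi,c,\tau)$, as required.

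I expect the one substantive input to be the realization statement — that the twist $(\phi_P,c_P,\tau_P)$ of the finite group $P$, enhanced by the $\bZ_M$-central lift, is carried by an honest finite-dimensional unitary/antiunitary representation, equivalently that $\cA$ is nonzero. The delicate point there is that the antilinear ``$\hat\phi$-directions'' and the odd ``$\hat c$-directions'' must assemble compatibly with the central cyclic group continuing to act by genuine scalars; this works because complex conjugation restricts to inversion on $\bZ_M\subset\bT$, matching the conjugation action of $\hat P$ on $\bZ_M$, so the relations defining $\cA$ are consistent. Everything else — the reduction to $P$, the lift of $\tau_P$ to $\bar\tau_P$, and the computation of $k_P^*\tilde\tau$ — is routine manipulation of group extensions.
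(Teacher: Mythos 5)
There is a genuine gap, and it sits exactly where the real work of this proposition lies. You open by asserting that $(\phi,c,\tau)$ is ``pulled back along $\pi\colon G\to P$'' with $\Pi:=j^{-1}(S\cap\bR^d)$ and $P:=G/\Pi$. That is not part of the hypothesis: the data are merely $\phi,c\in H^1(G;\bZ_2)$ and $\tau\in\mathrm{Im}\,H^2(G;\prescript{\phi}{}\bZ_N)$, with no compatibility with any lattice assumed. The remark in the paper that the twist of a twisted crystallographic group factors through its point group applies to a group that is \emph{already} realized inside $\Euc(V)\times\qAut(\bP\sK)$, with $\Pi=G\cap(\bR^d\times 1)$ determined by that realization --- which is precisely what you are trying to construct, so the step is circular. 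It is also false for your choice of $\Pi$: in the paper's own running example ($G=\bZ$ acting by translations, $\phi\colon\bZ\to\bZ_2$ surjective, Section 6.1), one has $j^{-1}(S\cap\bR)=G$, so nothing nontrivial factors through $G/\Pi=1$; the correct lattice of the resulting twisted crystallographic group is $2\bZ$, a proper sublattice of $S\cap\bR$. Consequently your verification of condition (1) also breaks down: with your $k$, elements $g\in\Pi$ with $\phi(g)=1$ would have $k(g)\neq 1$, so $(j,k)(G)\cap(\bR^d\times 1)$ is not $j(\Pi)\times 1$.

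The missing idea is the construction of a suitable full-rank free abelian normal subgroup $\Pi\le G$ --- generally of finite index in $j^{-1}(S\cap\bR^d)$ and on which $j$ is injective --- such that $\phi$, $c$ vanish on $\Pi$ and the chosen $\bZ_N$-lift $\tau'$ of $\tau$ splits over $\Pi$ compatibly, so that the whole twist descends to $P=G/\Pi$. In the paper this is done by passing to the central extension $G_{\tau'}$, taking $H_{\tau'}=\ker\bigl(G_{\tau'}\to O(d)\times\bZ_2^2\bigr)$, and applying Lemma \ref{lem:groupext} to the extension $1\to K_{\tau'}\to H_{\tau'}\to j(H_{\tau'})\to 1$: a central-by-finite extension of $\bZ^d$ splits over the subgroup of $N$-th powers, and intersecting conjugates makes the resulting lattice normal in $G_{\tau'}$. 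Nothing in your proposal substitutes for this; your passage from $\tau_P$ to $\bar\tau_P$ and the finite extension $\hat P$ only make sense once $P$ is finite and the twist already lives on it. By contrast, the second half of your argument --- realizing a twist of a \emph{finite} group by a finite-dimensional unitary/antiunitary representation via the twisted group algebra, and checking that the pullback of the Wigner extension recovers $\tau_P$ --- is correct and is essentially the paper's final step (its citation of a finite-rank $(\phi,c,\tau')$-twisted representation). So the proposal would become a proof if you replaced the first paragraph's reduction by the actual construction of $\Pi$.
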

For this reason, in this paper we often specify a twisted crystallographic group by a quadruple $(G,\phi,c,\tau)$. For the proof of this proposition, we use the following lemma. 
\begin{lem}\label{lem:groupext}
Let $1 \to K \to H \xrightarrow{q} \Pi \to 1$ be an extension of groups such that $K$ is finite and $\Pi$ is free abelian of finite rank. Assume that the adjoint action $\Pi \to \mathop{\mathrm{Out}}(K)$ is trivial. Let $N:=\# K$, $\Pi_N:=\{ t^N \mid t \in \Pi\}$ and $H_N:=q^{-1}(\Pi_N)$. Then there is a section $s \colon \Pi_N \to H_N$ by homomorphism whose image commutes with $K$, i.e., $H_N \cong \Pi_N \times K$.
\end{lem}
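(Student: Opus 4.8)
The plan is to split the problem into two independent pieces: first replace the extension $1\to K\to H\to\Pi\to 1$ by a \emph{central} extension of $\Pi$ by the finite abelian group $Z(K)$, and then split that central extension over $\Pi_N$ by passing to $N$-th powers of lifts. Fix a basis $t_1,\dots,t_d$ of $\Pi$ and choose arbitrary lifts $h_i\in H$ of $t_i$. Since $K\triangleleft H$, conjugation by $h_i$ is an automorphism of $K$; its class in $\mathrm{Out}(K)$ depends only on $q(h_i)=t_i$ and equals the image of $t_i$ under the given map $\Pi\to\mathrm{Out}(K)$, hence is trivial. So $\Ad(h_i)|_K=\Ad(k_i)|_K$ for some $k_i\in K$, and $c_i:=k_i^{-1}h_i$ is a lift of $t_i$ lying in the centralizer $C:=C_H(K)$. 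As $q(c_i)=t_i$ for all $i$, the restriction $q|_C\colon C\to\Pi$ is surjective, with kernel $C\cap K=Z(K)$; and $Z(K)\subseteq Z(C)$ because $C$ centralizes $K\supseteq Z(K)$. Thus $1\to Z(K)\to C\xrightarrow{q}\Pi\to 1$ is a central extension whose kernel is finite of exponent dividing $N=\#K$.

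The heart of the argument is the following. Because $\Pi$ is abelian, $\Pi_N=\{t^N\mid t\in\Pi\}$ is the subgroup generated by $t_1^N,\dots,t_d^N$, and it is free abelian with this set as a basis; moreover $c_i^N\in C$ is a lift of $t_i^N$. I claim the $c_i^N$ pairwise commute. Indeed $z_{ij}:=[c_i,c_j]$ lies in $C\cap K=Z(K)$ and is therefore central in $C$, so the identity $[x^m,y^n]=[x,y]^{mn}$ for central commutators gives $[c_i^N,c_j^N]=z_{ij}^{N^2}=1$, since the exponent of $Z(K)$ divides $\#K=N$. Consequently there is a unique homomorphism $s\colon\Pi_N\to C\subseteq H$ with $s(t_i^N)=c_i^N$; comparing the two homomorphisms $\Pi_N\to\Pi$ on the basis $\{t_i^N\}$ shows $q\circ s$ is the inclusion $\Pi_N\hookrightarrow\Pi$, so $s$ is a homomorphic section of $q$ over $\Pi_N$ whose image lies in $C_H(K)$.

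Finally I would assemble the direct product. The subgroups $K=\ker q$ and $s(\Pi_N)$ both lie in $H_N=q^{-1}(\Pi_N)$; they commute since $s(\Pi_N)\subseteq C_H(K)$, and they intersect trivially since $s(x)\in K$ forces $x=q(s(x))=1$. Every $h\in H_N$ factors as $h=\bigl(h\,s(q(h))^{-1}\bigr)\,s(q(h))$ with first factor in $K$, so $H_N=K\cdot s(\Pi_N)$, whence $H_N\cong K\times\Pi_N$ via $(k,x)\mapsto k\,s(x)$.

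I expect the only genuinely delicate step to be the commutator bookkeeping in the middle paragraph: the point is that correcting the naive lifts $h_i$ to lifts $c_i$ that centralize $K$ forces all their mutual commutators into the central subgroup $Z(K)$, which is precisely what makes $N$-th powers multiplicative on commutators (so $[c_i^N,c_j^N]=[c_i,c_j]^{N^2}$) and lets $\#K=N$ annihilate them. The reduction to a central extension and the final assembly are routine; the triviality of $\Pi\to\mathrm{Out}(K)$ is used exactly once, to arrange the lifts $c_i$ in the first step.
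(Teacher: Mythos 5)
Your proof is correct, but it is organized quite differently from the paper's. The paper argues by induction on $\rank\Pi$: it splits off one generator at a time, writes $H \cong H' \rtimes_{\Ad(x)}\bZ$, normalizes $x$ so that $\Ad(x)|_K=\id$, and then checks that $\Ad(x^N)$ fixes the section already built over $\Pi'_N$ because $k_t^N=1$ by Lagrange. You instead make the construction global: correct \emph{all} basis lifts at once so they land in the centralizer $C=C_H(K)$, observe that $1\to Z(K)\to C\to\Pi\to 1$ is a central extension, and then kill the obstruction to commutativity of the $c_i^N$ via the identity $[c_i^N,c_j^N]=[c_i,c_j]^{N^2}$ for central commutators, again using that the exponent of $Z(K)$ divides $N$. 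The two arguments use exactly the same two inputs (triviality of $\Pi\to\mathrm{Out}(K)$ to centralize lifts, and $N$-th powers annihilating elements of $K$), but your reduction to a central extension replaces the paper's induction and semidirect-product bookkeeping with a single standard commutator computation; this is arguably cleaner, and it makes visible \emph{why} $N$-th powers suffice. The only step worth spelling out slightly more is the verification that $q|_C$ is surjective (the $c_i$ generate a subgroup of $C$ mapping onto $\Pi$), which you do state; everything else, including the final internal direct product decomposition $H_N=K\cdot s(\Pi_N)$ with trivial intersection, is sound.
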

\begin{proof}
The proof is given by induction on the rank of $\Pi$. 
When $\rank \Pi=0$, i.e., $\Pi$ is the trivial group, then the claim trivially holds. 
Assume that the claim holds for any extension with $\rank \Pi \leq n-1$. 
Let $\Pi' \leq \Pi$ be a subgroup with $\Pi/\Pi' \cong \bZ$. let $q \colon H \to \Pi$ denote the projection and set $H':=q^{-1}(\Pi')$, $\Pi_N':=\Pi_N \cap \Pi'$ and $H_N':=H_N \cap H'$. Then by the induction hypothesis there is a section $s' \colon \Pi_N' \to H_N'$ whose image commutes with $K$.  

Let us choose an element $x \in H$ which is mapped to the generator by the composition $H \to \Pi \to \Pi/\Pi' \cong \bZ$. 
Then there is an isomorphism $H=H' \rtimes _{\Ad (x)}\bZ$. 
Since $\Ad(x)$ is an inner action on $K$ by assumption, we may assume that $\Ad(x)=\id$ by replacing $x$ with $kx$ for some $k \in K$ if necessary. 
Moreover, for any $t \in \Pi_N'$, there is $k_t \in K$ such that $\Ad (x)(s'(t)) = s'(t)k_t$ holds. Hence we have 
\[\Ad (x^{l})(s'(t)) = \Ad(x^{l-1})(s'(t)k_t) = \Ad (x^{l-2}) (s'(t)k_t^2) = \cdots = s'(t) k_t^{l},\]
and in particular, $\Ad(x^{N})$ acts identically on the subgroup $s(\Pi'_N)$. 
Finally we obtain the desired section $s \colon \Pi_N \to H_N$ by $s|_{\Pi'} = s'$ and $s(q(x)^N)=x^N$.
\end{proof}

\begin{proof}[Proof of Proposition \ref{prp:twcry}]
We choose a preimage $\tau' \in H^2(G;\prescript{\phi}{} \bZ_N)$ of $\tau$. Let $G_{\tau'}$ denote the $\phi$-twisted extension of $G$ corresponding to $\tau'$ (i.e.\ the pullback of \eqref{eq:twisted.extension} under $k$) and let $H_{\tau'}$ denote the kernel of the composition 
\[G_{\tau'} \to G \xrightarrow{(j,\phi,c)} \Euc(V) \times (\bZ_2 \times \bZ_2) \to O(d) \times (\bZ_2 \times \bZ_2).\]
By assumption, $H_{\tau'} $ is a finite index normal subgroup of $G_{\tau'}$ and $j(H_{\tau'} ) \subset \Euc (V)$ is a free abelian group of rank $d$. 
We apply Lemma \ref{lem:groupext} for the extension $1 \to K_{\tau'} \to H_{\tau'} \to j(H_{\tau'} ) \to 1$. Then, we obtain a finite index normal subgroup $s(j(H_{\tau'} )_N)$ of $H_{{\tau'},N}:= j^{-1}(j(H_{\tau'})_N)$. Now we define the normal subgroup $\Pi$ of $G$ as
\[\Pi:= \bigcap_{gH_{{\tau'},N} \in G_{\tau'} /H_{{\tau'},N}} g s(j(H)_N) g^{-1}.  \] 
Then $\Pi$ is a  finite index normal subgroup of $G_{\tau'}$ and is free abelian of rank $d$. 

Set $P_{\tau'}:=G_{\tau'} /\Pi$ and $P:=G/\Pi$. Then $\phi$ and $c$ factor through $P$ and $\tau \in H^2(G;\prescript{\phi}{} \bT)$ is the pull-back of the $2$-cocycle of $P$ corresponding to the $\phi$-twisted extension $1 \to \prescript{\phi}{} \bZ_N \to P_{\tau'} \to P \to 1$. Let us choose a finite rank $(\phi,c,{\tau'})$-twisted unitary representation $\sK$ of $P$ 
(such a representation always exists, see e.g.\  \cite{kubotaNotesTwistedEquivariant2016}*{Example 2.9}). Then $k \colon P \to \qAut (\bP\sK)$ is the desired homomorphism.
\end{proof}

\begin{exmp}
Twisted crystallographic groups include the following classes of symmetries studied in the context of topological phases of matter.
\begin{enumerate}
\item Let $\sG :=\bZ_2 \times \bZ_2$ be acting on $V$ trivially and let $\phi, c \in \Hom (\sG , \bZ_2)$ be the first and the second projections respectively. Then there are 10 choices of a subgroup $\sA \subset  \sG$ and $\tau \in H^2(\sA ; \prescript{\phi}{} \bT)$, each of which corresponds to one of 2 complex and 8 real Clifford algebras (up to Morita equivalence). The classification of topological phase with the symmetry type $(\Pi \times \sA, \phi , c , \tau)$ is studied in \cites{kitaevPeriodicTableTopological2009,schnyderClassificationTopologicalInsulators2008} and summarized as the celebrated periodic table. 
\item More generally, let $S$ be a $d$-dimensional crystallographic group and let $(\sA,\phi,c,\tau)$ be as above. Set $G:= S \times \sA$. Then $(G,\phi,c,\tau)$ is a $d$-dimensional twisted crystallographic group. This class includes the pioneering work of Fu~\cite{fuTopologicalCrystallineInsulators2011} (corresponding to the case that $S = \bZ^2 \rtimes C_n$, where $C_n$ is the group generated by rotation by $2\pi/n$) and a part of the work of Chiu--Yao--Ryu~\cite{chiuClassificationTopologicalInsulators2013} and Morimoto--Furusaki~\cite{morimotoTopologicalClassificationAdditional2013} on reflection-invariant topological phases (corresponding to the case that $S=\bZ^d \rtimes \bZ_2$, where $\bZ_2$ acts on $V $ by a reflection). 
\item Magnetic space groups, discussed in detail in the next subsection. This class includes the antiferromagnetic topological insulator by Mong--Essin--Moore \cite{mongAntiferromagneticTopologicalInsulators2010}. The classification of topological phases with the symmetry of magnetic space groups are studied in \cite{okumaTopologicalClassificationNonmagnetic2019}. 
\end{enumerate}
\end{exmp}

\subsection{Magnetic space group}
An important and non-trivial class of twisted crystallographic groups is the \emph{magnetic space groups} (see \cites{lifshitzMagneticPointGroups2005,schwarzenbergerColourSymmetry1984} for example), which are also called (two-)colour symmetry groups, antisymmetry groups, dichromatic groups, Shubnikov groups, Heesch groups, Opechowski-Guccione groups, etc.
They are the groups of symmetries of crystals such that two possible values are attached to each site. 

\subsubsection{Definition}
Let us consider the simplest case where the Hilbert space $\sK$ of internal degrees of freedom is of dimension $1$ and equipped with the trivial $\bZ_2$-grading, i.e., $\sK^1=0$. In this case, the group $\qAut (\bP \sK)$ is isomorphic to $\bZ_2$ generated by complex conjugation.
\begin{defn}\label{defn:magnetic}
A \emph{magnetic space group} is a subgroup of $\Euc (V) \times \bZ_2$ such that:
\begin{enumerate}
\item the subgroup $\Pi:=G \cap \bR^d \subset \bR^d$ is a full-rank lattice of translations, 
\item The magnetic point group $P = G/\Pi \subset O(d) \times \bZ_2$ is a finite group.
\end{enumerate}
That is, there is a commutative diagram of exact sequences
\[
\xymatrix@R=1em{
1 \ar[r] & \bR^d \ar[r]\ar@{}[d]|{\cup} & \Euc(V) \times \bZ_2 \ar[r]\ar@{}[d]|{\cup} & O(d) \times \bZ_2 \ar[r]\ar@{}[d]|{\cup} & 1 \\
1 \ar[r] & \Pi \ar[r] & G \ar[r] &P \ar[r] & 1.
}
\] 
\end{defn}

In view of the background of our formulation, a symmetry $\tilde{\varphi} = (\varphi, t) \in \Euc(V) \times \bZ_2$ preserves the `time' direction if $t$ is the identity and reverses it if $t$ is the complex conjugation. In other words, the second projection $\phi=\pr_{\bZ_2} \colon \Euc(V) \times \bZ_2 \to \bZ_2$ indicates the time-reversal. On the other hand, the first projection $\pr_{\Euc(V)} \colon \Euc(V) \times \bZ_2 \to \Euc(V)$ tells us what the induced isometry on $V$ is. 
From the viewpoint of physics, the two values attached to each site can be regarded as the spin directions (up and down), and the magnetic space groups as the symmetries of such spin systems on crystals. One can also think of the possible two values as the time directions (future and past), and the magnetic space groups as the symmetries of crystals which possibly contain the time-reversal symmetry.

For a magnetic space group $G$, the subgroup $S:=j(G) \subset \Euc(V)$ is called an \emph{associated space group} of $G$. This terminology is justified by the following lemma.
\begin{lem}
A discrete subgroup $G$ of $\Euc(V) \times \bZ_2$ is a magnetic space group if and only if $S:=j(G)$ is a space group.
\end{lem}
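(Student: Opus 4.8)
The plan is to prove both directions of the equivalence by relating the discreteness/crystallographic conditions on $G \subset \Euc(V) \times \bZ_2$ to the corresponding conditions on $S = j(G) \subset \Euc(V)$ via the homomorphism $j = \pr_{\Euc(V)}$. The key observation is that $j$ has kernel $G \cap (1 \times \bZ_2)$, which is a subgroup of the finite group $1 \times \bZ_2$, hence finite; so $j \colon G \to S$ is a surjection with finite kernel, and in particular $G$ is discrete if and only if $S$ is discrete.

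\emph{($\Rightarrow$)} Suppose $G$ is a magnetic space group. I would first note that $S = j(G) \subset \Euc(V)$ is discrete, being the image of a discrete group under a proper map (or directly: the kernel of $j$ is finite). Then I must check the two defining conditions of a space group for $S$. For the lattice condition, set $\Pi_S := S \cap \bR^d$. One checks $j(\Pi) \subseteq \Pi_S$, where $\Pi = G \cap \bR^d$ is the given full-rank lattice; since $\Pi$ already maps isomorphically onto a full-rank lattice in $\bR^d$ (the $\bZ_2$-component of elements of $\Pi$ is trivial because $\Pi \subset \bR^d \times 1$), we get that $\Pi_S$ contains a full-rank lattice, hence is itself a full-rank lattice (using discreteness of $S$). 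For the point group condition, the composite $G \to S \to S/\Pi_S$ is surjective with kernel containing $\Pi$; hence $S/\Pi_S$ is a quotient of $G/\Pi = P$, which is finite, so $S/\Pi_S$ is finite. Thus $S$ is a space group.

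\emph{($\Leftarrow$)} Conversely, suppose $S = j(G)$ is a space group. Discreteness of $G$ follows since $j$ has finite kernel and $S$ is discrete. For the lattice condition on $G$: let $\Pi_S = S \cap \bR^d$ be the full-rank lattice of $S$, and consider $j^{-1}(\Pi_S) \subseteq G$. This is a subgroup of $G$ mapping onto $\Pi_S$ with finite kernel, and it fits in $1 \to (\ker j) \to j^{-1}(\Pi_S) \to \Pi_S \to 1$, an extension of a free abelian group of rank $d$ by a finite group with trivial outer action (the action is trivial since $\Pi_S$ is central in... well, at least the conjugation action on the finite kernel sitting inside $1 \times \bZ_2$ is by automorphisms of a finite abelian group, and one argues it is inner, e.g.\ trivial). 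Here I invoke Lemma \ref{lem:groupext}: after passing to the finite-index subgroup $\Pi_{S,N} := \{ t^N : t \in \Pi_S\}$, there is a section, and intersecting its conjugates over the finite group $G/j^{-1}(\Pi_S)$ produces a finite-index normal subgroup $\Pi \subseteq G$ that is free abelian of rank $d$ and maps isomorphically into $\bR^d \times 1$. One verifies $\Pi = G \cap (\bR^d \times 1)$ is full-rank, and $P = G/\Pi$ is finite (it surjects onto the finite group $S/j(\Pi)$ with kernel $j^{-1}(\Pi_S)/\Pi$, which is finite). Hence $G$ is a magnetic space group.

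\textbf{Main obstacle.} The delicate point is the $(\Leftarrow)$ direction: producing the correct translation lattice $\Pi = G \cap (\bR^d \times 1)$ and showing it has finite index in $G$. Naively $j^{-1}(\Pi_S)$ need not land in $\bR^d \times 1$ (its elements may have nontrivial $\bZ_2$-component and may fail to commute), so one genuinely needs the splitting argument of Lemma \ref{lem:groupext} together with the conjugation-intersection trick to carve out an honest lattice of translations; checking that this $\Pi$ coincides with $G \cap (\bR^d \times 1)$ and has the right rank is where the care is needed. The rest is routine diagram-chasing with the two commuting exact sequences.
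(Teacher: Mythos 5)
Your forward direction is correct and is essentially the paper's argument: everything reduces to comparing the rank of $\Pi$ with that of $S\cap\bR^d$ through the surjection $G\cap(\bR^d\times\bZ_2)\to S\cap\bR^d$, which has finite kernel and whose source contains $\Pi$ with finite index.

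The backward direction has a genuine gap, and it comes from attacking the wrong problem. The definition of a magnetic space group requires the \emph{specific} subgroup $\Pi=G\cap(\bR^d\times 1)$ to be a full-rank lattice; it does not merely ask for the existence of some finite-index free abelian normal subgroup. Lemma \ref{lem:groupext}, applied to $1\to\ker j\to G\cap(\bR^d\times\bZ_2)\to S\cap\bR^d\to 1$, produces a section whose image consists of elements $(t,\epsilon(t))$ for some homomorphism $\epsilon$ to $\bZ_2$; nothing forces $\epsilon$ to be trivial, so this image need not lie in $\bR^d\times 1$ and need not equal $G\cap(\bR^d\times 1)$. The step ``one verifies $\Pi=G\cap(\bR^d\times 1)$'' therefore cannot be carried out: for instance, when $\ker j=1$ the section is unique and its image is all of $G\cap(\bR^d\times\bZ_2)$, which is strictly larger than $G\cap(\bR^d\times 1)$ whenever $G$ contains an element $(t,-1)$ with $t$ a translation. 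The machinery of Lemma \ref{lem:groupext} is also unnecessary here: $G\cap(\bR^d\times\bZ_2)$ is a subgroup of the abelian group $\bR^d\times\bZ_2$, so your worry that its elements ``may fail to commute'' is moot, and no splitting or conjugation-intersection argument is needed. The correct (and the paper's) argument is direct: $\Pi=G\cap(\bR^d\times 1)$ is the kernel of $G\cap(\bR^d\times\bZ_2)\to\bZ_2$, hence has index at most $2$ there; $j$ restricted to $\Pi$ is injective, since its kernel is $G\cap(1\times 1)$; and $j$ maps $G\cap(\bR^d\times\bZ_2)$ onto $S\cap\bR^d$ with finite kernel. Consequently $\rank\Pi=\rank(S\cap\bR^d)$, so $\Pi$ is a full-rank lattice if and only if $S\cap\bR^d$ is, and the finiteness of the two point groups is then equivalent (indeed automatic), as you correctly observe. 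This single comparison of ranks settles both directions at once.
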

\begin{proof}
It suffices to show that $\Pi \subset \bR^d$ is of full-rank if and only if $S \cap \bR^d \subset \bR^d$ is of full-rank. 
To see this, consider the restriction of $j$ to a surjection $ G\cap (\bR^d \times \bZ_2) \to S \cap \bR^d$. Since $G \cap (\bR^d \times \bZ_2)$ includes $\Pi$ as a finite index subgroup, the ranks of $\Pi$ and $S \cap \bR^d$ are the same.
\end{proof}

\subsubsection{Type classification and cohomology}
By means of their associated data $\phi$ and $S$, we can classify the magnetic space groups $G$ into three types, which is the standard formulation in the literature \cites{lifshitzMagneticPointGroups2005,schwarzenbergerColourSymmetry1984}. 

To state this, let $G_0:=\ker \phi$, let $\Pi_0:=G_0 \cap \Pi$ and let $P_0:=G_0/\Pi_0$. Moreover, we think of $G $ as a subgroup of $S \times \bZ_2 \subset \Euc(V) \times \bZ_2$. 
The quotient $(S \times \bZ_2)/G$ is either $\bZ_2$ or trivial. 
In the former case, we define the homomorphism $\rho \colon S \times \bZ_2 \to \bZ_2$ as the quotient $S \times \bZ_2 \to (S \times \bZ_2)/G \cong \bZ_2$. 
In the latter case, set $\rho \equiv 1$. 
We write as $\rho_S:=\rho|_S$ and $\rho_{\bZ_2}:=\rho|_{\bZ_2}$. 
Note that $G$ is now characterized as the kernel of $\rho=\rho_S \cdot \rho_{\bZ_2}$.

\begin{prp}[{\cite{schwarzenbergerColourSymmetry1984}}] \label{prop:relation_to_standard_formulation}
Let $G$ be a $d$-dimensional magnetic group, $\phi \colon G \to \bZ_2$ its time-reversal indicator, and $S$ the associated space group. Then $G$ is classified into one of the following three types:
\begin{itemize}
\item[(a)] (black group) $G$ satisfies one of the following three equivalent conditions:
\begin{itemize}
\item[(a1)] $\phi$ is trivial.
\item[(a2)] $\rho_S=1$ and $\rho_{\bZ_2} =\id_{\bZ_2}$.
\item[(a3)] $\Pi = \Pi_0 $ and $P=P_0$.
\end{itemize}
\item[(b)] (grey group) $G$ satisfies one of the following three equivalent conditions:
\begin{itemize}
\item[(b1)]$G = S \times \bZ_2$ and $\phi$ is the second projection.
\item[(b2)] $\rho_S=1$ and $\rho_{\bZ_2} = 1$.
\item[(b3)] $\Pi = \Pi_0 $ and $P=P_0 \times \bZ_2$.
\end{itemize}
\item[(c)] (black and white group) $G$ satisfies one of the following three equivalent conditions:
\begin{itemize}
\item[(c1)] $G=\{ (s , \rho_S(s)) \in \Euc(V) \times \bZ_2 \mid s \in S \}$. 
\item[(c2)] $\rho_S \neq 1$ and $\rho_{\bZ_2} =\id_{\bZ_2}$.
\item[(c3)] Either of the following two conditions holds:
\begin{itemize}
\item[(c-i)] $\Pi=\Pi_0$ and $P_0$ is an index $2$ normal subgroup of $P$.
\item[(c-ii)] $\Pi_0$ is an index $2$ subgroup of $\Pi$ and $P_0 \cong P$.
\end{itemize}
\end{itemize}
\end{itemize}
\end{prp}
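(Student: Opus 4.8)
The plan is to recall the classical argument of \cite{schwarzenbergerColourSymmetry1984} in the present notation. The backbone is an index dichotomy: viewing $G \subseteq S \times \bZ_2$, the first projection restricts to a surjection $G \twoheadrightarrow S$ with kernel $G \cap (1 \times \bZ_2) \in \{1, \bZ_2\}$, so $G$ has index $1$ or $2$ in $S \times \bZ_2$. The whole proof is then an inspection of these two cases, with the index-$2$ case subdivided once more.

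First I would treat the index-$1$ case: here $G = S \times \bZ_2$, so $\phi = \pr_{\bZ_2}$ and $\rho \equiv 1$, which is exactly (b1), and (b2) holds by the convention $\rho_S = \rho_{\bZ_2} = 1$. In the index-$2$ case $\rho \colon S \times \bZ_2 \to \bZ_2$ is the nontrivial quotient map and $G = \ker\rho = \ker(\rho_S \cdot \rho_{\bZ_2})$. Surjectivity of $G \to S$ forces $\Img \rho_S \subseteq \Img \rho_{\bZ_2}$, so $\rho_{\bZ_2}$ cannot be trivial (otherwise $\rho$ would be), i.e.\ $\rho_{\bZ_2} = \id_{\bZ_2}$. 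Now split on $\rho_S$: if $\rho_S = 1$ then $G = S \times 1$ and $\phi$ is trivial, which is (a1)/(a2); if $\rho_S \neq 1$ then $G = \{(s, \rho_S(s)) : s \in S\}$ and $\phi(s, \rho_S(s)) = \rho_S(s)$ is nontrivial, which is (c1)/(c2). These three cases are exhaustive and mutually exclusive, and each of (a1), (b1), (c1) singles out exactly one of them; this yields both the partition into the three types and the equivalences (a1)$\Leftrightarrow$(a2), (b1)$\Leftrightarrow$(b2), (c1)$\Leftrightarrow$(c2).

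It remains to match the three cases with the conditions (a3), (b3), (c3) phrased via $(\Pi, \Pi_0, P, P_0)$, where $G_0 = \ker\phi$. In cases (a) and (b) the translation lattices of $G$ and $G_0$ coincide, so $\Pi = \Pi_0$; for (a) one has $G_0 = G$, hence $P = P_0$, while for (b) the coset $G \setminus G_0$ is represented by the on-site time reversal, which centralizes $G_0$, giving $P = P_0 \times \bZ_2$. Case (c), where $G = \{(s, \rho_S(s))\}$ with $\rho_S \neq 1$, is the one place needing genuine care, because the nontrivial coset $G \setminus G_0$ is absorbed in two different ways and one must keep the honest translation lattice $G \cap \bR^d$ of the magnetic group carefully separate from the lattice $S \cap \bR^d$ of its associated space group: if $\rho_S$ is trivial on $S \cap \bR^d$, then all translations of $S$ lift to $G$ (so $\Pi = \Pi_0$) while $G_0 \cong \ker\rho_S$ is an index-$2$ normal subgroup of $S \cong G$ with the same lattice, whence $P_0$ is index-$2$ normal in $P$ --- this is (c-i); if $\rho_S$ is nontrivial on $S \cap \bR^d$, then $G$ contains an anti-translation, its lattice $\Pi_0$ has index $2$ in $\Pi$, and the inclusion $\ker\rho_S \hookrightarrow S$ induces an isomorphism $P_0 \cong P$ of point groups (an injection of finite groups of equal order) --- this is (c-ii). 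Reading these implications backwards shows that (c-i) and (c-ii) together characterize case (c). Apart from this lattice bookkeeping in case (c) --- which is the main obstacle --- everything reduces to unwinding the definitions of $\rho$, $\rho_S$, $\rho_{\bZ_2}$ and of $G_0$, $\Pi_0$, $P_0$.
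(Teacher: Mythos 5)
The paper offers no proof of this proposition --- it is quoted from Schwarzenberger --- so there is nothing in the text to compare your argument against. Your reconstruction via the index dichotomy $[S\times\bZ_2:G]\in\{1,2\}$, the observation that surjectivity of $G\to S$ forces $\rho_{\bZ_2}=\id_{\bZ_2}$ in the index-two case, the split on whether $\rho_S$ is trivial, and the lattice bookkeeping separating (c-i) from (c-ii) according to whether $\rho_S$ vanishes on $S\cap\bR^d$, is the standard argument and is correct in substance. Your insistence on distinguishing $G\cap\bR^d$ from $S\cap\bR^d$ is exactly the point needed to make (c-ii) meaningful.

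One step is overstated. You assert that each of (a1), (b1), (c1) ``singles out exactly one'' of the three cases and that ``reading the implications backwards'' shows (c-i) and (c-ii) characterize case (c). Taken literally, neither holds: in case (a) one has $\rho_S=1$, so $G=S\times 1=\{(s,\rho_S(s)):s\in S\}$ and (c1) is satisfied; and in case (b) one has $\Pi=\Pi_0$ while $P=P_0\times\bZ_2$, so $P_0$ sits inside $P$ as an index-$2$ normal subgroup and (c-i) is satisfied. The backward implications therefore need the supplementary nontriviality data: for (c1) the condition $\rho_S\neq 1$, which you do invoke through (c2), and for (c-i) the fact that the generator of $P/P_0$ acts nontrivially on $V$ (equivalently, that $j$ is injective on $G$), which is what separates (c-i) from (b3). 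This is as much a looseness in the statement as in your argument, but since you explicitly claim the reverse characterization via the (x3) conditions, you should either supply this separating condition or note that the (x3) conditions determine the type only after the grey case has been excluded.
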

Notice that the characterization of types in the above proposition omits the case of $\rho_{S} \neq 1$ and $\rho_{\bZ_2} = 1$. 
In this case, we get a magnetic space group of `black type' (i.e.\ a space group without time-reversal) whose associated space group is an index $2$ subgroup of $S$. 
Therefore this case is covered in the case of (a), though the associated subgroup may be different from $S$.

Notice also that, in the characterization (c) of black and white groups, the non-trivial homomorphism $\rho_{S}$ specifies a subgroup $\ker\rho_S \subset S$ of index $2$. Any subgroup $S' \subset S$ of index $2$ is uniquely characterized as the kernel of a non-trivial homomorphism $S \to \bZ_2$. Thus, a black and white group is characterized by an index $2$ subgroup of $S$. This characterization is adopted in \cite{schwarzenbergerColourSymmetry1984}.

Now we provide a classification of magnetic space groups via group cohomology. 

\begin{prp}\label{prop:magnetic.class}
The following hold:
\begin{enumerate}
    \item There is only one black and one grey magnetic space group for each space group $S$.
    \item For a given space group $S$ whose point group is $P_0$, the set of black and white groups of type (c-i) with $j(G)=S$ corresponds one-to-one to $H^1(P_0;\bZ_2) \setminus \{ 0 \} $. 
    \item For a given space group $G_0$ with the point group $P$, the set of black and white groups of type (c-ii) with $\ker \rho_S =G_0$ corresponds one-to-one to a subset of $\tilde{H}^1_P(\hat{\Pi}_0; \bZ_2)\setminus \{0\}$.
\end{enumerate}
\end{prp}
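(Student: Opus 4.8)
The plan is to prove the three parts separately, in each case using the type characterizations of Proposition~\ref{prop:relation_to_standard_formulation} to reduce the count to elementary bookkeeping about index-$2$ subgroups and overgroups; only part~(3) requires any input beyond this, namely a $P$-equivariant cohomology identification.

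First, parts~(1) and~(2). By (a1) a black magnetic space group has $\phi$ trivial, so $G\subset\Euc(V)\times\{1\}$ and hence $G=S\times\{1\}$ with $S=j(G)$; thus $G$ is determined by its associated space group, and conversely $S\times\{1\}$ is such a group. By (b1) a grey group with associated space group $S$ is exactly $S\times\bZ_2$, again unique. For part~(2): by (c1) a type-(c) group with $j(G)=S$ is the graph $\{(s,\rho_S(s)):s\in S\}$ of a nontrivial homomorphism $\rho_S\colon S\to\bZ_2$, and since $j\colon G\to S$ is then an isomorphism, $G$ and $\rho_S$ determine each other. Unwinding the condition $\Pi=\Pi_0$ that defines type~(c-i) shows it holds precisely when $\rho_S$ is trivial on the translation lattice $S\cap\bR^d$, i.e.\ when $\rho_S$ factors through the point group $P_0=S/(S\cap\bR^d)$. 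Hence type-(c-i) groups with $j(G)=S$ correspond bijectively to the nontrivial elements of $\Hom(P_0,\bZ_2)=H^1(P_0;\bZ_2)$.

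For part~(3), fix a space group $G_0$ with point group $P$ and translation lattice $\Pi_0$, and consider type-(c-ii) groups $G$ with $\ker\rho_S=G_0$. Writing $S=j(G)$, the group $G$ is the graph of the unique surjection $S\to\bZ_2$ with kernel $G_0$, so $G$ is determined by $S$; and the type-(c-ii) conditions force $S$ to be an index-$2$ overgroup of $G_0$ with $\Pi:=S\cap\bR^d$ a $P$-invariant index-$2$ overlattice of $\Pi_0$ and $S=G_0\cdot\Pi$. Conversely, given any $P$-invariant index-$2$ overlattice $\Pi\supset\Pi_0$, one checks that the amalgam $S:=G_0\cdot\Pi$ of $G_0$ and $\Pi$ over $\Pi_0$ is again a space group — the extension of $P$ by $\Pi$ it defines is realizable inside $\Euc(V)$ because the obstruction lies in $H^2(P;\bR^d)=0$ — and that the resulting graph $G$ is a type-(c-ii) group with $\ker\rho_S=G_0$. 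Thus the set in question is in bijection with the set of $P$-invariant index-$2$ overlattices of $\Pi_0$. Finally, index-$2$ overlattices of $\Pi_0$ correspond to nonzero classes in $\tfrac12\Pi_0/\Pi_0\cong\Pi_0\otimes\bZ_2$, the $P$-invariant ones to $P$-fixed classes; a natural $P$-equivariant isomorphism $\Pi_0\otimes\bZ_2\cong H^1(\hat\Pi_0;\bZ_2)$ (from $H_1(\hat\Pi_0;\bZ)\cong\Pi_0^{*}$ and universal coefficients) identifies these with $H^1(\hat\Pi_0;\bZ_2)^P$, which the edge homomorphism of the Borel fibration $\hat\Pi_0\to\hat\Pi_0\times_P EP\to BP$ in turn identifies with $\tilde H^1_P(\hat\Pi_0;\bZ_2)$ — the transgression $d_2\colon H^1(\hat\Pi_0;\bZ_2)^P\to H^2(P;\bZ_2)$ vanishing because $\hat\Pi_0\times_P EP\simeq B(\Pi_0^{*}\rtimes P)$ is the classifying space of a split extension. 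This exhibits the set of type-(c-ii) groups with $\ker\rho_S=G_0$ as a subset of $\tilde H^1_P(\hat\Pi_0;\bZ_2)\setminus\{0\}$.

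The crux is part~(3): parts~(1) and~(2) are immediate once the type characterizations are available, whereas (3) requires the bijection with $P$-invariant index-$2$ overlattices — in particular the verification, via $H^2(P;\bR^d)=0$, that the amalgam $G_0\cdot\Pi$ is always a space group — together with the passage to equivariant cohomology and the attendant care about exactly which nonzero classes of $\tilde H^1_P(\hat\Pi_0;\bZ_2)$ are realized.
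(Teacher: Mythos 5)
Your treatment of parts (1) and (2) is exactly the paper's: the paper's proof of these is the same one-line observation that a black/grey group is $S\times\{1\}$/$S\times\bZ_2$ and that a type-(c-i) group is the kernel of $(\rho_S,\id_{\bZ_2})$ for a nontrivial $\rho_S$ factoring through $P_0$. For part (3) you take a genuinely different route. The paper sends a type-(c-ii) group directly to the class of the $P$-equivariant double covering $\hat{\Pi}\to\hat{\Pi}_0$ in $\tilde{H}^1_P(\hat{\Pi}_0;\bZ_2)$ and proves only \emph{injectivity}, by showing that two groups inducing the same class give the same overlattice inside $\Pi_0\otimes_\bZ\bR$; it makes no claim about the image, which is why the statement says ``a subset''. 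You instead interpose the set of $P$-invariant index-$2$ overlattices of $\Pi_0$, show the groups in question biject with these, and then compute that set as $(\Pi_0\otimes\bZ_2)^P\setminus\{0\}\cong H^1(\hat{\Pi}_0;\bZ_2)^P\setminus\{0\}\cong\tilde{H}^1_P(\hat{\Pi}_0;\bZ_2)\setminus\{0\}$, the last isomorphism coming from the section of the Borel fibration at the fixed character $0$ together with the vanishing of the transgression for $B(\Pi_0^{*}\rtimes P)$. I checked this identification against the paper's Tables 1 and 2 and it is consistent. Your argument thus proves the proposition and more: it identifies the ``subset'' as everything.

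Two remarks. First, the appeal to $H^2(P;\bR^d)=0$ is superfluous and slightly misleading: $G_0$ and $\Pi$ are already subgroups of $\Euc(V)$, and $G_0$ normalizes the $P$-invariant lattice $\Pi$, so $S=G_0\Pi$ is a subgroup of $\Euc(V)$ with $S\cap\bR^d=\Pi$ and $S/\Pi\cong P$; there is no realization problem to solve. Second, your surjectivity conclusion is in direct tension with the paper's discussion after the proposition, where the nonzero classes attached to \textsf{pgg}, \textsf{p4m} and \textsf{p4g} are asserted not to be realized. Since your bijection with overlattices appears sound (every $P$-invariant index-$2$ overlattice $\Pi$ does produce the type-(c-ii) group $G_0\sqcup G_0\cdot(t,-1)$ with $t\in\Pi\setminus\Pi_0$), the reconciliation with the count of $20$ groups must instead come entirely from the normalizer actions in the following proposition (in particular on \textsf{p1} and \textsf{p2}, where $N_S$ acts transitively on the three nonzero classes — a reduction the paper's tally omits). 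This discrepancy does not affect the validity of your proof of the proposition as stated, but you should not quote the paper's ``non-contributing elements'' remark as corroboration of your image computation.
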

\begin{proof}
The claim (1) is obvious. To see (2), recall that a black and white group of type (c-i) is obtained from a non-trivial homomorphism $\rho_P \colon P \to \bZ_2$ as $G:=\ker (\rho_S, \id_{\bZ_2})$, corresponding one-to-one to a non-zero element $[\rho_P] \in H^1(P;\bZ_2)$.

We show (3). Let $G$ be a magnetic space group of type (c-ii) such that $\ker \rho_S = \ker \phi $ is the given space group $S$. 
Then, by taking the Pontrjagin duals, the $P$-equivariant extension $1 \to \Pi_0 \to \Pi \to \bZ_2 \to 1$ gives rise to a $P$-equivariant $\bZ_2$-Galois covering $\bZ_2 \to \hat{\Pi}\to \hat{\Pi}_0$ (in other words a $P$-equivariant principal $\bZ_2$-bundle) whose restriction at $0 \in \hat{\Pi}_0$ is equivariantly trivial. Hence it corresponds to an element of the reduced equivariant cohomology group $\tilde{H}^1_P(\hat{\Pi}_0;\bZ_2) \cong \ker (\iota_0^*)$, where $\iota_0 \colon \{ 0\} \to \hat{\Pi}_0$ denotes the inclusion. Moreover, if there is another magnetic space group $G'$ with $\ker \phi =S$ corresponding to the same element, then there is a commutative diagram of $P$-equivariant homomorphisms
\[
\xymatrix@R=1em@C=1em{
&H^{1}(\hat{\Pi}_0;\bZ) \cong \Pi_0 \ar[ld] \ar[rd] &\\
H^{1}(\hat{\Pi};\bZ) \cong \Pi \ar[rr]^{\cong} &&  H^{1}(\hat{\Pi}'; \bZ) \cong \Pi'}
\]
such that the skew homomorphisms are inclusions $\Pi_0 \to \Pi$ and $\Pi_0 \to \Pi' $. Hence we obtain that $\Pi$ and $\Pi'$ coincide in $\bR^d \cong \Pi_0 \otimes _\bZ \bR$, and hence $G=G'$.
\end{proof}

We say that two magnetic space groups $G_1$ and $G_2$ are equivalent if there is a (not necessarily isometric) affine transform $\varphi \colon V \to V$ preserving the orientation of $V$ such that $\mathrm{Ad}(\varphi) G_1 =G_2$. 
For a space group $S$, set 
\[ N_S:=N_{\bR^d \rtimes \mathrm{GL}_d(\bR)}(S) \cap (\bR^d \rtimes \mathrm{GL}_d^+(\bR)),\]
namely the normalizer subgroup of $S$ taken in $\bR^d \rtimes \mathrm{GL}_d^+(\bR)$. To summarize the above discussion, we obtain:
\begin{prp}
For a $d$-dimensional space group $S$, we write as $\Pi_S:=S \cap \bR^d$ and $P_S:=S/\Pi_S$. The following hold:
\begin{enumerate}
\item The set of equivalence classes of $d$-dimensional black and white magnetic space groups of type (c-i) is given by
\[\bigsqcup_S (H^1(P_{S};\bZ_2) \setminus \{0\})/N_S,\]
where $S$ runs over all $d$-dimensional space groups. 
\item The set of equivalence classes of $d$-dimensional black and white magnetic space groups of type (c-ii) is given by a subset of
\[\bigsqcup_{G_0} (\tilde{H}^1_{P_{G_0}}(\hat{\Pi}_{G_0};\bZ_2) \setminus \{0\})/N_{G_0}, \]
where $G_0$ runs over all $d$-dimensional space groups. 
\end{enumerate}
\end{prp}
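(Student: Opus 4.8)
The plan is to combine the bijections of Proposition~\ref{prop:magnetic.class}(2),(3) with an orbit--stabilizer argument for the action of the orientation-preserving affine group $\bR^d\rtimes\mathrm{GL}_d^+(\bR)$ on the set of black and white magnetic space groups, the point being that the declared equivalence relation is exactly the relation of lying in the same affine orbit.

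First I would unwind how an equivalence $\mathrm{Ad}(\varphi)$ transforms the data attached to a black and white group. An orientation-preserving affine map $\varphi$ acts on $\Euc(V)\times\bZ_2$ by $\mathrm{Ad}(\varphi)$ on the first factor and trivially on the second; it carries magnetic space groups to magnetic space groups, and, since it preserves $\phi$, $\rho_S$ and $\rho_{\bZ_2}$ up to the identifications it induces, it respects the trichotomy of Proposition~\ref{prop:relation_to_standard_formulation} and the subdivision (c-i)/(c-ii) (the latter because affine conjugation preserves subgroup indices). For a type (c) group written $G=\{(s,\rho_S(s)):s\in S\}$ via condition (c1), one checks directly that $\mathrm{Ad}(\varphi)G$ has this same form with associated space group $\mathrm{Ad}(\varphi)S$ and twisting homomorphism $\rho_S\circ\mathrm{Ad}(\varphi^{-1})$. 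Because $\mathrm{Ad}(\varphi)$ preserves $\bR^d$, it sends $j(G)=S$ to $\mathrm{Ad}(\varphi)S$ and $G_0=\ker\phi$ to $\mathrm{Ad}(\varphi)G_0$, inducing compatible isomorphisms $P_S\cong P_{\mathrm{Ad}(\varphi)S}$ and $\Pi_{G_0}\cong\Pi_{\mathrm{Ad}(\varphi)G_0}$, hence also $H^1(P_S;\bZ_2)\cong H^1(P_{\mathrm{Ad}(\varphi)S};\bZ_2)$ and $\tilde{H}^1_{P_{G_0}}(\hat\Pi_{G_0};\bZ_2)\cong\tilde{H}^1_{P_{\mathrm{Ad}(\varphi)G_0}}(\hat\Pi_{\mathrm{Ad}(\varphi)G_0};\bZ_2)$. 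Tracing through the constructions in the proof of Proposition~\ref{prop:magnetic.class} one then sees that its bijections are natural for this affine action: the class assigned to a type (c-i) group transforms by the induced map on $H^1(P_S;\bZ_2)$, and the class assigned to a type (c-ii) group transforms by the induced map on $\tilde{H}^1_{P_{G_0}}(\hat\Pi_{G_0};\bZ_2)$.

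Granting this naturality, I would present the set of all type (c-i) black and white groups as $\bigsqcup_S(H^1(P_S;\bZ_2)\setminus\{0\})$, the disjoint union over all $d$-dimensional space groups $S$, on which $\bR^d\rtimes\mathrm{GL}_d^+(\bR)$ acts compatibly with the projection $S\mapsto\mathrm{Ad}(\varphi)S$ to the set of space groups. The stabilizer of a given $S$ inside $\bR^d\rtimes\mathrm{GL}_d^+(\bR)$ is exactly $N_S=N_{\bR^d\rtimes\mathrm{GL}_d(\bR)}(S)\cap(\bR^d\rtimes\mathrm{GL}_d^+(\bR))$, which acts on the fibre $H^1(P_S;\bZ_2)$ through the automorphisms it induces on $P_S=S/\Pi_S$. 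Picking a set of representatives of the equivalence classes of $d$-dimensional space groups and applying orbit--stabilizer in each fibre then identifies the set of equivalence classes of type (c-i) groups with $\bigsqcup_S(H^1(P_S;\bZ_2)\setminus\{0\})/N_S$, which is part~(1). Part~(2) is the same argument with $S$ replaced by $G_0$, with $H^1(P_S;\bZ_2)\setminus\{0\}$ replaced by the distinguished subset of $\tilde{H}^1_{P_{G_0}}(\hat\Pi_{G_0};\bZ_2)\setminus\{0\}$ produced by Proposition~\ref{prop:magnetic.class}(3), and with $N_S$ replaced by $N_{G_0}$; one has only to add the observation that $N_{G_0}$ preserves this subset, since conjugation by an element of $N_{G_0}$ carries a type (c-ii) group with $\ker\rho_S=G_0$ to another such group.

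The step I expect to be the real work --- everything else being formal --- is the naturality assertion of the second paragraph: one must reopen the proof of Proposition~\ref{prop:magnetic.class} and verify that the cohomology class it assigns to a black and white group transforms under $\mathrm{Ad}(\varphi)$ precisely by the induced isomorphism on $H^1(P_S;\bZ_2)$, respectively on $\tilde{H}^1_{P_{G_0}}(\hat\Pi_{G_0};\bZ_2)$, and, secondarily, confirm that the stabilizer of $S$ is genuinely the subgroup $N_S$ of the affine group rather than merely its image in $\Aut(S)$. I would concentrate the writing there.
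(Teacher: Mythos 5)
Your proposal is correct and matches the paper's (essentially omitted) argument: the paper states this proposition as a direct summary of Proposition \ref{prop:magnetic.class} together with the definition of equivalence and of $N_S$, which is exactly the orbit--stabilizer bookkeeping you spell out. The naturality of the correspondences in Proposition \ref{prop:magnetic.class} under affine conjugation and the identification of the stabilizer with $N_S$ are indeed the only points requiring verification, and your treatment of them is sound.
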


We apply this proposition to the enumeration of magnetic space groups in dimensions $1$ and $2$. The calculation of the equivariant cohomology groups $H^1_P(\mathrm{pt}; \bZ_2)$ and $\tilde{H}^1_P(\hat{\Pi}; \bZ_2)$ for each space group $S$ in dimension $2$ is given in \cite{gomiTwistsTorusEquivariant2017}. By the method of calculations in this paper, one can also calculate the equivariant cohomology groups in question for the space groups in dimension $1$.  The results are listed in Table \ref{tab:mag_dim1} and Table \ref{tab:mag_dim2} below.

\begin{table}[ht]
\begin{center}
\begin{tabular}{|c|c||c|c|c|}
\hline
\mbox{Space group $S$} & $P$ & 
$H^1_P(\hat{\Pi}; \bZ_2)$ & $H^1_P(\pt; \bZ_2)$ & $\tilde{H}^1_P(\hat{\Pi}; \bZ_2)$ \\
\hline
 $\bZ$ & $1$ & $\bZ_2$ & 0 & $\bZ_2$ \\
\hline
$\bZ \rtimes O(1)$ & $O(1)$ & $\bZ_2^{\oplus 2}$ & $\bZ_2$ & $\bZ_2$ \\
\hline
\end{tabular}
\caption{The list of $H^1_P(\hat{\Pi}; \bZ_2)$ in dimension $1$}
\label{tab:mag_dim1}
\begin{tabular}{|c|c||c|c|c|}
\hline
\mbox{Space group $S$} & $P$ & 
$H^1_P(\hat{\Pi}; \bZ_2)$ & $H^1_P(\pt; \bZ_2)$ & $\tilde{H}^1_P(\hat{\Pi}; \bZ_2)$ \\
\hline
\mbox{\textsf{p1}} & $1$ & 
$\bZ_2^{\oplus 2}$ & $0$ & $\bZ_2^{\oplus 2}$ \\
\hline
\mbox{\textsf{p2}} & $\bZ_2$ &
$\bZ_2^{\oplus 3}$ & $\bZ_2$ & $\bZ_2^{\oplus 2}$ \\
\hline
\mbox{\textsf{p3}} & $\bZ_3$ &
$0$ & $0$ & $0$ \\
\hline
\mbox{\textsf{p4}} & $\bZ_4$ &
$\bZ_2^{\oplus 2}$ & $\bZ_2$ & $\bZ_2$  \\
\hline
\mbox{\textsf{p6}} & $\bZ_6$ &
$\bZ_2$ & $\bZ_2$ & $0$ \\
\hline
\mbox{\textsf{pm}/\textsf{pg}} & $D_1$ &
$\bZ_2^{\oplus 3}$ & $\bZ_2$ & $\bZ_2^{\oplus 2}$ \\
\hline
\mbox{\textsf{cm}} & $D_1$ &
$\bZ_2^{\oplus 2}$ & $\bZ_2$ & $\bZ_2$ \\
\hline
\mbox{\textsf{pmm}/\textsf{pmg}/\textsf{pgg}} & $D_2$ &
$\bZ_2^{\oplus 4}$ & $\bZ_2^{\oplus 2}$ & $\bZ_2^{\oplus 2}$ \\
\hline
\mbox{\textsf{cmm}} & $D_2$ &
$\bZ_2^{\oplus 3}$ & $\bZ_2^{\oplus 2}$ & $\bZ_2$ \\
\hline
\mbox{\textsf{p3m1}} & $D_3$ &
$\bZ_2$ & $\bZ_2$ & $0$ \\
\hline
\mbox{\textsf{p31m}} & $D_3$ &
$\bZ_2$ & $\bZ_2$ & $0$ \\
\hline
\mbox{\textsf{p4m}/\textsf{p4g}} & $D_4$ &
$\bZ_2^{\oplus 3}$ & $\bZ_2^{\oplus 2}$ & $\bZ_2$ \\
\hline
\mbox{\textsf{p6m}} & $D_6$ &
$\bZ_2^{\oplus 2}$ & $\bZ_2^{\oplus 2}$ & $0$  \\
\hline
\end{tabular}
\caption{The list of $H^1_P(\hat{\Pi}; \bZ_2)$ in dimension $2$. We follow \cite{schwarzenbergerColourSymmetry1984} for the labels of the $2$-dimensional space groups. In the column of the point group $P$, the cyclic group $\bZ_n$ is a subgroup of $\mathrm{SO}(2) \subset \mathrm{O}(2)$, and the dihedral group $D_n \not\subset \mathrm{SO}(2)$ of order $2n$ contains a reflection.}
\label{tab:mag_dim2}
\end{center}
\end{table}

In the case of dimension $1$, there is $1$ magnetic space group of type (c-i) and $2$ magnetic space groups of type (c-ii).  

In the case of dimension $2$, there are $29$ non-trivial elements of $H^1_P(\pt ; \bZ_2)$ in total. 
Among them, there are three degenerations of magnetic space groups of type (c-i) by the action of the normalizer in \textsf{pmm}, \textsf{pgg} and \textsf{cmm} space groups. All of these groups are invariant under the adjoint by the rotation by $\pi/2$. The induced action on the point group $P \cong \bZ_2 \times \bZ_2$ is the flip, and hence the induced action onto $H_P^1(\pt;\bZ_2) \cong \bZ_2 \oplus \bZ_2$ is also the flip. 

Also, there are $26$ non-trivial elements of the reduced cohomology group $\tilde{H}^1_P(\hat{\Pi}; \bZ_2)$ in total. Among them, $5$ non-trivial elements do not contribute to the enumeration of the magnetic space group of type (c-ii); $3$ comes from the \textsf{pgg} space group and $1$ from each of \textsf{p4m} and \textsf{p4g} groups. Indeed, these groups are maximal among $2$-dimensional space groups. Moreover, the \textsf{pmm} space group admits the adjoint action by the $\pi/2$ rotation. This automorphism acts on $\tilde{H}^1_P(\hat{\Pi};\bZ_2) \cong \bZ_2^{\oplus 2}$ as the flip, which identifies two non-trivial elements. 

These discussions are consistent with the result in \cite{schwarzenbergerColourSymmetry1984} that there are $26$ type (c-i) and $20$ type (c-ii) magnetic space groups in dimension $2$. Similarly, Proposition \ref{prop:magnetic.class} is consistent with the existence of $2+2+(1+2)=7$ frieze groups and $17+17+(26+20)=80$ layer groups \cite{kopsky2002international}, which are so-called subperiodic groups in one and two dimensions. Some of their associated twisted $K$-theories were investigated in \cite{gomiCrystallographicTduality2019}.

\section{Freed--Moore twisted equivariant K-theory}\label{section:3}
In this section we summarize the foundations of Freed--Moore twisted equivariant K-theory.
We recall two definitions, i.e., the Fredholm and Karoubi pictures, and introduce several fundamental operations such as the pull-back, the open embedding, the product, the Thom isomorphism and the push-forward. Although the twisted equivariant K-group is defined for proper groupoids, here we concentrate on action groupoids $X \rtimes P$, where $P$ is a compact group and $X$ is a locally compact $P$-space.
We also review the relation between the classification of topological phases protected by the symmetry of a  twisted crystallographic group and the twisted equivariant $\K^0$-group of the Brillouin torus proposed in \cite{freedTwistedEquivariantMatter2013}.

\subsection{Notations and conventions}
Here we collect some notations and conventions used throughout the paper.  

\begin{rmk}
Throughout the paper, the sesquilinear inner product on a Hilbert space is linear in its first argument and antilinear in their second. 
Conversely, the inner product of a Hilbert C*-module $A$ is antilinear in its first argument and linear in the second. 
\end{rmk}

\begin{rmk}
For $\bZ_2$-graded Hilbert spaces $\sH_1$ and $\sH_2$, their graded tensor product $\sH_1 \hotimes \sH_2$ is defined as the Hilbert space $\sH_1 \otimes \sH_2$ equipped with the $\bZ_2$-grading $\gamma_1 \otimes \gamma_2$. For a pair of (resp.\ anti) linear maps $T \colon \sH_1 \to \sH_1$ and $S \colon \sH_2 \to \sH_2$, their graded tensor product $T \hotimes S$ stands for
\[ T  \otimes S^0 + T \gamma_1 \otimes S^1, \] 
where $S^0$, $S^1$ denotes the even and odd parts of $S$. The graded tensor product of vector bundles is also defined in the same manner. 
\end{rmk}

\begin{rmk}\label{rmk:Cliffordalgebra}
Let $\Cl_{p,q}$ denote the Clifford algebra (it is customary to use $p,q$ for the integer labels in $Cl_{p,q}$, and it should not be confused with our use of $p,q\in P$) associated to $\bR^{p+q}$ with the bilinear form $\langle \cdot , \cdot  \rangle$ of signature $(p,q)$, 
that is, the $\bR$-algebra generated by elements $v$ for $v \in \bR^{p+q}$ with relations $vw+wv = \langle v,w \rangle 1$ (in other words, $\Cl_{p,q}$ is generated by mutually anticommuting elements $e_1,\cdots, e_p$ with $e_i^2=1$ and $f_1,\cdots, f_q$ with $f_i^2=-1$). 
Let $\Delta_{p,q}$ denote the real linear space $\Cl_{p,q}$ with the inner product $\langle a,b \rangle =\mathrm{tr} (b^*a)$, where $\mathrm{tr}$ is the unique tracial state on $\Cl_{p,q}$ which is even, i.e., $\mathrm{tr}(\Cl_{p,q}^{\mathrm{odd}})=0$. Let $\fc \colon \Cl_{p,q} \to \bB(\Delta_{p,q})$ denote the graded $\ast$-representation given by the multiplication from the left. 

In this paper we use the same symbols $\Cl_{p,q}$ and $\Delta _{p,q}$ for the corresponding Real C*-algebra and its Real $\ast$-representation (see Remark \ref{rmk:real}). In particular, $G$ acts on $\Cl_{p,q}$ and $\Delta _{p,q}$ by their complex conjugations through $\phi$.  
\end{rmk}

\subsection{Two definitions}
We start with a reminder of the definition of the twisted equivariant K-groups. For a detailed discussion, we refer to \cite{gomiFreedMooreKtheory2017} and \cite{kubotaNotesTwistedEquivariant2016}.

We say that a twist of the action $P \curvearrowright X$ is a triplet $(\phi,c,\sigma)$, where $\phi ,c$ are elements of the equivariant cohomology group $H^1 _P(X ; \bZ_2 )$ and $\sigma \in H^2_P(X ; \prescript{\phi}{} \bT)$ (note that the cohomology groups are not compactly-supported ones). 
For details of groupoid cohomology, refer to Appendix \ref{sec:equivariant cohomology}. 
\begin{rmk}
In contrast to some existing works such as \cite{moutuouGradedBrauerGroups2014}, in this paper we do not include the $0$-th cohomology group $H^0_P(X; \bZ_2)$ into our definition of the twist. This parameter corresponds to the degree of twisted equivariant K-theory in our definition.
\end{rmk}

\begin{rmk}
In this section, we use the letter $\sigma$ for a $2$-cocycle in $H^2_P(X ; \prescript{\phi}{} \bT)$. 
As a general convention, a left-superscript $\prescript{\phi}{}\blank$ indicates that complex conjugation is to be applied whenever $\phi(p,x)=1$.
For simplicity of notation, in this section we represent $1$-cocycles $\phi$ and $c$ by a function $P \times X \to \bZ_2$ as $\phi(p,x)$ or $c(p,x)$ (although it is possible only for $1$-cocycles which are sent to the trivial element in the non-equivariant cohomology by the forgetful map). 
\end{rmk}

For a fixed choice of $\phi$, the summation on $H^1_P(X;\bZ_2) \times H^2_P(X;\prescript{\phi}{} \bT)$ is imposed as 
\begin{align} (c,\sigma) + (c',\sigma') := (c+c', \sigma + \sigma' + \epsilon (c,c')), \label{eq:sum} \end{align}
where $\epsilon(c,c')$ is a $2$-cocycle defined by
\[ \epsilon (c,c')(p,q,x):=(-1)^{c'(p,x)c(q,x)}\]
for any $x \in X$ and $p,q \in P$. 
\begin{rmk}\label{rmk:cocycle}
A $1$-cocycle $c \in H^1_P(X;\bZ_2)$ corresponds to a $P$-equivariant principal $\bZ_2$-bundle on $X$. 
Also, a $2$-cocycle $\sigma \in H^2_P(X; \prescript{\phi}{} \bT )$ corresponds to a $\phi$-twisted extension of the action groupoid $X \rtimes P$ (cf.\ \cite{gomiFreedMooreKtheory2017}*{Definition 2.3}), comprising the following data;
\begin{itemize}
    \item a collection $\{ L_p \}_{ p \in P}$ of complex line bundles on $X$,
    \item a collection of bundle isomorphisms 
    \[ \sigma(p,q) \colon \alpha_{q^{-1}}^* L_p \otimes \prescript{\phi(p)}{}{L}_q \to L_{pq},\]
    i.e., a continuous family $\sigma(p,q)_x \colon {L}_{p,qx} \otimes \prescript{\phi(p,qx)}{}L_{q,x} \to L_{pq,x}$, with the compatibility condition 
    \[  \alpha _{r^{-1}}^* \sigma(p,q) \circ \sigma(pq,r)= \prescript{\phi(p)}{}{\sigma}(q,r ) \circ \sigma(p,qr).  \]
\end{itemize}
Here $L_{p,x}$ denotes the fiber of $L_p$ at $x \in X$.
\end{rmk}

\begin{rmk}
For simplicity of notation, hereafter we often use the letter $\ft$ for a pair $(c, \sigma) \in H^1_P(X;\bZ_2) \times H^2_P(X;\prescript{\phi}{} \bT)$.
\end{rmk}

For a twist $(\phi, \ft )$ of a $P$-space $X$, we say that a $(\phi , \ft )$-twisted $P$-equivariant hermitian vector bundle on $X$ is a $\bZ_2$-graded hermitian vector bundle $E$ on $X$ with a collection of $\bC$-linear bundle isomorphisms 
\[u_p \colon L_p \otimes \prescript{\phi(p)}{}E  \to  \alpha_{p^{-1}}^*E, \] 
i.e., a continuous family $u_{p,x} \colon L_{p,x} \otimes  \prescript{\phi(p,x)}{}E_x  \to E_{px}$, such that 
\begin{itemize}
    \item $u_{p,x} \colon L_{p,x} \otimes  \prescript{\phi(p,x)}{}E_x  \to E_{px}$ is even/odd if $c(p)=0/1$, and
    \item $u$ is $\sigma$-projective, i.e., the equality
    \[ u_{p} \circ u_{q} = u_{pq} \circ (\id_E \otimes \sigma(p,q))\] 
    holds for $p,q\in P$ as bundle maps 
    \[\alpha_{q^{-1}}^*L_{p} \otimes  \prescript{\phi(p)}{}L_{q} \otimes  \prescript{\phi(pq)}{}E  \to  \alpha_{(pq)^{-1}}^*E.\] 
\end{itemize}
In other words, $u_p$ is a continuous family of $\phi$-linear $c$-graded maps $u_{p,x} \colon E_x \otimes L_{p,x} \to E_{px}$ such that
\[u_{p,qx}\circ u_{px} = u_{pq,x} \circ (\id_E \otimes \sigma(p,q)_x). \]

Note that, for a fixed $\phi \in H^1_P(X;\bZ_2)$, the graded tensor product $E_1 \hotimes E_2$ of $(\phi,\ft_i)$-twisted $P$-equivariant hermitian vector bundles $E_i$ (for $i=1,2$) makes sense and is a $(\phi,\ft_1 + \ft_2)$-twisted $P$-equivariant hermitian vector bundle. Here $\ft_1 + \ft_2$ is the summation (\ref{eq:sum}). This is checked as
\begin{align*}
& (u_p^1 \hotimes u_p^2)(u_q^1 \hotimes u_q^2) \\
=& (-1)^{c_2(p,x)c_1(q,x)}(u_p^1u_q^1) \hotimes (u_p^2u_q^2) \\
  =& (-1)^{c_2(p,x)c_1(q,x)} (u_{pq}^1 \hotimes u_{pq}^2) (\id_{E_1 \hotimes E_2} \otimes \sigma^1(p,q) \otimes \sigma^2(p,q)). 
\end{align*}

\begin{rmk}\label{rmk:univ}
A $(\phi, \ft )$-twisted $P$-equivariant Hilbert bundle $\cH$ is said to be universal if any $(\phi , \ft )$-twisted vector bundle on $X$ is embedded into $\cH$. A universal bundle satisfies the following properties;
\begin{enumerate}
\item Any $P$-space $X$ admits a universal Hilbert $P$-bundle (cf.\ \cite{freedLoopGroupsTwisted2011}*{Corollary A.33}).
\item Any two universal Hilbert bundles are unitarily isomorphic (cf.\ \cite{freedLoopGroupsTwisted2011}*{Lemma A.26}). 
\item A universal bundle has the following absorbing property (cf.\ \cite{freedLoopGroupsTwisted2011}*{Lemma A.24}); for any $(\phi,\ft)$-twisted Hilbert bundle $\cV$, there is an even $P$-invariant unitary isomorphism
\[ V \colon \cH \oplus \cV \to \cH. \]
\end{enumerate}
\end{rmk}

Let $\bB(\cH)$, $\bK(\cH)$ and $\cU(\cH)$ denote the bundle of bounded, compact and unitary operators on $\cH$ respectively. Also, let $\mathrm{Fred}(\cH)$ denote the bundle on $X$ whose fiber at $x\in X$ is the set of odd self-adjoint operators $F_x \in \bB(\cH_x)$ such that $F_x^2-1 \in \bK(\cH_x)$, on which we put the weakest topology such that a local section $F$ is continuous if $F$ is strongly continuous and $F^2-1$ is norm continuous. 
\begin{defn}\label{defn:Fred}
We write $\Gamma_c (X,\mathrm{Fred}(\cH))^P$ for the topological space of continuous sections $F \in \Gamma (X,\mathrm{Fred}(\cH))$ such that
\begin{enumerate}
    \item $F^2-1$ is compactly supported on $X$ and
    \item $[u_p, F] =0$ holds (where $[\blank, \blank ]$ denotes the graded commutator), that is, $u_{p,x}F_{x}u_{p,x}^*=(-1)^{c(p,x)}F_{px}$ holds for any $(p,x) \in P \times X $. 
\end{enumerate}
The twisted equivariant $\K^0$-group $\prescript{\phi}{} \K^{0,\ft}_P(X)$ is defined to be the set of connected components of $\Gamma_c (X,\mathrm{Fred}(\cH))$. 
\end{defn}
Note that the set $\prescript{\phi}{}{}\K^{*,\ft}_P(X)$ forms an abelian group \cite{gomiFreedMooreKtheory2017}*{Lemma 3.3}. The additive structure imposed on this set will be described later in Definition \ref{defn:twK}. For the definition of the $\K$-group in other degrees, a symmetry of Clifford algebras is taken into account.

Let $\cH_{p,q}:=\cH \hotimes \Delta_{p,q}$ and let $\mathrm{Fred}_{p,q}(\cH)$ denote the subbundle of $\Fred (\cH_{p,q})$ consisting of operators such that $[F, \fc(v)]=0$. 
Also, we write $\Gamma_c (X , \Fred_{p,q} (\cH))^P$ for the space of continuous sections of $\mathrm{Fred}_{p,q}(\cH)$ satisfying (1), (2) of Definition \ref{defn:Fred}.

\begin{defn}\label{defn:twK}
We define the twisted equivariant K-group as 
\[ \prescript{\phi}{} \K^{q-p,\ft }_P (X) := \pi_0 (\Gamma_c (X,\mathrm{Fred}_{p,q}(\cH))^P).\]
This is an abelian group under the direct sum
\[[F_1] + [F_2] := [V(F_1 \oplus F_2)V^*], \]
where $V \colon \cH_{p,q} \oplus \cH_{p,q} \to \cH_{p,q}$ is a $P$-equivariant unitary intertwining the $Cl_{p,q}$-actions.  
\end{defn}
Note that the above summation is well-defined independent of the choice of $V$ because of the following lemma.
Here we write $\cU_{p,q}^0(\cH)$ and $\cU_{p,q}^1(\cH)$ for the fiber bundle of even/odd unitaries on $\cH_{p,q}$ which graded commutes with the action of $\Cl_{p,q}$ respectively. 
We consider the space $\Gamma (X,\cU_{p,q}^i(\cH))^P$ of graded $P$-invariant sections of these bundles. 
\begin{lem} \label{lem:unitary}
The spaces $\Gamma (X,\cU^0_{p,q}(\cH))^P$ and $\Gamma(X,\cU^1_{p,q}(\cH))^P$ are weakly contractible. 
\end{lem}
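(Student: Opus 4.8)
The plan is to prove weak contractibility of $\Gamma(X, \cU^i_{p,q}(\cH))^P$ by reducing to a statement about the unitary group of a Hilbert space acted on by a compact group, and then invoking Kuiper-type contractibility in the presence of absorption. The essential input is the absorbing property of the universal bundle $\cH$ recorded in Remark \ref{rmk:univ}(3), which lets us treat $\cH_{p,q}$ as ``infinitely divisible''.

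First I would set up the equivariant framework properly: a graded $P$-invariant section of $\cU^0_{p,q}(\cH)$ over $X$ is the same as a unitary on the graded Hilbert module $\Gamma_0(X,\cH_{p,q})$ over $C_0(X)$ which is $P$-equivariant, even, and graded-commutes with the $\Cl_{p,q}$-action; similarly for the odd sections after multiplying by a fixed odd unitary (e.g.\ an extra Clifford generator, using that $\cU^1_{p,q}$ is a torsor over $\cU^0_{p,q}$, so the two bundles are homeomorphic). Thus it suffices to treat $\Gamma(X,\cU^0_{p,q}(\cH))^P$. Since $\cH$ is universal for the twist $(\phi,\ft)$ and $\Cl_{p,q}$ is a fixed finite-dimensional object, $\cH_{p,q}$ is itself a universal $(\phi,\ft+(\text{Clifford}))$-twisted bundle, hence absorbs $\cH_{p,q}$ itself: there is an even $P$-invariant $\Cl_{p,q}$-linear unitary $\cH_{p,q}\oplus \cH_{p,q}\cong \cH_{p,q}$, and more generally $\cH_{p,q}\otimes \ell^2\cong \cH_{p,q}$ equivariantly and compatibly with the Clifford action.

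The main step is then a Kuiper/Dixmier--Douady argument. To show $\pi_k(\Gamma(X,\cU^0_{p,q}(\cH))^P)=0$ for all $k\ge 0$, I would take a continuous map $f\colon S^k\to \Gamma(X,\cU^0_{p,q}(\cH))^P$, view it as a single unitary $u$ on $C(S^k)\otimes \Gamma_0(X,\cH_{p,q})$ (equivariant, even, Clifford-linear), and contract it to the identity using the infinite-swindle/Eilenberg swindle familiar from Kuiper's theorem: using the absorption isomorphism, $u\oplus 1\oplus 1\oplus\cdots$ is conjugate to $1\oplus u\oplus u^{-1}\oplus u\oplus u^{-1}\oplus\cdots$, and then a rotation homotopy (each $\begin{pmatrix}u&0\\0&u^{-1}\end{pmatrix}$ is connected to the identity through $\begin{pmatrix}\cos t& \sin t\\-\sin t&\cos t\end{pmatrix}$-type paths) contracts everything, all homotopies being through $P$-equivariant even $\Cl_{p,q}$-linear unitaries because the $2\times 2$ block rotations commute with all the structure. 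Concretely one can cite the equivariant Kuiper theorem as in \cite{freedLoopGroupsTwisted2011} (Appendix A, which already supplies the absorption lemmas quoted in Remark \ref{rmk:univ}): the unitary group of a universal equivariant Hilbert bundle, with the strong topology, is contractible, and imposing graded-commutation with a finite-dimensional Clifford algebra replaces $\cH_{p,q}$ by the ``multiplicity space'' $\Hom_{\Cl_{p,q}}(\Delta_{p,q},\cH_{p,q})$, which is again a universal equivariant Hilbert bundle for the appropriate twist, so the same theorem applies.

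The part requiring the most care is the topology: the sections carry the strong-$*$ topology rather than the norm topology, so I must make sure the swindle homotopies are strong-$*$ continuous (they are, since they are built from $2\times 2$ scalar matrices acting on orthogonal summands, and strong continuity is preserved under such block operations and under composition with norm-continuous paths), and that the absorbing unitary $V$ of Remark \ref{rmk:univ}(3) can be chosen to intertwine the $\Cl_{p,q}$-actions — which follows by applying the absorption statement to the universal bundle for the shifted twist and then tensoring back up by $\Delta_{p,q}$. I would also note the mild point that $S^k$ being compact keeps all function-space issues ($C(S^k)$-linearity, continuity of the family) harmless, and that weak contractibility (vanishing of all homotopy groups) is exactly what is needed for the later application in Definition \ref{defn:twK}, namely that the choice of intertwining unitary $V$ does not affect the group operation.
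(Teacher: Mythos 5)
Your argument is correct and is essentially the one the paper relies on: the paper gives no proof of its own but cites \cite{freedLoopGroupsTwisted2011}*{Lemma A.36} and \cite{gomiFreedMooreKtheory2017}*{Lemma 3.2}, whose proofs are exactly the absorption-plus-Eilenberg-swindle Kuiper argument in the strong-$*$ topology that you describe, including the reduction of the Clifford-commutant condition to a universal bundle for a shifted twist and the reduction of $\cU^1$ to $\cU^0$ by a fixed $P$-invariant odd unitary. No gaps of substance.
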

For a proof, see \cite{freedLoopGroupsTwisted2011}*{Lemma A.36} and \cite{gomiFreedMooreKtheory2017}*{Lemma 3.2}.
We remark that $\Gamma (X,\cU^1_{p,q}(\cH))^P$ is a subspace of $\Gamma_c(X,\Fred_{p,q}(\cH))^P$.

\begin{rmk}\label{rmk:Gomi}
We mention a relation of Definition \ref{defn:twK} with the definitions given in references \cite{gomiFreedMooreKtheory2017} and \cite{kubotaNotesTwistedEquivariant2016}. 
In this paper, we used the set of self-adjoint Fredholm operators as in \cite{kubotaNotesTwistedEquivariant2016}*{Definition 3.10}, instead of skew-adjoint Fredholm operators as in \cite{gomiFreedMooreKtheory2017}*{Definition 3.4}. Indeed, a self-adjoint Fredholm operator is more compatible with the Kasparov theory.

In this remark $\tau$ denotes a cocycle $H^2_P(X,\prescript{\phi}{} \bT)$ in the terminology of \cite{gomiFreedMooreKtheory2017}. It is mentioned in \cite{gomiFreedMooreKtheory2017}*{Remark 4.12} that, the twisted equivariant K-group $\prescript{\phi}{} \K_P^{(c,\tau),q-p}(X)$ given in \cite{gomiFreedMooreKtheory2017}*{Definition 3.4} is isomorphic to another group $\prescript{\phi}{}{\acute{\K}}^{(c,\acute{\tau}),p-q}_P(X)$, where $\acute{\tau}:=\tau + \epsilon (c,c)$. The latter group is the same thing as our $\prescript{\phi}{} \K_P^{q-p,c, \acute{\tau}}(X)$. That is, the convention of $\tau$ in this paper is shifted by $\epsilon (c,c)$ from \cite{gomiFreedMooreKtheory2017}. 
We also remark that the role of $p,q$ in the notation of the Clifford algebra $\Cl_{p,q}$ in this paper and \cite{gomiFreedMooreKtheory2017} are opposite, i.e.,\ in \cite{gomiFreedMooreKtheory2017} the algebra $\Cl_{p,q}$ is generated by mutually anticommuting elements $e_1,\cdots, e_p$ with $e_i^2=-1$ and $f_1,\cdots, f_q$ with $f_i^2=1$.
\end{rmk}

There is another description of the twisted equivariant $\K$-group as a twisted generalization of Karoubi's K-theory.
Here we say that a $(\phi,c, \sigma)$-twisted $P$-equivariant $\Cl_{p,q}$-vector bundle is a $\bZ_2$-graded hermitian vector bundle $E$ on $X$ with a graded $\ast$-representation $\fc \colon \Cl_{p,q} \to \bB(E)$ and a $\phi$-linear $\sigma$-twisted $P$-action $u$ such that $u_p$ is even/odd if $c(p,x)=0/1$ and the graded commutator satisfies $[\fc(v), u_p] =0$ for any $v \in \Cl_{p,q}$. 

\begin{defn}[{\cite{gomiFreedMooreKtheory2017}*{Definition 4.13}}]
Let $X$ be a locally compact $P$-space. A $(\phi, c,\sigma)$-twisted $P$-equivariant $\Cl_{p,q}$-triple on $X$ is a triple $(E,\gamma_1,\gamma_2)$, where
\begin{itemize}
\item $E$ is a $(\phi,\sigma)$-twisted $P$-vector bundle on $X$ of finite rank,
\item $E$ is equipped with a fiberwise $\ast$-representation of $\Cl_{p,q}$,
\item each $\gamma_i$ is a $\bZ_2$-grading on $E$ which makes $E$ into a $(\phi,c,\sigma)$-twisted $P$-equivariant $\Cl_{p,q}$-vector bundle on $X$, and 
\item $\gamma_1-\gamma_2$ is compactly supported.
\end{itemize}
Let $\prescript{\phi}{} \cK^{q-p,c,\sigma}_P(X)$ denote the quotient of the monoid of homotopy classes of $(\phi,c, \sigma)$-twisted $P$-equivariant triples on $X$ by its submonoid consisting of triples of the form $[E,\gamma ,\gamma]$. 
\end{defn}
This notation is justified by the fact that the group $\prescript{\phi}{} \cK^{q-p,c,\sigma}_P(X)$ depends only on the difference $q-p$ as is shown in \cite{gomiFreedMooreKtheory2017}*{Subsection 4.3}.

It was proved in \cite{gomiFreedMooreKtheory2017}*{Theorem 4.11, Theorem 4.20} (a similar result is also shown in \cite{kubotaNotesTwistedEquivariant2016}*{Theorem 5.14}) that there is an isomorphism 
\[\vartheta \colon \prescript{\phi}{} \K_P^{q-p,\ft}(X) \to \prescript{\phi}{} \cK_P^{q-p,\ft} (X). \]
The map is defined in \cite{denittisKtheoreticClassificationDynamically2019}*{Lemma 5.16} and also in \cite{kubotaNotesTwistedEquivariant2016} implicitly (the paragraph above Lemma 5.12 and the proof of Theorem 5.14). 
In short, $\vartheta$ is defined in the following way; for any element of $\prescript{\phi}{}{\K}_P^{*,\ft}(X)$, we choose its representative $F \in \Gamma_c (X, \Fred_{p,q}(\cH))^P$ in the way that $F^2-1$ is supported into a finite rank $(\phi,\ft)$-twisted $P$-equivariant subbundle $E$ of $\cH$, and define as $\vartheta ([F]) = [E, \gamma_F , \gamma ]$, where
\[ \gamma_F:= -e^{\pi F \gamma } \gamma = -\cos(\pi F) \gamma - \sin (\pi F).\] 
Note that the conventions of \cite{denittisKtheoreticClassificationDynamically2019} and \cite{kubotaNotesTwistedEquivariant2016} are different by $-1$. 
Indeed, the definition of $\vartheta ([F])$ given in \cite{kubotaNotesTwistedEquivariant2016}  is $[E, -e^{-\pi F \gamma }\gamma , \gamma]$. 
\begin{rmk}
For simplicity of notation, we identify the Fredholm picture $\prescript{\phi}{}\K_P^{*,\ft}(X)$ with the Karoubi picture $\prescript{\phi}{}{\mathcal{K}}_P^{*,\ft}(X)$ by the isomorphism $\vartheta$ and just use the same notation $\prescript{\phi}{}\K_P^{*,\ft}(X)$ for the both groups.
\end{rmk}

\subsection{Operations in twisted equivariant K-theory}
Next we introduce several operations appearing in twisted equivariant K-theory. They are compared with the corresponding operations in Kasparov's KK-theory in Section \ref{section:5}.
\subsubsection{Pull-back}
Let $X$ and $Y$ be $P$-spaces and let $(\phi , c,\sigma ) $ be a twist of $Y $. Let $f \colon X \to Y$ be a $P$-equivariant continuous map. The pull-back of $(\phi,c,\tau )$ by $f$ is defined to be $f^*(\phi,c,\sigma):=(f^*\phi,f^*c, f^*\sigma)$, where $f^* \phi, f^*c \in H^1_P(X; \bZ_2)$ and $ f^*\sigma \in H^2_P(X; \prescript{f^*\phi }{}{\bT})$. Let $\ft:=(c,\sigma)$.

Let $\cH_X$ and $\cH_Y$ denote the universal $f^*(\phi,\ft)$-twisted $P$-equivariant Hilbert bundle on $X$ and the universal $(\phi,\ft)$-twisted $P$-equivariant Hilbert bundle $Y$ respectively. Consider the pull-back $f^* \cH_Y$, which is equipped with a canonical $f^*(\phi,\ft)$-twisted action of $P$. 
Moreover, the pull-back also induces a continuous map
\[ f^* \colon \Gamma (Y, \Fred (\cH))^P \to \Gamma (X, \Fred (f^*\cH))^P.\]
We define the pull-back
\[ f^* \colon \prescript{\phi}{} \K^{*,\ft}_P(Y) \to \prescript{f^*\phi}{}\K^{*,f^*\ft}_P(X) \]
as
\begin{align}
    f^*[F] = [V (f^*F \oplus G) V^*],\label{eq:pull}
\end{align} 
where $V \colon f^* \cH_Y \oplus \cH_X \to \cH_X $ is a $P$-equivariant even unitary respecting the $\Cl_{p,q}$-action (which exists by Remark \ref{rmk:univ} (3)) and $G \in \Gamma (X;\cU_{p,q}^1(\cH_X))^P$. This definition is independent of the choice of $V$ and $G$ by Lemma \ref{lem:unitary}.

\subsubsection{Open embedding}
Let $X$ be a compact $P$-space and let $Y$ be a $P$-invariant closed subspace. Let $(\phi,c,\sigma)$ be a twist on $P \curvearrowright X$ and let $\cH$ be the universal $(\phi,c,\sigma)$-twisted Hilbert bundle of $P \curvearrowright X$. Let $\ft:=(c,\sigma)$.

Let $F \in \Gamma_c(X\setminus Y, \Fred_{p,q} (\cH) ) ^P$. Then there is an open subspace $U \subset X$ including $Y$ such that $F|_{U \setminus Y}$ is a unitary-valued section. Since the space $\Gamma(U \setminus Y, \cU_{p,q}^1(\cH))^P$ is contractible, there is a continuous path $F_t \in \Gamma(U \setminus Y, \cU_{p,q}^1(\cH))^P$ such that $F_0=F|_{U \setminus Y}$ and $F_1$ extends to a section on $U$. Let $\rho$ be a $[0,1]$-valued continuous function on $X$ supported on $U$ and $\rho|_Y \equiv 1$. Now
\[F'(x):= \begin{cases}F(x) & x \in X \setminus U, \\ F_{\rho(x)}(x) & x \in U, \end{cases} \]
is a section in $\Gamma (X, \Fred_{p,q}(\cH))^P$, which is homotopic to $F$ in $\Gamma_c (X \setminus Y, \Fred_{p,q} (\cH))^P$. 

Let $\iota \colon X\setminus Y \to X$ denote the inclusion. Now $\iota_* [F]:= [F']$ determines the well-defined open embedding map
\[\iota_* \colon \prescript{\phi}{} \K_P^{p-q, \ft }(X \setminus Y) \to \prescript{\phi}{} \K_P^{p-q, \ft }(X). \]
Here we use the same letter $\ft$ for its restriction to $X \setminus Y$ for simplicity of notation.

\subsubsection{External and internal products}\label{section:3.2.3}
Let $P$ be a compact group, let $Z$ be a compact $P$-space and let $\phi \in H_P^1(Z;\bZ_2)$. 
Let $X_1$ and $X_2$ be $P$-spaces over $Z$, that is, $X_i$ are equipped with a continuous $P$-equivariant map $f_i \colon X_i \to Z$. Then the fiber product space $X_1 \times _Z X_2$ is also a $P$-space over $Z$ by $f(x_1,x_2):=f_1(x_1)$. For $i=1,2$, we consider twists of $X_i$ of the form $(f_i^* \phi, c_i, \sigma_i)$. Let $\ft_i:=(c_i,\sigma_i)$. 
Then there is an external product operation as
\[ {\cdot} \otimes _Z {\cdot} \colon \prescript{f_1^*\phi}{} \K_{G}^{n_1 , \ft_1}(X_1) \otimes \prescript{f_2^*\phi}{} \K_{G}^{n_2 , \ft_2}(X_2) \to \prescript{f^*\phi}{} \K_{G}^{n_1+n_2, \ft_1 + \ft_2}(X_1 \times_Z X_2). \] 
Here, for simplicity of notation we use the same letter for a twist $\ft_i$ on $X_i$ and its pull-back to $X_1 \times _Z X_2$. 

In the Fredholm picture of twisted equivariant K-theory as in Definition \ref{defn:Fred}, this homomorphism is defined in the following way. 
The collection of bundle maps  
\[ u_p^1 \hotimes_Z u_p^2 \colon \cH_1 \hotimes_Z  \cH_2 \to \cH_1 \hotimes_Z \cH_2\] 
determines the structure of a $\phi$-linear $(\ft_1 + \ft_2)$-twisted $P$-equivariant bundle since
\begin{align*}
&(u_p^1 \gamma_1^{c_1(p)} \otimes u_p^2)(u_{q}^1 \gamma_1 ^{c_2(q)} \otimes u_{q}^2)\\
=& (-1)^{c_1(p)c_2(q)}\sigma_1(p,q)\sigma_2(p,q) (u_{pq}^1 \gamma^{c(pq)}_1 \otimes u_{pq}^2). 
\end{align*}
When the Clifford algebra symmetry is taken into account, we canonically identify $\Cl_{p_1,q_1} \hotimes \Cl_{p_2,q_2} $ with $\Cl_{p,q}$ where $p:=p_1 + p_2$ and $q:=q_1 + q_2$. This identification gives rise to isomorphisms $\Delta_{p_1,q_1} \hotimes \Delta_{p_2,q_2}  \cong \Delta_{p,q}$ and
\[ \ (\cH_1 \hotimes \Delta_{p_1,q_1}) \hotimes (\cH_2) \hotimes \Delta_{p_2,q_2} \to \cH_1 \hotimes_Z \cH_2 \hotimes \Delta_{p,q}. \]

\begin{lem}\label{lem:prod}
Let $F_i \in \Gamma _c(X_i, \Fred_{p_i,q_i}(\cH_i))^P$ for $i=1,2$. Then the sections
\begin{enumerate}
    \item $F_1 \hotimes_Z (1-F_2^2)^{1/2} + 1 \hotimes_Z F_2$,
    \item $(F_1 \hotimes 1 + 1 \hotimes F_2)\cdot \chi(F_1^2 \hotimes 1 + 1 \hotimes F_2^2)$, where $\chi (x):=\max \{ 1 , |x|^{-1/2}\} $,
\end{enumerate}
both lie in $\Gamma_c(X_1 \times_Z X_2, \Fred_{p,q} (\cH_1 \hotimes \cH_2))^P$, and are mutually homotopic.
\end{lem}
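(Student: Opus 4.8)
The plan is to check that each of the two sections defines an element of $\Gamma_c(X_1 \times_Z X_2, \Fred_{p,q}(\cH_1 \hotimes \cH_2))^P$, and then to connect them by an explicit norm-continuous path. Write $A := F_1 \hotimes 1$ and $B := 1 \hotimes F_2$ for the two odd self-adjoint operators on the $\phi$-twisted $P$-equivariant $\Cl_{p,q}$-Hilbert bundle $\cH_1 \hotimes_Z \cH_2 \to X_1 \times_Z X_2$ (with $\Cl_{p,q} = \Cl_{p_1,q_1} \hotimes \Cl_{p_2,q_2}$); a short bookkeeping with graded tensor products shows $AB = -BA$, $[A,B^2]=[B,A^2]=0$, and that $A$, $B$ graded-commute with the $\Cl_{p,q}$-action and satisfy condition (2) of Definition~\ref{defn:Fred}, whence so does any graded polynomial in $A$, $B$ and any continuous function of $A^2$ or $B^2$. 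By the standard homotopy replacing $F_i$ by its bounded transform I would assume $F_i^2 \le 1$ and set $P_i := 1 - F_i^2 \ge 0$, a fibrewise compact section supported in a compact $C_i \subset X_i$; then $(1-B^2)^{1/2} = 1 \hotimes P_2^{1/2}$ is a genuine even continuous function of $B$.

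For section~(1), $F := A(1-B^2)^{1/2} + B$: oddness, self-adjointness, $\Cl_{p,q}$-commutation and equivariance are immediate, and using $AB = -BA$, $[A,B^2]=0$ the cross terms in $F^2$ cancel, giving $F^2 = A^2(1-B^2)+B^2$ and
\[ F^2 - 1 \;=\; (A^2-1)(1-B^2) \;=\; -\,P_1 \hotimes P_2 . \]
This is a graded tensor product of two fibrewise compact sections, hence fibrewise compact, and it is supported in the compact set $C_1 \times_Z C_2$; so $F \in \Gamma_c(X_1 \times_Z X_2, \Fred_{p,q}(\cH_1 \hotimes \cH_2))^P$.

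For section~(2) I would recognise it as the bounded transform $b(D)$ of $D := A+B$, where $b(t)=t\,\chi(t^2)$ satisfies $b(t)^2=\min\{t^2,1\}$; since the cross term $\{A,B\}=AB+BA$ vanishes, $D^2 = A^2+B^2 = F_1^2\hotimes 1 + 1\hotimes F_2^2$ (so section~(2) is indeed $D\,\chi(D^2)=b(D)$) $= 2 - P_1\hotimes 1 - 1\hotimes P_2$, and therefore
\[ b(D)^2 - 1 \;=\; -\,(1-D^2)_+ \;=\; -\,G\bigl(P_1\hotimes 1,\; 1\hotimes P_2\bigr), \qquad G(\alpha,\beta) := (\alpha+\beta-1)_+ . \]
Since $0 \le P_i \le 1$, $G$ vanishes whenever $\alpha=0$ or $\beta=0$; and the joint essential spectrum of the commuting pair $(P_1\hotimes 1,\,1\hotimes P_2)$ lies in the coordinate axes, because the spectral subspace $E_\alpha(P_1)\hotimes E_\beta(P_2)$ is finite dimensional unless $\alpha$ or $\beta$ is $0$. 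Hence $G(P_1\hotimes 1,\,1\hotimes P_2)$ is fibrewise compact, and it vanishes wherever $P_1=0$ or $P_2=0$, so it is supported in $C_1\times_Z C_2$; thus section~(2) is well defined. (Norm-continuity of these sections in the weak topology of $\Fred_{p,q}$ is clear since the $P_i$ are norm-continuous sections and $b$, $G$ are continuous.)

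For the homotopy I would interpolate through bounded transforms:
\[ D_s \;:=\; A\,(1-sB^2)^{1/2} + B , \qquad s \in [0,1], \]
so $D_0 = D$ and $D_1 = F$ is section~(1), which satisfies $\|D_1\| \le 1$ by the formula for $F^2$, hence $b(D_1) = F$. Then $s \mapsto b(D_s)$ is a norm-continuous path from section~(2) to section~(1). To see $b(D_s) \in \Gamma_c(X_1 \times_Z X_2, \Fred_{p,q}(\cH_1\hotimes\cH_2))^P$ for all $s$: the algebraic conditions pass as before, and cross-term cancellation gives $D_s^2 = (2-s) - (1-s)(P_1\hotimes 1 + 1\hotimes P_2) - s\,P_1\hotimes P_2$, so $b(D_s)^2 - 1 = -G_s(P_1\hotimes 1,\,1\hotimes P_2)$ with $G_s(\alpha,\beta) = \bigl((1-s)(\alpha+\beta-1)+s\alpha\beta\bigr)_+$; again $G_s$ vanishes on the coordinate axes (for $\alpha=0$ one has $(1-s)(\beta-1)\le 0$), so $b(D_s)^2-1$ is fibrewise compact, and it vanishes where $P_1=0$ or $P_2=0$, so it is supported in $C_1\times_Z C_2$. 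The step I expect to be the main obstacle is precisely this fibrewise compactness — for section~(2) and along the path — since it is where the naive estimate ``compact $\hotimes$ bounded is compact'' fails and one must use the structure of the joint essential spectrum; everything else is routine bookkeeping with graded tensor products.
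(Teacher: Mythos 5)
Your proof is correct and follows essentially the same route as the paper: the key step in both is that fibrewise compactness of $F^2-1$ reduces to the joint essential spectrum of the commuting pair $(F_1^2\hotimes 1,\,1\hotimes F_2^2)$ lying in $[0,1]\times\{1\}\cup\{1\}\times[0,1]$ (your coordinate-axes statement for $(P_1\hotimes 1, 1\hotimes P_2)$ is the same fact), and your explicit path $s\mapsto b(D_s)$ is a particular path inside the paper's family $(F_1\hotimes 1)f_1+(1\hotimes F_2)f_2$ with $x f_1^2+y f_2^2\equiv 1$ on that boundary set, whose connectedness the paper invokes instead. Note also that your reading of $\chi$ as $\max\{1,|x|\}^{-1/2}=\min\{1,|x|^{-1/2}\}$ is the one actually needed for the normalization condition, so that part of your bookkeeping is the intended interpretation of the statement.
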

\begin{proof}

Since $F_1^2 \hotimes 1$ commutes with $1 \hotimes F_2^2$, the functional calculus $f(F_1^2 \hotimes 1, 1 \hotimes F_2^2)$ is defined for any continuous function $f \colon [0,1]^2 \to \bC$.  
In general, if we have a pair of continuous functions $(f_1,f_2)$ such that $f_i \colon [0,1]^2 \to [0,1]$ such that $xf_1(x,y)^2 + yf_2(x,y)^2 \equiv 1$ on $[0,1] \times \{1\} \cup \{1 \} \times [0,1]$, the section
\[ (F_1 \hotimes 1) \cdot  f_1(F_1^2 \hotimes 1, 1 \hotimes F_2^2) + (1 \hotimes F_2) \cdot f_2(F_1^2 \hotimes 1, 1 \hotimes F_2^2) \]
lies in $\Gamma_c(X_1 \times_Z X_2, \Fred (\cH_1 \hotimes \cH_2))^P$.
Indeed,
\begin{align}
    &1-((F_1 \hotimes 1) \cdot  f_1(F_1^2 \hotimes 1, 1 \hotimes F_2^2) + (1 \hotimes F_2) \cdot f_2(F_1^2 \hotimes 1, 1 \hotimes F_2^2))^2 \notag \\
    =& 1- (F_1^2 \hotimes 1) f_1(F_1^2 \hotimes 1, 1 \hotimes F_2^2)^2 - (1 \hotimes F_2^2)f_2(F_1^2 \hotimes 1, 1 \hotimes F_2^2)^2 \notag \\
    =& (1-xf_1^2 -yf_2^2)(F_1^2 \hotimes 1, 1 \hotimes F_2^2), \label{eq:cpt}
    \end{align}
is compact at any fiber since the joint essential spectrum (cf.~\cite{davidsonAlgebrasExample1996}*{p.255}) of $(F_1^2 \hotimes 1, 1 \hotimes F_2^2)$ lies in $[0,1] \times \{1\} \cup \{1 \} \times [0,1]$. Moreover, the section (\ref{eq:cpt}) is compactly supported since it vanishes at $(x_1,x_2) \in X_1 \times _Z X_2$ if either $F_1(x_1)$ or $F_2(x_2)$ is a unitary.

Now, we obtain (1) if we choose $(f_1,f_2)$ as $f_1(x,y)=(1-y)^{1/2}$ and $f_2 \cong 1$. Also, if we choose $(f_1,f_2) $ as $f_1(x,y) = f_2(x,y)=\chi(x+y)$, then we obtain (2). This finishes the proof since the space of functions $(f_1,f_2)$ as above is connected. 
\end{proof}

Let $\cH$ denote the universal $(\phi,c,\sigma)$-twisted $P$-equivariant Hilbert bundle on $X_1 \times_Z X_2$.
\begin{defn}\label{defn:prod}
We define the external product
\[ \prescript{\phi}{} \K_P^{n_1,\ft_1}(X_1) \otimes \prescript{\phi}{} \K_P^{n_2,\ft_2}(X_2) \to \prescript{\phi}{} \K_P^{n_1+n_2,\ft_1+\ft_2}(X_1 \times_Z X_2) \]
by 
\begin{align*}
[F_1] \otimes_Z [F_2] &:= [V((F_1 \hotimes _Z (1-F_2^2)^{1/2} + 1 \hotimes F_2) \oplus G)V^*]\\
&=[V((F_1 \hotimes 1 + 1 \hotimes F_2)\cdot \chi(F_1^2 \hotimes 1 + 1 \hotimes F_2^2) \oplus G)V^*],
\end{align*}
where $V \colon (\cH_{1,p_1,q_1} \hotimes _Z \cH_{2,p_2,q_2}) \oplus \cH_{p,q} \to \cH_{p,q}$ is an even $P$-equivariant unitary commuting with $\Cl_{p,q}$-action and $G \in \Gamma(X_1 \times_Z X_2, \cU_{p,q}^1(\cH))^P$. 
Particularly, in the case of $X_1=Z=X_2=X$, the homomorphism
\[ \prescript{\phi}{} \K_P^{n_1,\ft_1}(X) \otimes \prescript{\phi}{} \K_P^{n_2,\ft_2}(X) \to \prescript{\phi}{} \K_P^{n_1+n_2,\ft_1+\ft_2}(X) \]
is called the internal (or cup) product.
\end{defn}
This is well-defined independent of the choice of $V$. 
Moreover, this map is obviously additive in the first and second components. 

\begin{lem}\label{lem:ring}
The map given in Definition \ref{defn:prod} satisfies the following properties:
\begin{enumerate}
\item (associative)  $([F_1] \hotimes [F_2]) \hotimes [F_3] = [F_1] \hotimes ([F_2] \hotimes [F_3])$.
\item (graded commutative) $[F_1] \hotimes [F_2] = (-1)^nf^*[F_2] \hotimes [F_1]$, where $f \colon X_1 \times_Z X_2 \to X_2 \times_Z X_1$ is the flip and $n:=(q-p)(q'-p')$.
\end{enumerate}
\end{lem}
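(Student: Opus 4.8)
The plan is to verify both properties of the external product directly in the Fredholm picture, reducing everything to statements about functional calculus and homotopies of sections of $\Fred_{p,q}$-bundles, exactly as in the proof of Lemma \ref{lem:prod}. The key observation is that the formula
\[(F_1 \hotimes 1 + 1 \hotimes F_2)\cdot \chi(F_1^2 \hotimes 1 + 1 \hotimes F_2^2)\]
for the product is symmetric in $F_1$ and $F_2$ up to the canonical graded-flip isomorphism, so once one checks that the two candidate formulas in Lemma \ref{lem:prod} agree, the commutativity and associativity reduce to essentially bookkeeping about Koszul signs and the canonical identifications $\Cl_{p_1,q_1}\hotimes\Cl_{p_2,q_2}\cong\Cl_{p,q}$.

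First, for \emph{graded commutativity}: I would use the second (symmetric-looking) formula of Definition \ref{defn:prod} for the product. Applying the flip $f\colon X_1\times_Z X_2\to X_2\times_Z X_1$ and the graded-tensor-flip $\cH_1\hotimes\cH_2\to\cH_2\hotimes\cH_1$, the operator $F_1\hotimes 1 + 1\hotimes F_2$ is carried to $1\hotimes F_1 + F_2\hotimes 1$, which graded-tensor-flips back to $F_1\hotimes 1 + 1\hotimes F_2$ up to the sign $(-1)^{\deg F_1\deg F_2}$; here $F_i$ is odd, contributing a sign, and the Clifford generators of $\Delta_{p_1,q_1}$ and $\Delta_{p_2,q_2}$ get reordered inside $\Delta_{p,q}$, contributing the sign $(-1)^{(q_1-p_1)(q_2-p_2)}=(-1)^n$ after using that only the parity of the dimension matters. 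Since the function $\chi(x+y)$ is symmetric, $\chi(F_1^2\hotimes1+1\hotimes F_2^2)$ is manifestly flip-invariant, so the whole product picks up exactly the sign $(-1)^n$, which gives the claimed identity after absorbing the stabilizing summand $G$ and the unitary $V$ (harmless by Lemma \ref{lem:unitary}).

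Second, for \emph{associativity}: I would compute $([F_1]\hotimes[F_2])\hotimes[F_3]$ using the first formula of Lemma \ref{lem:prod} iterated, obtaining an operator of the form
\[F_1\hotimes(1-F_2^2)^{1/2}\hotimes(1-F_3^2)^{1/2} + 1\hotimes F_2\hotimes(1-F_3^2)^{1/2} + 1\hotimes 1\hotimes F_3\]
on $\cH_1\hotimes_Z\cH_2\hotimes_Z\cH_3$, after checking that the first Lemma \ref{lem:prod} formula, applied to $(F_1\hotimes(1-F_2^2)^{1/2}+1\hotimes F_2)$ and $F_3$, produces precisely this; the other association gives the same operator after functional calculus with $(f_1,f_2,f_3)$ whose squares, weighted by the eigenvalues, sum to $1$ on the joint essential spectrum. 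The point is that the space of such triples of functions is connected, so the two operators are homotopic through $\Fred_{p,q}$-sections; this is the three-variable analogue of the last sentence of the proof of Lemma \ref{lem:prod}. The canonical associativity isomorphism $(\Cl_{p_1,q_1}\hotimes\Cl_{p_2,q_2})\hotimes\Cl_{p_3,q_3}\cong\Cl_{p_1,q_1}\hotimes(\Cl_{p_2,q_2}\hotimes\Cl_{p_3,q_3})$ matches the Clifford-module identifications on the nose, so no extra sign intervenes.

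The main obstacle I expect is \emph{sign bookkeeping}, not any analytic difficulty: one must be careful that the Koszul signs coming from graded tensor products of the $u_p$'s (as recorded in the displayed computation just before Definition \ref{defn:prod}), the signs from reordering Clifford generators inside the identification $\Delta_{p_1,q_1}\hotimes\Delta_{p_2,q_2}\cong\Delta_{p,q}$, and the signs from the graded flip of the Hilbert bundles all combine to give exactly $(-1)^n$ with $n=(q-p)(q'-p')$ and nothing more. A clean way to organize this is to fix once and for all the convention that the flip $\Delta_{p_1,q_1}\hotimes\Delta_{p_2,q_2}\to\Delta_{p_2,q_2}\hotimes\Delta_{p_1,q_1}$ is the graded flip (Koszul sign on homogeneous elements) and that the identification with $\Delta_{p,q}$ lists the $e$'s and $f$'s of the first factor before those of the second; then the reordering sign is $(-1)^{(p_1+q_1)(p_2+q_2)}$, which has the same parity as $(-1)^{(q_1-p_1)(q_2-p_2)}$, and combining with the odd-odd sign from $F_1,F_2$ gives the stated $(-1)^n$. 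Once the conventions are pinned down, both parts are routine.
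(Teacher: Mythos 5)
Your overall strategy is the same as the paper's: verify both identities directly on Fredholm sections, handling associativity by a functional-calculus homotopy and commutativity by tracking the reordering of Clifford generators, whose parity $(p_1+q_1)(p_2+q_2)\equiv(q-p)(q'-p')=n \pmod 2$ is exactly the paper's observation. Your associativity argument is fine (in fact, iterating the first formula of Lemma \ref{lem:prod} gives \emph{literally} the same operator for both bracketings, since $1-(F_2\hotimes(1-F_3^2)^{1/2}+1\hotimes F_3)^2=(1-F_2^2)\hotimes(1-F_3^2)$ by the anticommutativity of the two odd summands; the paper instead rewrites the product as $\psi(F_1+F_2)$ and exhibits an explicit homotopy).

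There is, however, a concrete slip in your sign bookkeeping for part (2). You claim the graded flip carries $F_1\hotimes 1+1\hotimes F_2$ back to itself ``up to the sign $(-1)^{\deg F_1\deg F_2}$'' and then propose to \emph{combine} this odd--odd sign with the Clifford reordering sign $(-1)^{(p_1+q_1)(p_2+q_2)}$. That double-counts: under the graded flip, $F_1\hotimes 1\mapsto 1\hotimes F_1$ and $1\hotimes F_2\mapsto F_2\hotimes 1$ with \emph{no} Koszul sign, because in each term one tensor factor is the even element $1$. So the flip of the Hilbert-bundle factors produces exactly the representative of $f^*[F_2]\otimes[F_1]$ with no sign at all, and the entire $(-1)^n$ must come from the Clifford side. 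Taken literally, your combination yields $(-1)^{n+1}$. The paper isolates the Clifford contribution cleanly: the basis exchange $\theta$ is implemented by conjugation by $\fc(x)$ for $x=v_1\cdots v_k$ a product of $k$ reflections with $k\equiv n\pmod 2$, and since $F$ graded-commutes with the Clifford action, conjugating by the (odd, when $n$ is odd) element $\fc(x)$ sends $F\mapsto(-1)^kF=(-1)^nF$. If you repair your accounting to attribute the whole sign to this reordering and none to the flip of $F_1\hotimes1+1\hotimes F_2$, your argument goes through.
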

By this lemma, the internal product induces a graded commutative ring structure on $\bigoplus _{n,\ft} \prescript{\phi}{} \K_P^{n,\ft}(X)$. 

\begin{proof}
For simplicity of notation, in the proof of this lemma, we use the same letter $F_1$ for the tensor product $F_1 \hotimes 1$ ($F_2$, $F_3$ are also used in a similar fashion). 
To see (1), note that $(F_1 + F_2)\chi (F_1^2+F_2^2) = \psi (F_1 + F_2)$, where $\psi(t):=t \max \{ 1, |t|^{-1} \}$. Then the homotopy
\[\tilde{F}_t:=\psi(t(F_1 + F_2) + (1-t)\psi(F_1 + F_2 ) + F_3)  \]
connects $\psi(\psi(F_1 + F_2) + F_3)$ with $\psi(F_1+F_2+F_3)$ in $\Gamma _c (X,\Fred_{p,q}(\cH))^P$. 
The same argument also connects $\psi(F_1 + \psi(F_2 + F_3))$ with $\psi (F_1 + F_2 + F_3)$.

The claim (2) follows from the commutativity of the diagram
\[ \xymatrix{
\Cl_{p,q} \hotimes \Cl_{p',q'} \ar[r] \ar[d]^{\mathrm{flip}} & \Cl_{p+p',q+q'} \ar[d]^{\Ad (x)} \ar[r] & \bB(\Delta_{p+p',q+q'}) \ar[d]^{\Ad (\fc (x))}   \\
\Cl_{p',q'} \hotimes \Cl_{p,q} \ar[r] & \Cl_{p'+p,q'+q} \ar[r] & \bB(\Delta_{p+p',q+q'}),
} 
\]
where $x=v_1 \cdots v_k \in \Cl_{p,q}$ is a product of vectors in $V$ such that the composition $R_{v_1} \cdots R_{v_k}$, where $R_v$ denotes the reflection along $v$, is the linear automorphism $\theta \colon \bR^{p + p',q+q'}\to \bR^{p' + p,q'+q} $ exchanging the basis. Note that $k$ is odd if and only if $\theta $ changes the orientation, i.e., $ (p+q)(p'+q') \equiv n$ is odd. Now
\[ f^* \hotimes \fc(x) \colon (\cH_1 \hotimes _Z \cH_2) \hotimes \Delta_{p+p',q+q'} \to (\cH_2 \hotimes_Z \cH_1) \hotimes \Delta_{p'+p,q'+q}  \]
implements the map between the spaces of Fredholm sections, which maps $F$ to $(-1)^nf^*F$.
\end{proof}

\begin{rmk}\label{rmk:prod}
In this paper we only consider the case that $Z=\pt$, in other words, $\phi$ is the pull-back of an element of $ H_P^1(\pt;\bZ_2)$ (denoted by the same letter $\phi$). 
In this case the external product
\[ \prescript{\phi}{} \K_P^{n_1,\ft_1}(X_1) \otimes \prescript{\phi}{} \K_P^{n_2,\ft_2}(X_2) \to \prescript{\phi}{} \K_P^{n_1+n_2,\ft_1+\ft_2}(X_1 \times X_2) \]
is defined. Moreover, if $\phi$ is chosen in this way, the product given in Definition \ref{defn:prod} satisfies
\[ [F_1] \otimes _Z [F_2] = \Delta_Z ^* ([F_1] \hotimes [F_2]),\]
where $\Delta_Z \colon X_1 \times_Z X_2 \to X_1 \times X_2$ is the inclusion.
\end{rmk}

\subsubsection{Thom isomorphism}
The equivariant Bott periodicity and the Thom isomorphism is discussed in \cite{gomiFreedMooreKtheory2017}*{Subsection 3.6}. Here we describe the Thom isomorphism map by means of the product introduced in Subsection \ref{section:3.2.3} for our convenience. 

Let $q \colon E \to X$ be a $P$-equivariant real vector bundle on a $P$-space $X$ of rank $r$. For simplicity, here we only consider the case that an orientation and a spin structure on $E$, which is not necessarily $P$-equivariant, is fixed. Let $\mathrm{SO}(E)$ and $\mathrm{Spin}(E)$ denote the corresponding principal bundles.

Set $E':=E \oplus \underline{\bR}^{8n-r}$, where $\underline{\bR}^{8n-r}$ denotes the trivial bundle of rank $8n-r$ with the trivial $P$-action. We associate to $E$ and $E'$ the Clifford algebra bundles $\Cl(E)$ and $\Cl(E')$ respectively. That is, $\Cl(E):= \mathrm{SO}(E) \times _{\mathrm{SO}(r)} \Cl_r$ and $\Cl(E'):=\mathrm{SO}(E') \times_{\mathrm{SO}(8n)} \Cl_{8n}$. Note that $\Cl(E')$ is isomorphic to the graded tensor product $\Cl(E) \hotimes \Cl_{8n-r}$. 
Let $S$ denote the spinor bundle of $E'$, i.e., the vector bundle on $X$ whose fiber at $x$ is the unique irreducible representation of $\Cl(E')$, i.e., $S:=\mathrm{Spin}(E) \times _{\mathrm{Spin}(8n)} \Delta$.

The $\bR$-linear actions of $P$ onto $E$ and $E'$ give rise to algebra actions onto $\Cl(E')$, and hence a $(\nu,\mu) $-twisted action onto $S$ in the following way. 
Let $\nu \in H^1_P(X;\bZ_2)$ be the $1$-cocycle corresponding to the orientation bundle $\mathrm{Or}(E)$ on which $P$ acts canonically. 
Also, consider the following data;
\begin{itemize}
    \item the collection of $\bZ_2$-bundles $\{ L_p \}$ on $X$ defined as
    \[L_p:=\bigsqcup_{x \in X} \{ T \in \Hom (S_x,S_{px}) \mid \Ad (T)=\alpha_p \}, \]
    \item the unitary $\mu (p,q) \colon L_p \otimes L_q \to L_{pq}$ given by the composition, and
    \item the unitary $u_p \colon S \otimes L_p \to \alpha_p^*S$ given by $(\xi,T) \mapsto T\xi$.
\end{itemize}
Then $(L,\mu)$ corresponds to a $2$-cocycle $\mu \in H^2_P(X; \bZ_2)$. Let us use the same letter $\mu$ for its image in $H^2_P(X;\prescript{\phi}{} \bT)$ through the homomorphism induced from the inclusion of local systems $\bZ_2 \subset \prescript{\phi}{} \bT$. Then $u$ determines a $(\phi,\nu,\mu)$-twisted $P$-action on $S$.
We write as $\fv:=(\nu,\mu)$. Note that $\nu$, $\mu$ are the same thing as the $P$-equivariant Stiefel-Whitney classes $w_1^P(E)$ and $w_2^P(E)$ respectively.

Let $\xi \colon E \to q^*E$ denote the section defined by $\xi(e):=e$ for any $e \in E_x$. Let $C$ denote the bounded section
\begin{align}
    C :=\fc(\xi)(1+\|\xi \|^2)^{-1/2} \in \Gamma  (E, q^*\Cl(E')), \label{eq:C}
\end{align} 
which acts on the bundle $q^*S$ on $E$ by the fiberwise multiplication. Moreover $[C, \fc(v)]=0$ for any $v \in \Cl_{8n-r,0}$. 
Let $\cH$ be a universal $(\phi,-\fv)$-twisted $P$-equivariant bundle on $E$. We define the Bott element 
\begin{align}
 \beta_E:= [V(C \oplus G)V^* ] \in \prescript{\phi}{} \K_{P}^{r, \fv } (E), \label{eq:Bott}
\end{align}
where $V \colon \pi^*S \oplus \cH_{8n-r,0} \to\cH_{8n-r,0}$ is a $P$-equivariant even unitary respecting the $\Cl_{8n-r,0}$-action and $G \in \Gamma (E, \cU_{8n-r,0}^1(\cH))^P$. (Here we omit $8n$ in the degree of the K-group).
\begin{defn}\label{defn:Thom}
The Thom isomorphism map is defined by the cup product over $X$ as
\[
\Thom _E:=\beta_E \otimes_X \blank \colon \prescript{\phi}{} \K_P^{*,\ft}(X) \to \prescript{\phi}{} \K_P^{*+r,\ft + \fv}(E). 
\]
\end{defn}
By Remark \ref{rmk:prod} the Thom isomorphism is written as
\begin{align}
    \Thom_E(x)= \Delta_E ^*( x \otimes \beta_E), \label{eq:Thom}
\end{align}
where $\Delta_E:=(\id_E, q) \colon E \to E \times X$.

\begin{prp}
For any real $P$-equivariant vector bundle $E$ on $X$, the homomorphism $\Thom_E$ is an isomorphism. 
\end{prp}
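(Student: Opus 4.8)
The plan is to reduce the statement to the case of a trivial bundle attached to a linear representation, where it is the twisted equivariant Bott periodicity isomorphism, and then to propagate it along Whitney sums. Concretely: for a trivial bundle $\underline{V} = X\times V$ associated with a finite-dimensional real orthogonal $P$-representation $V$, the underlying (non-equivariant) orientation and spin structure are the standard ones, the twist $\fv = (\nu,\mu)$ is exactly the pair of equivariant Stiefel--Whitney classes $(w_1^P(\underline V), w_2^P(\underline V))$ of the representation, and $\Thom_{\underline V}\colon \prescript{\phi}{}\K_P^{*,\ft}(X) \to \prescript{\phi}{}\K_P^{*+\dim V,\ft+\fv}(X\times V)$ is identified with the equivariant Bott periodicity map of \cite{gomiFreedMooreKtheory2017}*{Subsection 3.6}, which is an isomorphism over any locally compact $P$-space. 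Since, for $P$ compact, every $P$-equivariant vector bundle on $X$ is a direct summand of some $X\times V$ (averaging a classifying map; in particular this holds for the compact $P$-spaces used in this paper), this base case together with a multiplicativity property will suffice.

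The key intermediate step is the multiplicativity of the Bott classes under Whitney sums. For oriented spin $P$-equivariant bundles $E_1, E_2 \to X$, equip $E_1\oplus E_2$ with the induced orientation and spin structure and choose the auxiliary stabilizations compatibly (so that $\Cl(E_1')\hotimes\Cl(E_2') \cong \Cl((E_1\oplus E_2)')$ with $8n = 8n_1+8n_2$). Then, using $E_1\oplus E_2 \cong E_1\times_X E_2$ and the naturality $\beta_{f^*E} = f^*\beta_E$ of the Bott class under pullback of bundles, one should establish
\[ \beta_{E_1\oplus E_2} \;=\; q_1^*\beta_{E_1}\otimes_X q_2^*\beta_{E_2}, \]
an identity in $\prescript{\phi}{}\K_P^{r_1+r_2,\,\fv_{E_1}+\fv_{E_2}}(E_1\times_X E_2)$ --- here the degrees add and the twists add in the sense of \eqref{eq:sum}, since $\nu_{E_1\oplus E_2} = \nu_{E_1}+\nu_{E_2}$ while $\mu_{E_1\oplus E_2} = \mu_{E_1}+\mu_{E_2}+\epsilon(\nu_{E_1},\nu_{E_2})$, the correction term $\epsilon(\nu_{E_1},\nu_{E_2})$ being the equivariant Whitney-sum term of $w_2^P$. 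Granting this, associativity of the cup product (Lemma \ref{lem:ring}) and Remark \ref{rmk:prod} give the chain rule
\[ \Thom_{E_1\oplus E_2} \;=\; \widetilde{\Thom}_{q_1^*E_2}\circ\Thom_{E_1}, \]
where $\widetilde{\Thom}_{q_1^*E_2}$ is the Thom map over the base $E_1$ for the pulled-back bundle $q_1^*E_2$ (whose total space is $E_1\times_X E_2 = E_1\oplus E_2$).

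From the chain rule the result follows by the usual Karoubi--Atiyah--Bott--Shapiro bootstrap. Given $E\to X$, pick a $P$-equivariant complement $E'$ with $E\oplus E' \cong X\times V$; the base case makes $\Thom_{E\oplus E'}$ an isomorphism, and the chain rule factors it as $\widetilde{\Thom}_{q_E^*E'}\circ\Thom_E$, whence $\Thom_E$ is split injective (for every $E$) and $\widetilde{\Thom}_{q_E^*E'}$ is split surjective. Applying the chain rule a second time over the base $E$, to the Whitney sum $q_E^*E'\oplus q_E^*E = q_E^*(X\times V) = E\times V$ (a trivial bundle over $E$, so its Thom map is an isomorphism by the base case), shows that $\widetilde{\Thom}_{q_E^*E'}$ is split injective as well, hence an isomorphism; therefore $\Thom_E = \widetilde{\Thom}_{q_E^*E'}^{-1}\circ\Thom_{E\oplus E'}$ is an isomorphism.

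The main obstacle is the Whitney-sum identity $\beta_{E_1\oplus E_2} = q_1^*\beta_{E_1}\otimes_X q_2^*\beta_{E_2}$. Beyond the Clifford-algebra bookkeeping of the $8n$-stabilization, two things must be checked: that the Clifford symbol $C$ of \eqref{eq:C} for $E_1\oplus E_2$ is homotopic, inside $\Gamma_c(\,\cdot\,,\Fred(\,\cdot\,))^P$, to the external product of the symbols for $E_1$ and $E_2$ --- which is precisely what Lemma \ref{lem:prod} provides --- and, more delicately, that the $(\phi,-\fv)$-twisted spinor bundle and its twisted $P$-action for $E_1\oplus E_2$ decompose as the graded tensor product of those for $E_1$ and $E_2$, compatibly with the twisted addition \eqref{eq:sum}. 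A shorter alternative, if one is willing to take the Thom isomorphism of \cite{gomiFreedMooreKtheory2017}*{Subsection 3.6} as a black box, is to verify directly that the product formula of Definition \ref{defn:Thom} computes the same map; by naturality and the same multiplicativity this reduces once more to the trivial-bundle case.
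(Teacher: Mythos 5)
Your argument is correct in outline, but it globalizes the trivial-bundle case by a genuinely different mechanism than the paper. Both proofs share the same base case: for $E = X\times V$ with $V$ a representation, $\Thom_{X\times V}$ is the external product over a point with $\beta_V$, which is invertible because $\beta_V$ is a $\KK^P$-equivalence (Lemma \ref{lem:BottKK}); your alternative of quoting the Bott periodicity of \cite{gomiFreedMooreKtheory2017}*{Subsection 3.6} is equally admissible. Where you diverge is the passage to general $E$: the paper invokes a Mayer--Vietoris argument over a trivializing cover, whereas you use Whitney-sum multiplicativity of the Bott classes together with the standard complement bootstrap ($E\oplus E'\cong X\times V$, two applications of the chain rule giving split injectivity and then invertibility). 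Your route has the advantage of avoiding excision/Mayer--Vietoris sequences in Freed--Moore K-theory altogether, at the price of (i) needing an equivariant complement, which restricts you to compact (or finite-type) base spaces --- a restriction you correctly flag, and one that is morally shared by the Mayer--Vietoris route, which needs a finite nice cover --- and (ii) the identity $\beta_{E_1\oplus E_2}=q_1^*\beta_{E_1}\otimes_X q_2^*\beta_{E_2}$, which you correctly isolate as the technical heart but do not fully prove. For (ii), note that Lemma \ref{lem:prod} only shows the two standard forms of the product are homotopic; you still need the (easy) additional homotopy from the normalized symbol $\fc(\xi_1\oplus\xi_2)(1+\|\xi_1\|^2+\|\xi_2\|^2)^{-1/2}$ to the product of the two normalized symbols, and, as you say, the graded tensor decomposition of the twisted spinor bundles compatibly with the cocycle sum \eqref{eq:sum}, where $\epsilon(\nu_{E_1},\nu_{E_2})$ matches the cross term in the equivariant Whitney formula for $w_2^P$. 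Also, strictly speaking the chain rule composes products over the two different bases $X$ and $E_1$, which is a mild extension of the associativity in Lemma \ref{lem:ring} (stated for a fixed base $Z$); it follows from Remark \ref{rmk:prod} and naturality of pullbacks, but deserves a sentence. With these points filled in, your proof is a valid and self-contained alternative to the one in the paper.
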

\begin{proof}
If $E$ is the trivial bundle $X \times V$, where $V$ is a real representation of $P$, then $\Thom_{X \times V}$ is the external product with $\beta_V \in \prescript{\phi}{} \K_P^{r,\fv}(V)$ over the point. 
It will be proved in Lemma \ref{lem:BottKK} that this is an isomorphism by using Kasparov theory. 
Now a Mayer--Vietoris argument concludes the proof. 
\end{proof}

\subsubsection{Push-forward}
Let $M$ and $N$ be $P$-manifolds and let $f \colon M \to N$ be a $P$-equivariant smooth map. 
We choose a spin representation $V$ of $P$ and a $P$-equivariant embedding $j \colon M \to V$. Let $E$ denote the normal bundle of the embedding $(j,f) \colon M \to V \times N $. 

Set
\begin{align*}
     \fv&=(\mu,\nu):=(w_1^P(E) , w_2^P(E))\\
     &=f^*(w_1^P(N), w_2^P(N))-(w_1^P(M) , w_2^P(M)).
\end{align*}
Note that $f$ is said to be spin-oriented if $\fv=0$.

\begin{defn}\label{defn:push}
We define the push-forward by $\pi$ as
\[\pi_!:= \Thom_{N \times V}^{-1} \circ \iota_* \circ \Thom _E \colon \prescript{\phi}{} \K_P^{*,f^*\ft - \fv}(M) \to \prescript{\phi}{} \K_P^{-*,\ft }(N). \]
\end{defn}
It is seen in the same way as \cite{atiyahIndexEllipticOperators1968a}*{Section 3} that the push-forward is independent of the choice of $j$.

\subsection{Classification of gapped topological phases}\label{section:3.3}
In condensed matter physics, a (non-interacting) quantum system on a lattice with space group symmetry is studied through the Pontrjagin dual of the lattice (Brillouin torus) with an action of the point group. We generalize in this section the action of the point group on the Pontrjagin dual, to that of the twisted point group.

Let $(G,\phi,c,\tau)$ be a twisted crystallographic group in the sense of Definition \ref{defn:twcry}.
Firstly we define the action of the twisted crystallographic group $G$ on the Pontrjagin dual $\hat{\Pi}:= \Hom (\Pi , \bT)$ of $\Pi$ 
as
\begin{align}
     \rho_g (\chi) (t) := \left\{ \begin{array}{ll} \overline{\chi(g^{-1}tg)} & \text{ if $\phi(g)=1$,} \\  \chi(g^{-1}tg) & \text{ if $\phi(g)=0$,} \end{array} \right. \label{eq:Piact}
\end{align} 
for $\chi \in \hat{\Pi}$ and $g \in G$. 
Since $\rho_t$ is trivial for any $t \in \Pi$, this $\rho$ is reduced to the action of the twisted point group $P$. 

We also define the $2$-cocycle $\sigma \in H^2_P(\hat{\Pi},\prescript{\phi}{} \bT )$ in the following way: Let $L_p$ be the trivial bundle $\underline{\bC}$ over $\hat{\Pi}$.
Let us fix a set-theoretic section $s \colon P  \to G$ and define the continuous map $\sigma \colon P \times P \to \End(\underline{\bC})$ as
\begin{align}
    \sigma(\chi , p,q) :=\chi(s(p)s(q)s(pq)^{-1}) \label{eq:sigma}
\end{align}
for any $p,q \in P$. 
It is straightforward to check that this $\sigma$ is a $2$-cocycle of the action $\rho \colon P \curvearrowright \hat{\Pi}$ with coefficients in $\prescript{\phi}{} \bT $.

This $(\rho,\sigma)$ is related to unitary representations of $G$ through the Fourier transform in the following way. 
Let $(G,\phi,c,\tau)$ be a twisted crystallographic group acting on the Euclidean space $V$ and the internal Hilbert space $\sK$. So there is a $\phi$-twisted projective unitary representation of $G$ on $L^2(V)\otimes\sK$. The full dynamics (Hamiltonian operator which graded-commutes with $G$) is usually reducible at low energy to a sub-representation space, spanned by an orthonormal set of localized orbitals centered on lattice points thought of as atomic positions. Such a basis is called a (localized) Wannier basis, see also the discussion of atomic insulators in Section \ref{sec:atomic.insulator}. This is formalized as follows.

Let $\bX \subset V$ be a $G$-invariant discrete subspace and let us fix a choice $\bX_0$ of a fundamental domain of the $\Pi$-action onto $\bX$, i.e., the image of a section of the quotient map $\bX \to \bX/\Pi$. Let $\sH$ denote the Hilbert space $\ell^2(\bX, \sK)$ with the $(\phi,c,\tau)$-twisted unitary representation $U:=\lambda \otimes v$ of $G$. Here $\lambda $ is the regular representation on $\ell^2(\bX)$, i.e., $(\lambda_g\xi)(x) :=\xi (g^{-1}x)$, and $v \colon G \to \qAut(\sK)$ is a lift of $k \colon G \to \qAut(\bP\sK)$. 
By using this $U$ we identify $\sH$ with $\ell^2(\Pi,\tilde{\sK})$, where $\tilde{\sK}
:=\ell^2(\bX_0,\sK)$. 

The canonical basis of $\ell^2(\Pi)$, along with a basis for $\tilde{\sK}$, should be thought of as mutually orthogonal wave functions in $L^2(V)\otimes\sK$ localized at the points of $\bX$, and the effective Hilbert space $\sH$ as a subrepresentation in $L^2(V)\otimes\sK$. When one works at this effective level, the (discrete) Fourier transform gives a unitary isomorphism
\[ \cF \colon \ell^2(\Pi,\tilde{\sK}) \to L^2(\hat{\Pi}, \tilde{\sK}), \]
where the $L^2$-space on $\hat{\Pi}$ is defined by the Haar measure normalized as $\mathrm{vol}(\hat{\Pi})=1$. 
\begin{lem}\label{lem:bdlK}
For each $g \in G$, there is a bundle map $u_g \colon \hat{\Pi} \times \tilde{\sK} \to \hat{\Pi} \times \tilde{\sK}$ covering $\rho_g$ such that $\cF U_g \cF^* = u_g$ as a $\phi$-linear unitary on $L^2(\hat{\Pi}, \tilde{\sK})$.
\end{lem}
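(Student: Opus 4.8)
The plan is to unwind the definitions of the Fourier transform and the twisted representation $U = \lambda \otimes v$ and to verify directly that the conjugated operator $\cF U_g \cF^*$ acts fiberwise over $\hat{\Pi}$, covering the base map $\rho_g$ of \eqref{eq:Piact}. First I would recall that, under the identification $\sH \cong \ell^2(\Pi, \tilde{\sK})$, the restriction of $U$ to $\Pi$ is just the regular representation of $\Pi$ tensored with the identity on $\tilde{\sK}$; its Fourier transform is therefore multiplication by the character function $\chi \mapsto \chi(t)$ on $L^2(\hat{\Pi}, \tilde{\sK})$, which is manifestly fiberwise and covers the identity on $\hat{\Pi}$. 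This is the classical statement that the group C*-algebra $C^*(\Pi)$ is $C(\hat{\Pi})$ via Fourier transform.

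Next I would treat a general $g \in G$ by writing $g = s(p)\cdot t$ for the chosen section $s\colon P \to G$ and $t \in \Pi$, so it suffices to handle the section elements $s(p)$. For such an element, the key computation is how $U_{s(p)}$ interacts with the $\Pi$-action: for $t \in \Pi$ one has $s(p)\, t = (s(p) t s(p)^{-1})\, s(p)$, and $s(p) t s(p)^{-1} \in \Pi$ since $\Pi$ is normal. Hence conjugation by $U_{s(p)}$ sends the generator $\rho$ of $\Pi$ at $t$ to the generator at $s(p) t s(p)^{-1}$ (up to the scalar/phase recorded by $\tau$ and $\sigma$, and up to complex conjugation when $\phi(s(p)) = 1$ because $v_{s(p)}$ may be antiunitary). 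Translating this covariance relation through $\cF$: multiplication by $\chi(t)$ gets intertwined with multiplication by $\chi(s(p) t s(p)^{-1})$, i.e. with $(\rho_{s(p)}^{-1})^*$ applied to the coordinate function, possibly composed with complex conjugation. This is precisely the statement that $\cF U_{s(p)} \cF^*$ is a bundle endomorphism of $\hat{\Pi} \times \tilde{\sK}$ covering $\rho_{s(p)}$, $\phi$-linear in the sense that it is antilinear exactly when $\phi(s(p)) = 1$. I would then assemble $u_g := u_{s(p)} \circ (\text{mult. by } \chi(t))$ and note the fiberwise formula is continuous in $\chi$, hence defines a genuine bundle map.

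The main obstacle I anticipate is bookkeeping rather than conceptual: one must carefully track (i) the complex conjugation dictated by $\phi$ — in particular checking that an antiunitary $v_{s(p)}$ combined with the antiunitary effect on the Fourier variable produces exactly the rule \eqref{eq:Piact}, with the two conjugations not cancelling but rather the base-map conjugation being the one that survives on the coordinate function while the fiber action $u_g$ retains $\phi$-linearity; and (ii) the cocycle phases, verifying that the scalars appearing are exactly the $\sigma(\chi, p, q)$ of \eqref{eq:sigma}, which is a consequence of $s(p) s(q) = s(pq) \cdot (s(p)s(q)s(pq)^{-1})$ with the correction term lying in $\Pi$ and thus acting by the character $\chi$ after Fourier transform. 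Once these sign and phase conventions are pinned down, continuity of $u_g$ in $\chi$ and the fact that it covers $\rho_g$ are immediate from the explicit formulas, and the lemma follows; I would also remark that the cocycle relation for the $u_g$ (witnessing that this is the $(\phi, c, \tau+\sigma)$-twisted $P$-structure claimed in the subsequent discussion) is the natural next step but is not part of the present statement.
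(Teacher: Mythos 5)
Your opening step (for $t \in \Pi$, $\cF U_t \cF^*$ is multiplication by $\hat{t}(\chi)=\chi(t)$) and your reduction to section elements via $g = s(p)t$ match the paper's starting point. But the central step of your argument has a gap. You derive the covariance relation $\cF U_{s(p)} \cF^* \, M_{\hat t}\, (\cF U_{s(p)} \cF^*)^{-1} = M_{\widehat{s(p)ts(p)^{-1}}}$ (up to conjugation when $\phi=1$) and then assert that ``this is precisely the statement that $\cF U_{s(p)}\cF^*$ is a bundle endomorphism covering $\rho_{s(p)}$.'' It is not: intertwining the multiplication operators by all characters only shows that the unitary is decomposable over $\hat{\Pi}$ as a \emph{measurable} field of fiber unitaries covering $\rho_{s(p)}$. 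It does not yield continuity of the fiberwise operators, hence does not produce a bundle map; multiplication by a discontinuous unimodular function satisfies the same covariance with $g=e$. You defer to ``the explicit formulas'' for continuity, but you never derive them, and deriving them is the actual content of the lemma.

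What is missing is the decomposition of the internal space over the fundamental domain, which is how the paper proceeds: write $\tilde{\sK} = \ell^2(\bX_0,\sK) = \bigoplus_{x\in\bX_0}\sK_x$ and, for $g\in G$ and $x\in\bX_0$, let $t(x,g)\in\Pi$ be determined by $gx\in t(x,g)\bX_0$. A direct computation with $U=\lambda\otimes v$ then gives
\[ \cF U_g \cF^* \xi \;=\; \sum_{x\in\bX_0} U_{t(x,g)}\cdot(\lambda^0_g\otimes v_g)\,\xi_x , \]
and since $\cF U_{t(x,g)}\cF^*$ is multiplication by $\chi(t(x,g))$, the fiber map $u_g(\chi,\xi) = \bigl(\rho_g\chi,\ \sum_{x}\chi(t(x,g))(\lambda^0_g\otimes v_g)\xi_x\bigr)$ is explicit and manifestly continuous in $\chi$. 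Note in particular that the fiber action is \emph{not} simply $\lambda^0_g\otimes v_g$: the $\chi$-dependent phases $\chi(t(x,g))$ on each summand $\sK_x$ are essential, and they are invisible to your covariance argument. Your remarks about the cocycle $\sigma$ and the $\phi$-conjugation bookkeeping are correct in spirit but concern the subsequent verification that $\{u_{s(p)}\}$ is a $(\phi,c,\tau+\sigma)$-twisted action, not the construction of $u_g$ itself.
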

\begin{proof}
Firstly we remark that, for any $t\in \Pi$, $\cF U_t \cF^*$ is the multiplication with $t$ regarded as a continuous function on $\hat{\Pi}$ by $\hat{t} (\chi):=\chi(t)$.

Let us decompose $\tilde{\sK}$ into the direct sum of subspaces $\sK_x=\ell^2(\{ x \}, \sK)$ where $x$ runs over $\bX_0$. For $\xi \in L^2(\hat{\Pi},\tilde{\sK})$, $\xi_x$ denotes its $L^2(\hat{\Pi},\sK_x)$-component with respect to this decomposition. Let $\lambda^0$ denote the regular representation of $G$ acting on $\bX/\Pi \cong \bX_0$. 
Then, for any $\xi \in L^2(\hat{\Pi},\tilde{\sK})$ and $g\in G$, we have
\[\cF U_g \cF^* \xi = \sum_{x \in \bX_0} U_{t(x,g)} \cdot (\lambda_g^0 \otimes v_g) \xi_x, \]
where $t(x,g) \in \Pi$ is the element characterized by $gx \in t(x,g)\bX_0$. 
Therefore, 
\[u_g(\chi, \xi):= \Big(\rho_g x, \sum_{x \in \bX_0} \chi(t(x,g)) \cdot (\lambda_g^0 \otimes v_g)\xi_x \Big) \]
is the desired bundle map.
\end{proof}
Let $s \colon P \to G$ be a set-theoretic section. Then we have
\[ u_{s(p)}u_{s(q)}u_{s(pq)}^{-1}=\sigma(\chi , p,q) \cdot   \tau (p,q) \]
for any $p,q \in P$. That is, the collection of bundle maps $\{ u_{s(p)} \}_{p \in P}$ forms a $(\phi,c,\tau+\sigma)$-twisted $P$-equivariant vector bundle on $\hat{\Pi} \times \sK$.

Let $\sH^N$ denote the direct sum of $N$ copies of $\sH$, which is isomorphic to $\ell^2(\bX,\sK^N)$. 
For $x,y \in \bX$ and $H \in \bB(\sH^N)$, $H_{xy}$ denotes the operator $\delta_y  H \delta_x$, where $\delta_x$ denotes the delta function on $x$, regarded as a bounded operator on $\sK^N$. The following lemma is a standard fact in Fourier analysis.
\begin{lem}\label{lem:TP}
Let $H$ be a bounded operator on $\sH^N$ satisfying the following two properties:
\begin{enumerate}
\item There are constants $C_1, C_2>0$ such that, for any $x,y \in \bX$, the coefficient $H_{xy}$ satisfies $\|H_{xy} \| \leq C_1e^{-C_2d(x,y)}$. 
\item The operator $H$ is $\Pi$-invariant, i.e., $U_t H U_t^*=H$ for any $t \in \Pi$.
\end{enumerate}
Then $\cF H \cF^* \in \bB(L^2(\hat{\Pi} , \tilde{\sK}))$ is the multiplication operator with a smooth function $h \colon \hat{\Pi} \to \bB(\tilde{\sK})$. 
\end{lem}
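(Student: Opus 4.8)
The plan is to show that $\cF H \cF^*$ is a decomposable operator on $L^2(\hat\Pi,\tilde\sK) \cong \int^\oplus_{\hat\Pi} \tilde\sK\, d\chi$, i.e.\ that it acts fiberwise as multiplication by some function $h \colon \hat\Pi \to \bB(\tilde\sK)$, and then to upgrade the regularity of $h$ from merely measurable/bounded to smooth using the exponential decay hypothesis (1).

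First I would set up the Fourier transform explicitly: under the identification $\sH^N \cong \ell^2(\Pi,\tilde\sK)$, write $\xi = (\xi_t)_{t\in\Pi}$ and recall $(\cF\xi)(\chi) = \sum_{t\in\Pi}\chi(t)^{-1}\xi_t$ (or the chosen normalization). For $H$ with matrix coefficients $H_{xy}$, $\Pi$-invariance (property (2)) means that in the $\ell^2(\Pi,\tilde\sK)$-picture $H$ is a convolution operator: $(H\xi)_t = \sum_{r\in\Pi} a_r\, \xi_{t-r}$ for a family $a_r \in \bB(\tilde\sK)$, where $a_r$ is assembled from the coefficients $H_{xy}$ with $y - x$ lying in the $\Pi$-coset determined by $r$ and $x,y$ ranging over $\bX_0$. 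Conjugating a convolution operator by $\cF$ yields multiplication by the ``symbol'' $h(\chi) := \sum_{r\in\Pi}\chi(r)\, a_r \in \bB(\tilde\sK)$, so the candidate $h$ is forced.

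The key step is convergence and smoothness of this series. Property (1) gives $\|a_r\| \le C_1' e^{-C_2 d(0,r)}$ for a possibly adjusted constant $C_1'$ (absorbing the finitely many pairs in $\bX_0$ and comparing the metric $d$ on $\bX$ with the lattice distance on $\Pi$, which are comparable up to bounded distortion). Since $\Pi \cong \bZ^d$, the number of $r\in\Pi$ with $d(0,r)\le R$ grows polynomially in $R$, so $\sum_r \|a_r\| < \infty$ and, more importantly, $\sum_r \|a_r\|\, d(0,r)^k < \infty$ for every $k$. Writing $\hat\Pi$ as a quotient $(\bR^d)^* / \Pi^*$ and parametrizing $\chi = \chi_\theta$ by $\theta$ in a torus, each term $\chi_\theta(r)$ is a smooth (indeed real-analytic) function of $\theta$ whose partial derivatives of order $k$ are bounded by $\mathrm{const}\cdot d(0,r)^k$; the exponential-decay estimate then makes the differentiated series converge uniformly, so $h$ is $C^\infty$ (in fact real-analytic) as a $\bB(\tilde\sK)$-valued function, which is finite-dimensional so no subtlety about operator-valued smoothness arises.

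Finally I would verify that multiplication by this smooth $h$ genuinely equals $\cF H \cF^*$: it suffices to check equality on the dense subspace of finitely-supported $\xi \in \ell^2(\Pi,\tilde\sK)$, where both sides are given by the same absolutely convergent sums, and then extend by boundedness (the bound $\|H\| < \infty$ is given, and $\|h\|_\infty \le \|H\|$). The main obstacle is purely bookkeeping: carefully relating the word/Euclidean metric $d$ on $\bX$ used in hypothesis (1) to the lattice distance on $\Pi$ after the identification $\sH \cong \ell^2(\Pi,\tilde\sK)$, so that the exponential decay survives with uniform constants; once that comparison is in hand, the polynomial volume growth of $\bZ^d$ does all the analytic work and smoothness is automatic. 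No deep input is needed beyond standard harmonic analysis on $\bZ^d$.
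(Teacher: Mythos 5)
Your proof is correct. The paper does not actually prove this lemma --- it is stated with the remark that it ``is a standard fact in Fourier analysis'' --- and your argument (convolution structure from $\Pi$-invariance, symbol $h(\chi)=\sum_{r\in\Pi}\chi(r)a_r$, absolute convergence of the series and of all its term-by-term derivatives from the exponential decay of $\|a_r\|$ against the polynomial volume growth of the lattice, finiteness of $\bX_0$ making $\tilde{\sK}$ finite-dimensional) is exactly that standard proof, with the metric-comparison bookkeeping correctly identified as the only point needing care.
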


Let $\cH_N(G,\phi,\ft)$ denote the set of operators on $\sH^N$ for some $N$, satisfying (1) of Lemma \ref{lem:TP} and 
\begin{enumerate}
    \item[(2')] $H$ is graded $G$-invariant, i.e., $U_g H U_g^*=(-1)^{c(g)}H$ for any $g \in G$,
    \item[(3)] $H$ is self-adjoint and invertible.
\end{enumerate}
We embed $\cH_N(G,\phi,\ft)$ into $\cH_{N+1}(G,\phi,\ft)$ by $H \mapsto \big( \begin{smallmatrix} H & 0 \\ 0 & \gamma \end{smallmatrix} \big)$ and define the set of topological phases with the symmetry $(G,\phi,\ft)$ as
\[ \mathscr{TP}(G,\phi,\ft) := \Big( \bigcup_{N} \cH_N (G,\phi,\ft)\Big) /\sim, \]
where the equivalence relation is given by the homotopy of operators. 
The set $\mathscr{TP}(G,\phi,\ft)$ is obviously related to Karoubi's picture of the twisted equivariant K-theory.   
The following proposition is a slight modification of \cite{freedTwistedEquivariantMatter2013}*{Theorem 10.15} which is essentially proved in \cite{kubotaControlledTopologicalPhases2017}*{Proposition 3.4}. 
\begin{prp} \label{prp:TP}
The map $[H] \mapsto [\underline{\sK}^N , \mathrm{sgn}(h) , \gamma]$, where $h \colon \hat{\Pi} \to \bB(\sK^N)$ is the continuous function as in Lemma \ref{lem:TP}, is an isomorphism between $\mathscr{TP}(G,\phi,\ft)$ and $\prescript{\phi}{} K^{0,\ft+\sigma}_P(\hat{\Pi})$. 
\end{prp}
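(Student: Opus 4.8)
The plan is to factor the correspondence of the statement through the partial Fourier transform. Given $H\in\cH_N(G,\phi,\ft)$, Lemma \ref{lem:TP} identifies $h:=\cF H\cF^*$ with the multiplication operator of a smooth field $h\colon\hat\Pi\to\bB(\tilde\sK^N)$, which by hypothesis (3) is fibrewise self-adjoint and invertible, so that $\mathrm{sgn}(h)$ is a smooth field of self-adjoint involutions. I will regard $\underline{\tilde\sK}^N$ as carrying the $(\phi,c,\tau+\sigma)$-twisted $P$-equivariant structure $\{u_{s(p)}\}$ produced in Lemma \ref{lem:bdlK} and the paragraph following it, and check that $[\underline{\tilde\sK}^N,\mathrm{sgn}(h),\gamma]$ is a legitimate class in the Karoubi model $\prescript{\phi}{} \cK^{0,\ft+\sigma}_P(\hat\Pi)\cong\prescript{\phi}{} \K^{0,\ft+\sigma}_P(\hat\Pi)$; here compactness of $\hat\Pi$ makes the compact-support clauses in that model vacuous.

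First I would verify well-definedness. Transporting the graded $G$-invariance (2$'$) of $H$ through $\cF$ gives $u_{s(p)}\,h\,u_{s(p)}^{-1}=(-1)^{c(p)}h$ on $\hat\Pi$, hence the same relation for $\mathrm{sgn}(h)$; this is exactly the condition for $\mathrm{sgn}(h)$ to be a grading making $\underline{\tilde\sK}^N$ a $(\phi,c,\tau+\sigma)$-twisted $P$-equivariant $\Cl_{0,0}$-vector bundle, and the constant section $\gamma$ is such a grading because the lift $v\colon G\to\qAut(\sK)$ has parity $c$. A homotopy inside $\bigcup_N\cH_N(G,\phi,\ft)$ Fourier-transforms to a homotopy of such triples, and the stabilization $H\mapsto\mathrm{diag}(H,\gamma)$ alters the triple only by the degenerate summand $[\underline{\tilde\sK},\gamma,\gamma]$; since block direct sum of operators matches direct sum of triples, the assignment descends to a monoid homomorphism $\mathscr{TP}(G,\phi,\ft)\to\prescript{\phi}{} \K^{0,\ft+\sigma}_P(\hat\Pi)$, which is a homomorphism of abelian groups once one recalls that $\mathscr{TP}(G,\phi,\ft)$ is a group via the standard rotation homotopy.

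For bijectivity I would bring every K-theory class into the normal form $[\underline{\tilde\sK}^N,J,\gamma]$ with $J$ a $P$-equivariant smooth field of self-adjoint involutions. This rests on the absorbing property --- that every finite-rank $(\phi,c,\tau+\sigma)$-twisted $P$-equivariant vector bundle on $\hat\Pi$ is a direct summand of some $\underline{\tilde\sK}^N$, with a complement on which the grading may be taken to be $\gamma$ --- which is the substance of \cite{freedTwistedEquivariantMatter2013}*{Theorem 10.15} and \cite{kubotaControlledTopologicalPhases2017}*{Proposition 3.4}, if necessary after enlarging $\sK$ or the orbit $\bX$ (an enlargement that does not change $\mathscr{TP}(G,\phi,\ft)$). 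Granting this, surjectivity follows by approximating $J$ by the sign of a real-analytic self-adjoint invertible field $\tilde h$ (made $P$-equivariant by averaging over the finite group $P$), which is homotopic to $J$ because nearby self-adjoint invertibles have homotopic signs, and setting $H:=\cF^*\tilde h\,\cF$; this $H$ lies in $\cH_N(G,\phi,\ft)$ because real-analytic fields on the torus $\hat\Pi$ have exponentially decaying Fourier coefficients, so that (1) holds. For injectivity, a kernel element yields, after stabilization, a homotopy of gradings from $\mathrm{sgn}(h)$ to $\gamma$, which I would lift to a homotopy of operators obeying (1) by the same real-analytic approximation, concluding $[H]=0$ in $\mathscr{TP}(G,\phi,\ft)$.

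The hard part is this last step: establishing the absorbing property in the genuinely $\phi$-twisted crystallographic situation, confirming that the auxiliary enlargements of $\sK$ and $\bX$ leave $\mathscr{TP}$ unchanged, and carrying out the analytic bookkeeping that reconciles the exponential-decay condition (1) built into $\cH_N(G,\phi,\ft)$ with homotopy classes of merely continuous equivariant data on $\hat\Pi$. All of these are handled in the cited references in less general settings, so the real work is to check that their arguments persist once the $(\phi,c,\tau)$-twist of Section \ref{section:2} and the twisted $P$-action and cocycle $\sigma$ on $\hat\Pi$ of Section \ref{section:3.3} are in force.
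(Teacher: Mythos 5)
Your proposal is correct and follows essentially the same route as the paper, which in fact gives no argument of its own here but defers entirely to \cite{freedTwistedEquivariantMatter2013}*{Theorem 10.15} and \cite{kubotaControlledTopologicalPhases2017}*{Proposition 3.4}; your outline (Fourier transform, equivariance of $\mathrm{sgn}(h)$, absorption into $\underline{\tilde\sK}^N$, and real-analytic approximation to reconcile the exponential-decay condition with continuous equivariant data) is precisely the content of those references adapted to the twist $(\phi,c,\tau+\sigma)$. The remaining work you honestly flag — the absorbing property and the analytic bookkeeping in the twisted setting — is exactly what the paper also leaves to the cited sources, so no further comparison is needed.
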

In particular, this proposition shows that the twisted equivariant K-group $\prescript{\phi}{} K^{0,\ft+\sigma}_P(\hat{\Pi})$ is independent of the choice of $\Pi$.

\begin{rmk}
When $\phi,c,\tau$ are trivial, a concrete meaning to the class of $H$ being non-trivial is as follows. Namely, the range of the negative-energy projection $\frac{1-\mathrm{sgn}(h)}{2}$ cannot be spanned by the Fourier transform of a localized Wannier basis, see \cite{ludewigGoodWannierBases2020}.
\end{rmk}

\section{Crystallographic T-duality}\label{section:4}
In this section we introduce the main research target of the paper, two twisted equivariant K-groups associated to a twisted crystallographic group and the crystallographic T-duality homomorphism between them.  

Let $(G,\phi,c,\tau)$ be a $d$-dimensional twisted crystallographic group in the sense of Definition \ref{defn:twcry} acting on a $d$-dimensional Euclidean space $V$ and let  $\Pi:= G \cap \bR^d \times \{1\}$.  
Let $\rho$ denote the action of $P=G/\Pi$ onto $\hat{\Pi}$ given in (\ref{eq:Piact}) and let $\sigma \in H_P^2(\hat{\Pi} ; \prescript{\phi}{} \bT)$ denote the $2$-cocycle given in (\ref{eq:sigma}). 
We also consider the $P$-action onto $V/\Pi$ induced from the $G$-action onto $V$.

Let $\fv=(\mu , \nu)$ be the $P$-equivariant Stiefel-Whitney class $(w_1^P(V), w_2^P(V)) \in H^1_P(\pt ;\bZ_2) \oplus H^2_P(\pt; \bZ_2)$ of $V$ as a $P$-bundle over the point.
Since the tangent bundle $T(V/\Pi) \to V/\Pi$ is $P$-equivariantly trivial, that is, $T(V/\Pi) \cong V/\Pi \times V$, its equivariant Stiefel-Whitney class is the pull-back $\pi^*\fv$ by the collapsing map $\pi \colon V/\Pi \to \pt$.

The crystallographic T-duality map is schematically represented by the diagram
\[ \xymatrix@R=2em{ &  V/\Pi \times \hat{\Pi} \ar@(lu,ru)[]^{[\cP] \otimes } \ar[ld]_{\hat \pi} \ar[rd]^{\pi } & \\ V/\Pi && \hat{\Pi}. } \]
Here, $\pi \colon V/\Pi \to \pt $ and $\hat{\pi} \colon \hat{\Pi} \to \pt$ are collapsing maps. Another ingredient is the element
\[ [\cP]  \in \prescript{\phi}{} \K_P^{0,\pi^*\sigma}(V/\Pi \times \hat{\Pi} )\] 
represented by the Poincar\'e line bundle in the following way. 

\begin{lem}[{\cite{gomiCrystallographicTduality2019}*{Theorem 4.3}}]
The $\phi$-twisted $G$-action on the Poincar\'e line bundle
\[\cP := (V \times \hat{\Pi} \times \bC)/\{ (v,\chi, z) \sim (v + t,\chi, \overline{\chi(t)}z )  \} \]
given by 
\[g \cdot [v,\chi,z]=[gv,\chi,\prescript{\phi(g)}{}z]\]
determines a $(\phi,0,\hat \pi ^*\sigma)$-twisted $P$-equivariant vector bundle on $V/\Pi \times \hat{\Pi}$.
\end{lem}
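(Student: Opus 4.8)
The plan is to (i) realize $\cP$ as a hermitian line bundle over $V/\Pi\times\hat\Pi$ via a free and proper quotient, (ii) check that the displayed formula is a well-defined $\phi$-twisted $G$-action on the total space $V\times\hat\Pi\times\bC$ compatible with that quotient, so that it descends to $\cP$, (iii) compute the restriction of the descended action to $\Pi$, and (iv) repackage the $G$-action, via a set-theoretic section $s\colon P\to G$, into the data $\{u_{p}\}$ of a $(\phi,0,\pi^{*}\sigma)$-twisted $P$-equivariant bundle, where $\pi^{*}\sigma$ is the pull-back of $\sigma$ along the projection to $\hat\Pi$.

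For (i): the relation defining $\cP$ is exactly the orbit relation of the $\Pi$-action $t\cdot(v,\chi,z):=(v+t,\chi,\overline{\chi(t)}z)$, which is a genuine group action because each $\chi$ is a character, is free and proper since $\Pi$ acts on $V$ by a lattice of translations, and is isometric on the $\bC$-factor; hence $\cP\to V/\Pi\times\hat\Pi$ is a hermitian line bundle, sitting in even degree, canonically trivialized after pull-back to $V\times\hat\Pi$. For (ii): the formula $g\cdot(v,\chi,z):=(gv,\rho_{g}\chi,\prescript{\phi(g)}{}{z})$, where $gv$ uses $j\colon G\to\Euc(V)$ and $\rho_{g}$ is the action \eqref{eq:Piact}, manifestly defines a $\phi$-linear $G$-action covering the diagonal $G$-action on $V\times\hat\Pi$, since $j$ and $\rho$ are actions and $\phi$ is a homomorphism to $\bZ_{2}$ controlling the antilinearity of $z\mapsto\prescript{\phi(g)}{}{z}$. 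Because $\Pi$ is normal in $G$, for $t\in\Pi$ and $g\in G$ one has $g\cdot\bigl(t\cdot(v,\chi,z)\bigr)=(A_{g}t)\cdot\bigl(g\cdot(v,\chi,z)\bigr)$, where $A_{g}\in O(d)$ is the linear part of $g$ and $A_{g}t\in\Pi$; this is a one-line check using $(\rho_{g}\chi)(A_{g}t)=\chi(t)$ when $\phi(g)=0$ and $=\overline{\chi(t)}$ when $\phi(g)=1$. Thus the action descends to $\cP$ and covers the induced $G$-action on $V/\Pi\times\hat\Pi$, which factors through $P$ on the base.

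For (iii) and (iv): using $\phi|_{\Pi}=0$, $\rho_{t}=\id$, and the defining relation, the descended action of $t\in\Pi$ on $\cP$ is multiplication by the scalar $\chi(t)$ on the fibre over $(\bar v,\chi)$ in the canonical trivialization — it is \emph{not} trivial, but this is precisely the behaviour encoded by the twist $\pi^{*}\sigma$. Fixing the section $s\colon P\to G$ used to define $\sigma$ in \eqref{eq:sigma}, taking trivial line bundles $L_{p}=\underline{\bC}$, and letting $u_{p}\colon\cP_{x}\to\cP_{px}$ be the descended action of $s(p)$ (tensored trivially with $L_{p}$), each $u_{p}$ is $\phi(p)$-linear because $\phi(s(p))=\phi(p)$ and even because $\cP$ is purely even, so the grading part of the twist is $c=0$; and $u_{p}\circ u_{q}$ is the action of $s(p)s(q)=\omega(p,q)\,s(pq)$ with $\omega(p,q):=s(p)s(q)s(pq)^{-1}\in\Pi$, hence by (iii) it is $u_{pq}$ composed with multiplication by $\sigma(\chi',p,q)$ on the relevant fibre. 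In other words $\{u_{p}\}$ is $\sigma$-projective in the sense of Section \ref{section:3}, and taking $s$ as in \eqref{eq:sigma} yields exactly the twist $(\phi,0,\pi^{*}\sigma)$.

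The one point requiring real care is this last identification: matching the cocycle arising from the failure of $s$ to be multiplicative with the prescribed cocycle $\sigma(\chi,p,q)=\chi(s(p)s(q)s(pq)^{-1})$ of \eqref{eq:sigma}, while keeping track of which of the two (equivalent) conventions for $u_{p}$ recorded in Section \ref{section:3} is in force, of left versus right translations, and of the base point at which the $2$-cocycle is evaluated; these reconcile only after transporting along the $\rho$-action on $\hat\Pi$, so one must choose the convention for $u_{p}$ (action of $s(p)$ versus of $s(p)^{-1}$, equivalently the $\alpha_{p}^{*}$ versus $\alpha_{p^{-1}}^{*}$ normalization) that makes the two cocycles literally agree. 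Apart from this bookkeeping and the routine check that the $\Pi$-quotient is locally trivial, everything is a direct verification, and in fact the statement coincides with \cite{gomiCrystallographicTduality2019}*{Theorem 4.3}.
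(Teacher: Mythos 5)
Your proposal is correct and follows essentially the same route as the paper: the heart of the matter is your step (iii), the one-line computation $t\cdot[v,\chi,z]=[v+t,\chi,z]=[v,\chi,\chi(t)z]$ showing that $\Pi$ acts by multiplication by $\hat\pi^*\sigma_t$, which is the entire content of the paper's proof (the well-definedness in (ii) and the repackaging into the $\{u_p\}$ data in (iv), including the convention bookkeeping you flag, are left implicit there). Your version is simply a more detailed unwinding of the same argument.
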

\begin{proof}
It suffices to show that the $\Pi$-action on $\cP$ is given by the multiplication by $\hat{\pi}^*\sigma_t \in C(V/\Pi\times\hat{\Pi},\bT)$, where $\sigma_t$ denotes the function $\hat{t} $ as before (i.e., the function defined by $\hat{t}(\chi):=\chi(t)$). This is checked as 
\begin{align*}
t \cdot [v,\chi,z]&= [v+t,\chi , z] = [v,\chi,\chi (t)z]=[v,\chi,\sigma_t(\chi) z]. \qedhere
\end{align*}
\end{proof}
The bundle $\cP$ determines an element of the Freed--Moore K-group (in the Karoubi picture) as 
\begin{align*} [\cP,1,-1] \in \prescript{\phi}{} \K^{0,\pi^*\sigma}_P(V/\Pi \times \hat{\Pi}), 
\end{align*}
which is simply written as $[\cP]$ hereafter.

\begin{defn}
Let $\ft \in H^1(P;\bZ_2) \oplus H^2(P;\prescript{\phi}{} \bT )$. We call the composition 
\begin{align}
\prescript{\phi}{}{\mathrm{T}} _G^{\ft }:= \pi_! \circ ([\cP] \otimes_{V/\Pi \times \hat{\Pi}} \blank )  \circ \hat{\pi}^* \colon \prescript{\phi}{} \K^{*+d,\ft - \fv}_{P}(V/\Pi) \to \prescript{\phi}{} \K_P^{*,\ft + \sigma}(\hat{\Pi})  \label{eq:Tdual}
\end{align}
the \emph{crystallographic T-duality map}.
\end{defn}
Here the homomorphisms 
\begin{enumerate}
\item $\hat \pi^* \colon \prescript{\phi}{} \K_P^{*+d,\ft - \fv}(V/\Pi) \to \prescript{\phi}{} \K_P^{*+d,\ft-\fv}(V/\Pi \times \hat{\Pi})$,
\item $[\cP] \otimes_{V/\Pi \times \hat{\Pi}} \blank \colon \prescript{\phi}{} \K_P^{*+d,\ft-\fv}(V/\Pi \times \hat{\Pi}) \to \prescript{\phi}{} \K_P^{*+d,\ft-\fv+\sigma}(V/\Pi \times \hat{\Pi})$, and 
\item $\pi_! \colon \prescript{\phi}{} \K_P^{*+d,\ft-\fv + \sigma} (V/\Pi \times \hat{\Pi} ) \to \prescript{\phi}{} \K_P^{*,\ft+\sigma}(\hat{\Pi})$
\end{enumerate}
are the pull-back (\ref{eq:pull}), the internal tensor product (Definition \ref{defn:prod}) with $[\cP]$ and the push-forward (Definition \ref{defn:push}) respectively, and we have suppressed the pullback notation for the twists.

Now we state the main theorem of the paper. 
\begin{thm}\label{thm:crystal}
The crystallographic T-duality map $\prescript{\phi}{}{\mathrm{T}}_G^{\ft} $ is an isomorphism for any $\phi \in H^1(P;\bZ_2)$ and $\ft \in H^1(P;\bZ_2) \oplus H^2(P;\prescript{\phi}{} \bT )$.
\end{thm}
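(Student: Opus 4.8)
The plan is to carry out the two-step strategy announced in the introduction: first to produce, by $C^*$-algebraic means, \emph{some} isomorphism between the two groups in \eqref{eq:Tdual}, and then to show that this isomorphism coincides with the T-duality map $\prescript{\phi}{}{\mathrm{T}}_G^{\ft}$. The first step is a Dirac--dual-Dirac argument for the (amenable) group $G$, pushed through the partial descent homomorphism of Appendix~\ref{section:app}; the second step, which is genuinely new even in the untwisted situation of \cite{gomiCrystallographicTduality2019}, is a comparison of the Freed--Moore operations of Section~\ref{section:3} with their Chabert--Echterhoff KK-theoretic counterparts from Section~\ref{section:5}.

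\textbf{Step 1 (isomorphism via Baum--Connes).} A twisted crystallographic group is virtually abelian, hence amenable, so its $\gamma$-element equals $1$ and the $\phi$-twisted Baum--Connes assembly map is an isomorphism with arbitrary coefficients. Since $V\cong\bR^d$ is a model for $\underline{E}G$, this says that the Dirac homomorphism $\sfD$ is a $\prescript{\phi}{}\KK^G$-equivalence between $C_0(V)\hotimes\Cl_d$, of KK-degree $d$ --- here $G$ acts on $\Cl_d\cong\Cl(V)$ through $j\colon G\to O(d)$ and $\phi$, this being $C_0(V,\Cl(TV))$ under the $G$-equivariant trivialization $TV\cong V\times\bR^d$ --- and $\bC$, and it remains an equivalence after tensoring with a fixed $(\phi,c,\tau)$-twisted $G$-$C^*$-algebra $A_{\ft}$ realizing the twist $\ft$ (e.g.\ $A_{\ft}=\bK(\sK)$ for a $(\phi,c,\tau)$-twisted unitary $P$-representation $\sK$). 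Applying the partial descent $\prescript{\phi}{}{j}_{G,\Pi}$ turns $\sfD$ into a $\prescript{\phi}{}\KK^{G,\Pi}$-equivalence between $\bigl(A_{\ft}\hotimes C_0(V)\hotimes\Cl_d\bigr)\rtimes\Pi$ and $A_{\ft}\hotimes C^*(\Pi)$. One then identifies the twisted $P$-equivariant K-theories of these two crossed products with the two sides of \eqref{eq:Tdual}. On the momentum side, $A_{\ft}\hotimes C^*(\Pi)=A_{\ft}\hotimes C(\hat\Pi)$ carries the residual action $\rho$ of \eqref{eq:Piact}, which is only $\sigma$-projective because the extension $1\to\Pi\to G\to P\to1$ need not split --- the $2$-cocycle \eqref{eq:sigma} being exactly the obstruction to a multiplicative section --- so its twisted $P$-equivariant K-theory is $\prescript{\phi}{}\K^{*,\ft+\sigma}_P(\hat\Pi)$. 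On the position side, the free, proper, cocompact $\Pi$-action on $V$ makes $C_0(V)\rtimes\Pi$ Morita equivalent to $C(V/\Pi)$ through the Green bimodule, while the residual orthogonal $P$-action on the fibre $\Cl_d$ contributes, via the equivariant Thom isomorphism (Definition~\ref{defn:Thom}), exactly the degree shift by $d$ and the twist offset $\fv=(w_1^P(V),w_2^P(V))$, so the group in question is $\prescript{\phi}{}\K^{*+d,\ft-\fv}_P(V/\Pi)$. Therefore the Kasparov product with $\prescript{\phi}{}{j}_{G,\Pi}(\sfD)$ defines an isomorphism $\Psi\colon\prescript{\phi}{}\K^{*+d,\ft-\fv}_P(V/\Pi)\xrightarrow{\cong}\prescript{\phi}{}\K^{*,\ft+\sigma}_P(\hat\Pi)$.

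\textbf{Step 2 ($\Psi=\prescript{\phi}{}{\mathrm{T}}_G^{\ft}$).} Decompose the T-duality map according to \eqref{eq:Tdual} as $\pi_!\circ([\cP]\otimes\blank)\circ\hat\pi^*$ and match each factor with a KK-operation under the identifications of Step 1. The pull-back $\hat\pi^*$ becomes tensoring with $1_{C(\hat\Pi)}$, i.e.\ inflation along the $\Pi$-crossed product. The internal product with the Poincar\'e class $[\cP]$ becomes the Kasparov product with the class of the Poincar\'e bundle viewed as a $C(V/\Pi)$--$C^*(\Pi)$ imprimitivity bimodule, which is precisely the Green bimodule of Step 1; this is the twisted-equivariant form of the classical fact that the topological T-duality map for $\bZ^d$ is the assembly map. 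The push-forward $\pi_!$ along the torus bundle $V/\Pi\times\hat\Pi\to\hat\Pi$, which by Definition~\ref{defn:push} is the fibrewise composition $\Thom^{-1}\circ\iota_*\circ\Thom$, becomes the Kasparov product with the fibrewise Dirac element of that bundle, and the latter is $\prescript{\phi}{}{j}_{G,\Pi}(\sfD)$. By associativity of the Kasparov product and naturality of the partial descent, the composite of these three factors equals the Kasparov product with $\prescript{\phi}{}{j}_{G,\Pi}(\sfD)$, that is $\Psi$; since $\Psi$ is an isomorphism, so is $\prescript{\phi}{}{\mathrm{T}}_G^{\ft}$.

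\textbf{Main obstacle.} The substantive difficulty is Step 2. The categories of $\phi$-twisted $(G,\Pi)$-$C^*$-algebras and of twisted $P$-spaces overlap only very slightly, so there is no functorial comparison; one must instead build an explicit dictionary between the Fredholm/Karoubi models of Freed--Moore K-theory and KK-classes of crossed products, verifying that pull-back, internal product, and Thom/push-forward correspond to exterior product, Kasparov product, and partial descent. The bookkeeping of twists is the crux: one must track carefully the orientations and (local) spin structures responsible for $\fv$ and the degree shift $d$, the way the non-split extension produces $\sigma$, and the normalization subtleties noted in Remark~\ref{rmk:Gomi} (the $\epsilon(c,c)$-shift of $\tau$ and the self-adjoint versus skew-adjoint conventions for Fredholm operators). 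Once this dictionary is in place, the three matchings above and the concluding identification are formal consequences of the standard properties of the Kasparov product.
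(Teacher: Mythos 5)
Your proposal follows essentially the same route as the paper: identify the Freed--Moore groups with $\phi$-twisted Chabert--Echterhoff KK-groups, express the three factors $\hat\pi^*$, $[\cP]\otimes\blank$, $\pi_!$ as Kasparov products, and recognize the composite as the partial descent $\prescript{\phi}{}{j}_{G,\Pi}(\sfD)$ of the (Higson--Kasparov invertible) Dirac element conjugated by the Green imprimitivity bimodule $\cX$ — the key computation being that $\cX^*\otimes\cX$ reproduces the Poincar\'e bundle, exactly as in the paper's Lemma on the Dirac morphism. The two-step organization and the identified "main obstacle" (the dictionary between Freed--Moore operations and KK-operations, with careful twist bookkeeping) match the paper's proof in Section~\ref{section:5}.
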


\begin{rmk}\label{rmk:module}
Let $\prescript{\phi}{} R(P)$ denote the Grothendieck ring of finite dimensional $\phi$-twisted representations of $P$. In other words, $\prescript{\phi}{} R(P) :=\prescript{\phi}{} \K_P^0(\pt)$ with the ring structure given by Definition \ref{defn:prod}. The crystallographic T-duality map (\ref{eq:Tdual}) is actually a homomorphism of $\prescript{\phi}{} R(P)$-modules.
\end{rmk}

Here we introduce a simple application of Remark \ref{rmk:module}. Suppose that $G$ acts on $V$ freely, which implies that the induced action of $P$ onto $V/\Pi$ is also free. Hence the Lemma \ref{lem:AS} below, which is a generalization of the Atiyah--Segal completion theorem, implies that there is $n \in \bZ_{>0} $ such that $\prescript{\phi}{} I_P^n \cdot \prescript{\phi}{} \K_P^{*,\ft}(V/\Pi)=0$. 
Here $\prescript{\phi}{} I_P$ denotes the augmentation ideal $\ker (\prescript{\phi}{} \K_P^0(\pt) \to \K^0(\pt))$. Now Theorem \ref{thm:crystal} implies that the $\prescript{\phi}{} R(P)$-module $\prescript{\phi}{} \K_P^{*,\ft - \fv +\sigma}(\hat{\Pi})$ also satisfies $\prescript{\phi}{} I_P^n \cdot \prescript{\phi}{} \K_P^{*,\ft -\fv + \sigma}(\hat{\Pi})=0$, although $\hat{\Pi}$ is no longer a free $P$-space.

\begin{lem}\label{lem:AS}
Let $X$ be a free finite $P$-CW-complex. Then the $\prescript{\phi}{} R(P)$-module $\prescript{\phi}{} \K_P^{*,\ft}(X)$ satisfies $\prescript{\phi}{} I_P^n \cdot \prescript{\phi}{} \K_P^{*,\ft}(X)=0$ for some $n \in \bZ_{>0}$.
\end{lem}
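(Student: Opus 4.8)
The plan is to prove the lemma by induction on the number of free $P$-cells of $X$, using the long exact sequence of a pair in Freed--Moore twisted equivariant $\K$-theory together with the fact that the $\prescript{\phi}{}R(P)$-module structure on the $\K$-theory of a \emph{free} $P$-space is as degenerate as possible. For the base case $X=\emptyset$ we have $\prescript{\phi}{}\K_P^{*,\ft}(X)=0$ and nothing to prove; any $n$ works.

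The key sublemma is that $\prescript{\phi}{}I_P$ annihilates $\prescript{\phi}{}\K_P^{*,\ft}(P\times Z)$ for any $P$-space of the form $P\times Z$ with $P$ acting on the first factor (this covers the open cell $P\times(D^k\setminus S^{k-1})\cong P\times\bR^k$). Indeed, since the $P$-action here is free, passage to the quotient gives an isomorphism $\prescript{\phi}{}\K_P^{*,\ft}(P\times Z)\cong\K^{*,\bar\ft}(Z)$ onto the non-equivariant twisted $\K$-theory of $Z=(P\times Z)/P$, where $\bar\ft$ is the descent of $\ft$ (the $\phi$-twist also descends to $Z$, but on a space of the form $(P\times Z)/P$ it is carried by the trivial double cover and plays no role); this is the twisted equivariant version of the elementary statement that equivariant $\K$-theory of a free action is the $\K$-theory of the orbit space, and is proved by descending $(\phi,\ft)$-twisted $P$-equivariant bundles along the quotient map. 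Under this isomorphism, the class $[W]\in\prescript{\phi}{}\K_P^0(\pt)=\prescript{\phi}{}R(P)$ of a finite-dimensional $\phi$-twisted representation $W$ acts as multiplication by the integer $\dim W$, because its pullback to $P\times Z$ is the trivial bundle $\underline{W}$ with $P$-action, whose descent to $Z$ is the rank-$(\dim W)$ trivial bundle. Hence the module structure factors through the augmentation $\dim\colon\prescript{\phi}{}R(P)\to\bZ$, and $\prescript{\phi}{}I_P=\ker(\dim)$ acts as $0$.

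For the inductive step, let $X$ have $m\geq 1$ free $P$-cells, write $X=X'\cup_\varphi(P\times D^k)$ with $X'$ a $P$-subcomplex having $m-1$ cells and attaching map $\varphi\colon P\times S^{k-1}\to X'$, and put $U:=X\setminus X'\cong P\times\bR^k$. The open embedding map of Section \ref{section:3} and the restriction map fit into a natural long exact sequence of $\prescript{\phi}{}R(P)$-modules (cf.\ \cite{gomiFreedMooreKtheory2017})
\[ \cdots \to \prescript{\phi}{}\K_P^{*,\ft}(U) \xrightarrow{\iota_*} \prescript{\phi}{}\K_P^{*,\ft}(X) \xrightarrow{r} \prescript{\phi}{}\K_P^{*,\ft}(X') \to \cdots, \]
whose maps are all module homomorphisms. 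By induction $\prescript{\phi}{}I_P^{n'}$ annihilates $\prescript{\phi}{}\K_P^{*,\ft}(X')$ for some $n'$; given $\xi\in\prescript{\phi}{}\K_P^{*,\ft}(X)$ and $a\in\prescript{\phi}{}I_P^{n'}$, the element $a\xi$ has $r(a\xi)=a\cdot r(\xi)=0$, so by exactness $a\xi=\iota_*(\eta)$ for some $\eta$, and then $b\cdot a\xi=\iota_*(b\eta)=0$ for all $b\in\prescript{\phi}{}I_P$ by the sublemma. Thus $\prescript{\phi}{}I_P^{n'+1}$ annihilates $\prescript{\phi}{}\K_P^{*,\ft}(X)$, so $n=n'+1$ works (and $n$ may be taken to be one more than the number of free cells of $X$). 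The homological-algebra skeleton is routine; the only substantive point is the sublemma, whose two ingredients — the descent isomorphism along a free action and the identification of the $\prescript{\phi}{}R(P)$-action with scalar multiplication by dimension — require careful tracking of how the twists $\phi$ and $\ft$ behave under quotienting, and I expect this bookkeeping to be the main thing to verify.
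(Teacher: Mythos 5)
Your proof is correct, but it organizes the induction differently from the paper. The paper first chooses a $P$-equivariant map $X\to E_nP$ into the $n$-fold join $E_nP=P\ast\cdots\ast P$ (possible because $X$ is a free finite $P$-CW-complex) and thereby reduces the lemma to the universal statement that $\pi_n^*\colon\prescript{\phi}{}R(P)\to\prescript{\phi}{}\K_P^0(E_nP)$ annihilates $\prescript{\phi}{}I_P^n$; that statement is then proved by induction on $n$ along the join filtration, using precisely your sublemma for the pieces $E_1P=P$ and $E_nP\setminus E_{n-1}P\cong P\times C(\pi_{n-1})$. You instead induct directly on the cells of $X$ and apply the sublemma to each open cell $P\times\bR^k$. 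The substantive content is the same in both arguments: on a $P$-space of the form $P\times Z$ the $\prescript{\phi}{}R(P)$-module structure factors through the augmentation, because restriction to the transversal $\{e\}\times Z$ trivializes both $\phi$ and the equivariance, sending $[W]$ to $\dim W$ times the unit (this is the identification $\prescript{\phi}{}\K_P^{*,\ft}(P\times Z)\cong\K^{*}(Z)$ that the paper also invokes, e.g.\ for $\prescript{\phi}{}\K_{\bZ_2}^{*}(\bZ_2)\cong\K^*(\pt)$); and a single application of the exact sequence of a pair converts this into one extra factor of $\prescript{\phi}{}I_P$ per stage. The paper's route yields an exponent depending only on the classifying dimension of $X$ and isolates a reusable statement about $E_nP$, whereas yours avoids constructing the classifying map and bounds the exponent by the number of free cells; both are adequate for the application, and your identification of the twist bookkeeping on the quotient as the only point needing care is accurate.
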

\begin{proof}
By the assumption, there is an $n \in \bZ_{>0}$ and a continuous $P$-equivariant map $X \to E_nP$, where $E_nP$ is the join $P \ast \cdots \ast P$ of $n$ copies of $P$. This defines the $\prescript{\phi}{} \K_P^{0}(E_nP)$-module structure on $\prescript{\phi}{}\K_P^*(X)$. 
Hence it suffices to show that the pull-back by $ \pi _n \colon E_nP \to \pt$ factors through $\pi_n^* \colon \prescript{\phi}{} R(P)/\prescript{\phi}{} I_P^n \to \prescript{\phi}{} \K_P^{0}(E_nP) $. We show this by induction on $n$. 
First we observe that the claim holds when $n=1$. In this case $E_1P = P$ has the twisted equivariant K-group $\prescript{\phi}{}\K^{0}_P(P) \cong \K^0(\pt) \cong \bZ$ and hence  $\ker \pi_1^* = \prescript{\phi}{}I_P$.   

Next, assume that $\pi_k^*$ factors through $\prescript{\phi}{}R(P) / \prescript{\phi}{}I_P^k$ for $k=1, \cdots, n-1$.
Let $x_1, \cdots , x_n \in \prescript{\phi}{} I_P$ and set $x':=x_1 \cdot \cdots \cdot x_{n-1} \in \prescript{\phi}{} I_P^{n-1}$, $x:=x' \cdot x_n \in \prescript{\phi}{} I_P^n$. It suffices to show that $\pi_n^*(x)=0$.  
Let us consider the diagram 
\[
\xymatrix{
&\prescript{\phi}{} R (P) \ar@{=}[r] \ar[d]^{\pi_n^*} & \prescript{\phi}{} R(P) \ar[d]^{\pi_{n-1}^*} \\
\prescript{\phi}{} \K_P^0(E_nP,E_{n-1}P) \ar[r]  & \prescript{\phi}{} \K_P^0(E_nP) \ar[r]^{i_n^*} & \prescript{\phi}{} \K_P^0(E_{n-1}P), \\
}
\]
where $i_n \colon E_{n-1}P \to E_nP$ denotes the inclusion. 
Since the right square commutes, $i_n^* \circ \pi_n^*(x')=\pi_{n-1}^*(x')=0$ by the induction hypothesis. The exactness of the second row implies that $\pi_n^*(x')=\prescript{\phi}{} j_n^*(\xi)$ for some $\xi \in \prescript{\phi}{} \K^0_P(E_nP,E_{n-1}P)$. 
On the other hand, since $E_nP \setminus E_{n-1}P$ is $P$-equivariantly homeomorphic to the direct product $P \times C(\pi_{n-1})$ (where $C(\pi_{n-1})$ is the mapping cone space), we have $\prescript{\phi}{} I_P \cdot \prescript{\phi}{} \K_P^0(E_nP,E_{n-1}P)=0$. 
Hence we obtain
\[\pi_n^*(x)=x_n \cdot \pi_n^*(x') = x_n \cdot \prescript{\phi}{} j_n^*(\xi) = \prescript{\phi}{} j_n^*(x_n \cdot  \xi)=0. \qedhere \]
\end{proof}

\section{Proof of Theorem \ref{thm:crystal}}\label{section:5}
In this section we give a proof of Theorem \ref{thm:crystal}. To this end, we firstly identify the Freed--Moore twisted equivariant K-groups under consideration with the corresponding Chabert--Echterhoff KK-group \cite{chabertTwistedEquivariantKK2001}, and then identify the twisted crystallographic T-duality map with the Kasparov product with an element, which is known to be a KK-equivalence. 
After the proof, we also give two related discussions; the irrational generalization of the twisted crystallographic T-duality and a relation between the twisted crystallographic T-duality and the Baum--Connes assembly map.  

\subsection{Comparison of twisted equivariant K-theories}
As is summarized in Appendix \ref{section:app}, the $\phi$-twisted Chabert--Echterhoff KK-theory is a bivariant homology theory for a pair of $\phi$-twisted $(G,\Pi)$-C*-algebras. 
Here a $\phi$-twisted $(G,\Pi)$-C*-algebra is a triple $(A,\alpha, \sigma)$, where $A$ is a C*-algebra, $\alpha$ is a $\phi$-linear action of $G$ onto $A$, and $\sigma \colon \Pi \to U(\cM (A))$ such that $\sigma_t a \sigma_t^* = \alpha_t(a)$ and $\alpha_t(\sigma_t)=\sigma_{gtg^{-1}}$ for any $t \in \Pi$, $a \in A$ and $g\in G$. 
In this section we deal with the following three kinds of $\phi$-twisted $(G,\Pi)$-C*-algebras and their tensor product:
\begin{enumerate}
\item For a locally compact $P$-space $X$ and a $(\phi,\ft)$-twisted $P$-equivariant vector bundle $(\sV, v)$ on $X$, the C*-algebra $C_0(X, \bK(\sV)) \cong C_0(X) \hotimes \bK(\sV)$ of continuous sections of the algebra bundle $\bK(\sV)$ has a $\phi$-twisted $P$-C*-algebra structure given by $\alpha_p(T)(x)=v_pT(px)v_p^*$ for any $T \in C_0(X,\bK(\sV))$.
\item Let $\hat{\Pi}$ be the Pontrjagin dual of $\Pi$, on which $G$ acts by $\rho$ as in (\ref{eq:Piact}). We put a $G$-action $\prescript{\phi}{} \rho^*_g(f)= \prescript{\phi(g)}{}{(\rho^*_g(f))}$ on the C*-algebra $C(\hat{\Pi})$ (cf.\ Remark \ref{rmk:real}). Moreover, let us define the implementing unitaries $\sigma_t \in C(\hat{\Pi}) $ as $\sigma _t = \hat{t}$, i.e., $\sigma_t(\chi)=\chi(t)$ for $\chi \in \hat{\Pi}$. Then $(C(\hat{\Pi}), \prescript{\phi}{} \rho^* ,  \sigma )$ is a $(G,\Pi)$-C*-algebra, \emph{which is denoted by $C(\hat{\Pi})_\sigma$ hereafter}. 
\item The complex number field $\bC$ equipped with the $G$-action $z \mapsto \prescript{\phi(g)}{}z$, which is simply denoted by $\bC$ (cf.\ Remark \ref{rmk:real}). More generally, we deal with Clifford algebras regarded as a Real C*-algebra (cf.\ Remark \ref{rmk:Cliffordalgebra}).
\end{enumerate}

Let $A$ be a $\phi$-twisted $\bZ_2$-graded $(G,\Pi)$-C*-algebra. As is shown in Corollary \ref{cor:twKK},  the twisted equivariant K-group is defined as
\[ \prescript{\phi}{} \KK^{G,\Pi}(\Cl_{p,q}, A) :=\pi_0 \Fred_{p,q} (\prescript{\phi}{} \sH_A^{G,\Pi})^G, \]
where $\prescript{\phi}{} \sH_A^{G,\Pi}$ is the $\phi$-twisted $(G,\Pi)$-equivariant Hilbert $A$-module as in (\ref{eq:Hilbmod}). 
Note that if $A$ is a $P$-C*-algebra, then $\prescript{\phi}{} \sH_A^{G,\Pi}$ is isomorphic to $\prescript{\phi}{}  \sH_A^P := (\prescript{\phi}{} \ell^2(P) ^{\oplus \infty} \oplus (\prescript{\phi}{} \ell^2(P) ^{\oplus \infty})^{\mathrm{op}}) \hotimes A$. (Here $\prescript{\phi}{} \ell^2(P)$ denotes the space of $\ell^2$-functions on $P$ equipped with the usual summation and the complex multiplication determined by $(\lambda \cdot f)(p)=\prescript{\phi(p)}{}{\lambda} \cdot f(p)$, on which $P$ acts by the left regular representation.)
This is consistent with Remark \ref{rmk:twistedKK}. Throughout this section, we identify $\prescript{\phi}{}\KK^{G,\Pi}(A,B)$ and $\prescript{\phi}{}\KK^P(A,B)$ if both $A$ and $B$ are $\phi$-twisted $P$-C*-algebras (regarded as $(G,\Pi)$-C*-algebras as in Remark \ref{rmk:twisted}).

\begin{lem}\label{lem:move}
Let $X$ be a locally compact $P$ space. For $\ft=(c,\tau) \in H^1_P(X;\bZ_2) \oplus H^2_P(X;\prescript{\phi}{} \bT)$, let $\acute{\ft}$ denote the pair $(c,\acute{\tau})$ where $\acute{\tau}:=\tau + \epsilon(c,c)$. 
Let $(\acute{\sV}, \acute{v})$ be a $(\phi, \acute{\ft})$-twisted $P$-equivariant vector bundle on $X$. Then there is an isomorphism
\[\prescript{\phi}{} \K^{q-p,\ft}_P (X) \cong \prescript{\phi}{} \KK^P( \Cl_{p,q} ,C_0(X, \bK(\acute{\sV})) ). \]
\end{lem}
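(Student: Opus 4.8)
The plan is to present both sides of the claimed isomorphism as $\pi_0$ of a space of Fredholm operators parametrised by $X$ and then to match these two spaces through a canonical identification of Hilbert bundles; the hypothesis $\acute\tau=\tau+\epsilon(c,c)$ will serve precisely to absorb a Koszul-type sign that appears when one passes from a twisted vector bundle to its algebra of compact operators, the same bookkeeping already recorded in Remark~\ref{rmk:Gomi}.

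First I would unwind the right-hand side. Since $A:=C_0(X,\bK(\acute\sV))$ is a $\phi$-twisted $P$-C*-algebra, Corollary~\ref{cor:twKK} together with the isomorphism $\prescript{\phi}{}\sH_A^{G,\Pi}\cong\prescript{\phi}{}\sH_A^P$ recalled above gives
\[
\prescript{\phi}{}\KK^P(\Cl_{p,q},A)=\pi_0\bigl(\Fred_{p,q}(\prescript{\phi}{}\sH_A^P)^P\bigr),\qquad \prescript{\phi}{}\sH_A^P=\cL_P\hotimes A,
\]
where $\cL_P:=\prescript{\phi}{}\ell^2(P)^{\oplus\infty}\oplus(\prescript{\phi}{}\ell^2(P)^{\oplus\infty})^{\mathrm{op}}$ carries the even left regular representation $\lambda$, the $G$-action on $\prescript{\phi}{}\sH_A^P$ factors through $P$ as $\lambda\hotimes\alpha$ with $\alpha_p=\Ad(\acute v_p)$, and $\prescript{\phi}{}\sH_A^P$ is viewed as a right Hilbert $A$-module. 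Using the fibrewise identification $\bK(\acute\sV_x)\cong\acute\sV_x\hotimes\overline{\acute\sV_x}$, under which $\bK(\acute\sV_x)$ is a right module over itself with $\bB_{\bK(\acute\sV_x)}\bigl(\cL_P\hotimes\acute\sV_x\hotimes\overline{\acute\sV_x}\bigr)\cong\bB(\cL_P\hotimes\acute\sV_x)$, I would realise $\prescript{\phi}{}\sH_A^P$ as the module of $C_0$-sections of the Hilbert bundle $\cH':=\underline{\cL_P}\hotimes\acute\sV$ on $X$. Under this identification, bounded adjointable operators correspond to bounded $\ast$-strongly continuous sections of $\bB(\cH')$, $A$-compact operators to compactly supported sections of $\bK(\cH')$, and hence $\Fred_{p,q}(\prescript{\phi}{}\sH_A^P)$ to $\Gamma_c(X,\Fred_{p,q}(\cH'))$ in the sense of condition~(1) of Definition~\ref{defn:Fred}; matching the topologies on the two sides is routine for algebras of the form $C_0(X,\bK(-))$.

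Next I would transport the $P$-action and determine its twist. Writing $\Ad(\acute v_p)$ on $\bK(\acute\sV)\cong\acute\sV\hotimes\overline{\acute\sV}$ as a graded tensor of operators and extracting the part acting on the $\acute\sV$ factor, the induced $\phi$-linear unitary $w_p$ on $\cH'=\underline{\cL_P}\hotimes\acute\sV$ covering $x\mapsto px$ equals $\lambda_p\hotimes\acute v_p$ precomposed with the grading operator of $\acute\sV$ raised to the power $c(p)$; a short computation using $\acute v_p\acute v_q=\acute v_{pq}(\id\otimes\acute\sigma(p,q))$, where $\acute\sigma$ is a cocycle representing $\acute\tau$, then gives
\[
w_p\,w_q=w_{pq}\cdot\bigl((-1)^{c(p,x)c(q,x)}\,\acute\sigma(p,q)\bigr).
\]
Since $(-1)^{c(p,x)c(q,x)}=\epsilon(c,c)(p,q,x)$ and $\epsilon(c,c)$ is $2$-torsion, the cocycle $(-1)^{c(p,x)c(q,x)}\acute\sigma(p,q)$ represents $\acute\tau+\epsilon(c,c)=\tau$; together with the fact that $w_p$ is even/odd according to $c(p)$, this shows that $\cH'$ is a $(\phi,\ft)$-twisted $P$-equivariant Hilbert bundle. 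Because $\cL_P$ absorbs $\prescript{\phi}{}\ell^2(P)^{\oplus\infty}$ in both parities, $\cH'$ is moreover universal: any finite-rank $(\phi,\ft)$-twisted bundle embeds into it after graded-tensoring with the conjugate of $\acute\sV$ and invoking universality of $\cL_P$ for the resulting $\phi$-linear untwisted $P$-bundle. By Remark~\ref{rmk:univ}(1),(2), $\cH'$ is therefore unitarily isomorphic to the universal bundle $\cH$ used to define the left-hand side in Definition~\ref{defn:twK}.

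Finally, graded-tensoring both identifications with $\Delta_{p,q}$ and matching the $\Cl_{p,q}$-representations, the unitary isomorphism $\cH'\cong\cH$ carries $\Fred_{p,q}(\prescript{\phi}{}\sH_A^P)^P$ homeomorphically onto $\Gamma_c(X,\Fred_{p,q}(\cH))^P$, the $A$-compactness of $F^2-1$ becoming condition~(1) and $P$-equivariance of the Kasparov cycle becoming condition~(2) of Definition~\ref{defn:Fred}. Passing to $\pi_0$ yields the asserted isomorphism, which is additive since in both pictures the group law is induced by the direct sum of Fredholm operators followed by conjugation with a $P$-equivariant unitary intertwining the $\Cl_{p,q}$-actions. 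I expect the main obstacle to be the sign bookkeeping producing the displayed formula for $w_p\,w_q$ — this is exactly where the hypothesis $\acute\tau=\tau+\epsilon(c,c)$ is used — with the verification that the operator-space topologies agree being the only other, more routine, technical point.
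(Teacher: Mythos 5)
Your proposal is correct and follows essentially the same route as the paper: both sides are realized as $\pi_0$ of spaces of $P$-invariant Fredholm sections, the Hilbert module $\prescript{\phi}{}\sH^P_{C_0(X,\bK(\acute\sV))}$ is identified with sections of a Hilbert bundle built from $\acute\sV$ via the Morita identification of operator algebras, and the Koszul sign $\epsilon(c,c)$ converts the twist $\acute\ft$ into $\ft$ (the paper encodes this by setting $v_p:=\gamma^{c(p)}\acute v_p$, exactly your $w_p$). Your explicit computation of $w_pw_q$ is a detail the paper leaves implicit, but the argument is the same.
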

Note that for any twist $(\phi,c,\tau)$ of a $P$-space $X$, there is a twisted $P$-vector bundle on $X$ (cf.\ \cite{kubotaNotesTwistedEquivariant2016}*{Example 2.9}).
\begin{proof}
Let $(\sV, v)$ denote the $(\phi,\ft)$-twisted $P$-equivariant vector bundle on $X$ given by $\sV:=\acute{\sV}$ and $v_p:=\gamma^{c(p)}\acute{v}_p$. Then we have
\[ \sV \hotimes \acute{\sV}^* \cong \acute{\sV} \otimes \acute{\sV}^* \cong \bK(\acute{\sV}) \]
as $\phi$-twisted Hilbert $\bK(\acute{\sV})$-modules. Set
\[\cH:= X \times (\prescript{\phi}{} \ell^2(P)^{\oplus \infty} \hotimes \sV ). \]
Then this is a universal $(\phi,\ft)$-twisted $P$-equivariant Hilbert bundle.
Moreover, there is a canonical isomorphism
\[\prescript{\phi}{} \sH_{C_0(X) \hotimes \bK(\acute{\sV})}^P \cong \Gamma_0 (X, \cH \hotimes \acute{\sV}^* )\]
as Hilbert $C_0(X) \hotimes \bK(\acute{\sV})$-modules, where $\Gamma_0(-)$ denotes the set of continuous sections vanishing at infinity.

We write $\bB(\cH \hotimes \acute{\sV}^*)$ for the bundle whose fiber at $x \in X$ is the bounded adjointable operator algebra over the Hilbert $\bK(\acute{\sV})$-module $\cH_x \hotimes \acute{\sV}^*$. Then, by the above discussion, there is a canonical identification 
\[ \bB( \prescript{\phi}{} \sH_{C_0(X) \hotimes \bK(\acute{\sV})}^P ) \cong \Gamma_b (X, \bB(\cH \hotimes \acute{\sV}^*)),\]
where $\Gamma_b(-)$ denotes the set of uniformly bounded continuous sections. Moreover, the map 
\[ \Gamma_b (X, \bB(\cH)) \to \Gamma_b (X, \bB(\cH \hotimes \acute{\sV}^*)), \ \ T \mapsto T \hotimes 1 \]
is a $P$-equivariant bundle isomorphism. 
Therefore, we obtain a $P$-equivariant isomorphism $\Gamma_b (X, \bB(\cH)) \to \bB(\prescript{\phi}{} \sH_{C_0(X) \hotimes \bK(\acute{\sV})}^P)$, which restricts to a homotopy equivalence $\Gamma_c (X, \Fred (\cH))^P \to \Fred(\prescript{\phi}{} \sH_{C_0(X) \hotimes \bK(\acute{\sV})}^P )^P$. 
This finishes the proof for the $p=q=0$ case by taking the $\pi_0$ of both sides. The general Clifford-equivariant version is proved in the same way. 
\end{proof}

\begin{rmk}\label{rmk:Cl}
In general, for a real $P$-equivariant vector bundle $E$, the Clifford bundle $\Cl(E)$ is $P$-equivariantly Morita equivalent to $\Cl(E \oplus \bR^{8n}) \cong \Cl(E \oplus \bR^{8n-d}) \hotimes \Cl_{d,0}$. The Clifford algebra $\Cl(E \oplus \bR^{8n-d})$ is $P$-equivariantly isomorphic to $\bK(S)$, where $S$ is the irreducible representation of $\Cl(E \oplus \bR^{8n-d})$. The group $P$ acts on $S$ as a $(\phi,\fv)$-twisted representation. Note that $\acute{\fv}=-\fv$ holds since $2\nu=0$ and $2\mu=0$ in the $\bZ_2$-coefficient cohomology groups. 
\end{rmk}

\begin{lem}\label{lem:movemap}
Under the isomorphisms given in Lemma \ref{lem:move},  the following hold:
\begin{enumerate}
\item For a continuous map $f \colon X \to Y$, the pull-back $f^* \colon \prescript{\phi}{} \K^{*,\ft}_P (Y) \to \prescript{\phi}{} \K_P^{*,\ft}(X)$ corresponds to the Kasparov product with 
\[ [f^*]:=[C(X), f^*, 0] \in \prescript{\phi}{} \KK^{G,\Pi}(C(Y), C(X)).\]
\item For $[F_i] \in \prescript{\phi}{} \K_P^{*,\ft_i}(X_i) $ ($i=1,2$), the external product $[F_1] \otimes  [F_2] \in \prescript{\phi}{} \K_P^{*,\ft_1+\ft_2}(X_1 \times X_2)$ corresponds to the Kasparov product $[F_1] \hotimes  [F_2]$.
\item The Bott element $\beta_E \in \prescript{\phi}{} \K^{r, \fv}_P(E)$ corresponds to the KK-element 
\begin{align*}
    [C_0(E, q^* S) , \fc , C] \in \prescript{\phi}{} \KK^{P} (\Cl_{8n-r,0}  , C_0(E, q^*\Cl(E'))), \label{eq:BottKK}
\end{align*}
where $C$ is the operator defined in \eqref{eq:C}.
\end{enumerate}
\end{lem}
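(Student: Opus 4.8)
The plan is to trace each of the three operations through the chain of identifications established in the proof of Lemma~\ref{lem:move}. Recall that that proof produced, for a $(\phi,\acute{\ft})$-twisted bundle $\acute{\sV}$ on a locally compact $P$-space $X$, a $P$-equivariant isomorphism $\Gamma_b(X,\bB(\cH))\cong\bB(\prescript{\phi}{}\sH_{C_0(X)\hotimes\bK(\acute{\sV})}^P)$ sending $T$ to $T\hotimes 1$, which restricts to a homotopy equivalence between the spaces $\Gamma_c(X,\Fred_{p,q}(\cH))^P$ and $\Fred_{p,q}(\prescript{\phi}{}\sH_{C_0(X)\hotimes\bK(\acute{\sV})}^P)^P$ on either side. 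Under this correspondence the \emph{defining} Fredholm representatives of the three K-theory operations --- the pull-back formula (\ref{eq:pull}), the external-product formulas of Lemma~\ref{lem:prod}, and the Bott section (\ref{eq:Bott}) --- will be recognised as the standard representatives of the corresponding Kasparov products. Thus no Kasparov product has to be computed from scratch; only its well-known explicit form in the three special situations at hand, together with bookkeeping on Clifford degrees and on the twist conventions, is needed.

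For (1), the Kasparov product of a class lying on the right-hand side of Lemma~\ref{lem:move} for $Y$ with $[f^*]=[C_0(X),f^*,0]$ is the interior tensor product of Hilbert modules along the $\ast$-homomorphism $f^*$, equipped with the operator $F\hotimes 1$; no correction term is needed, because the second module carries the zero operator, so that $F\hotimes 1$ is trivially an $f^*$-connection. Via Lemma~\ref{lem:move} for $Y$ this interior tensor product is $\Gamma_0\big(X,f^*(\cH_Y\hotimes\acute{\sV}_Y^*)\big)$ with operator $f^*F\hotimes 1$; by Remark~\ref{rmk:univ}(1)--(3) the pull-back bundle $f^*\cH_Y$ is absorbed into a universal $f^*(\phi,\ft)$-twisted Hilbert bundle $\cH_X$ on $X$, the absorption being implemented exactly by the even $P$-equivariant unitary $V$ and the auxiliary odd unitary section $G$ appearing in (\ref{eq:pull}). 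This reproduces $f^*[F]=[V(f^*F\oplus G)V^*]$, once one observes that $C_0(X,\bK(f^*\acute{\sV}_Y))$ and $C_0(X,\bK(\acute{\sV}_X))$ are canonically identified, the algebra $C_0(X,\bK(-))$ of a twisted bundle depending only on its twist.

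For (2), one uses the standard description of the exterior Kasparov product of $[\cE_1,F_1]\in\prescript{\phi}{}\KK^P(\Cl_{p_1,q_1},A_1)$ and $[\cE_2,F_2]\in\prescript{\phi}{}\KK^P(\Cl_{p_2,q_2},A_2)$ when both modules carry honest adjointable operators: it is represented by $[\cE_1\hotimes\cE_2,\ F_1\hotimes 1+1\hotimes F_2]$ after bounded transform, since $F_1\hotimes 1$ is then an $F_2$-connection and the Connes--Skandalis positivity condition is immediate. Because $(F_1\hotimes 1+1\hotimes F_2)^2=F_1^2\hotimes 1+1\hotimes F_2^2$ for odd $F_i$, the bounded transform of $F_1\hotimes 1+1\hotimes F_2$ by $\chi$ is exactly formula~(2) of Lemma~\ref{lem:prod}, which is homotopic to formula~(1) there. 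Combining this with the facts that $\cH_1\hotimes\cH_2$ is universal for $\ft_1+\ft_2$, that $\prescript{\phi}{}\sH_{A_1}^P\hotimes\prescript{\phi}{}\sH_{A_2}^P\simeq\prescript{\phi}{}\sH_{A_1\hotimes A_2}^P$, and that $\Cl_{p_1,q_1}\hotimes\Cl_{p_2,q_2}\cong\Cl_{p,q}$, one sees that the external product of Definition~\ref{defn:prod} is carried to the Kasparov product, the Koszul signs $(-1)^{c_2(p)c_1(q)}$ producing precisely the $\epsilon(c,c')$-term in the twist addition (\ref{eq:sum}). For (3), one first checks that $(C_0(E,q^*S),\fc,C)$ is a Kasparov $(\Cl_{8n-r,0},C_0(E,q^*\Cl(E')))$-module: $q^*S$ is a Hilbert $C_0(E,q^*\Cl(E'))$-module via $\Cl(E')\cong\bK(S)$ (Remark~\ref{rmk:Cl}), $\fc$ is the left $\Cl_{8n-r,0}$-action coming from the splitting $\Cl(E')\cong\Cl(E)\hotimes\Cl_{8n-r,0}$, and $C$ is odd, self-adjoint, graded-commutes with $\fc(\Cl_{8n-r,0})$, and has $C^2-1$ vanishing at infinity on $E$, hence compact. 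Applying Lemma~\ref{lem:move} with $X=E$, twist $\fv$ and $\acute{\sV}=S$ (using $\acute{\fv}=-\fv$ from Remark~\ref{rmk:Cl} to identify $C_0(E,\bK(\acute{\sV}))$ with $C_0(E,q^*\Cl(E'))$), this Kasparov module is sent to $[V(C\oplus G)V^*]=\beta_E$, since the map of Lemma~\ref{lem:move} is $T\mapsto T\hotimes 1$ and $C$ is already the Clifford-multiplication operator occurring on both sides.

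I expect the main obstacle to be part (2): verifying, in the present $\bZ_2$-graded, $\phi$-twisted, $P$-equivariant setting, that the exterior Kasparov product of the two Kasparov modules really is represented by $F_1\hotimes 1+1\hotimes F_2$ --- that is, confirming the connection and positivity conditions characterising the Kasparov product --- and, in tandem, keeping careful track of the Koszul signs so that the resulting twist is exactly $\ft_1+\ft_2$ as in (\ref{eq:sum}). Once this is secured, Lemma~\ref{lem:prod} supplies the matching Fredholm formula. Parts (1) and (3) should be essentially formal, the only non-trivial point being the canonical identifications of the coefficient algebras $C_0(X,\bK(-))$ across the Morita equivalences noted above.
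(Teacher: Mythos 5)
Your proposal is correct and follows essentially the same route as the paper: the paper declares (1) and (3) immediate from the definitions (the identification of Lemma \ref{lem:move} being $T\mapsto T\hotimes 1$), and for (2) it simply observes that the representative in Lemma \ref{lem:prod}(1), $F_1\hotimes(1-F_2^2)^{1/2}+1\hotimes F_2$, satisfies the connection and positivity conditions of Definition \ref{defn:Kasparov} — exactly the verification you identify as the crux, carried out on the homotopic representative (2) instead of (1).
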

\begin{proof}
The claims (1) and (3) are obvious from the definition. To see (2), notice that the first definition of the external product $[F_1] \otimes [F_2]$ in Lemma \ref{lem:prod} (1) satisfies the conditions of the Kasparov product given in Definition \ref{defn:Kasparov}. 
\end{proof}

Let $\cK$ denote the bundle $\hat{\Pi} \times \tilde{\sK}$ with the $G$-action as in Lemma \ref{lem:bdlK}. This bundle is viewed in two ways:
\begin{itemize}
\item Let us choose a section $s \colon P \to G$. Then $(\cK, u_{s(p)})$ is a $(\phi,\sigma)$-twisted $P$-equivariant vector bundle on $\hat{\Pi}$. 
\item The space $C(\hat{\Pi}, \cK)$ of continuous sections is a $(G,\Pi)$-equivariant Hilbert $C(\hat{\Pi})_\sigma$-module.
\end{itemize}

\begin{lem}
The $\phi$-twisted $(G,\Pi)$-C*-algebras $C(\hat{\Pi},\bK(\cK))$ and $C(\hat{\Pi})_\sigma$ are equivariantly Morita equivalent via the imprimitivity bimodule $C(\hat{\Pi},\cK)$ (cf.\ Example \ref{exmp:Morita}). In particular, 
\[ [\cK]:=[C(\hat{\Pi},\cK), \id, 0] \in \prescript{\phi}{} \KK^{G,\Pi}(C(\hat{\Pi},\bK(\cK)),C(\hat{\Pi})_\sigma) \]
is a $\prescript{\phi}{} \KK^{G,\Pi}$-equivalence. 
\end{lem}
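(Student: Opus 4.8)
The plan is to exhibit $M:=C(\hat{\Pi},\cK)$ as a $\phi$-twisted $(G,\Pi)$-equivariant imprimitivity bimodule between $C(\hat{\Pi},\bK(\cK))$ and $C(\hat{\Pi})_\sigma$, and then to invoke the general principle that an equivariant imprimitivity bimodule gives rise to an invertible element of Chabert--Echterhoff KK-theory. Since, by Lemma~\ref{lem:bdlK}, $\cK$ is the trivial bundle $\hat{\Pi}\times\tilde{\sK}$ with $\tilde{\sK}$ finite-dimensional, the algebra $C(\hat{\Pi},\bK(\cK))$ is a (trivial) matrix-algebra bundle over $\hat{\Pi}$, and the underlying Morita equivalence $C(\hat{\Pi},\bK(\cK))\sim C(\hat{\Pi})$ via $M$ is the elementary one: the left action of $C(\hat{\Pi},\bK(\cK))$ on $M$ is fibrewise application, the right action of $C(\hat{\Pi})_\sigma$ is pointwise scalar multiplication, and the two inner products are ${}_{A}\langle\xi,\eta\rangle(\chi):=|\xi(\chi)\rangle\langle\eta(\chi)|$ and $\langle\xi,\eta\rangle_{B}(\chi):=\langle\xi(\chi),\eta(\chi)\rangle_{\tilde{\sK}}$, computed with the constant Hermitian metric on $\cK$. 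This metric is $G$-invariant in the $\phi$-conjugate-linear sense because each $u_g$ is fibrewise unitary or antiunitary (as is visible from the formula in the proof of Lemma~\ref{lem:bdlK}). Fullness on both sides is immediate since $\tilde{\sK}\neq 0$ and $\hat{\Pi}$ is compact, and because $\cK$ is of finite rank the left action identifies $C(\hat{\Pi},\bK(\cK))$ with $\bK_{C(\hat{\Pi})_\sigma}(M)=\bB_{C(\hat{\Pi})_\sigma}(M)$.

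Next I would check equivariance. The right-hand structure is the $(G,\Pi)$-equivariant Hilbert $C(\hat{\Pi})_\sigma$-module structure on $M$ recalled in the bullet list preceding the lemma, whose $\phi$-linear $G$-action is $(g\cdot\xi)(\chi)=u_g\bigl(\xi(\rho_{g^{-1}}\chi)\bigr)$; the left-hand $G$-action on $C(\hat{\Pi},\bK(\cK))$ is the conjugation action $\alpha_g(T)(\chi)=u_gT(\rho_{g^{-1}}\chi)u_g^{*}$. One verifies directly that both inner products intertwine these actions in the appropriate $\phi$-conjugate-linear way, so that the two actions make $M$ an equivariant bimodule. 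For the implementing unitaries one uses that, by the proof of Lemma~\ref{lem:bdlK} and $\rho_t=\id$, the $\Pi$-action on $M$ is multiplication by $\hat{t}=\sigma_t$; this is simultaneously right multiplication by the implementing unitary $\sigma_t\in\cM(C(\hat{\Pi})_\sigma)$ and (since $\hat{t}$ is central) left multiplication by the trivial implementing unitary $1\in\cM(C(\hat{\Pi},\bK(\cK)))$, which is precisely the compatibility condition in the definition of a $\phi$-twisted $(G,\Pi)$-equivariant imprimitivity bimodule (cf.\ Example~\ref{exmp:Morita}). This establishes the first assertion.

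The second assertion is then formal: a $\phi$-twisted $(G,\Pi)$-equivariant imprimitivity bimodule $M$ between $A$ and $B$ yields classes $[M,\id,0]\in\prescript{\phi}{}\KK^{G,\Pi}(A,B)$ and $[M^{\ast},\id,0]\in\prescript{\phi}{}\KK^{G,\Pi}(B,A)$ whose Kasparov products in either order are the identity classes, so $[\cK]=[C(\hat{\Pi},\cK),\id,0]$ is a $\prescript{\phi}{}\KK^{G,\Pi}$-equivalence (see Appendix~\ref{section:app}). The only genuinely delicate point is the equivariant bookkeeping in the middle step: tracking the complex conjugations coming from $\phi$ through both inner products and through the conjugation action, and matching the $\Pi$-implementing unitaries on the two sides; once the trivialization $\cK=\hat{\Pi}\times\tilde{\sK}$ and the explicit form of the $u_g$ are in hand, everything else is a routine transcription of the fact that the section module of a full finite-rank Hermitian vector bundle is an imprimitivity bimodule between its endomorphism algebra and the base.
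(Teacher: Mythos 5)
Your proof is correct, and the paper in fact states this lemma with no proof at all, treating it as the routine verification signposted by the two bullet points preceding it and by the reference to Example \ref{exmp:Morita}; your argument supplies exactly that verification (finite-rank full Hermitian bundle $\Rightarrow$ imprimitivity bimodule, plus equivariance of both inner products and matching of the $\Pi$-implementing unitaries). The only caveat is the $t\leftrightarrow t^{-1}$/complex-conjugation ambiguity in reconciling $\rho_t\xi=\hat{t}\,\xi$ with the appendix normalization $\rho_t\xi=\xi\cdot\sigma_t^{*}$, but that ambiguity is already present in the paper's own conventions (compare the proof of the Poincar\'e-bundle lemma in Section \ref{section:4}), so it is not a gap on your part.
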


\subsection{Twisted crystallographic T-duality via the Kasparov product}
We apply the above discussion to the twisted equivariant K-groups of our interest. Before that, we summarize our notations on Dirac operators. 
\begin{notn}\label{notn:Dirac}
let $\hat{S}$ denote the spinor representation of the Clifford algebra $\Cl(V \oplus -V)$, where $-V$ denotes the linear space $V$ with the negative definite inner product. We have $\ast$-homomorphisms $\fc \colon \Cl(V) \to \bB(\hat{S})$ and $\fh \colon \Cl(-V) \to \bB(\hat{S})$. We write $m \colon C(V/\Pi ) \to \bB(L^2(V/\Pi, \hat{S}))$ for the multiplication $\ast$-representation. Let us define the Dirac operator $D:= \sum \fh(v_i) \partial_{v_i}$ on $V/\Pi$, where $v_i$ runs over an orthonormal basis of $T(V/\Pi)$. Set $F:=D(1+D^2)^{-1/2}$. 

Similarly, we write $D_E$ and $F_E$ for the Dirac operator twisted by a vector bundle $E$. Also, we write $\tilde{D}:=\sum_{v_i} \fh(v_i) \partial_{v_i}$ for the Dirac operator on $V$ and set $\tilde{F}:=\tilde{D}(1+\tilde{D}^2)^{-1/2}$. 
\end{notn}

\begin{lem}\label{lem:BottKK}
The Bott map $\beta_V \otimes \blank \colon \prescript{\phi}{} \K_P^{*,\ft-\fv}(\pt) \to \prescript{\phi}{} \K_P^{*,\ft}(V)$ is an isomorphism. 
\end{lem}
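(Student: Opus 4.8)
The plan is to pass to $\phi$-twisted Chabert--Echterhoff KK-theory, where $\beta_V\otimes\blank$ becomes the Kasparov product with a $\phi$-twisted equivariant dual-Dirac class, and then to invoke equivariant Bott periodicity in the Real setting; this is legitimate because the point group $P$ is finite. Concretely, I would first use Lemma \ref{lem:move} and Lemma \ref{lem:movemap} to rewrite the map. Up to the equivariant Morita equivalences of Remark \ref{rmk:Cl} and the cosmetic bookkeeping of the stabilizing Clifford factors $\Cl_{8n-r,0}$ and $\Cl_{p,q}$ (and the attendant degree shift), the map $\beta_V\otimes\blank$ is identified with the Kasparov product with the dual-Dirac class $[\beta]\in\prescript{\phi}{}\KK^P(\bC,C_0(V,\Cl(V)))$, namely the class of Lemma \ref{lem:movemap}(3) specialized to the bundle $E=V\to\pt$ (so $C_0(V,\Cl(V))$ is $C_0$ of the vector space $V$ tensored with the Clifford algebra of the $P$-representation $V$), built from the Bott section $C$ of \eqref{eq:C}. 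Since Kasparov product with a $\prescript{\phi}{}\KK^P$-equivalence induces isomorphisms on $\prescript{\phi}{}\KK^P(D,\blank)$ for every $D$, it therefore suffices to prove that $[\beta]$ is a $\prescript{\phi}{}\KK^P$-equivalence, and this settles all degrees $*$ and all twists $\ft$ at once.

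Next I would write down the candidate inverse, the $\phi$-twisted equivariant Dirac class assembled from Notation \ref{notn:Dirac}:
\[ [\alpha] := [\, L^2(V,\hat S),\ m\hotimes\fc,\ \tilde F\,] \in \prescript{\phi}{}\KK^P(C_0(V,\Cl(V)),\bC), \]
where $\tilde F=\tilde D(1+\tilde D^2)^{-1/2}$, $m$ denotes multiplication by $C_0(V)$, $\fc$ is the $\Cl(V)$-action on $\hat S$ (which graded-commutes with $\tilde D$), and $P$ acts on $L^2(V,\hat S)$ by the $\phi$-twisted unitary representation induced from its orthogonal action on $V$ and on $\hat S$. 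The Kasparov-module axioms---oddness and essential self-adjointness of $\tilde F$, compactness of $m(f)(\tilde F^2-1)$ by Rellich's lemma, compactness of the graded commutators $[\tilde F,m(f)\hotimes\fc(w)]$, and the (exact) commutation of $\tilde F$ with the $\phi$-twisted $P$-action, which holds since $P$ acts by isometries---are routine and are unaffected by the conjugation twist, since $\tilde D$ is built from real-linear data.

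The crux is then to establish the two Bott-periodicity identities $[\beta]\hotimes[\alpha]=\id_{\bC}$ in $\prescript{\phi}{}\KK^P(\bC,\bC)$ and $[\alpha]\hotimes[\beta]=\id_{C_0(V,\Cl(V))}$; together with the first paragraph these give at once that $\beta_V\otimes\blank$ is an isomorphism for every degree and every twist $\ft$. The first identity is the standard harmonic-oscillator computation: the Kasparov product $[\beta]\hotimes[\alpha]$ is represented by the operator $\tilde F+C$, whose kernel is the one-dimensional space of Gaussians, and Mehler's formula identifies the product with the rank-one projection onto that ground state; the ground state is $O(d)$-invariant and the projection is real, so the computation is $P$-equivariant and compatible with the Real structure. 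The second identity is Kasparov's rotation trick: regarding $[\alpha]\hotimes[\beta]$ as an endomorphism of $C_0(V,\Cl(V))$, one connects it to the identity through the homotopy of rotations of $V\oplus V$ interchanging the two summands. This is the step I expect to be the main obstacle, although the difficulty is organizational rather than conceptual: the rotations can be chosen $P$-equivariantly because $P$ acts linearly and orthogonally on $V$, and every ingredient of the argument (rotations, Clifford multiplication by real vectors, the Gaussian Bott function) is real, so Kasparov's proof of equivariant Bott periodicity carries over verbatim to the category of Real $P$-C*-algebras. In practice I would cite the Real equivariant Bott periodicity theorem and verify that $[\alpha]$ and $[\beta]$ are its $\phi$-twisted incarnations, rather than reproduce the rotation homotopy in detail.
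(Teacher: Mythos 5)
Your proposal is correct and follows essentially the same route as the paper: identify $\beta_V\otimes\blank$ with a Kasparov product via Lemma \ref{lem:movemap}(3), exhibit the Dirac class $\alpha_V=[L^2(V,\hat S),m\hotimes\fc,\tilde F]$ as the candidate inverse, and conclude by the fact that $\alpha_V$ and $\beta_V$ are mutually inverse in Real equivariant KK-theory (Kasparov's argument), which transfers to the $\phi$-twisted setting by the functor of Remark \ref{rmk:RealKK}. The paper simply cites Kasparov for the two periodicity identities rather than sketching the harmonic-oscillator and rotation-trick computations as you do.
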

\begin{proof}
This follows from Lemma \ref{lem:movemap} (3). Indeed, it is proved in the same way as \cite{kasparovOperatorFunctorExtensions1980} that the element $\beta_V$ has the $\prescript {\phi}{} \KK^P$-inverse
\[\alpha_V:=[L^2(V,\hat{S}),m,\tilde{F}] \in \prescript{\phi}{} \KK^P (C_0(V), \bC ). \]
(Indeed, the elements $\alpha_V$ and $\beta_V$ are mutually inverse in Real KK-theory. Compare this fact with Remark \ref{rmk:RealKK}.)
\end{proof}

Now we apply Lemma \ref{lem:move}, Remark \ref{rmk:Cl} and Lemma \ref{lem:bdlK} for the spaces of our interest. Let $(\phi , \ft)$ be a twist on the group $P$ and let $\acute{\sV}$ be a $(\phi,\acute{\ft})$-twisted representation of $P$ as in Lemma \ref{lem:move}.
\begin{prp}\label{lem:FMtoKK}
There are canonical isomorphisms
\begin{align*}
\begin{split}
\prescript{\phi}{} \K_P^{n+d,\ft -\fv}(V/\Pi) &\cong \prescript{\phi}{} \KK^P(\Cl_{0,n}, C(V/\Pi) \hotimes \bK(\acute{\sV}) \hotimes \Cl(V)), \\
\prescript{\phi}{} \K_P^{n+d,\ft -\fv}(V/\Pi \times \hat{\Pi}) &\cong \prescript{\phi}{} \KK^P(\Cl_{0,n}, C(V/\Pi  \times \hat{\Pi}) \hotimes \bK(\acute{\sV}) \hotimes \Cl(V)), \\
\prescript{\phi}{} \K_P^{n+d,\ft -\fv + \sigma}(V/\Pi \times \hat{\Pi}) &\cong \prescript{\phi}{} \KK^{G,\Pi}(\Cl_{0,n}, C(V/\Pi  \times \hat{\Pi})_\sigma \hotimes \bK(\acute{\sV}) \hotimes \Cl(V)), \\
\prescript{\phi}{} \K_P^{n,\ft +\sigma }(\hat{\Pi}) &\cong \prescript{\phi}{} \KK^{G,\Pi }(\Cl_{0,n}, C(\hat{\Pi})_\sigma \hotimes \bK(\acute{\sV})). 
\end{split}
\end{align*}
\end{prp}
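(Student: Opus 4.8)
The four isomorphisms are all instances of Lemma~\ref{lem:move}, combined with two further ingredients: Remark~\ref{rmk:Cl}, which trades the Clifford-bundle factor $\Cl(V)$ for a genuine spinor bundle at the cost of a degree shift, and---for the two statements carrying the $\sigma$-twist---the identification $\prescript{\phi}{}\KK^{G,\Pi}=\prescript{\phi}{}\KK^{P}$ on $\phi$-twisted $P$-C*-algebras recorded above, together with the $(G,\Pi)$-equivariant Morita equivalence $C(\hat{\Pi},\bK(\cK))\sim C(\hat{\Pi})_\sigma$. Note that $V/\Pi$, $V/\Pi\times\hat{\Pi}$ and $\hat{\Pi}$ are all compact, so $C_0=C$ everywhere; all twists named below are understood pulled back to the relevant base.

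For (1) and (2): choose $m$ with $8m\geq d$ and let $S$ be the spinor bundle of $V\oplus\underline{\bR}^{8m-d}$ carrying the $(\phi,\fv)$-twisted $P$-action of Remark~\ref{rmk:Cl}. A short computation with the definition of $\acute{\cdot}$, using its additivity on cohomology classes and $\acute{\fv}=-\fv$, gives $\acute{(\ft-\fv)}=\acute{\ft}+\fv$, so that $\acute{\sV}\hotimes S$ is a $(\phi,\acute{(\ft-\fv)})$-twisted $P$-vector bundle on $X$ (for $X=V/\Pi$ or $V/\Pi\times\hat{\Pi}$). Lemma~\ref{lem:move} then yields
\[
\prescript{\phi}{}\K_P^{n+d,\ft-\fv}(X)\;\cong\;\prescript{\phi}{}\KK^{P}\bigl(\Cl_{0,n+d},\,C(X,\bK(\acute{\sV}\hotimes S))\bigr).
\]
By Remark~\ref{rmk:Cl} one has a $P$-equivariant graded $\ast$-isomorphism $\Cl(V)\hotimes\Cl_{8m-d,0}\cong\bK(S)$, hence $C(X,\bK(\acute{\sV}\hotimes S))\cong\bigl(C(X)\hotimes\bK(\acute{\sV})\hotimes\Cl(V)\bigr)\hotimes\Cl_{8m-d,0}$; moving the last factor into the first KK-variable via the standard Clifford dimension-shift isomorphism and invoking $8$-fold periodicity identifies the right-hand side with $\prescript{\phi}{}\KK^{P}\bigl(\Cl_{0,n},\,C(X)\hotimes\bK(\acute{\sV})\hotimes\Cl(V)\bigr)$. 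This is (1) (with $X=V/\Pi$) and (2) (with $X=V/\Pi\times\hat{\Pi}$); it is precisely this step that produces the degree shift by $d=\dim V$ together with the twist shift by $-\fv$.

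For (3) and (4): let $\cK$ be the $(\phi,\sigma)$-twisted $P$-bundle on $\hat{\Pi}$ built from Lemma~\ref{lem:bdlK} via a choice of section $s\colon P\to G$ (note $\acute{\sigma}=\sigma$, as $\sigma$ has trivial $H^1$-component, so that $\acute{\sV}\hotimes S\hotimes\cK$, resp.\ $\acute{\sV}\hotimes\cK$, carries the twist demanded by Lemma~\ref{lem:move}). Applying Lemma~\ref{lem:move} to $X=V/\Pi\times\hat{\Pi}$ with twist $\ft-\fv+\sigma$, then factoring the section algebra along the $\hat{\Pi}$-direction and performing the Clifford manipulation of the previous paragraph on the $V/\Pi$-factor (all inside $\prescript{\phi}{}\KK^{P}$), gives
\[
\prescript{\phi}{}\K_P^{n+d,\ft-\fv+\sigma}(V/\Pi\times\hat{\Pi})\;\cong\;\prescript{\phi}{}\KK^{P}\bigl(\Cl_{0,n},\,C(V/\Pi)\hotimes\bK(\acute{\sV})\hotimes\Cl(V)\hotimes C(\hat{\Pi},\bK(\cK))\bigr);
\]
for (4) one applies Lemma~\ref{lem:move} to $X=\hat{\Pi}$ with twist $\ft+\sigma$ to get $\prescript{\phi}{}\K_P^{n,\ft+\sigma}(\hat{\Pi})\cong\prescript{\phi}{}\KK^{P}(\Cl_{0,n},\bK(\acute{\sV})\hotimes C(\hat{\Pi},\bK(\cK)))$, no degree reduction needed. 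In both cases every tensor factor is a $\phi$-twisted $P$-C*-algebra, so $\prescript{\phi}{}\KK^{P}$ may be replaced by $\prescript{\phi}{}\KK^{G,\Pi}$, and the equivariant Morita equivalence $C(\hat{\Pi},\bK(\cK))\sim C(\hat{\Pi})_\sigma$ replaces $C(\hat{\Pi},\bK(\cK))$ by $C(\hat{\Pi})_\sigma$; using $C(V/\Pi)\hotimes C(\hat{\Pi})_\sigma=C(V/\Pi\times\hat{\Pi})_\sigma$ this is exactly (3), and the $V/\Pi$-free version is (4).

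The only genuine work is bookkeeping: verifying $\acute{(\ft-\fv)}=\acute{\ft}+\fv$ (and its $\sigma$-shifted analogue), tracking how $\Cl(V)\cong\Cl_{d,0}$ contributes exactly the degree shift $+d$ under the dimension-shift isomorphisms, and checking that the periodicity identifications respect the Real structure encoded by $\phi$. None of this needs an idea beyond Lemmas~\ref{lem:move} and~\ref{lem:bdlK} and Remark~\ref{rmk:Cl}; the main obstacle is simply to not lose or gain a twist or a unit of degree along the chain of identifications.
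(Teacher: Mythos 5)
Your proposal is correct and follows essentially the same route the paper intends: the paper gives no separate proof of this proposition but derives it, exactly as you do, from Lemma \ref{lem:move} (with the auxiliary bundle $\acute{\sV}\hotimes S$, resp.\ $\acute{\sV}\hotimes S\hotimes\cK$), Remark \ref{rmk:Cl} for trading $\bK(S)$ against $\Cl(V)\hotimes\Cl_{8m-d,0}$ and absorbing the degree shift by $d$, Lemma \ref{lem:bdlK} for $\cK$, and the Morita equivalence $C(\hat{\Pi},\bK(\cK))\sim C(\hat{\Pi})_\sigma$ together with the identification of $\prescript{\phi}{}\KK^{P}$ with $\prescript{\phi}{}\KK^{G,\Pi}$ on $P$-algebras. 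Your twist bookkeeping ($\acute{(\ft-\fv)}=\acute{\ft}+\fv$ up to the coboundary relating $\epsilon(c,\nu)$ and $\epsilon(\nu,c)$, and $\acute{\sigma}=\sigma$) checks out.
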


We describe the twisted crystallographic T-duality map as a Kasparov product.
\begin{lem}
Through the isomorphisms in Proposition \ref{lem:FMtoKK}, the pull-back $\hat{\pi}^* \colon \prescript{\phi}{} \K_P^{*,\ft-\fv} (V/\Pi) \to \prescript{\phi}{} \K_P^{*,\ft-\fv} (V/\Pi \times \hat{\Pi})$ corresponds to the Kasparov product with $[\hat{\pi}^*] \hotimes \id_{C(V/\Pi)}$, where
\[[\hat{\pi}^*] := [C(\hat{\Pi}), \pi^*, 0] \in \prescript{\phi}{} \KK^P( \bC,C(\hat{\Pi})). \]
\end{lem}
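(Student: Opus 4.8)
The plan is to obtain this as an immediate consequence of the naturality of the comparison isomorphisms recorded in Lemma \ref{lem:movemap}(1), together with an elementary rewriting of the Kasparov class attached to a $*$-homomorphism. Lemma \ref{lem:movemap}(1) already asserts that, under the isomorphisms of Lemma \ref{lem:move} --- and hence under those of Proposition \ref{lem:FMtoKK} --- the pull-back along a $P$-equivariant map $f$ corresponds to the Kasparov product with $[C(-),f^{*},0]$. So, with $f=\hat{\pi}\colon V/\Pi\times\hat{\Pi}\to V/\Pi$ the projection, all that remains is to identify the resulting class
\[ \big[\,C(V/\Pi\times\hat{\Pi})\hotimes\bK(\acute{\sV})\hotimes\Cl(V),\ \hat{\pi}^{*}\hotimes\id,\ 0\,\big] \]
with $[\hat{\pi}^{*}]\hotimes\id_{C(V/\Pi)}$ after tensoring through by $\id_{\bK(\acute{\sV})\hotimes\Cl(V)}$. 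Before doing this I would note that the situation is genuinely $P$-equivariant: the twist on $V/\Pi\times\hat{\Pi}$ under consideration is $\hat{\pi}^{*}(\ft-\fv)$, carrying no $\sigma$-contribution, so the relevant algebra is the honest $\phi$-twisted $P$-algebra $C(V/\Pi\times\hat{\Pi})\hotimes\bK(\acute{\sV})\hotimes\Cl(V)$ and one may work inside $\prescript{\phi}{}\KK^{P}$; moreover the auxiliary data $\acute{\sV}$ and the Clifford bundle $\Cl(V)$ absorbing $-\fv$ (Remark \ref{rmk:Cl}) are pulled back from the point, so at the two ends of $\hat{\pi}$ they agree up to this pull-back and Lemma \ref{lem:movemap}(1) applies with a common auxiliary representation on both sides.

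For the rewriting I would use the canonical $P$-equivariant identification $C(V/\Pi\times\hat{\Pi})\cong C(V/\Pi)\hotimes C(\hat{\Pi})$, under which $\hat{\pi}^{*}\colon C(V/\Pi)\to C(V/\Pi\times\hat{\Pi})$ becomes $\id_{C(V/\Pi)}\hotimes\pi^{*}$, where $\pi^{*}\colon\bC\to C(\hat{\Pi})$ is the unital inclusion, i.e.\ the pull-back of functions along the collapsing map $\hat{\Pi}\to\pt$ (which is $P$-equivariant for the $\rho$-action on $\hat{\Pi}$). Since the $\prescript{\phi}{}\KK^{P}$-class of a graded $*$-homomorphism of the form $\varphi\hotimes\psi$ is the exterior Kasparov product of $[\varphi]$ and $[\psi]$, this gives
\[ \big[\,C(V/\Pi\times\hat{\Pi}),\ \hat{\pi}^{*},\ 0\,\big]\;=\;\big[\,C(\hat{\Pi}),\ \pi^{*},\ 0\,\big]\hotimes\id_{C(V/\Pi)}\;=\;[\hat{\pi}^{*}]\hotimes\id_{C(V/\Pi)}, \]
up to the flip $C(\hat{\Pi})\hotimes C(V/\Pi)\cong C(V/\Pi)\hotimes C(\hat{\Pi})$, which is a $\prescript{\phi}{}\KK^{P}$-equivalence and affects nothing. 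Tensoring through by $\id_{\bK(\acute{\sV})\hotimes\Cl(V)}$ and combining with the previous paragraph yields the claim; the factor $\bK(\acute{\sV})\hotimes\Cl(V)$ is what is suppressed in the notation $[\hat{\pi}^{*}]\hotimes\id_{C(V/\Pi)}$ of the statement. (Alternatively, one may observe that $\hat{\pi}^{*}(x)=x\otimes_{\pt}1_{\hat{\Pi}}$ and combine Lemma \ref{lem:movemap}(1) for the collapsing map $\hat{\Pi}\to\pt$ with Lemma \ref{lem:movemap}(2).)

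The proof is thus essentially bookkeeping once Lemma \ref{lem:movemap} is available; the one point that deserves a little care is the one flagged above, namely that the twist pulled back to $V/\Pi\times\hat{\Pi}$ at this stage carries no $\sigma$-part, so that Lemma \ref{lem:move} and Lemma \ref{lem:movemap}(1) genuinely apply with a common auxiliary twisted representation and one never has to leave $\prescript{\phi}{}\KK^{P}$ for $\prescript{\phi}{}\KK^{G,\Pi}$.
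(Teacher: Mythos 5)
Your proposal is correct and follows exactly the route the paper takes: the paper disposes of this lemma with the single remark that it is a special case of Lemma \ref{lem:movemap}(1), and your argument simply spells out the bookkeeping (the identification $C(V/\Pi\times\hat{\Pi})\cong C(V/\Pi)\hotimes C(\hat{\Pi})$ and the compatibility of classes of $*$-homomorphisms with exterior products) that makes that specialization explicit.
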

This is a special case of Lemma \ref{lem:movemap} (1). 

\begin{lem}
Through the isomorphisms in Proposition \ref{lem:FMtoKK}, the internal product $[\cP] \hotimes _{V/\Pi \times \hat{\Pi}} \blank \colon \prescript{\phi}{} \K_P^{*,\ft-\fv} (V/\Pi \times \hat{\Pi}) \to \prescript{\phi}{} \K_P^{*,\ft-\fv +\sigma} (V/\Pi \times \hat{\Pi})$ corresponds to the Kasparov product with 
\[ [\! [\cP] \!] :=[C(V/\Pi \times \hat{\Pi},\cP), m, 0] \in \prescript{\phi}{} \KK^{G,\Pi}(C(V/\Pi \times \hat{\Pi}) , C(V/\Pi \times \hat{\Pi})_\sigma). \] 
\end{lem}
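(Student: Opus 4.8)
The plan is to reduce the internal product with $[\cP]$ to an external product followed by a diagonal pull-back, and then to translate both operations into the Kasparov picture using Lemma~\ref{lem:movemap}. Write $Z := V/\Pi \times \hat{\Pi}$. Since $\phi$ is pulled back from $H^1_P(\pt)$, Remark~\ref{rmk:prod} applies and gives
\[
[\cP] \hotimes_Z \blank \;=\; \Delta_Z^* \circ \big( [\cP] \hotimes \blank \big),
\]
where $\Delta_Z \colon Z \to Z \times Z$ is the diagonal and $[\cP] \hotimes \blank$ is the external product of Definition~\ref{defn:prod}. Thus it suffices to identify $[\cP] \hotimes \blank$ with a Kasparov external product and $\Delta_Z^*$ with a Kasparov product over a diagonal $*$-homomorphism, and then to simplify the composite.

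For the external factor, observe that $[\cP] = [\cP, 1, -1] \in \prescript{\phi}{}\K^{0, \hat{\pi}^*\sigma}_P(Z)$ is represented by a rank-one twisted bundle. Under the $(G,\Pi)$-equivariant analogue of Lemma~\ref{lem:move} used in Proposition~\ref{lem:FMtoKK} (the case $\ft=0$, $n=0$), namely $\prescript{\phi}{}\K^{0, \hat{\pi}^*\sigma}_P(Z) \cong \prescript{\phi}{}\KK^{G,\Pi}(\bC, C(Z)_\sigma)$, in which the $\sigma$-twist is absorbed into the implementing unitaries of $C(Z)_\sigma$ rather than into a coefficient bundle, the class $[\cP]$ corresponds to that of the Hilbert $C(Z)_\sigma$-module $C(Z,\cP)$ of sections of $\cP$, equipped with the canonical left action of $\bC$ and the zero operator; here one uses that the $\Pi$-action on $\cP$ is multiplication by $\hat{\pi}^*\sigma_t$, as computed in the lemma describing the $\phi$-twisted $G$-action on the Poincar\'e line bundle $\cP$, so that $C(Z,\cP)$ is a $(G,\Pi)$-equivariant rank-one module over $C(Z)_\sigma$. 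By Lemma~\ref{lem:movemap}(2), together with the compatibility of the external product with the partial descent $\prescript{\phi}{}{j}_{G,\Pi}$ recorded in Appendix~\ref{section:app}, the external product $[\cP] \hotimes \blank$ corresponds to the Kasparov external product with this module class.

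For the diagonal pull-back, Lemma~\ref{lem:movemap}(1) shows that $\Delta_Z^*$ corresponds, through the isomorphisms of Proposition~\ref{lem:FMtoKK}, to the Kasparov product with the class of the induced pull-back $*$-homomorphism; because the target of $\Delta_Z^*$ carries the twist $\ft - \fv + \sigma$, the relevant morphism is the $(G,\Pi)$-equivariant multiplication $C(Z)_\sigma \hotimes C(Z) \to C(Z)_\sigma$, which is a genuine $(G,\Pi)$-morphism since the implementing unitaries $\hat{\pi}^*\sigma_t \otimes 1$ of the source are sent to $\hat{\pi}^*\sigma_t$. Composing this with the previous step and invoking associativity of the Kasparov product, the operation $[\cP] \hotimes_Z \blank$ becomes the Kasparov product with the balanced product of $C(Z,\cP) \hotimes C(Z)$ over $C(Z)_\sigma \hotimes C(Z)$ against $C(Z)_\sigma$; this balanced product is canonically $C(Z,\cP)$, with its left $C(Z)$-action now given by the multiplication representation $m$, i.e.\ exactly $[\![\cP]\!]$. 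This proves the lemma.

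The main obstacle is the twist bookkeeping. The source and target of the two maps live in Freed--Moore groups whose twists differ by $\sigma$, which on the Chabert--Echterhoff side is encoded not in a coefficient bundle but in the implementing unitaries of $C(Z)_\sigma$; one must verify that the isomorphisms of Lemma~\ref{lem:move} and Proposition~\ref{lem:FMtoKK} intertwine the operation ``tensor the universal $(\phi, \ft - \fv)$-twisted bundle $\cH$ and its Fredholm sections by the line bundle $\cP$'' — which is how $[\cP] \hotimes_Z \blank$ acts in the Fredholm and Karoubi pictures — with the operation ``balance the corresponding Kasparov cycle against $C(Z,\cP)$ on the right'', and that this is compatible with all the auxiliary tensor factors ($\bK(\acute{\sV})$, $\Cl(V)$, the stabilization by $\prescript{\phi}{}\ell^2(P)$) appearing in Proposition~\ref{lem:FMtoKK}. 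Once the dictionary of Lemma~\ref{lem:movemap} and the partial-descent compatibilities of the appendix are in hand, the remaining verifications are routine functoriality statements for induced $*$-homomorphisms and imprimitivity bimodules, together with the stability of the Kasparov product under $\prescript{\phi}{}{j}_{G,\Pi}$.
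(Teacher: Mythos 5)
Your proposal is correct and follows essentially the same route as the paper: decompose the cup product as $\Delta_Z^*\circ([\cP]\hotimes\blank)$ via Remark \ref{rmk:prod}, translate each factor through Lemma \ref{lem:movemap}, and observe that the balanced product $[\cP]\hotimes[\Delta_\sigma^*]$ collapses to $[\![\cP]\!]$ with $C(Z)$ acting by multiplication. The paper's proof is just a terser version of the same computation.
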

\begin{proof}
This follows from Lemma \ref{lem:movemap} (1), (3) and the fact that the cup product $[\cP] \otimes _{V/\Pi \times \hat{\Pi}} x $ coincides with $\Delta^*([\cP] \otimes x)$, where $\Delta \colon V/\Pi \times \hat{\Pi} \to (V/\Pi \times \hat{\Pi})^2$ denotes the diagonal embedding. Indeed, $\Delta^*([\cP] \otimes x)$ corresponds to
\[ x \hotimes_{C(V/\Pi) \hotimes C(\hat{\Pi})} ([\cP] \hotimes_{C(V/\Pi) \hotimes C(\hat{\Pi})} [\Delta_\sigma^*])=x \hotimes _{C(V/\Pi) \hotimes C(\hat{\Pi})} [\![ \cP ]\!], \]
where $\Delta_\sigma^* \colon C(V/\Pi \times \hat{\Pi}) \otimes C(V/\Pi \times \hat{\Pi})_\sigma \to C(V/\Pi \times \hat{\Pi})_\sigma$ is the pull-back by the diagonal embedding.  
\end{proof}

\begin{lem}
Through the isomorphisms in Lemma \ref{lem:FMtoKK}, the push-forward $\pi_! \colon \prescript{\phi}{} \K_P^{*,\ft-\fv +\sigma}(V/\Pi \times \hat{\Pi}) \to \prescript{\phi}{} \K_P^{*,\ft + \sigma }(\hat{\Pi})$ corresponds to the Kasparov product with the element $[\pi_!] \otimes \id_{C(\Pi)_\sigma }$, where 
 \[ [\pi_!] :=[L^2(V/\Pi, \hat{S}), m \otimes \fc , F] \in \prescript{\phi}{} \KK^P (C(V/\Pi) \otimes \Cl(V) , \bC ). \]
\end{lem}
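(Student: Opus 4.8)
The plan is to reduce to the collapsing map $\pi \colon V/\Pi \to \pt$ and then to identify the $\prescript{\phi}{}\KK$-class implementing $\pi_!$ with the Dirac class of $V/\Pi$ via the standard Kasparov-product computation underlying the equality of the topological and analytic indices. For the reduction, note that $C(V/\Pi)$ and $\Cl(V)$ carry ordinary $\phi$-twisted $P$-actions, so the $\sigma$-twist in the statement sits entirely on the factor $C(\hat\Pi)_\sigma \hotimes \bK(\acute\sV)$, which $\pi_!$ leaves alone; since all the ingredients of $\pi_!$ in Definition \ref{defn:push} (the equivariant embedding, the Thom isomorphism $\Thom_E$, the open embedding, and $\Thom_{V'}^{-1}$) are performed fibrewise over $\hat\Pi$, the construction is compatible with the exterior product $\prescript{\phi}{}\KK^P(-,-) \times \prescript{\phi}{}\KK^{G,\Pi}(-,-) \to \prescript{\phi}{}\KK^{G,\Pi}(-,-)$ of Appendix \ref{section:app}. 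Thus it is enough to treat $\pi \colon V/\Pi \to \pt$, the general case following by tensoring with $\id_{C(\hat\Pi)_\sigma \hotimes \bK(\acute\sV)}$; the twists match because $T(V/\Pi)$ is equivariantly trivial, so $w_i^P(V/\Pi) = w_i^P(V)$, together with Remark \ref{rmk:Cl}.

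Next I would unwind Definition \ref{defn:push}. Choose a $P$-equivariant embedding $j \colon V/\Pi \hookrightarrow V'$ into a spin representation, with normal bundle $q \colon E \to V/\Pi$ identified with an open tubular neighbourhood $\iota \colon E \hookrightarrow V'$, so that $\pi_! = \Thom_{V'}^{-1} \circ \iota_* \circ \Thom_E$. By Lemma \ref{lem:movemap}(3), $\Thom_E$ is the Kasparov product with the Bott element $\beta_E = [C_0(E,q^*S),\fc,C]$; the open embedding map (Section \ref{section:3}), read off from the Fredholm picture, is the Kasparov product with the class $[\iota] \in \prescript{\phi}{}\KK^P(C_0(E),C_0(V'))$ of the ideal inclusion $C_0(E)\hookrightarrow C_0(V')$; and the analogue of Lemma \ref{lem:BottKK} for $V'$ shows $\Thom_{V'}^{-1}$ is the Kasparov product with $\alpha_{V'} = [L^2(V',\hat S_{V'}),m,\tilde F_{V'}]$. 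Hence, after the usual identification of Clifford degrees, $\pi_!$ is the Kasparov product with the triple product $\beta_E \hotimes_{C_0(E)} [\iota] \hotimes_{C_0(V')} \alpha_{V'} \in \prescript{\phi}{}\KK^P(C(V/\Pi)\hotimes\Cl(V),\bC)$.

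The remaining step, which I expect to be the main obstacle, is to show this triple product equals $[L^2(V/\Pi,\hat S), m\otimes\fc, F]$. This is the $\phi$-twisted $P$-equivariant version of the identity ``topological index $=$ analytic index'' for the Dirac operator on $V/\Pi$, and I would prove it exactly as in the non-equivariant case (cf.\ \cite{atiyahIndexEllipticOperators1968a}*{Section 3} and \cite{kasparovEquivariantKKTheory1988}): the product $[\iota]\hotimes_{C_0(V')}\alpha_{V'}$ is the Dirac class of the open submanifold $E\subset V'$, which, since $E$ is a tubular neighbourhood, is operator-homotopic to the sum of the vertical Dirac operator of $q\colon E\to V/\Pi$ and the pullback of $F$; and the Kasparov product of $\beta_E$ with the vertical Dirac operator over $C_0(E)$ contracts to $\id_{C(V/\Pi)\hotimes\Cl(V)}$ by Kasparov's Bott--Dirac argument along the fibres of $E$. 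The only point to check is that this contraction remains valid $P$-equivariantly, $\phi$-Real-ly, and with $\bK(\acute\sV)$-coefficients; but as every bundle and operator involved is $P$-equivariant and $\phi$-Real, and the Kasparov products are computed locally along the fibres, the classical argument carries over verbatim. Independence of the choices of $j$ and $V'$ is already part of Definition \ref{defn:push}. Combining the three Kasparov products yields $[\pi_!] = [L^2(V/\Pi,\hat S),m\otimes\fc,F]$, and tensoring back with $\id_{C(\hat\Pi)_\sigma\hotimes\bK(\acute\sV)}$ gives the claim.
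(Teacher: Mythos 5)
Your proposal is correct and follows essentially the same route as the paper: reduce to the $V/\Pi$ factor by observing that each of the three operations in Definition \ref{defn:push} acts as $(\cdot)\hotimes\id_{C(\hat{\Pi})_\sigma}$, rewrite $\pi_!$ as the triple Kasparov product $\beta_E\hotimes[\iota_*]\hotimes\alpha_W$ via Lemma \ref{lem:movemap}, and identify that product with the Dirac class $[L^2(V/\Pi,\hat S),m\otimes\fc,F]$ by the standard topological-equals-analytic-index argument (the paper cites Connes--Skandalis for this last step, which is the same computation you sketch via Atiyah and Kasparov).
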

\begin{proof}
Let us choose a $P$-equivariant embedding $V/\Pi \to W$. Let $E$ denote the normal bundle and let $\iota \colon E \to W$ denote the embedding. 
Then the normal bundle of $V/\Pi \times \hat{\Pi} \subset W \times \hat{\Pi}$ is $E \times \hat{\Pi}$ with the open embedding $\iota \times \id_{\hat{\Pi}}$.

Now, by Definition \ref{defn:push}, (\ref{eq:Thom}) and Lemma \ref{lem:movemap} (2), we have
\begin{itemize}
    \item $\Thom_{E \times \hat{\Pi}} (x) =  x \hotimes_{C(V/\Pi \times \hat{\Pi})_\sigma } \beta_{E \times \hat{\Pi}} =   (x \hotimes_{C(V/\Pi)} \beta_{E}) \hotimes \id_{C(\hat{\Pi})_\sigma} $,
    \item $(\iota \times \id_{\hat{\Pi}})_*(y) =y \hotimes _{C_0(E)}([\iota_*] \hotimes \id_{C(\hat{\Pi})_\sigma})$,
    \item $\Thom_{\hat{\Pi} \times W} ^{-1}(z)=z \hotimes _{C_0(W)} \alpha_W$, 
\end{itemize}
for any $x \in \prescript{\phi}{} \K_P^{*,\ft-\fv}(V/\Pi \times \hat{\Pi})$, $y \in \prescript{\phi}{} \K_P^{*,\ft}(E \times \hat{\Pi})$ and $z \in \prescript{\phi}{} \K_P^{*,\ft}(W \times \hat{\Pi})$ under the identification as in Proposition \ref{lem:FMtoKK}. Here $\Delta_E$ denotes the proper map $(\id_{E}, q ) \colon E \to E \times V/\Pi $.

Hence the map $\pi_!$ corresponds to the Kasparov products over $C(V/\Pi)$ with the KK-element
\[ (\beta_E \hotimes_{C_0(E)} [\Delta_E^*]) \hotimes_{C_0(E)} [\iota_*] \hotimes_{C_0(W) \hotimes \Cl(V)} \alpha_W \]
of $\prescript{\phi}{} \KK^{G,\Pi}(C(V/\Pi ) \hotimes  \Cl(V) , \bC )$. 
Note that the Bott element $\beta_E$ is identified with 
\[ [C_0(E, S_W), \fc , C] \in  \prescript{\phi}{}\KK^P(\Cl(V), C_0(E)), \]
where $S_W$ denotes the spinor bundle on $W$, through the isomorphism
\[ \prescript{\phi}{} \KK^{P}( \Cl_{8n-r,0}, C_0(E, q^*\Cl(E'))) \cong \prescript{\phi}{} \KK^P(\Cl(V), C_0(E)) \]
given by $\Cl(E') \hotimes  \Cl(V) \cong \Cl(W) \hotimes \Cl_{8n-r,0}$. 
Now it is proved that this element coincides with $[\pi_!]$ in the same way as the push-forward map in non-equivariant K-theory, given in \cite{connesLongitudinalIndexTheorem1984}*{Proposition 2.9} (which is essentially due to \cite{connesLongitudinalIndexTheorem1984}*{Lemma 2.4}). 
\end{proof}

In summary, the crystallographic T-duality map $\prescript{\phi}{}{\mathrm{T}}_{G}^{*,\ft}$ is given by the Kasparov product with the element
\[ [\hat{\pi}^*] \otimes_{C(\hat{\Pi})} [\! [\cP]\! ] \otimes_{C(V/\Pi)} [\pi_!] \in   \prescript{\phi}{} \KK^{G,\Pi}(C(V/\Pi) \otimes \Cl(V), C(\hat{\Pi})_\sigma ). \]

Now we give an explicit representative of this $\KK$-element. Let $\fH_\cP$ denote the Hilbert bundle on $\hat{\Pi}$ obtained by the fiberwise $L^2$-completion of $C(\hat{\Pi} \times V/\Pi, \hat{S} \otimes \cP)$ and let $\fD_\cP$ denote the fiberwise Dirac type operator twisted by $\cP$, acting on the Hilbert $C(\hat{\Pi})$-module $C(\hat{\Pi}, \fH_\cP)$ as an unbounded regular odd self-adjoint operator (the regularity is proved in \cite{hankeCodimensionTwoIndex2015}*{Theorem 1.5}). Set $\fF_\cP:=\fD_\cP (1+\fD_\cP ^2)^{-1/2}$. We write $m_{V/\Pi}$ for the multiplication $\ast$-representation of $C(V/\Pi)$ onto $C(\hat{\Pi}, \fH_\cP )$.
\begin{lem}\label{lem:Tdual}
We have
\begin{align} 
[\hat{\pi}^*] \otimes_{C(\hat{\Pi})} [\! [\cP]\! ] \otimes_{C(V/\Pi)} [\pi_!]
= [C(\hat{\Pi}, \fH_\cP ), m \otimes \fc , \fF_\cP  ].\label{eq:TdualKK}
\end{align}
\end{lem}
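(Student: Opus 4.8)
The plan is to identify the right-hand side of \eqref{eq:TdualKK} with the asserted Kasparov product and then invoke uniqueness of that product; for this I verify the conditions of Definition \ref{defn:Kasparov}, treating the two zero-operator correspondences first, which is legitimate by associativity of the Kasparov product. Since $\hat\Pi$ and $V/\Pi$ are compact, the internal tensor product of the modules of $[\hat\pi^*]=[C(\hat\Pi),\pi^*,0]$ and $[\![\cP]\!]=[C(V/\Pi\times\hat\Pi,\cP),m,0]$ over $C(\hat\Pi)$ is the finitely generated projective module $C(\hat\Pi)\otimes_{C(\hat\Pi)}C(V/\Pi\times\hat\Pi,\cP)\cong C(V/\Pi\times\hat\Pi,\cP)$, on which $C(V/\Pi)$ acts on the left by multiplication through $\hat\pi^*$ and $0$ is a valid product operator. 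Using that the implementing unitaries $\sigma_t$ depend only on the $\hat\Pi$-coordinate and that $\Pi$ acts trivially on $V/\Pi$ and on $\Cl(V)$, one has $C(V/\Pi\times\hat\Pi)_\sigma\cong C(V/\Pi)\hotimes C(\hat\Pi)_\sigma$ as $\phi$-twisted $(G,\Pi)$-C*-algebras, so
\[ [\hat\pi^*]\otimes_{C(\hat\Pi)}[\![\cP]\!]=\big[C(V/\Pi\times\hat\Pi,\cP),\,m\circ\hat\pi^*,\,0\big]\in\prescript{\phi}{}\KK^{G,\Pi}\big(C(V/\Pi),C(V/\Pi)\hotimes C(\hat\Pi)_\sigma\big). \]

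I then identify the module underlying the Kasparov product of this correspondence, tensored with $\id_{\Cl(V)}$, with $[\pi_!]\hotimes\id_{C(\hat\Pi)_\sigma}$ over $C(V/\Pi)\hotimes\Cl(V)$. Fiberwise over $\chi\in\hat\Pi$ the tensor product $C(V/\Pi,\cP_\chi)\otimes_{C(V/\Pi)}L^2(V/\Pi,\hat S)$ completes to $L^2(V/\Pi,\cP_\chi\otimes\hat S)$, while the $\Cl(V)\otimes_{\Cl(V)}(\cdot)$ and $C(\hat\Pi)_\sigma\otimes_{C(\hat\Pi)_\sigma}(\cdot)$ factors collapse; globally one obtains canonically the Hilbert $C(\hat\Pi)_\sigma$-module $C(\hat\Pi,\fH_\cP)$ defined above the lemma, with left $C(V/\Pi)\hotimes\Cl(V)$-action $m\otimes\fc$ (the Clifford part coming from the collapsed $\Cl(V)\otimes_{\Cl(V)}(\cdot)$). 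One checks that the $\phi$-twisted $(G,\Pi)$-structure carried through these identifications from $\cP$ (whose $\Pi$-action is multiplication by $\hat\pi^*\sigma_t$) and from $\hat S$ matches the structure on $C(\hat\Pi,\fH_\cP)$ described before the statement; in particular the $\sigma$-twist is accounted for entirely by $\cP$.

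It remains to see that $\fF_\cP=\fD_\cP(1+\fD_\cP^2)^{-1/2}$ is a valid product operator. As $\fD_\cP$ is fiberwise elliptic on the compact manifold $V/\Pi$ and regular as an unbounded odd self-adjoint operator on $C(\hat\Pi,\fH_\cP)$ by \cite{hankeCodimensionTwoIndex2015}*{Theorem 1.5}, the triple $(C(\hat\Pi,\fH_\cP),m\otimes\fc,\fF_\cP)$ is a $\phi$-twisted $(G,\Pi)$-equivariant Kasparov module, with $\fF_\cP$ essentially $G$-invariant and $\fF_\cP^2-1$, $[\fF_\cP,(m\otimes\fc)(a)]$ fiberwise compact. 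Because the operator on $[\hat\pi^*]\otimes_{C(\hat\Pi)}[\![\cP]\!]$ vanishes, Kasparov's positivity condition holds trivially, so the only substantive requirement is the connection condition: for each smooth section $\xi$ of $\cP$, with $T_\xi\colon L^2(V/\Pi,\hat S)\hotimes C(\hat\Pi)_\sigma\to C(\hat\Pi,\fH_\cP)$ the induced adjointable map, $\fF_\cP T_\xi\mp T_\xi(F\hotimes 1)$ and its adjoint are compact. Fixing a smooth connection $\nabla^\cP$ on $\cP$, on a common core one has $\fD_\cP(\xi\cdot\eta)=\fc(\nabla^\cP\xi)\,\eta\pm\xi\cdot\tilde D\eta$, so $[\fD_\cP,T_\xi]$ is bounded; the standard resolvent estimates for the bounded transform then show $\fF_\cP T_\xi\mp T_\xi(F\hotimes 1)$ is a norm limit of fiberwise-compact operators, hence compact. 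This is the fiberwise, $G$-equivariant analogue of the argument for the non-equivariant push-forward (cf.\ \cite{connesLongitudinalIndexTheorem1984}*{Lemma 2.4, Proposition 2.9}); alternatively one checks Kucerovsky's criterion for the unbounded Kasparov product. Uniqueness of the product then yields \eqref{eq:TdualKK}.

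The main obstacle is twofold. On the analytic side one must have in hand the regularity and fiberwise ellipticity of the twisted Dirac family $\fD_\cP$ — the nontrivial input imported from \cite{hankeCodimensionTwoIndex2015} — and then run the connection estimate uniformly in $\chi$. On the bookkeeping side, one must carry the $\phi$-twisted $(G,\Pi)$-equivariant data through every internal tensor product and confirm that the twists of $\cP$, $\Cl(V)$ and $\hat S$ reassemble into precisely the stated $C(\hat\Pi)_\sigma$-module $C(\hat\Pi,\fH_\cP)$ with its $G$-action, rather than some Morita-equivalent model.
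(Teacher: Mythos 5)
Your argument follows the paper's proof essentially step for step: first compute $[\hat\pi^*]\otimes_{C(\hat\Pi)}[\![\cP]\!]$ as the zero-operator correspondence on $C(V/\Pi\times\hat\Pi,\cP)$, identify the internal tensor product with $C(\hat\Pi,\fH_\cP)$, and then reduce everything to checking that $\fF_\cP$ is an $F$-connection (positivity being vacuous since the first operator is zero). The paper dispatches the connection condition by noting that the principal symbol of $\fF_\cP$ is $\sigma(F)\otimes\id_\cP$, whereas you run the equivalent Leibniz-rule commutator estimate with a choice of connection $\nabla^\cP$; this is the same standard argument in slightly more explicit form, so the proposal is correct.
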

\begin{proof}
The Kasparov product $[\hat{\pi}^*] \otimes [\! [\cP ]\!] $ is represented by the Kasparov bimodule
\[ [C(V/\Pi \times \hat{\Pi},\cP) , m_{V/\Pi} , 0 ] \in \prescript{\phi}{} \KK^{G,\Pi}(C(V/\Pi), C(V/\Pi \times \hat{\Pi})_\sigma). \]
Hence the only thing we have to see is that the operator $\fF_\cP $ is a $F$-connection on 
\[C(V/\Pi \times \hat{\Pi} , \cP) \otimes_{C(V/\Pi)} L^2(V/\Pi, S) \cong C(\hat{\Pi}, \fH_\cP ). \]
It holds true because the principal symbol $\sigma (\fF_\cP)$ coincides with $\sigma(F) \otimes \id_{\cP}$. 
\end{proof}

\subsection{T-duality and Dirac morphism}
The proof of Theorem \ref{thm:crystal} is now reduced to the invertibility of the twisted equivariant KK-morphism (\ref{eq:TdualKK}). In this subsection we show this by relating (\ref{eq:Tdual}), equivalently \eqref{eq:TdualKK}, with the Dirac element (in the sense of \cite{kasparovEquivariantKKTheory1988}), which is known to be invertible.

\begin{defn}
Let $\mathsf{D} \in \prescript{\phi}{} \KK^G(C_0(V) \hotimes \Cl (V), \bC )$ be the Dirac element 
\[ \mathsf{D}:= [L^2(V, \hat{S} ) , m \otimes \fc, \tilde{F}], \]
where $\hat{S}$ and $\tilde{F}$ are as in Notation \ref{notn:Dirac}.
\end{defn}
It is proved by Higson--Kasparov \cite{higsonTheoryKKTheory2001} that this element is a $\prescript{\phi}{} \KK^G$-equivalence (see also Remark \ref{rmk:RealKK}). 
By the functoriality of the partial descent homomorphism, the KK-element
\[ \prescript{\phi}{} j_{G,\Pi}(\mathsf{D}) \in \prescript{\phi}{} \KK^{G,\Pi} ((C_0(V) \hotimes \Cl(V)) \rtimes \Pi , C^*(\Pi)), \]
represented by the Kasparov bimodule $[L^2(V) \rtimes \Pi, (m \hotimes \fc) \rtimes \Pi ,  F_\Pi ]$ as in Definition \ref{defn:pdesc}, is also a $\prescript{\phi}{} \KK^{G,\Pi}$-equivalence. 

The group C*-algebra $C^*\Pi$ with the twisted $(G,\Pi)$-action as in Remark \ref{rmk:cross} is isomorphic to $C(\hat{\Pi})_\sigma$. 
Also, the crossed product C*-algebra $C_0(V) \rtimes \Pi$ is $\prescript{\phi}{} \KK^{G,\Pi}$-equivalent to $C(V/\Pi)$. Indeed, let $\cX$ denote the bundle of Hilbert spaces $V \times _\Pi \ell^2(\Pi)$ on $V/\Pi$. The $G$-action on $V$ induces that on $\cX$ as $g \cdot [v,\xi]:=[g\cdot v,\xi]$ for $v \in V$ and $\xi \in \ell^2(\Pi)$. 
The continuous section space $C(V/\Pi,\cX)$ is $G$-equivariantly isomorphic to the completion of $C_c(V)$ by the $C(V/\Pi)$-valued inner product
\[\langle \xi, \eta \rangle(v + \Pi):=\sum_{t \in \Pi}\overline{\xi(v + t)}\eta(v+t). \]
This identification extends to the $G$-equivariant unitary 
\[ W \colon L^2(V) \to L^2(V/\Pi , \cX). \]

\begin{lem}\label{lem:Morita}
There is a $(G,\Pi)$-equivariant $\ast$-isomorphism 
\[ \varphi \colon C_0(V) \rtimes \Pi \cong \bK(C(V/\Pi, \cX)).\]
\end{lem}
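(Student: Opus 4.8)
The plan is to recognise $\varphi$ as a $(G,\Pi)$-equivariant, $\phi$-twisted incarnation of Green's imprimitivity theorem for the free and proper action $\Pi\curvearrowright V$: namely, to exhibit the obvious covariant representation of $(C_0(V),\Pi,\alpha)$ on the Hilbert $C(V/\Pi)$-module $\cE:=C(V/\Pi,\cX)$ and then to identify its range with $\bK(\cE)$ by an explicit rank-one computation.

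First I would set up the module structure. Via the unitary $W$, identify $\cE$ with the completion of $C_c(V)$ in the $C(V/\Pi)$-valued inner product written just before the lemma, the right $C(V/\Pi)$-action being pointwise multiplication. On $C_c(V)$ let $C_0(V)$ act by $m_0(f)\xi:=f\xi$ and let $\Pi$ act by translation, $(U_t\xi)(v):=\xi(v-t)$. One checks that each $m_0(f)$ and each $U_t$ extends to an adjointable operator on $\cE$, that $U$ is a unitary representation of $\Pi$, and that $U_t\,m_0(f)\,U_t^{*}=m_0(\alpha_t(f))$; hence $(m_0,U)$ is covariant and integrates to a $\ast$-homomorphism $\varphi\colon C_0(V)\rtimes\Pi\to\bB(\cE)$ with $\varphi(g)=\sum_{t\in\Pi}m_0(g(t))U_t$ for $g\in C_c(\Pi,C_0(V))$.

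The heart of the matter is the identity $\Theta_{\xi,\eta}=\varphi(g_{\xi,\eta})$ for $\xi,\eta\in C_c(V)$, where $\Theta_{\xi,\eta}\zeta:=\xi\cdot\langle\eta,\zeta\rangle$ and $g_{\xi,\eta}(t):=\xi\cdot\overline{\alpha_t(\eta)}\in C_0(V)$; the function $t\mapsto g_{\xi,\eta}(t)$ is finitely supported because $g_{\xi,\eta}(t)\neq 0$ forces $t\in\operatorname{supp}\xi-\operatorname{supp}\eta$, a compact set, which is exactly where discreteness and properness of the lattice action enter. Since the $\Theta_{\xi,\eta}$ span a dense subspace of $\bK(\cE)$, this gives $\bK(\cE)\subseteq\overline{\varphi(C_0(V)\rtimes\Pi)}$; specialising to $\xi,\eta$ supported in a ball $B$ with $(B-B)\cap\Pi=\{0\}$ shows $\varphi(g_{\xi,\eta})=m_0(\xi\bar\eta)$, so $m_0(C_0(V))\subseteq\bK(\cE)$ and therefore $\varphi(C_0(V)\rtimes\Pi)\subseteq\bK(\cE)$ because $\bK(\cE)$ is an ideal in $\bB(\cE)$. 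Injectivity of $\varphi$ is immediate: $\Pi\cong\bZ^d$ is amenable, so $C_0(V)\rtimes\Pi=C_0(V)\rtimes_r\Pi$, and $\varphi$ agrees through $W$ with the regular representation of $C_0(V)\rtimes\Pi$ on $L^2(V)$, which is faithful. Thus $\varphi$ is a $\ast$-isomorphism $C_0(V)\rtimes\Pi\xrightarrow{\ \cong\ }\bK(\cE)$.

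It remains to check equivariance. The $\phi$-twisted $(G,\Pi)$-structure on $C_0(V)\rtimes\Pi$ (the $\phi$-linear $G$-action on $C_0(V)$ together with conjugation on the normal subgroup $\Pi$, the implementing unitaries being the canonical $\lambda_t\in\cM(C_0(V)\rtimes\Pi)$) and the one on $\bK(\cE)$ obtained by conjugating by the $G$-action on the bundle $\cX$ set up before the lemma are intertwined by $\varphi$. This is a direct verification on the dense subsets $C_c(V)\subseteq\cE$ and $C_c(\Pi,C_0(V))\subseteq C_0(V)\rtimes\Pi$, and it reduces to the $G$-equivariance of $W$ — with the complex conjugation governed by $\phi$ entering only through the scalars, since $\Pi\subseteq\ker\phi$ — and to the fact that the $G$-action on $\cX$ covers $G\curvearrowright V/\Pi$, its restriction to $\Pi$ being implemented by the constant unitaries that $\varphi$ assigns to the $\lambda_t$. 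I expect the only real obstacle to be this last step of bookkeeping: pinning down the translation direction, the regular-representation convention defining $\cX=V\times_\Pi\ell^2(\Pi)$, the direction in which $G$ conjugates $\Pi$, and the placement of the $\phi$-conjugation so that they are mutually consistent and $\varphi$ genuinely matches the implementing unitaries on both sides; the purely algebraic content is the classical imprimitivity isomorphism and is routine once the module is fixed.
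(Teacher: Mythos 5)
Your proof is correct, but it takes a genuinely different route from the paper's. The paper proves the lemma structurally: it first identifies $\bK(C(V/\Pi,\cX))$ with the continuous section algebra $C(V/\Pi, V\times_\Pi\bK(\ell^2\Pi))$, then invokes Takesaki--Takai duality $\bK(\ell^2\Pi)\cong c_0(\Pi)\rtimes\Pi$ and commutes the fibrewise crossed product past the section functor to land on $C_0(V)\rtimes\Pi$; only afterwards does it write down the resulting map $\varphi(\sum f_tu_t)\xi(v)=\sum f_t(v)\xi(v-t)$ and read off $(G,\Pi)$-equivariance from that formula. You instead build $\varphi$ from the start as the integrated form of the covariant pair $(m_0,U)$ and establish surjectivity onto $\bK(\cE)$ by the rank-one identity $\Theta_{\xi,\eta}=\varphi(g_{\xi,\eta})$ with $g_{\xi,\eta}(t)=\xi\cdot\overline{\alpha_t(\eta)}$, and injectivity via amenability of $\Pi$ and faithfulness of the representation on $L^2(V)$ --- i.e.\ the classical Green--Rieffel imprimitivity computation for the free proper action $\Pi\curvearrowright V$. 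The two constructions produce the same explicit $\varphi$, so the equivariance check at the end is identical. What the paper's route buys is brevity: the duality and bundle identifications do all the work and no density or faithfulness arguments are needed. What yours buys is self-containedness and transparency about hypotheses: the finite-support computation for $g_{\xi,\eta}$ isolates exactly where discreteness and properness of the lattice action enter, and the specialisation to $\xi,\eta$ supported in a ball $B$ with $(B-B)\cap\Pi=\{0\}$ is where freeness is used. The one point you lean on implicitly is that the covariant representation on $L^2(V)$ is unitarily equivalent to a regular representation (via $L^2(V)\cong\ell^2(\Pi)\otimes L^2(D)$ for a fundamental domain $D$), which is what makes it faithful on the reduced crossed product; this is standard for free proper actions, but it is the step where your injectivity argument actually uses freeness rather than just amenability.
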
 
\begin{proof}
The compact operator algebra $\bK(C(V/\Pi,\cX))$ is isomorphic to the continuous section algebra $C(V/\Pi , V \times _\Pi \bK(\ell^2(\Pi)))$. 
By the Takesaki--Takai duality $\bK(\ell^2(\Pi)) \cong c_0(\Pi) \rtimes \Pi$, we have 
\begin{align*}
\bK(C(V/\Pi, \cX))& \cong  C(V/\Pi , V \times _\Pi \bK(\ell^2\Pi) )\\
&\cong C(V/\Pi , V \times _\Pi (c_0(\Pi) \rtimes \Pi) ) \\
& \cong C(V/\Pi , V \times _\Pi c_0(\Pi)) \rtimes \Pi = C_0(V) \rtimes \Pi. 
\end{align*}
This gives a $\ast$-representation $\varphi \colon C_0(V) \rtimes \Pi \to \bK(C(V/\Pi, \cX))$, which is explicitly written as
\[ \varphi ((\sum f_t u_t)\xi) (v)=\sum f_t(v)\xi(v-t)  \]
for $\xi$ in the dense subalgebra $C_c(V) \subset C(V/\Pi , \cX)$. Hence it is $G$-equivariant and $u_t(\xi)=\varphi(u_t)\xi$, that is, $\varphi$ is $(G,\Pi)$-equivariant.
\end{proof}
Lemma \ref{lem:Morita} means that $C_0(V) \rtimes \Pi$ is $\phi$-twisted $(G,\Pi)$-equivariantly Morita equivalent with $C(V/\Pi)$ via the imprimitivity bimodule $\cX$ (cf.\ Example \ref{exmp:Morita}). In particular, the element 
\[ [\cX]:=[C(V/\Pi,\cX),\varphi ,0] \in \prescript{\phi}{} \KK^{G,\Pi }(C_0(V) \rtimes \Pi , C(V/\Pi))\]
is a $\prescript{\phi}{} \KK^{G,\Pi}$-equivalence with the $\KK$-inverse represented by its adjoint $\cX^*$.

\begin{lem}\label{lem:Dirac}
The $\prescript{\phi}{} \KK^{G,\Pi}$-equivalence 
\[ [\cX^*] \otimes_{C_0(V) \rtimes \Pi} \prescript{\phi}{} j_{G,\Pi} (\mathsf{D}) \in \prescript{\phi}{} \KK^{G,\Pi}(C(V/\Pi) \otimes \Cl(V), C(\hat{\Pi})_\sigma )\]
is represented by the Kasparov bimodule $[C(\hat{\Pi}, \fH_\cP ), m \otimes \fc , \fF_\cP]$.
\end{lem}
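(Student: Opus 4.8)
The strategy is to evaluate the Kasparov product on the left-hand side directly and recognise the answer. Since the imprimitivity bimodule $\cX^{*}$ is represented by a Kasparov cycle with zero operator, the interior product $[\cX^{*}] \otimes_{C_{0}(V) \rtimes \Pi} \prescript{\phi}{} j_{G,\Pi}(\mathsf{D})$ requires no operator perturbation: using $\prescript{\phi}{} j_{G,\Pi}(\mathsf{D}) = [L^{2}(V,\hat{S}) \rtimes \Pi, (m \hotimes \fc) \rtimes \Pi, F_{\Pi}]$ (Definition \ref{defn:pdesc}) together with the fact that $\Pi$ acts trivially on $\Cl(V)$ — so that $[\cX^{*}] \hotimes \id_{\Cl(V)}$ merely cancels the coefficient $\Cl(V)$ — the product is represented by
\[
\Bigl( C(V/\Pi,\cX^{*}) \otimes_{C_{0}(V) \rtimes \Pi} (L^{2}(V,\hat{S}) \rtimes \Pi),\ \tilde m \otimes \fc,\ F_{\Pi} \Bigr),
\]
where $\tilde m$ is the multiplication representation of $C(V/\Pi)$ induced by the left module structure of $\cX^{*}$ and $F_{\Pi}$ acts on the right tensor factor. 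We must therefore identify this cycle with $[C(\hat{\Pi},\fH_{\cP}), m \otimes \fc, \fF_{\cP}]$.

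First I would pin down the Hilbert $C^{*}(\Pi)$-module. Transporting through the $G$-equivariant unitary $W \colon L^{2}(V) \to L^{2}(V/\Pi,\cX)$ above, the isomorphism $\varphi \colon C_{0}(V) \rtimes \Pi \cong \bK(C(V/\Pi,\cX))$ of Lemma \ref{lem:Morita}, and the description $\cX = V \times_{\Pi} \ell^{2}(\Pi)$, the module $C(V/\Pi,\cX^{*}) \otimes_{C_{0}(V)\rtimes\Pi} (L^{2}(V,\hat{S}) \rtimes \Pi)$ is a Hilbert $C^{*}(\Pi)$-module on which the Fourier transform over $\Pi$ acts. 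That transform gives the $(G,\Pi)$-equivariant isomorphism $C^{*}(\Pi) \cong C(\hat{\Pi})_{\sigma}$ (the implementing unitaries $u_{t}$ going to $\sigma_{t} = \hat{t}$, and the $\phi$-twisted $G$-action matching \eqref{eq:Piact}), and it diagonalises the module into the continuous sections of the Hilbert bundle over $\hat{\Pi}$ whose fibre at $\chi$ is the space of $\chi$-equivariant $L^{2}$-spinors on $V$, i.e.\ $L^{2}(V/\Pi,\hat{S}\otimes\cP_{\chi})$; by definition this bundle is $\fH_{\cP}$, and under the identification $\tilde m$ becomes the fibrewise multiplication representation $m$. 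The non-triviality of the Poincar\'{e} bundle $\cP$ over $\hat{\Pi}$ is exactly the monodromy of these Bloch subspaces. This Fourier--Bloch identification runs parallel to the computation in the proof of Lemma \ref{lem:Tdual} and to the untwisted case treated in \cite{gomiCrystallographicTduality2019}.

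It then remains to match operators. After the identification, $F_{\Pi}$ is the fibrewise bounded transform of the Euclidean Dirac operator $\tilde{D}$ acting on each Bloch space; on $\chi$-equivariant sections this is the $\cP_{\chi}$-twisted Dirac operator on $V/\Pi$, so it corresponds to $\fF_{\cP} = \fD_{\cP}(1+\fD_{\cP}^{2})^{-1/2}$, whose regularity is \cite{hankeCodimensionTwoIndex2015}*{Theorem 1.5}. Invariantly, $F_{\Pi}$ and $\fF_{\cP}$ share the same principal symbol — Clifford multiplication along $T^{*}(V/\Pi)$ — so $\fF_{\cP}$ is an $F_{\Pi}$-connection on the interior tensor product, just as in the proof of Lemma \ref{lem:Tdual}; by uniqueness of the Kasparov product it also represents the product, which is thus the asserted cycle.

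The main obstacle is the second paragraph: making the Fourier--Bloch transform precise at the level of Hilbert $C^{*}$-modules rather than Hilbert spaces, and in particular checking that the Poincar\'{e} line bundle enters with the correct twist, that all the ambient structure ($\phi$-twisted $G$-action, $\Pi$-implementing unitaries, $\Cl(V)$-coefficients, and the $C(V/\Pi)$-action) is matched $(G,\Pi)$-equivariantly, and that the unbounded Euclidean Dirac operator transforms into the regular operator $\fD_{\cP}$ whose bounded transform is a connection for $F_{\Pi}$.
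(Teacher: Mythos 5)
Your proposal follows essentially the same route as the paper's proof: the paper likewise exploits that $[\cX^*]$ carries the zero operator, transports $L^2(V)\rtimes\Pi$ by the Fourier-type unitary $U(\sum_t \xi_t u_t)=\sum_t W\xi_t\otimes\chi_t$ onto $L^2(V/\Pi,\cX)\otimes C(\hat{\Pi})_\sigma$, identifies the resulting coefficient bundle $\cQ=\cX^*\otimes_{\tilde{\varphi}}\cX$ with the Poincar\'e bundle $\cP$ by an explicit fibrewise formula, and matches the operator with $\fF_\cP$ via the connection condition. The explicit unitary and the bundle isomorphism $\cQ\cong\cP$ are precisely the ``Fourier--Bloch'' details you defer to in your second paragraph, so your outline is correct and coincides with the paper's argument.
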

\begin{proof}
We identify the $G$-equivariant Hilbert $C(\hat{\Pi})_\sigma$-module $L^2(V) \rtimes \Pi$ with $L^2(V/\Pi , \cX ) \otimes C(\hat{\Pi})_\sigma$ by the unitary
\[ U \colon L^2(V) \rtimes \Pi \ni \sum_{t \in \Pi} \xi_t u_t \mapsto \sum_{t \in \Pi} W\xi_t \otimes \chi_t \in L^2(V/\Pi,\cH) \otimes C(\hat{\Pi
})_\sigma,\]
where $\chi_t \in C(\hat{\Pi})_\sigma$ is the function $\chi_t(\xi):=\xi(t)$. Then we have
\begin{align*}
U(m\rtimes \Pi )\Big( \sum_{h \in \Pi} f_hu_h \Big) U^* \Big( \sum_{g \in \Pi} \xi_g \otimes \chi_g \Big) &= \sum_{g,h \in \Pi} \varphi (f_hu_h)(\xi_g) \otimes \chi_{hg},\\
U (F_\Pi) U^* \Big( \sum_{g \in \Pi} \xi_g \otimes \chi_g \Big) &=\sum_{g \in \Pi} WFW^*\xi_g \otimes \chi_g.
\end{align*}
In other words, $\tilde{\varphi}:=\Ad(U) \circ (m \rtimes \Pi)$ satisfies
\[ 
 \tilde{\varphi} \Big( \sum_{g \in \Pi} f_gu_g \Big) (\alpha)  = \sum_{g \in \Pi} \alpha(g)\varphi (f_gu_g)\]
for any $\alpha \in \hat{\Pi}$, and $UF_\Pi U^*=WFW^* \otimes 1$. The operator $WFW^*$ coincides with $F_\cX=D_\cX(1+D_\cX^2)^{-1/2}$, where $D_\cX$ is the Dirac operator on $V/\Pi$ twisted by $\cX$ with respect to its flat connection. 

Let $\cQ:= \cX^* \otimes_{\tilde{\varphi}} \cX $, let $D_\cQ$ be the Dirac operator twisted by $\cQ$ and let $F_\cQ:=D_\cQ (1+D^2_\cQ)^{-1/2}$. Then the above discussion shows that
\begin{align*}
&[\cX]^* \otimes  _{C_0(V) \rtimes \Pi} \prescript{\phi}{} j_{G,\Pi} (\sfD) \\
=&[\cX^*] \otimes _{C_0(V) \rtimes \Pi} [L^2(V/\Pi, \cX \otimes \hat{S}_V) \otimes C(\hat{\Pi})_\sigma , \tilde{\varphi}, F_\cX \otimes  1_{C(\hat{\Pi})_\sigma } ] \\
=&[C(\hat{\Pi} , \fL^2(V/\Pi, \cQ \otimes \hat{S})) , m_{V/\Pi} , F_{\cQ}].
\end{align*}

The remaining task is to show that $\cQ$ is $G$-equivariantly isomorphic to $\cP$. For $x=v + \Pi \in V/\Pi$ and $\alpha \in \hat{\Pi}$, the bundle map
\[ \cX^* \otimes_{\tilde{\varphi}} \cX \ni  [x,\alpha, s^* \otimes t] \mapsto \Big[ v, \alpha, \sum _{g \in \Pi}\alpha(g)\overline{s(v+g)}t(v+g) \Big] \in (V \times \hat{\Pi} \times \bC)/\Pi \]
is well-defined independent from the choice of $v$. It is everywhere non-zero and hence is an isomorphism.
\end{proof}

\begin{proof}[Proof of Theorem \ref{thm:crystal}]
Now the theorem follows from Lemma \ref{lem:Tdual} and Lemma \ref{lem:Dirac}.
\end{proof}

\subsection{Generalization to irrational twists}\label{section:5.4}
The description using the Dirac morphism given in the above subsection extends the twisted crystallographic T-duality to the `irrational flux' case, i.e., the case that the twist $(\phi,c,\tau)$ of $G$ is not necessarily obtained as Proposition \ref{prp:twcry}, or in other words $\tau$ is not necessarily trivial on $\Pi$. The right side of the T-duality will then involve a noncommutative torus rather than $\hat{\Pi}$.
Such a situation may arise when $\Pi$ is projectively represented as \emph{magnetic} translation operators, as happens when an external magnetic field is applied perpendicular to $V$. 

Let $G$ be a discrete group acting properly and cocompactly on the affine space $V$ and let $(\phi,c,\tau)$ be an arbitrary twist of $G$. Let $\Pi$ be a free abelian finite index subgroup of $G$ such that $\phi|_\Pi$ and $c|_\Pi$ are trivial (such $\Pi$ exists by Lemma \ref{lem:groupext}). We write $G_\tau$ and $\Pi_\tau$ for the $\bT$-extension of $G$ and $\Pi$ corresponding to $\tau$ respectively. Let $(\pi, \sH)$ be a $(\phi,c,\tau)$-twisted unitary representation of $G$.
The Kasparov product with 
\[ \prescript{\phi}{}j_{G,\Pi} (\sfD \otimes \id_{\bK(\sH)}) \in \prescript{\phi}{}{\KK}^{G,\Pi}( (C_0(V) \otimes \bK(\sH)) \rtimes \Pi , \bK(\sH) \rtimes \Pi )  \]
induces an isomorphism 
\begin{align}
    \prescript{\phi}{}{\K}^{G,\Pi}((C_0(V) \otimes \bK(\sH)) \rtimes \Pi) \cong \prescript{\phi}{}{\K}^{G,\Pi}(\bK(\sH) \rtimes \Pi). \label{eq:irrBC}
\end{align} 

The equivariant K-groups on the left and right hand sides of the above isomorphism are simplified.
Firstly, let $\cA$ denote the $\phi$-linear $P$-equivariant bundle $V \times _{\Pi , \Ad(\pi)} \bK(\sH)$ of $\bZ_2$-graded compact operator algebras over $V/\Pi$. By the construction, the equivariant Dixmier--Douady class is
\[ \mathrm{DD}(\cA) = f^*\ft \in H^1_P(V/\Pi; \bZ_2) \oplus H^2_P(V/\Pi; \prescript{\phi}{}{\bT}), \]
where $f \colon V \to \pt$, through the canonical identification $H^1_P(V/\Pi; \bZ_2) \oplus H^2_P(V/\Pi; \prescript{\phi}{}{\bT})  \cong H^1_G(V; \bZ_2) \oplus H^2_G(V; \prescript{\phi}{}{\bT})$.
Now it is checked in the same way as Lemma \ref{lem:Morita} that there is a $\phi$-twisted $(G,\Pi)$-equivariant Morita equivalence
\[ (C_0(V) \otimes \bK(\sH)) \rtimes \Pi  \sim _{\mathrm{Morita }} C(V/\Pi, \cA).  \]

Secondly, we consider the non-commutative torus $C^*_{\bar{\tau}} \Pi$ (where $\bar{\tau}$ denotes the inverse of $\tau$), i.e., the C*-algebra generated by unitaries $\{ u_t \}_{t \in \Pi}$ with the relation $u_su_t = \bar{\tau}(s,t)u_{st}$. 
Let $\alpha$ denote the $\phi$-twisted $G$-action on $C^*_{\bar{\tau}} \Pi$ determined as the $\phi$-linear extension of 
\begin{align} 
\alpha_g(u_t) = \bar{\tau}(g,t)\bar{\tau}(gt,g^{-1})u_{gtg^{-1}} \label{eq:irr}
\end{align}
for any $t\in \Pi $ and $g \in G_\tau $ (note that this $\alpha$ satisfies $\alpha_g \circ \alpha_h=\alpha_{gh}$). 
Also, set $ \sigma _t:= u_t$. Then $\alpha_t = \Ad \sigma_t$  and $\sigma_s\sigma_t = \bar{\tau}(s,t)\sigma_{st}$ hold for any $t \in \Pi$. 
Hence the pair $(\alpha,\sigma)$ obviously gives rise to a $(G_\tau, \Pi_\tau )$-action onto $C^*_{\bar{\tau}} \Pi$ such that $\sigma_z =\bar{z}$ for any $z \in \bT \subset \Pi_\tau$. 
\begin{lem}
The C*-algebras $\bK(\sH) \rtimes \Pi$ and $C^*_{\bar{\tau}} \Pi$ are $(G_\tau, \Pi_\tau)$-equivariantly Morita equivalent (in the sense of Example \ref{exmp:Morita}). 
\end{lem}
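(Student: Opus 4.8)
The plan is to exhibit an explicit imprimitivity bimodule realizing the equivariant Morita equivalence, following the same template as Lemma \ref{lem:Morita}. The candidate bimodule is built from $\sH$ itself: we take $\cM := \sH$ viewed as a right Hilbert $C^*_{\bar{\tau}}\Pi$-module, where the right action of $u_t$ is $\xi \cdot u_t := \bar{\tau}(?)\,\pi_t^{-1}\xi$ (the precise cocycle normalization to be pinned down so that the twisted relations match), and the $C^*_{\bar{\tau}}\Pi$-valued inner product is the "Fourier coefficient" pairing $\langle \xi, \eta\rangle := \sum_{t\in\Pi} \langle \pi_t\xi, \eta\rangle_{\sH}\, u_t$. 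The left action of $\bK(\sH)\rtimes\Pi$ is the obvious one: $\bK(\sH)$ acts by its defining action and $u_t$ acts by $\pi_t$. One then checks that $\bK(\sH)\rtimes\Pi$ is recovered as the algebra of "compact operators" $\bK(\cM)$ on this module — this is essentially the statement that $\sH$ is a full Hilbert module over the twisted group algebra when $\pi$ is a $\tau$-twisted representation, a twisted analogue of the Takesaki--Takai / imprimitivity picture used in Lemma \ref{lem:Morita}.

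The key steps, in order, are: (1) fix cocycle normalizations so that the left $\bK(\sH)\rtimes\Pi$-action and the right $C^*_{\bar\tau}\Pi$-action on $\cM$ commute and the two inner products are compatible; (2) verify $\langle\cdot,\cdot\rangle$ is positive and $C^*_{\bar\tau}\Pi$-valued — completeness of $\sH$ handles convergence since $\pi$ is unitary — and that the module is full on the right; (3) identify $\bK(\cM) \cong \bK(\sH)\rtimes\Pi$, which is where the twisted regular-representation computation lives; (4) promote everything to a $(G_\tau,\Pi_\tau)$-equivariant statement by checking that the $\phi$-twisted $G$-action $g\cdot\xi := \pi_g\xi$ on $\cM$ (well-defined on the level of $\sH$ once one chooses a set-theoretic lift, with the $\Pi_\tau$-action implemented by the $\sigma_t = u_t$ on the right) intertwines the given actions on $\bK(\sH)\rtimes\Pi$ (as in Remark \ref{rmk:cross}) and on $C^*_{\bar\tau}\Pi$ (as in \eqref{eq:irr}); in particular one must check that $g\cdot(\xi\cdot u_t) = (g\cdot\xi)\cdot\alpha_g(u_t)$ with $\alpha_g$ as in \eqref{eq:irr}, which is exactly what the cocycle identity for $\tau$ guarantees.

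The main obstacle I expect is step (4), specifically bookkeeping the $\phi$-twist together with the two layers of cocycles. The $G$-action on $\sH$ is only $\phi$-linear and only $\tau$-projective, so "$g\cdot\xi := \pi_g\xi$" must be interpreted carefully: it descends to an honest action of $G_\tau$ with the central $\bT$ acting by the scalar characters, and one has to confirm this matches the $(G_\tau,\Pi_\tau)$-structure on $C^*_{\bar\tau}\Pi$ in which $\sigma_z = \bar z$. The factors $\bar\tau(g,t)\bar\tau(gt,g^{-1})$ appearing in \eqref{eq:irr} are precisely the obstruction to $\Ad(\pi_g)$ being well-defined on the twisted group algebra without a correction, so the verification that the bimodule intertwines the actions is equivalent to re-deriving \eqref{eq:irr} from the associativity of $\pi$; once that is set up the remaining identities are routine. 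The untwisted skeleton of the argument — that $C_0(V)\rtimes\Pi$ is Morita equivalent to $C(V/\Pi)$ — is Lemma \ref{lem:Morita}, and the present lemma is the "fiber" version of it with $C_0(V)$ replaced by the point and $\bK(\sH)$ carrying the twist, so no genuinely new analytic input beyond that lemma is needed.
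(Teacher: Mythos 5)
There is a genuine gap, and it is in the very first step: the bimodule you propose is the wrong object. You take $\cM=\sH$ itself with the pairing $\langle\xi,\eta\rangle=\sum_{t\in\Pi}\langle\pi_t\xi,\eta\rangle_{\sH}\,u_t$, but this sum does not converge in $C^*_{\bar{\tau}}\Pi$ for a general twisted representation $(\pi,\sH)$: the matrix coefficients $t\mapsto\langle\pi_t\xi,\eta\rangle$ need not decay at all (take $\Pi=\bZ$, $\tau$ trivial and $\pi|_\Pi$ trivial, so every coefficient equals $\langle\xi,\eta\rangle$). The template you are copying from Lemma \ref{lem:Morita} and Proposition \ref{prp:irr} works there only because $\Pi$ acts properly and cocompactly on $V$, resp.\ on $G$, so that compactly supported elements form a dense subspace on which the sum is finite; $\sH$ carries no analogue of compact support. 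The failure is not merely technical: in the toy example above your claim would make $\bC$ an imprimitivity bimodule between $\bC\rtimes\bZ\cong C(\bT)$ and $C^*\bZ\cong C(\bT)$, which is impossible (such a bimodule is the section space of a line bundle over $\bT$). Relatedly, $\bK(\cM)$ for your $\cM$ cannot be $\bK(\sH)\rtimes\Pi\cong\bK(\sH)\otimes C^*_{\bar{\tau}}\Pi$: the rank-one operators $\theta_{\xi,\eta}$ built from your pairing are formally averages $\sum_t\Ad(\pi_t^{-1})(|\xi\rangle\langle\eta|)$ over $\Pi$ and point toward the fixed-point algebra, not the crossed product.

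The correct module is a completion of $c_c(\Pi,\sH)$, i.e.\ $\sH\rtimes_{\bar{\tau}}\Pi\cong\sH\otimes C^*_{\bar{\tau}}\Pi$, not $\sH$. The paper obtains it with no convergence issue by descending the tautological $G_\tau$-equivariant $\bK(\sH)$--$\bC$ imprimitivity bimodule $\sH$ along $\Pi_\tau$ (Example \ref{exmp:Moritadescent}), which gives a $\bK(\sH)\rtimes\Pi_\tau$--$C^*\Pi_\tau$ equivalence, and then cutting down to the direct summand $C^*_{\bar{\tau}}\Pi$ of $C^*\Pi_\tau\cong\bigoplus_{n}C^*_{\tau^n}\Pi$ and observing that the left action factors through $\bK(\sH)\rtimes\Pi$ because the central $\bT\subset\Pi_\tau$ acts trivially from the left. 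Your step (4) --- that the factors $\bar{\tau}(g,t)\bar{\tau}(gt,g^{-1})$ in \eqref{eq:irr} are exactly what is needed for conjugation by $\pi_g$ to act on the twisted group algebra --- is a correct observation and survives the change of module, but the construction has to start from $c_c(\Pi,\sH)$ rather than from $\sH$.
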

\begin{proof}
Before starting the proof we remark that the group C*-algebra $C^*\Pi_\tau$ of the central extension group $\Pi_\tau$ is decomposed as the direct sum $\bigoplus_{n \in \bZ} C^*_{\tau^n}\Pi$ since the subalgebra $C^*\bT \cong c_0(\bZ)$ of $C^*\Pi_\tau $ lies in the center.
This decomposition respects the $\phi$-twisted $(G_\tau,\Pi_\tau)$-C*-algebra structure of $C^*\Pi_\tau$. 

As is mentioned in Example \ref{exmp:Moritadescent}, the $\phi$-twisted $G_\tau$-equivariant $\bK(\sH)$-$\bC$ imprimitivity bimodule $\sH$ induces a $\phi$-twisted $(G_\tau, \Pi_\tau)$-equivariant $\bK(\sH) \rtimes \Pi_\tau$-$C^*\Pi_\tau$ imprimitivity bimodule $\sH \rtimes \Pi_\tau$. 
Let $\varphi \colon C^*\Pi_\tau \to C^*_{\bar{\tau}} \Pi$ denote the quotient onto the direct summand. 
Then 
\[\sH \rtimes_{\bar{\tau}} \Pi := (\sH \rtimes \Pi_\tau )\otimes_\varphi C^*_{\bar{\tau}}\Pi \]
is a $G_\tau$-equivariant Hilbert $\bK(\sH) \rtimes \Pi_\tau$-$C^*_{\bar{\tau}}\Pi$ bimodule. Moreover, the left action of $\bK(\sH) \rtimes \Pi_\tau$ descends to the quotient $\bK(\sH) \rtimes \Pi$ since the central subgroup $\bT \subset \Pi_\tau$ acts on $\sH \rtimes _\tau \Pi$ trivially from the left. That is, $\sH \rtimes_\tau \Pi$ is a $\bK(\sH) \rtimes \Pi$-$C^*_{\bar{\tau}}\Pi$ imprimitivity bimodule. Since it is $(G_\tau, \Pi_\tau)$-equivariant by construction, this finishes the proof. 
\end{proof}

Since an equivariant Morita equivalence induces an isomorphism of the equivariant K-group (cf.\ Example \ref{exmp:Morita}), the above discussion and the isomorphism (\ref{eq:irrBC}) conclude the following theorem. 
\begin{thm}
Let $(\phi,c,\tau)$ be a twist of a discrete group $G$ acting properly and cocompactly on $V$. Let $\sW$ be a $(\phi,c,\epsilon(c,c))$-twisted representation of $P$. There is an isomorphism
\[ \prescript{\phi}{}\K^{*,f^*\ft-\fv}_P(V/\Pi) \xrightarrow{\cong} \prescript{\phi}{}{\KK}_{*-d}^{G_\tau ,\Pi_\tau }(\bC, C^*_{\bar{\tau}} \Pi \hotimes \bK(\sW)). \]
\end{thm}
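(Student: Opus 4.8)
The statement is obtained by concatenating the three constructions assembled above in this subsection, so the proof mostly consists of recognising the two ends of the resulting chain of isomorphisms as the groups in the statement. Fix once and for all a $(\phi,c,\tau)$-twisted unitary representation $(\pi,\sH)$ of $G$, so that $\cA:=V\times_{\Pi,\Ad(\pi)}\bK(\sH)$ has equivariant Dixmier--Douady class $f^*\ft$. The first ingredient is the isomorphism \eqref{eq:irrBC}, namely the Kasparov product with $\prescript{\phi}{}{j}_{G,\Pi}(\sfD\otimes\id_{\bK(\sH)})$; this is a $\prescript{\phi}{}\KK^{G,\Pi}$-equivalence because $\sfD$ is a $\prescript{\phi}{}\KK^{G}$-equivalence by Higson--Kasparov and the partial descent $\prescript{\phi}{}{j}_{G,\Pi}$ is functorial. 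The second and third ingredients are the two equivariant Morita equivalences established above: $(C_0(V)\otimes\bK(\sH))\rtimes\Pi\sim_{\mathrm{Morita}}C(V/\Pi,\cA)$ as $\phi$-twisted $(G,\Pi)$-C*-algebras (checked as in Lemma \ref{lem:Morita}), and $\bK(\sH)\rtimes\Pi\sim_{\mathrm{Morita}}C^*_{\bar{\tau}}\Pi$ as $\phi$-twisted $(G_\tau,\Pi_\tau)$-C*-algebras (the preceding lemma).

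\textbf{Identifying the two ends.} For the right-hand end, tensor the Morita equivalence of the preceding lemma with $\bK(\sW)$, where $\sW$ is a $(\phi,c,\epsilon(c,c))$-twisted representation of $P$: since $\bK(\sW)$ is Morita-trivial as a bare C*-algebra, this gives an isomorphism $\prescript{\phi}{}\K^{G,\Pi}(\bK(\sH)\rtimes\Pi)\cong\prescript{\phi}{}\KK^{G_\tau,\Pi_\tau}(\bC,C^*_{\bar{\tau}}\Pi\hotimes\bK(\sW))$, the factor $\bK(\sW)$ playing the same bookkeeping role as $\acute{\sV}$ in Lemma \ref{lem:move}, i.e.\ installing the $\epsilon(c,c)$-shift that reconciles the Chabert--Echterhoff and Freed--Moore conventions (cf.\ Remark \ref{rmk:Gomi}). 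For the left-hand end, note that since the Dirac element $\sfD$ has $C_0(V)\hotimes\Cl(V)$ as its domain, the algebra on the left of \eqref{eq:irrBC} carries a Clifford factor $\Cl(V)$, so that after the Morita equivalence the relevant algebra is $C(V/\Pi,\cA)\hotimes\Cl(V)$. By Remark \ref{rmk:Cl}, $\Cl(V)$ is $P$-equivariantly Morita equivalent, after stabilisation, to $\bK(S)$ with $S$ the spinor module of $V$ --- a $(\phi,\fv)$-twisted $P$-representation, with $\acute{\fv}=-\fv$ --- and this absorption shifts the Clifford degree by $d=\dim V$. Combining this with Lemma \ref{lem:move} (with $\acute{\sV}$ now absorbing $\cA\hotimes S$) identifies $\prescript{\phi}{}\K^{G,\Pi}\big(C(V/\Pi,\cA)\hotimes\Cl(V)\big)$ with the Freed--Moore group $\prescript{\phi}{}\K^{*,f^*\ft-\fv}_P(V/\Pi)$ up to the degree shift $*\mapsto*-d$, exactly as in Proposition \ref{lem:FMtoKK} but now for an arbitrary $\tau$, not necessarily trivial on $\Pi$; here $f^*\ft$ is contributed by $\cA$ and $-\fv$ by $\Cl(V)$. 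Concatenating all of this produces the asserted isomorphism.

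\textbf{Expected obstacle.} No single step is hard, but the bookkeeping of twists and degrees is where I would spend the effort, and I regard it as the main point. One must verify that under the Morita equivalence the equivariant Dixmier--Douady class of $\cA\hotimes\Cl(V)$ --- incorporating the $\phi$-linear structure, the orientation class and the spin data of $V$ --- is precisely $f^*\ft-\fv$ with no residual $\epsilon(c,c)$ term (that correction being carried entirely by $\sW$ on the other side, exactly as in Proposition \ref{lem:FMtoKK}), and that the $\Cl(V)\cong\Cl_{d,0}$ factor contributes a degree shift of exactly $-d$ with the sign conventions of Remark \ref{rmk:Gomi}. A secondary point, which should be stated explicitly, is that $\prescript{\phi}{}\KK^{G,\Pi}$ and $\prescript{\phi}{}\KK^{G_\tau,\Pi_\tau}$ agree on $C^*_{\bar{\tau}}\Pi\hotimes\bK(\sW)$ --- indeed a $\phi$-twisted $(G,\Pi)$-action is the same datum as a $\phi$-twisted $(G_\tau,\Pi_\tau)$-action with $\bT\subset\Pi_\tau$ acting by scalars --- which follows from the general development of $\phi$-twisted Chabert--Echterhoff KK-theory in Appendix \ref{section:app}.
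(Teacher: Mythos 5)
Your proposal is correct and follows essentially the same route as the paper: the theorem is proved there precisely by concatenating the $\prescript{\phi}{}{\KK}^{G,\Pi}$-equivalence \eqref{eq:irrBC} induced by $\prescript{\phi}{}j_{G,\Pi}(\sfD\otimes\id_{\bK(\sH)})$ with the two equivariant Morita equivalences $(C_0(V)\otimes\bK(\sH))\rtimes\Pi\sim C(V/\Pi,\cA)$ and $\bK(\sH)\rtimes\Pi\sim C^*_{\bar\tau}\Pi$, and then identifying the two ends via Lemma \ref{lem:move} and Remark \ref{rmk:Cl} exactly as you do. Your "expected obstacle" paragraph (the $\epsilon(c,c)$/$\sW$ bookkeeping and the $\Cl(V)$ degree shift by $d$) is the same bookkeeping the paper delegates to Proposition \ref{lem:FMtoKK}.
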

In short, hereafter we write the right hand side of the above isomorphism as $\prescript{\phi}{}{\KK}_{*-d,c}^{G_\tau ,\Pi_\tau }(\bC, C^*_{\bar{\tau}} \Pi)$ (this is consistent with the notation of \cite{kubotaNotesTwistedEquivariant2016}*{Section 2}).

Even in the irrational case, the K-group lying in the right hand side of the above isomorphism is viewed as the set of topological phases with the symmetry type $(G,\phi,c,\tau)$. Let us define a Hilbert space $\prescript{\phi}{}\ell^2(G, \tau)$ as the completion of
\[\prescript{\phi}{}{C}_c(G, \tau):=\{ \xi \in C_c(G_\tau) \mid \xi(z g) = z^{-1} \xi(g) \text{ for any $z\in \bT$}  \} \]
equipped with the $L^2$-inner product on $G_\tau$ with respect to the Haar measure and the $\bC$-vector space structure given by the complex multiplication $(\lambda \cdot \xi)(g)=\prescript{\phi(g)}{}{\lambda} \cdot \xi(g)$. 
By choosing a section $s \colon G \to G_{\tau }$, the left regular representation $(\pi_l(h)\xi)(g):=\xi(s(h)^{-1}g)$ is regarded as a $(\phi , \tau)$-twisted representation of $G$. Similarly, the right regular representation $(\pi_r(h)\xi)(g):= \xi(gs(h))$ is a $(\phi, \bar{\tau} )$-twisted representation of $G$. 

A choice of sections $P \to G$ identifies the underlying complex Hilbert space $\prescript{\phi}{}{\ell}^2(G, \tau)$ with the direct sum of subspaces $\ell^2( \Pi g, \tau)$. Then $\pi_l(\Pi)$ and $\pi_r (\Pi)$ acts by the regular representations on each component. 
The set of operators on $\prescript{\phi}{}{\ell}^2(G, \tau  )$ satisfying assumption (1) of Lemma \ref{lem:TP} and commuting with $\pi_l(\Pi)$ forms a $\ast$-algebra. Let $A_\tau$ denote its closure. It is generated by $\pi_r(\Pi)$ and $\bK(\prescript{\phi}{}{\ell}^2(P))$, that is, 
\[A_\tau \cong  C^*_{\bar{\tau} }\Pi \otimes \bK(\prescript{\phi}{}{\ell}^2(P))\]
as C*-algebras. 
The $\phi$-twisted $G_\tau $-action $\Ad (\pi_l)$ on $\bB(\prescript{\phi}{}{\ell^2}(G,\tau))$ descends to a $\phi$-twisted $P$-action onto $A_\tau$. 

In the same way as Subsection \ref{section:3.3}, let us define $\cH_N(G,\phi,\ft)$ as the set of bounded operators on $(\prescript{\phi}{}\ell^2(G,\tau ) \hotimes \acute{\sV})^{\oplus N}$ satisfying the conditions (1), (2') and (3) enumerated below Lemma \ref{lem:TP}, and define the set of topological phases as $\mathscr{TP}(G,\phi,\ft):=\big( \bigcup_N \cH_N(G,\phi,\ft)\big) /\sim $.
This set is the same thing as the Karoubi picture of the twisted equivariant K-group $\K_{0,c}^P(A_\tau)$, which is defined and proved to be isomorphic to $\KK_{0,c}^P(\bC, A_\tau)$ in \cite{kubotaNotesTwistedEquivariant2016}*{Theorem 5.14}. 

\begin{prp}\label{prp:irr}
The $(G_\tau,\Pi_\tau)$-C*-algebra $C^*_{\bar{\tau}} \Pi$ and the $P$-C*-algebra $A_\tau$ are $(G_\tau,\Pi_\tau)$-equivariantly Morita equivalent.
\end{prp}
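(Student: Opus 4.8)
The plan is to take the imprimitivity bimodule to be (a dense sub-bimodule of) $\prescript{\phi}{}\ell^2(G,\tau)$ itself. Fix a set-theoretic section $s\colon P\to G$ together with a compatible lift to $G_\tau$; this identifies $\prescript{\phi}{}\ell^2(G,\tau)$ with the exterior tensor product $\prescript{\phi}{}\ell^2(P)\otimes\prescript{\phi}{}\ell^2(\Pi,\tau)$, and the second factor is the GNS Hilbert space of the canonical tracial state on $C^*_{\bar\tau}\Pi$. Let $\cE_0\subset\prescript{\phi}{}\ell^2(G,\tau)$ be the dense subspace $\prescript{\phi}{}\ell^2(P)\otimes C^*_{\bar\tau}\Pi$, equipped with the right $C^*_{\bar\tau}\Pi$-action and $C^*_{\bar\tau}\Pi$-valued inner product coming from right multiplication on the second factor; its completion $\cE$ is the standard free module $(C^*_{\bar\tau}\Pi)^{\oplus|P|}$. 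The right-module operators are generated by (the adjoints of) the left translations $\pi_l(\Pi)$, so the left action of $A_\tau$ — which by its very definition is the norm closure of the algebra of operators on $\prescript{\phi}{}\ell^2(G,\tau)$ that commute with $\pi_l(\Pi)$ and satisfy the decay bound of Lemma~\ref{lem:TP}(1) — consists of adjointable $C^*_{\bar\tau}\Pi$-module maps, and this realizes $A_\tau$ as $\bK_{C^*_{\bar\tau}\Pi}(\cE)=M_{|P|}(C^*_{\bar\tau}\Pi)$, with $C^*_{\bar\tau}\Pi$ recovered as $\bK_{A_\tau}(\cE)$ and both inner products full. Thus $\cE$ is an $A_\tau$-$C^*_{\bar\tau}\Pi$ imprimitivity bimodule — which, given the stated isomorphism $A_\tau\cong C^*_{\bar\tau}\Pi\otimes\bK(\prescript{\phi}{}\ell^2(P))$, is just an incarnation of the tautological Morita equivalence $B\sim_M B\otimes\bK$.

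The content of the proposition is that this bimodule carries a $(G_\tau,\Pi_\tau)$-equivariant structure intertwining the given structures: on $A_\tau$ the $P$-action descended from $\Ad(\pi_l)$ (with the $\Pi_\tau$-part implemented on $\cE$ by right multiplication by the unitaries $u_t$), and on $C^*_{\bar\tau}\Pi$ the pair $(\alpha,\sigma)$ of \eqref{eq:irr}. I would equip $\cE$ with the $G_\tau$-action given by genuine left translation on functions on $G_\tau$, which is an honest action (not merely projective) precisely because we have passed to the $\bT$-extension, with the central $\bT$ acting by scalars in accordance with $\sigma_z=\bar z$. With this choice the conjugation action of left translation on operators recovers $\Ad(\pi_l)$, so the left-module structure is automatically equivariant, and it remains to verify $\tilde g\cdot(\xi\cdot b)=(\tilde g\cdot\xi)\cdot\alpha_{\tilde g}(b)$ and $\langle\tilde g\cdot\xi,\tilde g\cdot\eta\rangle_{C^*_{\bar\tau}\Pi}=\alpha_{\tilde g}(\langle\xi,\eta\rangle_{C^*_{\bar\tau}\Pi})$ for $\tilde g\in G_\tau$. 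The key computation is that conjugating the right translation by $t\in\Pi$ by the left translation by a lift of $g$ yields the right translation by $gtg^{-1}$ times the scalar $\bar\tau(g,t)\,\bar\tau(gt,g^{-1})$ — which is exactly the factor appearing in the definition \eqref{eq:irr} of $\alpha_g$; in other words $\alpha$ was defined to be precisely this conjugation action, so the identities hold by construction.

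The main obstacle, and the only place demanding genuine care, is the cocycle bookkeeping in this last step: one must normalize the section $P\to G$ and the representative of $\tau$, keep track of the phases picked up each time $\pi_l$ is commuted past a right-module operation or past the inner product, correctly account for the $\phi$-conjugations on the factors where $\phi(g)=1$, and reconcile everything with \eqref{eq:irr} and with the scalar action of the central $\bT\subset\Pi_\tau$ — unnormalized computations produce spurious factors (such as $\tau(g,g^{-1})$) that cancel only after these conventions are pinned down consistently. Everything else is a routine application of Rieffel induction for $(G,\Pi)$-C*-algebras as recalled in Example~\ref{exmp:Morita}. One could instead bypass part of this by first identifying $A_\tau$ $(G_\tau,\Pi_\tau)$-equivariantly with a crossed product $\bK(\sH)\rtimes\Pi$ for a suitable $(\phi,c,\tau)$-twisted unitary representation $\sH$ of $G$ and then invoking the preceding lemma, which already supplies a $(G_\tau,\Pi_\tau)$-equivariant Morita equivalence $\bK(\sH)\rtimes\Pi\sim_M C^*_{\bar\tau}\Pi$; but the same cocycle bookkeeping resurfaces in the identification step, so it seems cleanest to argue directly.
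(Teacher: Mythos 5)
Your proposal is correct and is essentially the paper's own argument: the paper likewise takes the imprimitivity bimodule to be the completion of $\prescript{\phi}{}{C}_c(G,\tau)$ as a Hilbert $C^*_{\bar{\tau}}\Pi$-module with right action $\xi\cdot u_t:=\pi_l(t^{-1})\xi$ (which, $P$ being finite, is exactly your free module $(C^*_{\bar{\tau}}\Pi)^{\oplus |P|}$), identifies its compacts with $A_\tau$ (citing Roe's comparison lemma where you invoke the free-module picture and the isomorphism $A_\tau\cong C^*_{\bar{\tau}}\Pi\otimes\bK(\prescript{\phi}{}{\ell}^2(P))$), and leaves the equivariance check implicit. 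One small caution: the operator conjugated in your ``key computation'' is the left translation $\pi_l(t^{-1})$ implementing right multiplication by $u_t$, not a genuine right translation (which would commute with $\pi_l(g)$); your earlier sentence shows this is what you intend, and it is the projectivity of $\pi_l$ that produces the cocycle factor of \eqref{eq:irr}.
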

\begin{proof}
Following the idea of \cite{roeComparingAnalyticAssembly2002}*{Lemma 2.1}, we define the $\phi$-twisted $(G_\tau,\Pi_\tau)$-equivariant Hilbert $C^*_{\bar{\tau}}\Pi$-module $\prescript{\phi}{} \ell^2_\Pi(G,\tau )$ as the closure of $\prescript{\phi}{}C_c(G, \tau )$ with respect to the inner product
\[ \langle \xi , \eta \rangle_{C^*_{\bar{\tau}} \Pi} := \sum_{t \in \Pi} \langle  \pi_l(t^{-1}) \xi, \eta \rangle_{\ell^2} \cdot u_t, \]
where $\langle {\cdot },{ \cdot } \rangle _{\ell^2}$ is the inner product of $\prescript{\phi}{}\ell^2(G,\tau )$. It is a routine work to see that the above inner product $\langle \cdot, \cdot \rangle_{C^*_{\bar{\tau}}\Pi}$ is compatible with the right $\bC_{\bar{\tau}}[\Pi]$-module structure given by 
\[ \xi \cdot u_t:=\pi_l(t^{-1}) \xi.\] 
Moreover, the external tensor product $\prescript{\phi}{} \ell^2_\Pi(G, \tau) \otimes_{C^*_{\bar{\tau}} \Pi} \ell^2(\Pi, \tau)$ (where $C^*_{\bar{\tau}}\Pi$ acts on $\ell^2(\Pi, \tau)$ by the right regular representation) is canonically isomorphic to $\prescript{\phi}{}{\ell}^2(G,\tau)$. 

Now the same argument as \cite{roeComparingAnalyticAssembly2002}*{Lemma 2.3} shows that 
\[{\cdot }\otimes_{C^*_{\bar{\tau}} \Pi} 1 \colon \bB(\prescript{\phi}{} \ell^2_\Pi(G,\tau )) \to \bB(\prescript{\phi}{} \ell^2(G,\tau ))\]
maps the compact operator algebra $\bK(\prescript{\phi}{} \ell^2_\Pi(G,\tau))$ faithfully onto $A_\tau$. 
This means that $\prescript{\phi}{} \ell^2_\Pi(G,\tau)$ is an equivariant $A_\tau$-$C^*_{\bar{\tau}}\Pi$ imprimitivity bimodule.
\end{proof}

Together with the above discussion, we obtain the desired isomorphism
\[\mathscr{TP}(G,\phi,\ft) \cong \prescript{\phi}{}\KK^{P}_{0,c}(\bC, A_\tau  ) \cong \prescript{\phi }{}\KK^{G_\tau ,\Pi_\tau }_{0,c}(\bC, C^*_{\bar{\tau}} \Pi). \]

\subsection{Relation to the partial Baum--Connes assembly map}
Following \cite{chabertTwistedEquivariantKK2001}, we define a $\phi$-twisted generalization of the partial Baum--Connes assembly map. Throughout this section we use the following notation 
\[ \prescript{\phi}{} \KK^G_\ft(A,B):= \prescript{\phi}{} \KK^G(A , B \hotimes \bK(\acute{\sV}))\]
for any $\bZ_2$-graded $G$-C*-algebras $A$ and $B$, where $\acute{\sV}$ is a $(\phi,\acute{\ft})$-twisted unitary representation of $G$ (cf.\ Lemma \ref{lem:move}). This notation is consistent with the one given in \cite{kubotaNotesTwistedEquivariant2016} (because of \cite{kubotaNotesTwistedEquivariant2016}*{Proposition 3.10}). 
\begin{defn}
The partial assembly map with coefficients in a $\phi$-twisted $G$-C*-algebra $A$ is defined by the composition
\[ \prescript{\phi}{} \mu_G^\Pi:= [\cH] \otimes_{C_0(V) \rtimes \Pi} \prescript{\phi}{} j_{G,\Pi}(\blank)  \colon \prescript{\phi}{} \KK^G(C_0(V) , A) \to \prescript{\phi}{} \KK^{G,\Pi}(\bC , A \rtimes \Pi  ).\]
\end{defn}
In particular, when $A = \bK(\acute \sV)$ we get a homomorphism from $\prescript{\phi}{} \KK ^G_\ft(C_0(V), \bC)$ to $\prescript{\phi}{} \KK^{G,\Pi}_\ft (\bC , C(\hat{\Pi})_\sigma)$. Here we relate this map with the Dirac morphism, and hence the twisted  crystallographic T-duality map.

The domains of the Baum--Connes and T-duality maps are identified by the following two isomorphisms.
\begin{enumerate}
    \item A $G$-equivariant $C_0(V)$-$\bK(\acute{\sV})$ bimodule is canonically viewed as a $(G,\Pi)$-equivariant $C_0(V) \rtimes \Pi$-$\bK(\acute{\sV})$ bimodule.
    Hence there is a canonical isomorphism 
    \[ \cJ_\Pi \colon \prescript{\phi}{} \KK ^G_\ft(C_0(V), \bC ) \to \prescript{\phi}{} \KK^{G,\Pi}_\ft (C_0(V) \rtimes \Pi, \bC).\]
    By definition of the partial descent map, this $\cJ_\Pi$ coincides with the composition $\iota_0^* \circ \prescript{\phi}{} j_{G,\Pi}$, where $\iota_0 \colon \pt \to \hat{\Pi}$ denotes the map to the unit (i.e., the trivial character). 
    \item The Poincar\'e duality isomorphism 
    \[ \PD_{V/\Pi} \colon \prescript{\phi}{} \KK^P_\ft(\bC, C(V/\Pi) \otimes \Cl(V)) \cong \prescript{\phi}{} \KK^P_\ft(C(V/\Pi), \bC)\]
    is given by the Kasparov product with
    \[ [\Delta^*] \otimes_{C(V/\Pi)} [L^2(V/\Pi , S),m \hotimes \fc, F] \in \prescript{\phi}{} \KK^P (C(V/\Pi) \otimes C(V/\Pi) \otimes \Cl(V) ,\bC ), \]
    where $\Delta \colon V/\Pi \to V/\Pi \times V/\Pi$ is the diagonal embedding. 
    It is seen in the same way as \cite{kasparovOperatorFunctorExtensions1980}*{Theorem 7} that this Kasparov product is an isomorphism by constructing its inverse as a Kasparov product. Note that $[L^2(V/\Pi , S),m, F]$ coincides with $\prescript{\phi}{} j_{G,\Pi}(\sfD) \otimes [\iota_0^*]$.
\end{enumerate}

To summarize, the composition $\mu_G^\Pi \circ \cJ_\Pi \circ \PD_{V/\Pi}$ is a homomorphism from $\prescript{\phi}{} \KK^P_{\ft-\fv}(\bC, C(V/\Pi)) $ to $\prescript{\phi}{} \KK^{G,\Pi}_\ft(\bC, C(\hat{\Pi})_\sigma)$.  

\begin{thm}\label{thm:BC}
The map $\mu_G^\Pi \circ \cJ_\Pi \circ \PD_{V/\Pi}$ is the same as the crystallographic T-duality map $\prescript{\phi}{}{\mathrm{T}}_{G}^{*,\ft}$. 
\end{thm}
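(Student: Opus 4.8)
The plan is to show that the two composite maps—the crystallographic T-duality map $\prescript{\phi}{}{\mathrm{T}}_G^{*,\ft}$ and the assembled Baum--Connes map $\mu_G^\Pi \circ \cJ_\Pi \circ \PD_{V/\Pi}$—agree as elements of $\prescript{\phi}{}\KK^{G,\Pi}_\ft(\bC, C(\hat{\Pi})_\sigma)$, by tracking both through the dictionary already assembled in Section \ref{section:5}. The key observation is that both sides have already been reduced to a single explicit Kasparov element. On the T-duality side, the summary just before Lemma \ref{lem:Tdual} identifies $\prescript{\phi}{}{\mathrm{T}}_G^{*,\ft}$ with the Kasparov product with $[\hat{\pi}^*]\otimes_{C(\hat{\Pi})}[\![\cP]\!]\otimes_{C(V/\Pi)}[\pi_!]$, and Lemma \ref{lem:Tdual} rewrites this as $[C(\hat{\Pi},\fH_\cP), m\otimes\fc, \fF_\cP]$. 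On the Baum--Connes side, Lemma \ref{lem:Dirac} shows the $\prescript{\phi}{}\KK^{G,\Pi}$-equivalence $[\cX^*]\otimes_{C_0(V)\rtimes\Pi}\prescript{\phi}{}j_{G,\Pi}(\sfD)$ is represented by the \emph{same} Kasparov bimodule $[C(\hat{\Pi},\fH_\cP), m\otimes\fc, \fF_\cP]$. So the real content of Theorem \ref{thm:BC} is to verify that the bookkeeping of the composite $\mu_G^\Pi\circ\cJ_\Pi\circ\PD_{V/\Pi}$ produces exactly $[\cX^*]\otimes_{C_0(V)\rtimes\Pi}\prescript{\phi}{}j_{G,\Pi}(\sfD)$ (up to the $\bK(\acute\sV)$-stabilization bookkeeping encoded by $\fv$ and $\acute\ft$), so that the two are forced to coincide.

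First I would unwind the definition of $\mu_G^\Pi\circ\cJ_\Pi\circ\PD_{V/\Pi}$ step by step. Starting from $x\in\prescript{\phi}{}\KK^P_{\ft-\fv}(\bC,C(V/\Pi))$, apply $\PD_{V/\Pi}$: by the remark after its definition, $[L^2(V/\Pi,S),m\hotimes\fc,F]$ equals $\prescript{\phi}{}j_{G,\Pi}(\sfD)\otimes[\iota_0^*]$, so $\PD_{V/\Pi}$ is the Kasparov product with $[\Delta^*]$ followed by that restricted descent of the Dirac class. Next apply $\cJ_\Pi$, which by definition is $\iota_0^*\circ\prescript{\phi}{}j_{G,\Pi}$ composed with the canonical reinterpretation of a $G$-module as a $(G,\Pi)$-module over $C_0(V)\rtimes\Pi$. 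Then apply $\mu_G^\Pi=[\cH]\otimes_{C_0(V)\rtimes\Pi}\prescript{\phi}{}j_{G,\Pi}(\blank)$. The associativity and functoriality of the Kasparov product and of the partial descent homomorphism $\prescript{\phi}{}j_{G,\Pi}$ (Appendix \ref{section:app}) let one absorb all the separate applications of $\prescript{\phi}{}j_{G,\Pi}$ into a single one, and the two occurrences of $[\iota_0^*]$ / $\iota_0^*$ cancel against each other: the restriction at $0\in\hat{\Pi}$ built into $\cJ_\Pi$ is undone by the fact that the full assembly map $\mu_G^\Pi$ uses the whole module $\cH$ over $C_0(V)\rtimes\Pi$ rather than its fiber at $0$. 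After this cancellation, the composite is the Kasparov product with $[\Delta^*]\otimes[\cH]\otimes_{C_0(V)\rtimes\Pi}\prescript{\phi}{}j_{G,\Pi}(\sfD)$; recognizing $[\cH]$ over $C_0(V)\rtimes\Pi$ as the imprimitivity bimodule identified in Lemma \ref{lem:Morita} with $[\cX]$ (so $[\cH]=[\cX]$ after Morita identification, and $[\Delta^*]\otimes[\cX]=[\cX^*]$ up to the diagonal convention), one lands exactly on $[\cX^*]\otimes_{C_0(V)\rtimes\Pi}\prescript{\phi}{}j_{G,\Pi}(\sfD)$.

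The main obstacle is the careful matching of conventions and of the Morita/Poincar\'e-duality bookkeeping: the diagonal embeddings $\Delta$ and $\Delta_\sigma$, the twist shift $\tau\mapsto\tau+\epsilon(c,c)$ recorded in Remark \ref{rmk:Gomi} and Lemma \ref{lem:move}, the stabilization by $\bK(\acute\sV)$, and the direction of the various imprimitivity bimodules ($\cX$ vs $\cX^*$, $\cH$ vs $\cP$). None of these steps is conceptually hard, but one must check that each Kasparov product is genuinely a product (i.e. the operators chosen are connections, as was done ad hoc in Lemma \ref{lem:Tdual} and Lemma \ref{lem:Dirac}), and that no sign or grading discrepancy slips in when the Dirac operator on $V$ is descended and then reassembled on $V/\Pi$ twisted by $\cP$. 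Once those identifications are in place, Theorem \ref{thm:BC} follows immediately: both the left-hand side and the right-hand side are the Kasparov product with $[C(\hat{\Pi},\fH_\cP),m\otimes\fc,\fF_\cP]$, hence they are literally the same homomorphism. I would therefore organize the proof as a short lemma-by-lemma chase citing Lemma \ref{lem:Tdual}, Lemma \ref{lem:Dirac}, Lemma \ref{lem:Morita}, and the definitions of $\cJ_\Pi$ and $\PD_{V/\Pi}$, with the bulk of the writing devoted to the cancellation of the two $\iota_0^*$'s and the Morita identification $[\cH]\cong[\cX]$.
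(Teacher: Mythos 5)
Your proposal is correct and follows essentially the same route as the paper: both sides are reduced, via Lemmas \ref{lem:Tdual} and \ref{lem:Dirac}, to the Kasparov product with $[\cX^*]\otimes_{C_0(V)\rtimes\Pi}\prescript{\phi}{}j_{G,\Pi}(\sfD)$, and the remaining work is the bookkeeping of $\mu_G^\Pi\circ\cJ_\Pi^{-1}\circ\PD_{V/\Pi}$ (note the inverse, needed for the composite to typecheck, exactly as in the paper's own proof). The only ingredient you leave implicit is the paper's Lemma \ref{lem:delta}, which identifies $[\Delta^*]$ with the $[\cX]$-conjugated partial descent of $[\Delta_V^*]$; this is precisely what makes your ``cancellation of the two $\iota_0^*$'s'' rigorous in the diagram chase.
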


Let $p$ denote the projection $V \to V/\Pi$ and let $\Delta_V :=(\id_V, p) \colon V \to V \times V/\Pi$. This is a proper $G$-equivariant map (by regarding $V/\Pi$ as a $G$-space through the quotient $G \to P$) and hence induces 
\[ \Delta_V ^* \colon C_0(V ) \otimes C(V/\Pi) \to C_0(V). \]

\begin{lem}\label{lem:delta}
We have 
\[ [\cX^*] \otimes_{C_0(V) \rtimes \Pi } \prescript{\phi}{}j_{G,\Pi} ([\Delta_V ^*]) \otimes_{C_0(V) \rtimes \Pi } [\cX] =[\Delta ^*]. \]
\end{lem}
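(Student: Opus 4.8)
The plan is to unwind the definitions of the partial descent homomorphism $\prescript{\phi}{}j_{G,\Pi}$ and of the imprimitivity bimodule $\cX$ from Lemma~\ref{lem:Morita}, and to check directly that the three Kasparov products compose to the KK-class of $[\Delta^*]\colon C(V/\Pi)\otimes C(V/\Pi)\to C(V/\Pi)$. The key observation is that $\Delta_V=(\id_V,p)$ and $\Delta=(\id_{V/\Pi},\id_{V/\Pi})$ are compatible under the covering map $p\colon V\to V/\Pi$: on the nose, $p\circ\pr_1\circ\Delta_V = \pr_2\circ\Delta_V\circ\blank$ so that $\Delta_V^*$ is nothing but the pullback of $\Delta^*$ along $p$ in the first coordinate, together with the identity in the second. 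Since conjugating by the Morita bimodule $\cX$ implements precisely the passage from the $(G,\Pi)$-algebra $C_0(V)\rtimes\Pi$ to the $P$-algebra $C(V/\Pi)$, this compatibility is exactly what the lemma asserts, once one is careful about the second tensor factor $C(V/\Pi)$ on which nothing happens.

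Concretely, first I would recall from Lemma~\ref{lem:Morita} that $\varphi\colon C_0(V)\rtimes\Pi\xrightarrow{\cong}\bK(C(V/\Pi,\cX))$, so that $[\cX]\in\prescript{\phi}{}\KK^{G,\Pi}(C_0(V)\rtimes\Pi, C(V/\Pi))$ is the Morita equivalence $[C(V/\Pi,\cX),\varphi,0]$ and $[\cX^*]$ its inverse $[C(V/\Pi,\cX)^*,\id,0]$. Next I would observe that the partial descent $\prescript{\phi}{}j_{G,\Pi}$ of the $*$-homomorphism $[\Delta_V^*]\colon C_0(V)\otimes C(V/\Pi)\to C_0(V)$ is represented by the bimodule obtained by applying $\blank\rtimes\Pi$, i.e.\ $[(C_0(V)\rtimes\Pi),(\Delta_V^*)\rtimes\Pi,0]$, where the crossed product is taken with respect to the diagonal $\Pi$-action (note $\Pi$ acts trivially on the $C(V/\Pi)$ factor since $\Pi$ acts trivially on $V/\Pi$). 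Then the triple Kasparov product collapses to a single bimodule with zero operator, namely $[C(V/\Pi,\cX)^*\otimes_{C_0(V)\rtimes\Pi}(C_0(V)\rtimes\Pi)\otimes_{C_0(V)\rtimes\Pi}C(V/\Pi,\cX),\psi,0]$ for the induced left action $\psi$. The remaining task is to identify this bimodule, together with its left action of $C(V/\Pi)\otimes C(V/\Pi)$, with $[C(V/\Pi),\Delta^*,0]$.

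For that identification I would use the explicit formula $\varphi((\sum f_tu_t)\xi)(v)=\sum f_t(v)\xi(v-t)$ from the proof of Lemma~\ref{lem:Morita}: the left $C_0(V)$-action on $\cX$ is by pointwise multiplication on the dense subspace $C_c(V)\subset C(V/\Pi,\cX)$, and under the Morita equivalence this becomes the multiplication action of $C(V/\Pi)$ on $C(V/\Pi)$. Conjugating the diagonal $*$-homomorphism $\Delta_V^*$ by $\cX$ sends $f\otimes h\in C_0(V)\otimes C(V/\Pi)$ to the operator $\xi(v)\mapsto f(v)h(p(v))\xi(v)$, which on the Morita-reduced module $C(V/\Pi)$ is precisely multiplication by $\bar\pr_1{}^*(\text{class of }f)\cdot\pr_2^*h$; since the passage to $C(V/\Pi)$ forgets the $\Pi$-direction, the first factor contributes only its image in $C(V/\Pi)$, i.e.\ via the identity, and the second via the identity as well, so the composite left action is $(g,h)\mapsto gh$, which is $\Delta^*$. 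Collecting these, one reads off that the triple product equals $[C(V/\Pi),\Delta^*,0]=[\Delta^*]$.

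The main obstacle I expect is bookkeeping the $\phi$-twisting and the $(G,\Pi)$-equivariance through the three consecutive Kasparov products, especially verifying that the induced left action of $C(V/\Pi)\otimes C(V/\Pi)$ on the balanced tensor product is genuinely $\Delta^*$ and not merely Morita-equivalent to it — i.e.\ checking that the associativity of the interior tensor product together with the Morita equivalences $\cX^*\otimes_{C_0(V)\rtimes\Pi}(C_0(V)\rtimes\Pi)\otimes_{C_0(V)\rtimes\Pi}\cX\cong C(V/\Pi)$ respects all the equivariant structures on the nose, so that the resulting Kasparov bimodule is literally (not just up to homotopy) the one representing $[\Delta^*]$. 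This is routine but requires care; everything else is a direct computation with the explicit formulas already available in Lemmas~\ref{lem:Morita} and~\ref{lem:movemap} and the definition of $\prescript{\phi}{}j_{G,\Pi}$ in Definition~\ref{defn:pdesc}.
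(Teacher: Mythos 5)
Your proposal is correct and follows essentially the same route as the paper: both compute $\prescript{\phi}{}j_{G,\Pi}([\Delta_V^*])=[\Delta_V^*\rtimes\Pi]$ explicitly as $x\otimes f\mapsto xf$ acting on $C(V/\Pi,\cX)$ via the formula for $\varphi$ from Lemma~\ref{lem:Morita}, and identify the resulting left action with $\Delta^*$. The only organizational difference is that the paper avoids your final worry by rewriting $[\Delta_V^*\rtimes\Pi]\otimes_{C_0(V)\rtimes\Pi}[\cX]=([\cX]\otimes\id_{C(V/\Pi)})\otimes_{C(V/\Pi)}[\Delta^*]$ and then cancelling $[\cX^*]\otimes[\cX]=\id$ formally, rather than computing the balanced tensor product $\cX^*\otimes_{C_0(V)\rtimes\Pi}\cX$ on the nose; in any case unitary equivalence of equivariant Kasparov bimodules suffices, so the concern is harmless.
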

\begin{proof}
By definition we have $\prescript{\phi}{} j_{G,\Pi} ([\Delta_V^*]) = [\Delta_V^* \rtimes \Pi]$, where $\Delta_V^* \rtimes \Pi$ is the induced $\ast$-homomorphism $(C_0(V) \otimes C(V/\Pi)) \rtimes \Pi \to C_0(V) \rtimes \Pi$. This $\ast$-homomorphism is explicitly described as
\[ (\Delta_V^* \rtimes \Pi)(x \otimes f) = xf, \]
for any $x \in C_0(V) \rtimes \Pi$ and $f \in C(V/\Pi)$. 
Here the product $xf$ is taken in $\bB(C(V/\Pi , \cX ))$, where $C_0(V) \rtimes \Pi  \cong \bK(C(V/\Pi, \cX ) )$ acts on $C(V/\Pi, \cX)$ as in Lemma \ref{lem:Morita} and $C(V/\Pi)$ acts by multiplication. 
Therefore, the Kasparov product 
$[\Delta_V^* \rtimes \Pi] \otimes_{C_0(V) \rtimes \Pi} [\cX] $ is represented by 
$ C(V/\Pi, \cX ) $. This shows that 
\[[\Delta_V^* \rtimes \Pi] \otimes_{C_0(V) \rtimes \Pi}[\cX] = ([\cX] \otimes \id_{C(V/\Pi)}) \otimes_{C(V/\Pi)} [\Delta^*], \]
which finishes the proof.
\end{proof}

\begin{proof}[Proof of Theorem \ref{thm:BC}]
In the proof, we omit the KK-equivalence $[\cX]$ and identify $C_0(V) \rtimes \Pi$ with $C(V/\Pi)$ for simplicity of notation.

Let $\Phi \colon \prescript{\phi}{} \KK^{G,\Pi}_\ft (\bC, C(V/\Pi) ) \to \prescript{\phi}{}  \KK^{G,\Pi}_\ft (C(V/\Pi),C(V/\Pi))$ denote the group homomorphism defined by  
\[ \Phi(\xi):=(\xi \otimes \id_{C(V/\Pi)}) \otimes_{C(V/\Pi ) \otimes C(V/\Pi)} [\Delta^*]. \]
Similarly we define $\Phi_V \colon \prescript{\phi}{} \KK^{G,\Pi}_\ft(\bC, C(V/\Pi) ) \to \prescript{\phi}{}  \KK^{G,\Pi}(C_0(V),C_0(V))$ given by $\Phi_V(\xi):=(\xi \otimes \id_{C_0(V)}) \otimes_{C(V/\Pi ) \otimes C_0(V)} [\Delta_V^*]$. Here Lemma \ref{lem:delta} is rephrased as
\[ \Phi = \prescript{\phi }{}{} j_{G,\Pi} \circ  \Phi_V. \]

Let us consider the diagram
\[
\xymatrix{
&\ar@{}[d]|{\circlearrowright} &\\
\prescript{\phi}{} \KK^{G}_\ft(C_0(V), C_0(V)) \ar[d]^{\blank \otimes_{C_0(V)} \sfD } \ar[r]^{\prescript{\phi}{} j_{G,\Pi} \hspace{3ex}} \ar@{}[rd]|{\hspace{2em} \circlearrowright}& \prescript{\phi}{} \KK^{G,\Pi}_\ft(C(V/\Pi) , C(V/\Pi)) \ar[r]^{\pi^*} \ar[d]^{\blank \otimes_{C(V/\Pi)} \prescript{\phi}{} j_{G,\Pi}(\sfD )}\ar@{}[rd]|{\hspace{4em} \circlearrowright} & \prescript{\phi}{} \KK^{G,\Pi}_\ft(\bC, C(V/\Pi)) \ar[d]^{ \blank \otimes_{C(V/\Pi)} \prescript{\phi}{} j_{G,\Pi} (\sfD )} \ar@/_3.5em/[ll]_{\Phi_V} \ar@/_1.5em/[l]_{\Phi}  \\
\prescript{\phi}{} \KK^{G}_\ft(C_0(V), \bC ) \ar[r]^{\prescript{\phi}{} j_{G,\Pi}\hspace{5ex}} \ar[dr]_{\cJ_\Pi} \ar@{}[rd]|{\hspace{5em} \circlearrowright }& \prescript{\phi}{} \KK^{G,\Pi}_\ft(C(V/\Pi) , C(\hat{\Pi})_\sigma ) \ar[r]^{\pi^*} \ar[d]^{ \blank \hotimes_{C(\hat{\Pi})_\sigma }[\iota_0^*]} & \prescript{\phi}{} \KK^{G,\Pi}_\ft (\bC, C(\hat{\Pi})_\sigma )  \\
&\prescript{\phi}{} \KK ^{P}_\ft (C(V/\Pi), \bC ). &
}
\]
The functoriality of $\prescript{\phi}{}j_{G,\Pi}$ and the above arguments show that parts of the above diagram indicated by the circular arrows are commutative. 
Moreover, by Lemma \ref{lem:Dirac} we have $\prescript{\phi}{} j_{G,\Pi} (\sfD) \hotimes_{C(\hat{\Pi})_\sigma} [\iota_0^*] =[L^2(V/\Pi,S), m \hotimes \fc, F]$, which shows that
\begin{align*}
    \PD_{V/\Pi}(\xi) 
    &= \Phi(\xi) \hotimes_{C(V/\Pi)} (\prescript{\phi}{} j_{G,\Pi}(\sfD) \hotimes_{C(\hat{\Pi})_\sigma} [\iota_0^*]).
\end{align*}
Therefore a diagram chasing shows that 
\begin{align*}
(\prescript{\phi}{} \mu_G^\Pi \circ (\cJ _\Pi)^{-1} \circ \PD_{V/\Pi})(\xi) & = \xi \hotimes \prescript{\phi}{} j_{G,\Pi}(\sfD),
\end{align*}
which finishes the proof by Theorem \ref{thm:crystal}.
\end{proof}

\section{T-duality between Atiyah--Hirzebruch spectral sequences}\label{section:6}
A promising application of Theorem \ref{thm:crystal} is the calculation of the twisted equivariant K-theory, which is well studied in the literature of condensed-matter physics \cites{shiozakiTopologicalCrystallineMaterials2017,shiozakiAtiyahHirzebruchSpectral2018}. 
The crystallographic T-duality implies  that there are two different spectral sequences converging to the same group; the Atiyah--Hirzebruch spectral sequence (AHSS) of $\prescript{\phi}{} \K_P^{*+d,\ft -\fv}(V/\Pi)$ and that of $\prescript{\phi}{} \K_P^{*,\ft + \sigma}(\hat{\Pi})$. 
These two spectral sequences are very different, in the sense that the map $\prescript{\phi}{}{\mathrm{T}}_{G}^\ft$ does not respect the Atiyah--Hirzebruch filtrations. Indeed, the (noncommutative) topology of the spaces on either side may be different, as is observed in non-equivariant setting in \cite{bouwknegtTopologyFluxDual2004} and \cites{mathaiTDualitySimplifiesBulkBoundary2016,hannabussTdualitySimplifiesBulkboundary2016}. 
For trivial $\phi,\ft$, the (ordinary) crystallographic T-duality isomorphism was exploited in \cite{gomiCrystallographicTduality2019} Section 8.3, to solve AHSS extension problems on one side by comparing with the other side where no extension problem occurs.
Here we provide some new examples, involving twisted crystallographic groups, of twisted equivariant K-group calculations via the comparison of Atiyah--Hirzebruch spectral sequences. 

We refer the reader to the work of Davis--L\"uck \cite{davisTopologicalKtheoryCertain2013}, which calculates the (non-twisted) K-group of the group C*-algebra of a large class of crystallographic groups in a similar spirit as our work, by a combination of topological methods including AHSS and the Baum--Connes isomorphism.

\subsection{$1$-dimensional black and white magnetic space group}
Here we consider the case that $V$ is of dimension $1$, $G = \bZ$ acting on $V$ by translation, and $\phi \colon G \to \bZ_2$ is the surjective homomorphism. 
Then $G$ is a twisted crystallographic group in the sense of Definition \ref{defn:twcry}, where the groups $\Pi$ and $P$ are $\Pi = 2\bZ$ and $P \cong \bZ_2$ respectively. We write $p \in P$ for the unique non-trivial element.  

The group $P$ acts on the torus $V/\Pi \cong S^1$ as rotation by $\pi$. Moreover, the linear $P$-action on $V$ is trivial and hence $\fv=(w_1^P(V),w_2^P(V)) =0$. We put the $P$-equivariant cell decomposition of $V/\Pi$ as:
\[
\xygraph{
!{<0cm,0cm>;<2cm,0cm>:<0cm,2cm>::}
!{(0,0) }*+{\circ }="1" ([]!{+(0,-0.2)} {\scriptstyle e_1^0})
!{(1,0) }*+{\circ }="2" ([]!{+(0,-0.2)} {\scriptstyle e_2^0})
!{(2,0) }*+{\circ} ="3" ([]!{+(0,-0.2)} {\scriptstyle e_1^0})
"1"-"2"_{e_1^1} "2"-"3"_{e_2^1}
}
\]
The action of $P=\bZ_2$ maps $e_1^0$ to $e_2^0$, $e_2^0$ to $e_1^0$, $e_{1}^1$ to $e_{2}^1$ and $e_{2}^1$ to $e_{1}^1$ respectively. 

The $P$-action on the Pontrjagin dual $\hat{\Pi} \cong U(1)$ as in (\ref{eq:Piact}) is the complex conjugation $z \mapsto \bar{z}$. We put the $P$-equivairant cell decomposition as:
\[
\xygraph{
!{<0cm,0cm>;<2cm,0cm>:<0cm,2cm>::}
!{(0,0) }*+{\circ }="1" ([]!{+(0,-0.2)} {\scriptstyle e_1^0})
!{(1,0) }*+{\circ }="2" ([]!{+(0,-0.2)} {\scriptstyle e_2^0})
!{(2,0) }*+{\circ} ="3" ([]!{+(0,-0.2)} {\scriptstyle e_1^0})
"1"-"2"_{e_1^1} "2"-"3"_{e_2^1}
}
\]
The action of $P=\bZ_2$ fixes $e_1^0$ and $e_2^0$, and maps $e_{1}^1$ to $e_{2}^1$ and $e_{2}^1$ to $e_{1}^1$ respectively. 
Moreover, the $2$-cocycle $\sigma \in H^2_P(\hat{\Pi}; \bZ_2)$ corresponding to the extension $1 \to \Pi \to G \to \bZ_2 \to 1$ as in (\ref{eq:sigma}) is determined by $\sigma (z,p,p)=z$ for any $z \in U(1)$.

\subsubsection{Type AI: AHSS for $\prescript{\phi}{} \K_P^*(V/\Pi)$}
We start with the case that $\ft:=(c,\tau)$ is trivial. Let us consider the AHSS for $\prescript{\phi}{} \K_P^*(V/\Pi)$. Recall that $\prescript{\phi}{} \K_{\bZ_2}^*(\bZ_2)$ is identified with $\K^*(\pt)$. Hence the $E_1$-page of this AHSS is as the left of Figure \ref{fig:AHSS1} (note that in this table $p$ and $q$ denote the rows and columns respectively). Moreover, by the following Lemma \ref{lem:E1}, we obtain the $E_2$-page of this AHSS as the right of Figure \ref{fig:AHSS1}.
\begin{figure}[h]
\begin{tabular}{c|cccc}
$7$ & $0$ && $0$ &  \\
$6$ & $\bZ$& &$\bZ$ & \\ 
$5$ & $0$ && $0$ &  \\
$4$ & $\bZ$ &&$\bZ$ & \\ 
$3$ & $0$ && $0$ &  \\
$2$ & $\bZ$ &&$\bZ$ & \\ 
$1$ & $0$ && $0$ &  \\
$0$ & $\bZ$ &&$\bZ$ & \\ 
\hline 
$E_1^{pq}$ & $0$ && $1$
\end{tabular}
\hspace{2em}
\begin{tabular}{c|cccc}
$7$ & $0$ && $0$ &  \\
$6$ & $0$ &&$\bZ_2$ & \\ 
$5$ & $0$ && $0$ &  \\
$4$ & $\bZ$ &&$\bZ$ & \\ 
$3$ & $0$ && $0$ &  \\
$2$ & $0$ &&$\bZ_2$ & \\ 
$1$ & $0$ && $0$ &  \\
$0$ & $\bZ$ &&$\bZ$ & \\ 
\hline 
$E_2^{pq}$ & $0$ && $1$
\end{tabular}
\caption{$E_1$ and $E_2$ pages of AHSS for $\prescript{\phi}{} \K_P^{*,-\fv}(V/\Pi)$}
\label{fig:AHSS1}
\end{figure}
\begin{lem}\label{lem:E1}
Under the above identification $E_1^{p,q} \cong \bZ$ for $p=0,1$ and even $q$, the differential $d_1^{0,q} \colon E_1^{0,q} \to E_1^{1,q}$ is 
\[d_1^{0,q} = \left\{ \begin{array}{ll} 0 & q \equiv 0 \text{ mod } 4, \\ 2 & q \equiv 2 \text{ mod }4.  \end{array} \right. \]
\end{lem}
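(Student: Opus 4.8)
**Proof proposal for Lemma 5.23 (the differential $d_1^{0,q}$ in the AHSS for $\prescript{\phi}{}\K_P^{*,-\fv}(V/\Pi)$).**

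The plan is to identify the $d_1$ differential with the connecting homomorphism of the pair given by the $0$- and $1$-skeleta of the $P$-CW decomposition of $V/\Pi$, and to compute it by tracking a generator of the $K$-theory of a single equivariant $1$-cell. First I would unwind the $E_1$-term: the $0$-skeleton is a single free $P$-orbit $P\cdot e_1^0\cong P=\bZ_2$, and the $1$-skeleton is built from it by attaching a single free $P$-orbit of $1$-cells, so that $V/\Pi$ with this cell structure is the suspension-type mapping cone of the attaching map $\partial\colon P\times S^0\to P$. Since $\phi$ restricted to each orbit is essentially irrelevant for free orbits (the identification $\prescript{\phi}{}\K_{\bZ_2}^*(\bZ_2)\cong\K^*(\pt)$ already absorbs it), the differential $d_1^{0,q}\colon E_1^{0,q}=\K^q(\pt)\to E_1^{1,q}=\K^{q+1-1}(\pt)=\K^q(\pt)$ is computed from the degree of this attaching map in the relevant reduced $K$-group, exactly as in the classical AHSS for $S^1$ with the antipodal-type $\bZ_2$-action, but now with a twist.

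The key subtlety --- and the source of the periodicity mod $4$ --- is that the twist $\phi$ (complex conjugation on the coefficients whenever $\phi(p)=1$) acts on the local coefficient system $\underline{\K^q}$. On a free orbit the coefficient system is trivializable, but the \emph{gluing} across the $1$-cell involves the $P$-action, which here combines the cell-permutation with complex conjugation on $\bC$. Concretely, I would compute $d_1$ on the chain level: the equivariant cellular cochain complex computing $E_2$ has $C^0=\K^q(\pt)$ and $C^1=\K^q(\pt)$, and the differential is $1-(-1)^{?}\cdot(\text{conjugation})$ acting on $\K^q(\pt)\cong\bZ$ (shifted through the Clifford degree by $q$). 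Because complex conjugation acts on $\K^q(\pt)\cong\bZ$ by $+1$ when $q\equiv 0\bmod 4$ and by $-1$ when $q\equiv 2\bmod 4$ (this is the standard fact that $c\colon\KO$-to-$\K$ behaviour, equivalently the action of conjugation on the Bott class $\beta^{q/2}$, alternates with period $4$ in $\bZ_2$-equivariant $K$-theory), the differential becomes $1-1=0$ when $q\equiv 0$ and $1-(-1)=2$ when $q\equiv 2$. This is precisely the claimed formula.

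Carrying this out in order: (1) set up the $P$-CW pair $(X_1,X_0)$ with $X_0$ the $0$-skeleton, and write the $E_1^{\bullet,q}$ row as the cellular cochain complex with twisted coefficients, citing the AHSS construction in \cite{gomiFreedMooreKtheory2017} (or \cite{shiozakiAtiyahHirzebruchSpectral2018}); (2) identify $C^0$ and $C^1$ with $\prescript{\phi}{}\K_P^q(P)\cong\K^q(\pt)$ via the fixed identification; (3) compute the attaching map contribution, reducing $d_1^{0,q}$ to $\mathrm{id}-T_q$ where $T_q$ is the action of the generator of $P$ on $\K^q(\pt)$ twisted by $\phi$, i.e.\ complex conjugation followed by (possibly) a sign from the cell permutation, which here is orientation-reversing; (4) invoke the period-$4$ behaviour of complex conjugation on $\K^q(\pt)$ to conclude. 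The main obstacle I anticipate is step (3): being careful about the interplay of three signs --- the cellular boundary sign, the Clifford/degree shift that implements $\K^q$ versus $\K^0$, and the complex-conjugation action on the Bott periodicity generator --- so that the combined sign genuinely has period $4$ and produces $0$ versus $2$ rather than, say, $2$ versus $0$ or $\pm1$. A clean way to pin this down is to compare directly with the non-equivariant computation of $d_1$ for the two-cell decomposition of $S^1$ (where $d_1=0$ identically because $S^1$ is a manifold with trivial attaching degree in $K$-theory) and then insert the conjugation-twist, isolating exactly the factor that the twist $\phi$ contributes; alternatively one can verify the $q=0$ and $q=2$ cases by hand against the known value of $\prescript{\phi}{}\K_P^{*}(V/\Pi)$ computed independently via Theorem~\ref{thm:crystal} and the $\hat\Pi$-side, which has no extension problem.
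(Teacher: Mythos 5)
Your proposal is essentially the paper's own argument: the paper also identifies $d_1^{0,q}$ with the connecting map of the skeletal pair, represents a class on the free $0$-orbit as $[F\cup\bar F]$, computes the boundary as $([F]-[\bar F])\otimes\beta$, and invokes the fact that complex conjugation acts on $\K^{2n}(\pt)$ by $(-1)^n$ (justified via Atiyah--Bott--Shapiro in the following remark), giving $1-(-1)^{q/2}$. One small correction: your aside that the cell permutation is ``orientation-reversing'' is wrong --- the $P$-action on $V/\Pi$ is rotation by $\pi$, which preserves orientation and maps $e_1^1$ to $e_2^1$ compatibly with the chosen directions, so no extra sign enters; had you actually inserted that sign, you would have obtained $1+(-1)^{q/2}$ and the values $0$ and $2$ would be swapped between $q\equiv 0$ and $q\equiv 2$. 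Your displayed computation $1-(-1)^{q/2}$ is the correct one.
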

\begin{proof}
We fix a Real structure, i.e., a complex conjugation on $\sH$ and extend it to $\sH_n:= \sH \otimes \Delta_{n,0}$. The complex Clifford algebra $\bC l_n:=\Cl_{n,0} \otimes _\bR \bC$ acts on $\sH_n$ and $\K^{n}(\pt)$ is isomorphic to the set of homotopy classes of the space of odd self-adjoint operators $F$ with $F^2-1 \in \bK(\sH_n)$ and $[F,\fc(v)]=0$ for any $v \in \bC l_n$. Now the complex conjugation induces an involution $\cT \colon [F] \mapsto [\bar{F}] $ on $\K^n(\pt)$. This involution is
\begin{align}
    \cT x = (-1)^nx \text{ for any $x \in \K^{2n}(\pt)$}. \label{eq:conjugate}
\end{align}

We use the trivial bundle $\cH:=V/\Pi \times \sH$, on which $\bZ_2$ acts by the composition of the rotation by $\pi$ and the complex conjugation, as the universal $\phi$-twisted $P$-equivariant Hilbert bundle. 
Now an element of $\prescript{\phi}{} \K_{\bZ_2}^{2n}(e_1^0 \cup e_2^0)$ is represented by $[F \cup \bar{F}]$ for some $F \in \Fred _{2n}(\sH)$. 
By definition of the boundary homomorphism
\[ \partial \colon \prescript{\phi}{} \K_P^{2n}(e_1^0 \cup e_2^0) \to \prescript{\phi}{} \K_P^{2n+1}( e_1^1 \cup e_2^1 ) \cong \K^{2n+1}(e_1^1 ), \]
it maps the element $[F \cup \bar{F}]$ to $([F] - [\bar{F}]) \otimes \beta $. Now (\ref{eq:conjugate}) finishes the proof.
\end{proof}

\begin{rmk}
Here we give a detail on the proof of (\ref{eq:conjugate}).
Let $\fM_n^\bC$ (resp.~$\fM_n^\bR$) denote the free abelian group generated by $\bZ_2$-graded representations of the complex (resp. real) Clifford algebra $\bC l_n$ (resp.~$\Cl_{n,0}$) and let $i_* \colon \fM_{n+1}^\bF \to \fM_{n}^\bF$ denote the forgetful map.
The Atiyah--Bott--Shapiro isomorphism $\K^n(\pt) \cong \fM_n^\bC /i_*\fM_{n+1}^\bC $ (resp.~$\KO^n(\pt) \cong \fM_n^\bR /i_*\fM_{n+1}^\bR $) is now given by mapping $[F]$ to the $\bZ_2$-graded representation $\ker (F)$ of $\bC l_n$. 
This correspondence is compatible with the complex conjugation, i.e., $[\bar{F}]$ is mapped to $\ker \bar{F} \cong \overline{\ker F}$. 

In $n=4k$, the complex conjugation acts trivially on $\K^n(\pt)$ since it must be trivial on the complexification map $\KO^n \to \K^n$. 
In $n=2$, the generator of $\fM_n / i_* \fM_{n+1}$ is $S=\bC^2$ on which the Clifford generators $e_1, e_2$ acts as $\big( \begin{smallmatrix} 1 & 0 \\ 0 & -1 \end{smallmatrix}\big) $ and $\big( \begin{smallmatrix} 0 & 1 \\ 1 & 0 \end{smallmatrix}\big) $ respectively, and the $\bZ_2$-grading is given by $\gamma =\big( \begin{smallmatrix} 0 & i \\ -i & 0 \end{smallmatrix}\big) $. Now $\bar{S}$ is the same representation of $\bC l_n$ with the opposite $\bZ_2$-grading. That is, $[\bar{S}]=-[S]$. 
Now we obtain (\ref{eq:conjugate}) since the complex conjugation is compatible with the multiplication in K-theory.
\end{rmk}

Finally we obtain that
\begin{align}\label{eqn:left.side}
\prescript{\phi}{} \K_P^*(V/\Pi) \cong \left\{ \begin{array}{ll} \bZ & * \equiv 0,1 \text{ mod }4,\\ 0 & * \equiv 2 \text{ mod }4,\\ \bZ_2 & * \equiv 3 \text{ mod }4.\\ \end{array} \right. 
\end{align}

\subsubsection{Type AI: AHSS for $\prescript{\phi}{} \K_P^{*+\sigma}(\hat{\Pi})$}
We also consider the AHSS for $\prescript{\phi}{} \K_P^{*,\sigma}(\hat{\Pi})$. Since $\sigma(e_1^0,p,p) =+1$ and $\sigma(e^0_2,p,p)=-1$, we have $\prescript{\phi}{} \K_P^{*,\sigma}(e_1^0) \cong \KO^*$ and $\prescript{\phi}{} \K_P^{*,\sigma}(e_2^0) \cong \KO^{*+4}$. 
Similarly we also have $\prescript{\phi}{} \K_P^{*,\sigma}(e^1_1 \cup e^1_2) =  \prescript{\phi}{} \K_P^*(e_1^1 \cup e_2^1)=\K^*(\pt)$ since $\sigma \in H_P^2(e_1^1 \cup e_2^1; \prescript{\phi}{} \bT) \cong 0$.
Hence the $E_1$-page is as in the left of Figure \ref{fig:AHSS2}. Moreover, the differentials $d_1^{0,q}$ are given by
\[d_1^{0,q} = ( \cC \ \cC) \colon \KO^q \oplus \KO^{q+4} \to \K^q, \]
where $\cC \colon \KO^* \to \K^*$ denotes the complexification homomorphism. Hence $d_1^{0,q}$ are surjective for $q=0,4$ and otherwise trivial. 
Therefore, the $E_2$-page of this AHSS is as the right of Figure \ref{fig:AHSS2}. 
 \begin{figure}[h]
\begin{tabular}{c|cccc}
$7$ & $\bZ_2$ && $0$ &  \\
$6$ & $\bZ_2$ && $\bZ$ & \\ 
$5$ & $0$ && $0$ &  \\
$4$ & $\bZ^2$ &&$\bZ$ & \\ 
$3$ & $\bZ_2$ && $0$ &  \\
$2$ & $\bZ_2$ &&$\bZ$ & \\ 
$1$ & $0$ && $0$ &  \\
$0$ & $\bZ^2$ &&$\bZ$ & \\ 
\hline 
$E_1^{pq}$ & $0$ && $1$
\end{tabular}
\hspace{2em}
\begin{tabular}{c|cccc}
$7$ & $\bZ_2$ && $0$ &  \\
$6$ & $\bZ_2$ &&$\bZ$ & \\ 
$5$ & $0$ && $0$ &  \\
$4$ & $\bZ$ && $0$ & \\ 
$3$ & $\bZ_2$ && $0$ &  \\
$2$ & $\bZ_2$ &&$\bZ$ & \\ 
$1$ & $0$ && $0$ &  \\
$0$ & $\bZ$ && $0$ & \\ 
\hline 
$E_2^{pq}$ & $0$ && $1$
\end{tabular}
\caption{$E_1$ and $E_2$ pages of AHSS for $\prescript{\phi}{} \K_P^{*,\sigma}(\hat{\Pi})$} \label{fig:AHSS2}
\end{figure}

Now, thanks to Theorem \ref{thm:crystal}, a comparison with \eqref{eqn:left.side} shows that the extension $0 \to E_2^{1,q-1} \to \prescript{\phi}{} \K_P^{q,\sigma}(\hat{\Pi}) \to E_2^{0,q} \to 0$ is nontrivial if $q \equiv 3$ modulo $4$, and hence
\begin{align}
\prescript{\phi}{} \K_P^{*,\sigma}(\hat{\Pi}) \cong \left\{ \begin{array}{ll} \bZ & * \equiv 0,3 \text{ mod }4,\\ 0 & * \equiv 1 \text{ mod }4,\\ \bZ_2 & * \equiv 2 \text{ mod }4.\\ \end{array} \right. \label{eq:1d}
\end{align}

\subsubsection{Other symmetry types} 
We consider some variations of the $1$-dimensional black and white magnetic space group studied above. 
\begin{enumerate}
    \item The case that $G$, $\phi$ are as above, $c$ is trivial and $\tau \in H^2(P;\prescript{\phi}{} \bT)$ is determined by $\tau(p,p)=-1$ (type AII). 
    \item The case that $G$, $\phi$ are as above and $c \colon G \to \bZ_2$ is the surjection. There are two choices of $\tau$ determined by $\tau(p,p)=\pm 1$ (types C and D).
    \item The case that $G = \bZ \times \bZ_2$, $\phi$ is the composition of the projection $\pr_{\bZ}$ and the surjection $\bZ \to \bZ_2$ and $c :=\pr_{\bZ_2}$. The twisted point group is $P=\mathbb{Z}_2\times\mathbb{Z}_2$ generated by $p$ and $c$, and there are four choices for $\tau$ given by $\tau(p,p)=\pm 1$ and $\tau(c,c)=\pm 1$, corresponding to symmetry types BDI, DIII, CI, and CII.
\end{enumerate}
In each of these cases, the $E_1$ and $E_2$ pages of the AHSS for $\prescript{\phi}{} \K_P^{*,\ft-\fv}(V/\Pi)$ indicated in Figure \ref{fig:AHSS1} is shifted by $m \in \bZ/8\bZ$ depending on the Altland--Zirnbauer (AZ) symmetry type as Table \ref{table:sym}. Indeed, as is well-understood in the study of topological insulators (see e.g.~\cite{kubotaNotesTwistedEquivariant2016}*{Corollary 5.16}), the functor $\prescript{\phi}{}\K_P^{*,\ft-\fv}$ is naturally identified with the type AI (i.e.~Real) K-functor $\prescript{\phi}{}\K_{\bZ_2}^{*+m,-\fv}$.
\begin{table}[h]
    \centering
    \begin{tabular}{c|cccccccc}
        Cartan & AI & BDI & D & DIII & AII & CI & C & CII \\ \hline
        $m$ & $0$ & $1$ & $2$&$3$& $4$ & $ 5 $ & $ 6 $ & $ 7 $  \\  
    \end{tabular}
    \caption{The degree shift corresponding to the symmetry type.}
    \label{table:sym}
\end{table}

Therefore, the resulting group $\prescript{\phi}{} \K_P^{*,\ft}(V/\Pi)$ is isomorphic to (\ref{eq:1d}) shifted by $m$, that is, 
\[\prescript{\phi}{} \K_P^{*,\ft}(V/\Pi) \cong \left\{ \begin{array}{ll} \bZ & *+m \equiv 0,3 \text{ mod }4,\\ 0 & *+m \equiv 1 \text{ mod }4,\\ \bZ_2 & *+m \equiv 2 \text{ mod }4.\\ \end{array} \right. \]

\subsection{$2$-dimensional \textsf{pg} grey magnetic space group}
The second example is the case that $G$ is the $2$-dimensional magnetic space group of type $\mathsf{pg}$. That is, $G$ is the subgroup of $\Euc(V)$ generated by $a \colon (x,y) \mapsto (x+1,y)$ and $b \colon (x,y) \mapsto (-x,y+1)$. 
Let $\Pi=\langle a,b^2\rangle \subset G $. Then $\Pi$ is a free abelian subgroup of $G$ acting on $V$ by translation. 
Let $\phi \colon G \to  \bZ_2$ be the quotient $G \to P:= G/\Pi \cong \bZ_2$. 
Then $(G,\phi)$ is a magnetic space group in the sense of Definition \ref{defn:magnetic}, and hence $(G,\phi,0,0)$ is a twisted crystallographic group in the sense of Definition \ref{defn:twcry}. 
Note that, since $b$ reverses the orientation of $V$, the twist $\fv= (w_1^P(V),w_2^P(V))$ (cf.\ Section \ref{section:4}) is non-trivial. 
 
\subsubsection{Type AI: AHSS for $\prescript{\phi}{} \K_P^*(V/\Pi)$}
Let us consider the AHSS for $\prescript{\phi}{} \K_P^{*,-\fv}(V/\Pi)$ with respect to the following $P$-equivariant cell decomposition of $V/\Pi$:
\[
\xygraph{
!{<0cm,0cm>;<1.5cm,0cm>:<0cm,1.5cm>::}
!{(0,0)}*+{\circ}="a" ([]!{+(-0.2,0)} {\scriptstyle e_1^0})
!{(2,0)}*+{\circ}="b" ([]!{+(+0.2,0)} {\scriptstyle e_1^0})
!{(0,1)}*+{\circ}="c" ([]!{+(-0.2,0)} {\scriptstyle e_2^0})
!{(2,1)}*+{\circ}="d" ([]!{+(+0.2,0)} {\scriptstyle e_2^0})
!{(0,2)}*+{\circ}="e" ([]!{+(-0.2,0)} {\scriptstyle e_1^0})
!{(2,2)}*+{\circ}="f" ([]!{+(+0.2,0)} {\scriptstyle e_1^0})
"a"-"c" |{e_3^1}
"c"-"e" |{e_4^1}
"b"-"d" |{e_3^1}
"d"-"f" |{e_4^1}
"a"-"b" |{e_1^1}
"c"-"d" |{e_2^1}
"e"-"f" |{e_1^1}
!{(1,0.5)}*+{\scriptstyle e_1^2}
!{(1,1.5)}*+{\scriptstyle e_2^2}
}
\]
The $P$-action maps $e_1^0$ to $e_2^0$ and $e_3^1$ to $e_4^1$. It also maps $e_1^1$ to $e_2^1$ and $e_1^2$ to $e_2^2$ while reversing the $x$-coordinate.

Whatever the twist $(\phi, \fv)$ is, the group $\prescript{\phi}{} \K_{\bZ_2}^{*,-\fv}(\bZ_2)$ is identified with the complex K-group $\K^*(\pt)$. Hence we have
\begin{align*}
    \prescript{\phi}{} \K_P^{*,-\fv}(e_1^0 \cup e_2^0) \cong &\K^*,\\
    \prescript{\phi}{} \K_P^{*,-\fv}(e_1^1 \cup e_2^1) \cong &\K^{*+1} \cong \prescript{\phi}{} \K_P^{*,-\fv}(e_3^1 \cup e_4^1), \\
    \prescript{\phi}{} \K_P^{*,-\fv}(e_1^2 \cup e_2^2)\cong &\K^{*+2},
\end{align*}
and hence the $E_1$-page of this AHSS is as the left of Figure \ref{fig:AHSS3}. 

Let $T:=e_1^0 \cup e_2^0 \cup e_3^1 \cup e_4^1$. Then the $P$-equivariant maps 
\[ T \xrightarrow{\mathrm{inclusion}} V/\Pi \xrightarrow{\mathrm{projection}} T \]
induce a direct sum decomposition 
\[ \prescript{\phi}{} \K_{\bZ_2}^{*,-\fv}( V/\Pi) \cong \prescript{\phi}{} \K_{\bZ_2}^{*,-\fv}( T) \oplus \prescript{\phi}{} \K_{\bZ_2}^{*,-\fv}( V/\Pi \setminus T).\]
We apply Lemma \ref{lem:E1} to obtain that the differential $d_1^{pq}$ of the AHSS for $V/\Pi$ restricted to the subcomplex $T$ is
\[
d_1^{0q} = \left\{ \begin{array}{ll} 2 \colon \bZ \to \bZ & q \equiv 0 \text{ mod } 4, \\ 0 \colon \bZ \to \bZ & q \equiv 2 \text{ mod }4,  \end{array} \right. 
\]
and otherwise zero (note that the degree is shifted by $2$ due to the twist $-\fv$). 
We also remark that the Thom isomorphism (Definition \ref{defn:Thom}) shows that $\prescript{\phi}{} \K_{\bZ_2}^{*,-\fv}(\bR^{0,1} \times X) \cong \prescript{\phi}{} \K_{\bZ_2}^{*+3}(X)$, where $\bR^{0,1}$ is the real line $\bR$ with the $\bZ_2$-action given by the reflection.  
Since $V/\Pi \setminus T \cong \bR^{0,1} \times T$, the differential $d_1^{pq}$ of the AHSS for $V/\Pi$ restricted to the subcomplex $V/\Pi \setminus T$ is
\[
d_1^{1q} = \left\{ \begin{array}{ll} 0 \colon \bZ \to \bZ & q \equiv 0 \text{ mod } 4, \\ 2\colon \bZ \to \bZ & q \equiv 2 \text{ mod }4,  \end{array} \right. 
\]
and otherwise zero. 
Therefore, the $E_2$-page of this AHSS is the right side of Figure \ref{fig:AHSS3}.
\begin{figure}[h]
\begin{tabular}{c|cccccc}
$7$ & $0$ && $0$ && $0$ &  \\
$6$ & $\bZ$& &$\bZ^2$ && $\bZ $ &  \\ 
$5$ & $0$ && $0$ && $0$ &  \\
$4$ & $\bZ$ &&$\bZ^2$ && $\bZ$ & \\ 
$3$ & $0$ && $0$ && $0$ &  \\
$2$ & $\bZ$ &&$\bZ^2$ && $\bZ$ & \\ 
$1$ & $0$ && $0$ && $0$ &  \\
$0$ & $\bZ$ &&$\bZ^2$ &&$\bZ$ &  \\ 
\hline 
$E_1^{pq}$ & $0$ && $1$ && $2$ &
\end{tabular}
\hspace{2em}
\begin{tabular}{c|cccccc}
$7$ & $0$ && $0$ && $0$ &  \\
$6$ & $\bZ$ &&$\bZ$ &&$\bZ_2$ &  \\ 
$5$ & $0$ && $0$ && $0$ &  \\
$4$ & $0$& &$\bZ_2 \oplus \bZ$ && $\bZ $ &  \\ 
$3$ & $0$ && $0$ && $0$ &  \\
$2$ & $\bZ$ &&$\bZ$ && $\bZ_2$ & \\ 
$1$ & $0$ && $0$ && $0$ &  \\
$0$ & $0$ &&$\bZ_2 \oplus \bZ$ && $\bZ$ & \\ 
\hline 
$E_2^{pq}$ & $0$ && $1$&& $2$ &
\end{tabular}
\caption{$E_1$ and $E_2$ pages of AHSS for $\prescript{\phi}{} \K_P^{*,-\fv}(V/\Pi)$}
\label{fig:AHSS3}
\end{figure}
Now the differential $d_2^{pq}$ is zero and there is no extension problem. Hence we obtain that  
\begin{align}
\prescript{\phi}{} \K_P^{*,-\fv}(V/\Pi) \cong \left\{ \begin{array}{ll} \bZ_2 & * \equiv 0 \text{ mod }4,\\\bZ \oplus \bZ_2 & * \equiv 1 \text{ mod }4,\\ \bZ^2 & * \equiv 2 \text{ mod }4,\\ \bZ & * \equiv 3 \text{ mod }4.\\ \end{array} \right. \label{eq:2dpg}
\end{align}
We remark that essentially the same calculation as above is done by Baraglia \cite{baragliaConformalCourantAlgebroids2013} in his study of topological T-duality of twisted $\mathrm{KR}$-theory.

\subsubsection{Type AI: AHSS for $\prescript{\phi}{} \K_P^{*+\sigma}(\hat{\Pi})$}
Next we calculate the AHSS of $\prescript{\phi}{} \K_P^{*,\ft}(\hat{\Pi})$ with respect to the following cell decomposition. Let $(k_x,k_y)$ denote the coordinate of $\hat{\Pi}$ dual to the standard $(x,y)$-coordinate of $V$.  We put the cell decomposition of $\hat{\Pi}$ as:
\[
\xygraph{
!{<0cm,0cm>;<1.5cm,0cm>:<0cm,1.5cm>::}
!{(0,0)}*+{\circ}="a" ([]!{+(0,-0.2)} {\scriptstyle e_1^0})
!{(1,0)}*+{\circ}="b" ([]!{+(0,-0.2)} {\scriptstyle e_2^0})
!{(2,0)}*+{\circ}="c" ([]!{+(0,-0.2)} {\scriptstyle e_1^0})
!{(0,2)}*+{\circ}="d" ([]!{+(0,0.2)} {\scriptstyle e_1^0})
!{(1,2)}*+{\circ}="e" ([]!{+(0,0.2)} {\scriptstyle e_2^0})
!{(2,2)}*+{\circ}="f" ([]!{+(0,0.2)} {\scriptstyle e_1^0})
"a"-"b"|{e_3^1}
"b"-"c"|{e_4^1}
"d"-"e"|{e_3^1}
"e"-"f"|{e_4^1}
"a"-"d"|{e_1^1}
"b"-"e"|{e_2^1}
"c"-"f"|{e_1^1}
!{(0.5,1)}*+{\scriptstyle e_1^2}
!{(1.5,1)}*+{\scriptstyle e_2^2}
}
\]
on which $P$ acts by the reflection along $k_y$-axis. The $2$-cocycle $\sigma$ is determined by $\sigma(\blank, p,p)=e^{2\pi ik_y}$. Hence we have 
\begin{align*}
    \prescript{\phi}{} \K_P^{*,\sigma}(e_1^0) \cong &\KO^* \cong \prescript{\phi}{} \K_P^{*+1,\sigma}(e^1_1),\\
    \prescript{\phi}{} \K_P^{*,\sigma}(e_2^0) \cong &\KO^{*+4} \cong \prescript{\phi}{} \K_P^{*+1,\sigma}(e_2^1),\\
    \prescript{\phi}{} \K_P^{*+1,\sigma}(e_3^1 \cup e_4^1) \cong &\K^* \cong \prescript{\phi}{} \K_P^{*+2} (e_1^2 \cup e_2^2).
\end{align*}
Therefore the $E_1$-page is as in the left of Figure \ref{fig:AHSS2}. By comparing this AHSS with that of $e_1^0 \cup e_1^1 \cong S^1$, $e_2^0 \cup e_2^1 \cong S^1$, $e_1^0 \cup e_2^0 \cup e_3^1 \cup e_4^1 \cong S^1$ and $e_1^1 \cup e_2^1 \cup e_1^2 \cup e_2^2 \cong \bR^{0,1} \times S^1$ along the homomorphism induced from the inclusion maps, we obtain that the differential $d_1^{pq}$ is
\begin{align*}
d_1^{0q} = \Big( \begin{smallmatrix}\cC & \cC \\ 0 & 0 \\ 0 & 0 \end{smallmatrix} \Big) &\colon \KO ^* \oplus \KO^{*+4} \to \K^* \oplus \KO ^* \oplus \KO^{*+4},  \\
d_1^{1q} = (\begin{smallmatrix}0 & \cC & \cC \end{smallmatrix}) & \colon \K^* \oplus  \KO ^* \oplus \KO^{*+4} \to \K^*.
\end{align*}
Hence the $E_2$-page is as in the right of Figure \ref{fig:AHSS4}.

\begin{figure}[h]
\begin{tabular}{c|cccccc}
$7$ & $\bZ_2$ && $0 \oplus \bZ_2$ && $0$ &  \\
$6$ & $\bZ_2$& &$\bZ \oplus \bZ_2$ && $\bZ $ &  \\ 
$5$ & $0$ && $0 \oplus 0$ && $0$ &  \\
$4$ & $\bZ^2$ &&$\bZ \oplus \bZ^2$ && $\bZ$ & \\ 
$3$ & $\bZ_2$ && $0 \oplus \bZ_2$ && $0$ &  \\
$2$ & $\bZ_2$ &&$\bZ \oplus \bZ_2$ && $\bZ$ & \\ 
$1$ & $0$ && $0 \oplus 0$ && $0$ &  \\
$0$ & $\bZ^2$ &&$\bZ \oplus \bZ^2$ &&$\bZ$ &  \\ 
\hline 
$E_1^{pq}$ & $0$ && $1$ && $2$ &
\end{tabular}
\hspace{2em}
\begin{tabular}{c|cccccc}
$7$ & $\bZ_2$ && $\bZ_2$ && $0$ &  \\
$6$ & $\bZ_2$& &$\bZ \oplus \bZ_2$ && $\bZ $ &  \\ 
$5$ & $0$ && $0$ && $0$ &  \\
$4$ & $\bZ$ &&$\bZ$ && $0$ & \\ 
$3$ & $\bZ_2$ && $\bZ_2$ && $0$ &  \\
$2$ & $\bZ_2$ &&$\bZ \oplus \bZ_2$ && $\bZ$ & \\ 
$1$ & $0$ && $0$ && $0$ &  \\
$0$ & $\bZ$ &&$\bZ$ &&$0$ &  \\ 
\hline 
$E_2^{pq}$ & $0$ && $1$&& $2$ &
\end{tabular}
\caption{$E_1$ and $E_2$ pages of AHSS for $\prescript{\phi}{} \K_P^{*,\sigma}(\hat{\Pi})$} \label{fig:AHSS4}
\end{figure}

It is automatic from Figure \ref{fig:AHSS4} that the higher differentials $d_n^{pq}$ ($n \geq 2$) vanish. 
Although the extension problem at $p+q \equiv 0 $ and $3$ modulo $4$ are nontrivial, we obtain  
\[\prescript{\phi}{} \K_P^{*,\sigma}(\hat{\Pi}) \cong \left\{ \begin{array}{ll} \bZ^2 & * \equiv 0 \text{ mod }4,\\ \bZ & * \equiv 1 \text{ mod }4,\\ \bZ_2 & * \equiv 2 \text{ mod }4,\\ \bZ \oplus \bZ_2 & * \equiv 3 \text{ mod }4,\\ \end{array} \right. \]
by comparing Figure \ref{fig:AHSS4} with (\ref{eq:2dpg}) through the twisted crystallographic T-duality isomorphism.

\begin{rmk}
The reasons that $\prescript{\phi}{}\K^{n,-\fv}(V/\Pi)$ and $\prescript{\phi}{}\K^{n, \sigma}(\hat{\Pi})$ are periodic with period $4$ are as follows:
\begin{itemize}
\item Let $\tau _\phi \in H^2(P;\prescript{\phi}{}\bT)$ be the cocycle determined by $\tau(e,e)=\tau(e,p)=\tau(p,e)=1$ and $\tau_\phi(p,p)=-1$. 
Then $\prescript{\phi}{}\K^{n,\tau_\phi}_P(V/\Pi)$ is identified with the Quaternionic $\K$-theory. The product line bundle $L = V / \Pi \times \bC$ gives rise to a Quaternionic line bundle on $V /\Pi$ by the Quaternionic structure $(x, y, z) \mapsto (-x, y+ 1, e^{\pi i y} \bar{z})$. 
This line bundle defines an element $[L] \in \prescript{\phi}{}\K^{n,\tau_\phi}_P(V/\Pi)$, and its internal tensor product induces an isomorphism $\prescript{\phi}{}\K^{n,-\fv}_P(V/\Pi) \to \prescript{\phi}{}\K^{n,-\fv+\tau_\phi}_P(V/\Pi)$, whose inverse is the tensor product with the dual line bundle $L^*$. It is known \cite{gomiFreedMooreKtheory2017}*{Subsection 3.3} that $\tau_\phi$ has
the effect of degree shift by $4$. Combining these isomorphisms, we get the periodicity $\prescript{\phi}{}\K^{n,-\fv}_P (V/\Pi) \cong \prescript{\phi}{}\K^{n+4,-\fv}_P (V/\Pi)$. (The same argument can be seen in \cite{baragliaConformalCourantAlgebroids2013}.)
\item 
Let $\iota \colon \hat{\Pi} \to \hat{\Pi}$ denote the homeomorphism $\iota (k_x, k_y) = (k_x, k_y + 1/2)$. Since it is $P$-equivariant, its pull-back induces an isomorphism $\iota^* \colon \prescript{\phi}{}\K^{n,\sigma}_P(\hat{\Pi}) \to \prescript{\phi}{}\K^{n,\iota^* \sigma}_P(\hat{\Pi})$. 
Since $\iota^*\sigma(p,p)(k_x,k_y) = e^{2\pi i (k_y + 1/2)} =-\sigma(p,p)(k_x,k_y)$, we have $\iota^*\sigma = \sigma +\tau_\phi$, and $\tau_\phi$ has the effect of the
degree shift by $4$. 
Combining them, we get the periodicity $\prescript{\phi }{}\K^{n,\sigma}_P(\hat{\Pi}) \to \prescript{\phi }{}\K^{n+4,\sigma}_P(\hat{\Pi})$.
\end{itemize}
We remark that the same argument shows that the twisted equivariant $\K$-groups (\ref{eqn:left.side}) and (\ref{eq:1d}) have the periodicity with period 4.
\end{rmk}

\subsubsection{Other symmetry types}
In the same way as the $1$-dimensional case, we may consider some variations of $2$-dimensional \textsf{pg} grey magnetic space group. Each of them corresponds to one of eight real Cartan labels as following.
\begin{enumerate}
    \item The case that $G$, $\phi$, $c$ are as above and $\tau \in H^2(P;\prescript{\phi}{} \bT)$ is determined by $\tau(p,p)=-1$ (type AII).
    \item The case that $G$, $\phi$ are as above and $c \colon G \to \bZ_2$ is the surjection. There are two choices of $\tau$ determined by $\tau(p,p)=\pm 1$ (type C and D).
    \item The case that $G = \mathsf{pg} \times \bZ_2$, $\phi$ is the composition of $\pr_{ \mathsf{pg} }$ and the surjection $ \mathsf{pg}  \to \bZ_2$ and $c :=\pr_{\bZ_2}$. There are four choices of $\tau$ determined by $\tau(p,p)=\pm 1$ and $\tau(c,c) = \pm 1$, where $p , c$ are generators of $P=G/\Pi \cong \bZ _2 \times \bZ_2$, corresponding to symmetry types BDI, DIII, CI, CII.
\end{enumerate}
The same calculation as the case of type AI symmetry shows that in each of these cases the twisted equivariant K-group of $V/\Pi$ is isomorphic to (\ref{eq:2dpg}) shifted by $m$, where $m \in \bZ/8\bZ$ is the one indicated in Table \ref{table:sym}.

\section{Functoriality of the twisted crystallographic T-duality}\label{section:7}
In this section we discuss the functoriality of the twisted crystallographic T-duality map. For the Baum--Connes assembly map, it was proved by Valette \cite{valetteBaumConnesAssemblyMap2003} that, for a homomorphism $\phi \colon \Gamma_1 \to \Gamma_2$ of groups, there is a commutative diagram
\[
\xymatrix{
\K_*^{\Gamma_1}(\underline{E}\Gamma_1) \ar[r]^{\mu_{\Gamma_1}} \ar[d] & \K_*(C^*\Gamma_1) \ar[d] \\
\K_*^{\Gamma_2}(\underline{E}\Gamma_2) \ar[r]^{\mu_{\Gamma_2}} & \K_*(C^*\Gamma_2). 
}
\]
Theorem \ref{thm:BC} suggests that a similar functoriality holds for twisted crystallographic T-duality. 

Here we treat the following setting: Let $G$ be a twisted crystallographic group and let $H$ be a subgroup of $G$. We write $\Sigma := H \cap \Pi$ and $Q:=H/\Sigma \subset P$. Then $H$ is also regarded as a twisted crystallographic group by the following lemma.  
\begin{lem}\label{lem:affsub}
There is an affine subspace $W \subset V $ of rank $d'$ such that $\alpha_h(W) = W$ for any $h \in H$ and $\Sigma \subset \Euc(W) \cap \bR^d \cong \bR^{d'}$ is of full-rank. 
\end{lem}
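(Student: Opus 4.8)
The plan is to take $W$ to be a translate of the real linear span of $\Sigma$, chosen so as to be fixed by the finite group $Q$. First I would set $d':=\rank\Sigma$ and let $W_0:=\Sigma\otimes_{\bZ}\bR\subset\bR^d$ be the $d'$-dimensional linear subspace spanned by $\Sigma$; by construction $\Sigma$ is a full-rank lattice in $W_0$. Since $\Pi$ is normal in $G$, the subgroup $\Sigma=H\cap\Pi$ is normal in $H$, and for $h\in H$ the conjugate of a translation $t\in\Sigma$ by $h$ is the translation by $A_h t$, where $A_h\in O(d)$ is the linear part of $j(h)$. As $h^{\pm 1}t h^{\mp 1}\in\Sigma$, this forces $A_h\Sigma=\Sigma$, hence $A_hW_0=W_0$ for every $h\in H$.

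Next, because $W_0$ is invariant under all the linear parts $A_h$, the affine action of $H$ on $V$ (through $j$) descends to an affine action on the quotient $V/W_0\cong\bR^{d-d'}$. The translation subgroup $\Sigma$ acts trivially on $V/W_0$, so this action factors through $Q=H/\Sigma$, which is finite since $Q\subset P$. A finite group acting by affine transformations on a finite-dimensional real affine space always has a fixed point — for instance the barycenter of any orbit — so I would choose a $Q$-fixed point $\bar w\in V/W_0$, a lift $w\in V$, and set $W:=w+W_0$.

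It then remains to verify the two asserted properties. The fibers of the projection $V\to V/W_0$ are exactly the affine subspaces of $V$ with direction $W_0$; since $A_hW_0=W_0$, the image $\alpha_h(W)$ is again such a fiber, namely the one lying over $\alpha_h(\bar w)=\bar w$, so $\alpha_h(W)=W$ for all $h\in H$. Finally, a translation of $V$ preserves $W$ if and only if its vector lies in $W_0$, whence $\Euc(W)\cap\bR^d\cong W_0\cong\bR^{d'}$, and $\Sigma$ sits inside it as a full-rank lattice by the choice of $W_0$.

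I expect the only genuine content to be the fixed-point step for the finite group $Q$ acting on $V/W_0$; the rest is bookkeeping about linear parts and directions of affine subspaces. One small point to keep straight is that the $H$-action on $V$ is the one induced through $j\colon G\to\Euc(V)$ (whose kernel may be nontrivial but is finite), so all statements about ``linear parts'' are to be read via $j$, and passing between $H$ and $j(H)$ does not affect $W_0$ or the descended action.
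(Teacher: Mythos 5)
Your argument is correct and is essentially the paper's own proof: both take the $\bR$-linear span $\Sigma_\bR$ of $\Sigma$, observe it is preserved by the linear parts of $H$ so that the action descends to an affine $Q$-action on $V/\Sigma_\bR$, and then pick a fixed point of this finite affine action to obtain the translate $W$. The extra verifications you spell out (invariance of $W$, identification of $\Euc(W)\cap\bR^d$ with $W_0$, and the harmlessness of $\ker j$) are routine and consistent with the paper.
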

\begin{proof}
Let $\Pi_\bR$ and $\Sigma _\bR$ denote the $\bR$-linear spans of $\Pi$ and $\Sigma$ respectively. Note that $\Pi_\bR$ is nothing but the group $\bR^d$ of translations. Since $h \Sigma_\bR h^{-1} = \Sigma_\bR$, the action of $H$ on $V$ is reduced to the $Q$-action of the affine space $V/\Sigma_\bR$. Since this is an affine action of a finite group, there is a fixed point in $V/\Sigma_\bR$, which is represented by a $\Sigma_\bR$-orbit $W:=v \Sigma_\bR \in V/\Sigma_\bR$. This is the desired affine subspace of $V$. 
\end{proof}

\subsection{Induction of topological insulators}
Now we define the induction of topological insulators with the symmetry of twisted crystallographic groups.  Let $\bX:=G \cdot x$ be a $G$-orbit in $V$. We choose the reference point $x \in V$ as an element of $W$.
Here we employ the internal degree of freedom $\sK$ over $\bX$ as the one of the form $\sK' \hotimes \ell^2(G_x)$, in the way that $\ell^2(\bX ,\sK) \cong \ell^2(G, \sK')$. 
Then it is decomposed as the direct sum $\bigoplus_{gH \in G/H} \ell^2(H,\sK')$.
\begin{defn}\label{defn:induction}
Let $h \in \cH_N(H,\phi,\ft)$. We define the induction of topological insulator as 
\[ \Ind _H^G (h):= \bigoplus_{gH \in G/H} U_ghU_g^*  \in \bB \Big(\bigoplus_{gH \in G/H} \ell^2(H, \sK' )\Big).  \]
We also use the same letter $\Ind_H^G$ for the induced map $\mathscr{TP}(H,\phi,\ft) \to \mathscr{TP}(G,\phi,\ft)$. 
\end{defn}
Under the identification in Proposition \ref{prp:TP}, this homomorphism is described as a composition of twisted equivariant K-theory operations. To see this, we decompose the induction into two steps. For $G$ and $H$ as above, let $H'$ denote the intermediate subgroup $q^{-1}(Q) \subset G$, where $q \colon G \to P$ denote the projection. Then $H'$ is also a twisted crystallographic group acting on $V$ and $\sK$, such that $H' \cap \bR^d = \Pi$ and has point group $Q$. By definition we have 
\begin{align}
\Ind _H^G = \Ind_{H'}^G \circ \Ind_H^{H'}. \label{eq:indstep}
\end{align} 
Here we write $\sigma_H$, $\sigma_{H'}$ and $\sigma_G$ for the $2$-cocycles on $Q \curvearrowright \hat{\Sigma}$, $Q \curvearrowright \hat{\Pi}$ and $P \curvearrowright \hat{\Pi}$ associated to the extensions $H$, $H'$ and $G$ as in (\ref{eq:sigma}) respectively. 

Firstly we describe the map $\Ind_{H'}^G$, i.e., the induction in the case that $\Sigma = \Pi$.
\begin{lem}
Let $P$ be a finite group and let $Q \leq P$ be a subgroup. For a $Q$-space $X$ and a twist $(\phi,\fs)$ of $Q \curvearrowright X$, there is a twist $(\tilde{\phi},\tilde{\fs})$ of $P \curvearrowright P \times_Q X$ and an isomorphism
\[\cI_Q^P \colon \prescript{\phi}{} \K_Q^{*,\fs} (X) \to \prescript{\tilde{\phi}}{} \K_P^{*,\tilde{\fs}}(P \times _Q X). \]
Moreover, if $X$ is a $P$-space and $(\phi,\fs)$ is the restriction of a twist of $P \curvearrowright X$, then $(\tilde{\phi}, \tilde{\fs})$ is the pull-back of $(\phi,\fs)$ by the $P$-equivariant map $\rho \colon P \times _Q X \to X$ given by the multiplication $[p,x] \mapsto px$.
\end{lem}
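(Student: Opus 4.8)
The plan is to produce the induced twist $(\tilde\phi,\tilde\fs)$ and the isomorphism $\cI_Q^P$ by a direct transport-of-structure argument on Fredholm sections, exploiting that $P\times_Q X$ is a disjoint union of $|P/Q|$ copies of $X$ permuted by $P$. First I would fix a set of coset representatives $p_1=e,p_2,\dots,p_n$ for $P/Q$, so that as a $Q$-space (after forgetting the $P$-action) $P\times_Q X \cong \bigsqcup_i X$, with the $i$-th copy being $\{[p_i,x]\}$; the $P$-action sends the $i$-th copy to the $j$-th copy whenever $pp_i\in p_jQ$, twisted by the resulting element of $Q$. I would define $\tilde\phi\in H^1_P(P\times_Q X;\bZ_2)$ and $\tilde\fs=(\tilde c,\tilde\sigma)\in H^1_P(P\times_Q X;\bZ_2)\oplus H^2_P(P\times_Q X;\prescript{\tilde\phi}{}\bT)$ by specifying the associated groupoid-cocycle data of Remark~\ref{rmk:cocycle}: on the $i$-th copy of $X$ the line bundles $\tilde L_p$, the grading and linearity indicators are pulled back along the map $[p_i,x]\mapsto x$ from the $Q$-cocycle data of $(\phi,\fs)$ via the "clutching element" $q_i(p)\in Q$ determined by $pp_i\in p_{\pi(i)}Q$. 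The cocycle identities for $(\tilde\phi,\tilde\fs)$ then follow mechanically from those of $(\phi,\fs)$ together with the cocycle identity $q_j(p')q_i(p)=q_i(p'p)$ for the clutching assignment.

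Next I would build $\cI_Q^P$ at the level of the Fredholm (or Karoubi) picture. Given a universal $(\phi,\fs)$-twisted $Q$-equivariant Hilbert bundle $\cH$ on $X$, I would take $\tilde\cH$ on $P\times_Q X$ to be $\cH$ on the first copy and $p_i$-translates of $\cH$ on the $i$-th copy; this is universal for $(\tilde\phi,\tilde\fs)$ because universality is checked copy by copy and the $P$-equivariance is built in. Then for $F\in\Gamma_c(X,\Fred_{p,q}(\cH))^Q$ I set $\cI_Q^P(F)$ to be the section whose restriction to the $i$-th copy is the $p_i$-transport of $F$; the defining relation $u_p\tilde F_x u_p^*=(-1)^{\tilde c(p,x)}\tilde F_{px}$ holds precisely because $F$ is $Q$-invariant in the twisted sense. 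The inverse is restriction to the first copy $\{e\}\times_Q X\hookrightarrow P\times_Q X$, which is visibly well-defined on $\pi_0$ and inverse to $\cI_Q^P$; compatibility with the direct-sum structure (hence additivity) and with the Clifford grading is immediate, so $\cI_Q^P$ is a group isomorphism. I should also remark that this is the standard equivariant induction isomorphism, and could alternatively be phrased via Lemma~\ref{lem:move} as the Morita/Green-imprimitivity statement $C_0(P\times_Q X)\rtimes P\sim_{\mathrm{Morita}} C_0(X)\rtimes Q$, which would give a cleaner proof but requires unwinding the twist bookkeeping in the $C^*$-picture.

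For the last sentence: suppose $X$ already carries a $P$-action and $(\phi,\fs)$ is the restriction to $Q$ of a genuine $P$-twist $(\phi_P,\fs_P)$ on $X$. Then the multiplication map $\rho\colon P\times_Q X\to X$, $[p,x]\mapsto px$, is $P$-equivariant, and on the $i$-th copy it is $x\mapsto p_i x$. I would check that the clutching data defining $(\tilde\phi,\tilde\fs)$ coincide, under $\rho$, with $\rho^*$ of the $P$-cocycle data of $(\phi_P,\fs_P)$: on the $i$-th copy the bundle $\tilde L_p$ was built from $q_i(p)\in Q$ acting via the $Q$-restriction, and since $(\phi,\fs)$ is the restriction of a $P$-twist this is exactly the $P$-cocycle evaluated along the path $x\to p_ix\to pp_ix$, which is what $\rho^*(\phi_P,\fs_P)$ records. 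Hence $(\tilde\phi,\tilde\fs)=\rho^*(\phi_P,\fs_P)$ and under $\cI_Q^P$ the isomorphism intertwines with $\rho^*$ in the evident way.

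The main obstacle I anticipate is purely bookkeeping: keeping the clutching cocycle $q_i(p)$, the line-bundle isomorphisms $\sigma(p,q)$, and the sign twists $\epsilon(c,c)$ consistent across the $n$ copies while verifying the $2$-cocycle identity for $\tilde\sigma$ in $H^2_P(P\times_Q X;\prescript{\tilde\phi}{}\bT)$ — in particular getting the $\phi$-conjugation signs right when $p$ reverses linearity. Nothing here is deep, but it is the step where an error would hide, so I would either carry it out carefully in the groupoid-cocycle language of Remark~\ref{rmk:cocycle} or, preferably, defer it to the crossed-product/Morita reformulation where the induction isomorphism is the classical Green imprimitivity theorem and the twist is carried automatically by the $(G,\Pi)$-algebra structure.
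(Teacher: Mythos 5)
Your proposal is correct and takes essentially the same route as the paper: the paper constructs the induced universal bundle $P\times_Q\cH$ on $P\times_Q X$ (which carries the twist $(\tilde\phi,\tilde\fs)$) and observes that restriction of $P$-invariant Fredholm sections to $X\subset P\times_Q X$ is a homeomorphism onto the $Q$-invariant sections, with the last claim following from $P\times_Q\cH\cong\rho^*\cH$. Your coset-representative ``clutching'' construction and the transport map $\cI_Q^P$ are just an explicit unwinding of this induced-bundle/restriction argument.
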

\begin{proof}
This follows from the fact that the inclusion of action groupoids $X \rtimes Q \to  (P \times _Q X) \rtimes P$ is a local equivalence. Indeed, the map
\[ \Gamma _c(P \times _Q X,\Fred (P \times _Q \cH ))^P \to \Gamma_c(X, \Fred(\cH))^Q \] 
given by the restriction of the section to $X \subset P \times _Q X$ is a homeomorphism. The bundle $P \times _Q \cH$ over $P \times _Q X$ is a universal twisted $P$-equivariant Hilbert bundle, to which a twist $(\tilde{\phi}, \tilde{\fs})$ is associated. Finally, consider the case that $X$ is a $P$-space and $\cH$ is a twisted $P$-equivariant Hilbert bundle on $X$. Then $P \times _Q \cH$ is isomorphic to $\rho^*\cH$, and hence $(\tilde{\phi}, \tilde{\fs}) = \rho^*(\phi,\fs)$. 
\end{proof}

\begin{defn}
Let $X$ be a $P$-space and let $(\phi,\fs)$ be a twist on $P \curvearrowright X$. We define the induction map as the composition
\[\Ind _Q^P:= \rho_! \circ \cI_Q^P \colon \prescript{\phi}{} \K_Q^{*,\fs} (X) \cong \prescript{\rho^*\phi}{} \K_P^{*,\rho^*\fs} (P \times _Q X) \to \prescript{\phi}{} \K_P^{*,\fs}(X).\]
Here $\rho_!$ is the push-forward map with respect to $\rho$. 
\end{defn}

\begin{lem}
Through the isomorphism in Proposition \ref{prp:TP}, the induction $\Ind_{H'}^G$ in the sense of Definition \ref{defn:induction} is identified with $\Ind_Q^P \colon \prescript{\phi}{} \K_Q^{*,\ft+\sigma_{H'}}(\hat{\Pi}) \to \prescript{\phi}{} \K_P^{*,\ft +\sigma_G}(\hat{\Pi})$.
\end{lem}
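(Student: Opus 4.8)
The plan is to compute both sides of the claimed identity explicitly in the Karoubi model $\prescript{\phi}{}\cK_P^{0,\ft+\sigma_G}(\hat\Pi)$ and to check that they agree. Fix once and for all a set-theoretic section $s\colon P\to G$ with $s(e)=e$. Since $H'=q^{-1}(Q)$, one automatically has $s(Q)\subset H'$, so $s|_Q$ is a section $Q\to H'$; using it to define $\sigma_{H'}$ via \eqref{eq:sigma} makes $\sigma_{H'}$ literally the restriction $\sigma_G|_Q$, so $\ft+\sigma_{H'}=(\ft+\sigma_G)|_Q$ is the restriction of a twist of $P\curvearrowright\hat\Pi$. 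The preceding lemma then identifies $\cI_Q^P$ with the change-of-groupoid isomorphism onto $\prescript{\phi}{}\K_P^{*,\rho^*(\ft+\sigma_G)}(P\times_Q\hat\Pi)$, where $\rho\colon P\times_Q\hat\Pi\to\hat\Pi$, $[p,\chi]\mapsto p\chi$. Because $\rho$ is a finite covering (the differential $d\rho$ is a $P$-equivariant isomorphism $T(P\times_Q\hat\Pi)\cong\rho^*T\hat\Pi$), its relative Stiefel--Whitney classes vanish, so $\rho_!$ preserves the twist and, exactly as for the transfer in ordinary K-theory, sends a $(\phi,\fs)$-twisted $P$-equivariant Clifford bundle $\cE$ on $P\times_Q\hat\Pi$ to the direct-image bundle $\rho_*\cE$ on $\hat\Pi$ (fibre $\bigoplus_{[p,\psi]\colon p\psi=\chi}\cE_{[p,\psi]}$) with its induced $P$-action and Clifford structure; on Karoubi triples $\rho_![\cE,\gamma_1,\gamma_2]=[\rho_*\cE,\rho_*\gamma_1,\rho_*\gamma_2]$.

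Next I would unwind the left-hand side. Choose representatives $p_1=e,p_2,\dots,p_k$ of $P/Q$ and put $g_i:=s(p_i)$, a set of representatives of $G/H'$. A Hamiltonian $h\in\cH_N(H',\phi,\ft)$ acts on $\ell^2(H',(\sK')^N)$, and $\Ind_{H'}^G(h)=\bigoplus_i U_{g_i}hU_{g_i}^*$ acts block-diagonally on $\ell^2(G,(\sK')^N)=\bigoplus_iU_{g_i}\ell^2(H',(\sK')^N)$. Applying the Fourier transform $\cF$ along $\Pi$, Lemma \ref{lem:bdlK} gives $\cF U_{g_i}\cF^*=u_{g_i}$, a bundle map covering $\rho_{p_i}$; moreover a $\Pi$-fundamental domain $\bX_0$ of $\bX$ decomposes compatibly with the $G/H'$-decomposition, which identifies $\tilde\sK\cong\bigoplus_i\tilde\sK_{H'}$ and hence $\cK_G\cong\bigoplus_i\cK_{H'}$ as bundles over $\hat\Pi$, the $i$-th summand carrying the twisted structure transported from $\cK_{H'}$ through $u_{g_i}$. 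Under Proposition \ref{prp:TP}, $[\Ind_{H'}^G(h)]$ thus maps to $[\cK_G^{\oplus N},\mathrm{sgn}(h_{\mathrm{Ind}}),\gamma]$ with $\mathrm{sgn}(h_{\mathrm{Ind}})=\bigoplus_i u_{g_i}\,\mathrm{sgn}(h)\,u_{g_i}^*$. A direct computation with the cocycle identity for $\sigma_G$ and the relation $u_{s(p)}u_{g_i}=\sigma_G(\,\cdot\,,p,p_i)\,u_{s(pp_i)}$ then shows that the datum $\{\cK_{H'}\text{-copies indexed by }P/Q;\ u_{s(p)}\}$ is precisely the $(\phi,\ft+\sigma_G)$-twisted $P$-equivariant bundle over $\hat\Pi$ induced from the $(\phi,\ft+\sigma_{H'})$-twisted $Q$-equivariant bundle $\cK_{H'}$, and that $\mathrm{sgn}(h_{\mathrm{Ind}})$ is the corresponding induced grading.

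On the right-hand side, $\cI_Q^P$ sends $[\cK_{H'}^{\oplus N},\mathrm{sgn}(h),\gamma]$ to the Karoubi class over $P\times_Q\hat\Pi$ of $P\times_Q\cK_{H'}^{\oplus N}$ with gradings pulled back fibrewise, and then $\rho_!=\rho_*$ turns this into the bundle over $\hat\Pi$ with fibre $\bigoplus_i(\cK_{H'})_{p_i^{-1}\chi}^{\oplus N}$ at $\chi$, equipped with the induced $P$-equivariant Clifford structure and gradings $\bigoplus_i$(transported $\mathrm{sgn}(h)$) and $\bigoplus_i\gamma$. Comparing with the previous paragraph — using the same representatives $p_i,g_i$ and the identity $\sigma_{H'}=\sigma_G|_Q$ — these two $(\phi,\ft+\sigma_G)$-twisted $P$-equivariant $\Cl_{p,q}$-triples over $\hat\Pi$ coincide, whence $[\Ind_{H'}^G(h)]=\Ind_Q^P[h]$. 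One finally checks compatibility of the identification with the stabilisations $\cH_N\hookrightarrow\cH_{N+1}$ on one side and $\underline{\sK}^{\oplus N}\hookrightarrow\underline{\sK}^{\oplus (N+1)}$ on the other, and with passage to homotopy/equivalence classes; this is routine.

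The main obstacle is the middle paragraph: reconciling the two models of the induced representation — the block decomposition $\bigoplus_{gH'\in G/H'}U_g(\,\cdot\,)U_g^*$ after Fourier transform on the physics side versus the geometric induction $P\times_Q(\,\cdot\,)$ built into $\cI_Q^P$ — and verifying that the cocycle data $\sigma_G$, $\sigma_{H'}$ and the induced twist $\rho^*\sigma_G$ all match under a coherent choice of section and coset representatives. Once that bookkeeping of $G/H'\cong P/Q$ is pinned down, the remaining steps are straightforward unwindings of the definitions of $\cF$, the bundle maps $u_g$, the isomorphism $\cI_Q^P$, the transfer $\rho_!$, and Proposition \ref{prp:TP}.
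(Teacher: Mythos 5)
Your proposal is correct and follows essentially the same route as the paper: the two key points in both arguments are that $\sigma_{H'}$ is literally the restriction of $\sigma_G$ to $Q\curvearrowright\hat\Pi$ (so the statement makes sense), and that the push-forward $\rho_!$ along the finite covering $\rho\colon P\times_Q\hat\Pi\to\hat\Pi$ is the fibrewise direct sum (transfer), proved as in Connes--Skandalis. You simply carry out in more detail the bookkeeping identifying $G/H'\cong P/Q$ and matching the Fourier-transformed block decomposition $\bigoplus_i u_{g_i}\,\mathrm{sgn}(h)\,u_{g_i}^*$ with the induced bundle, which the paper leaves implicit.
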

\begin{proof}
First of all, the statement makes sense because $\sigma_{H'}$ is the restriction of $ \sigma_{G}$ to $Q \curvearrowright \hat{\Pi}$, which follows by definition (\ref{eq:sigma}). 
Now the lemma follows from the following presentation of the push-forward $\rho_!$ with respect to a finite covering map: Let $\rho_!\cH$ denote the $(\phi,\ft)$-twisted $P$-equivariant Hilbert bundle on $X$ defined as $(\rho_!\cH)_x = \bigoplus_{\rho(\bar{x})=x} \cH_{\bar{x}} $ and let $\rho_!F \in \Gamma_c(X,\Fred(\rho_!\cH))^P$ given by
\[\rho_! (F)_x= \bigoplus_{\rho(\bar{x})=x } F_{\bar{x}}  \in \Fred(\rho_!\cH)_x. \]
Then we have $\rho_![F]=[\rho_!(F)]$. This is proved in the same way as the case of genuine push-forward, given in  \cite{connesLongitudinalIndexTheorem1984}*{Proposition 2.9}.
\end{proof}

Next we describe the map $\Ind_H^{H'}$, i.e., the induction in the case when the twisted crystallographic groups have the same point group $Q$. 
Here, the inclusion $\Sigma \to \Pi$ induces the surjection of Pontrjagin duals $\theta \colon \hat{\Pi} \to \hat{\Sigma}$. 
This map is $Q$-equivariant and $\theta^*\sigma_H = \sigma|_{H'}$ holds, which is checked as 
\[\sigma_{H'}(p,q)_\chi = \chi(s(p)s(q)s(pq)^{-1}) = \sigma_H(p,q)_{\theta (\chi)}, \]
since $s(p)s(q)s(pq)^{-1} \in \Sigma$ and $\theta(\chi):=\chi|_{\Sigma}$. 
Hence the pull-back by $p$ induces a homomorphism between twisted equivariant K-groups under consideration.
\begin{lem}
Through the isomorphism in Proposition \ref{prp:TP}, the induction $\Ind_{H}^{H'}$ is identified with $\theta^*  \colon \prescript{\phi}{} \K_Q^{*,\ft+\sigma_{H}}(\hat{\Sigma}) \to \prescript{\phi}{} \K_Q^{*,\ft +\sigma_{H'}}(\hat{\Pi})$.
\end{lem}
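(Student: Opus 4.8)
The plan is to translate the operator-theoretic induction $\Ind_H^{H'}$ of Definition \ref{defn:induction} into the Fourier/Bloch picture and recognize it as pullback along $\theta \colon \hat{\Pi} \to \hat{\Sigma}$. Since $H$ and $H' = q^{-1}(Q)$ share the point group $Q$, the only difference between them is the translation lattice: $\Sigma \subset \Pi$ with $\Pi/\Sigma$ a free abelian group of rank $d-d'$. On the level of topological phases, this means we are comparing the Brillouin torus $\hat{\Sigma}$ of $H$ with the Brillouin torus $\hat{\Pi}$ of $H'$, and $\theta$ is the quotient map dual to $\Sigma \hookrightarrow \Pi$; the statement $\theta^*\sigma_H = \sigma_{H'}|_Q$ is already verified in the excerpt, so the map $\theta^*$ on twisted $K$-groups is well-defined.

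First I would fix compatible internal data: working on the $G$-orbit $\bX = G\cdot x$ with reference point $x\in W$, write $\sH = \ell^2(\bX,\sK)$, and use the identification $\sH \cong \ell^2(\Pi,\tilde\sK)$ for $H'$ and $\sH \cong \ell^2(\Sigma,\tilde\sK_\Sigma)$ for $H$, where $\tilde\sK_\Sigma = \ell^2(\bX_0^\Sigma,\sK)$ for a $\Sigma$-fundamental domain $\bX_0^\Sigma$, and $\tilde\sK = \ell^2(\bX_0,\sK)$ with $\bX_0$ a $\Pi$-fundamental domain chosen inside $\bX_0^\Sigma$, so that $\tilde\sK_\Sigma \cong \ell^2(\Pi/\Sigma) \otimes \tilde\sK$. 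Next I would invoke Lemma \ref{lem:bdlK} and Lemma \ref{lem:TP} for both $H$ and $H'$: an element of $\cH_N(H,\phi,\ft)$ is, after the discrete Fourier transform $\cF_\Sigma \colon \ell^2(\Sigma,\tilde\sK_\Sigma) \to L^2(\hat\Sigma,\tilde\sK_\Sigma)$, a multiplication operator by a smooth self-adjoint invertible function $h_\Sigma \colon \hat\Sigma \to \bB(\tilde\sK_\Sigma^N)$, carrying a $(\phi,c,\ft+\sigma_H)$-twisted $Q$-equivariant structure; likewise for $H'$ with $\cF_\Pi \colon \ell^2(\Pi,\tilde\sK) \to L^2(\hat\Pi,\tilde\sK)$. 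The operator $h \in \cH_N(H,\phi,\ft) \subset \cH_N(H',\phi,\ft)$ is the same bounded operator on $\sH^N$, just regarded with respect to the finer $\Sigma$-periodicity rather than $\Pi$-periodicity — and the induction $\Ind_H^{H'}$ is, in this case where $\Sigma=\Pi$ does not hold but the point group is unchanged, precisely the inclusion $\cH_N(H,\phi,\ft) \hookrightarrow \cH_N(H',\phi,\ft)$ (the sum over $G/H$ collapses since $H' \supset H$ with $H'/H \cong \{1\}$ after accounting for the lattice refinement... more precisely $H'/H$ is infinite, but the induction in Definition \ref{defn:induction} for $\Sigma \subsetneq \Pi$ is the operator $\bigoplus_{gH\in H'/H} U_g h U_g^*$ which, under the Bloch decomposition, is exactly the operator whose symbol is $h_\Sigma$ pulled back along the covering $\hat\Pi \to \hat\Sigma$).

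Concretely, the key computation is: a $\Pi$-invariant operator $\Ind_H^{H'}(h)$ on $\sH^N$ that is merely $\Sigma$-invariant as $h$ is, has Bloch symbol $\cF_\Pi \Ind_H^{H'}(h)\cF_\Pi^* = $ the multiplication by the function $\hat\Pi \ni \chi \mapsto h_\Sigma(\theta(\chi))$, after the natural identification $\tilde\sK_\Sigma \cong \ell^2(\Pi/\Sigma)\otimes\tilde\sK$ and the Fourier transform on the finite group $\Pi/\Sigma$ (which identifies $L^2(\hat\Pi)$ fiberwise with $\ell^2(\Pi/\Sigma) \otimes L^2(\hat\Sigma)$ over the base $\hat\Sigma$). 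This is a routine Fourier-analytic identity: the restriction of a $\Sigma$-periodic but not $\Pi$-periodic symbol to the sublattice Brillouin zone is the pullback along $\theta$, where the extra internal direction $\ell^2(\Pi/\Sigma)$ absorbs the folding. Tracking the $Q$-equivariant twisted bundle structures through these identifications — using $\theta^*\sigma_H = \sigma_{H'}$ and the fact that the bundle maps $u_{s(p)}$ of Lemma \ref{lem:bdlK} for $H$ pull back to those for $H'$ — shows that under the isomorphism of Proposition \ref{prp:TP}, $[\Ind_H^{H'}(h)] = \theta^*[h]$ in $\prescript{\phi}{}\K_Q^{*,\ft+\sigma_{H'}}(\hat\Pi)$.

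The main obstacle I expect is not conceptual but bookkeeping: one must carefully match the choices of fundamental domains $\bX_0^\Sigma \supset \bX_0$, the resulting identification $\tilde\sK_\Sigma \cong \ell^2(\Pi/\Sigma) \otimes \tilde\sK$, and the Fourier transform over the finite quotient $\Pi/\Sigma$, and verify that the $\sigma_H$-twisted $Q$-action on the bundle $\hat\Sigma \times \tilde\sK_\Sigma$ pulls back under $\theta$ (together with this finite Fourier transform) to the $\sigma_{H'}$-twisted $Q$-action on $\hat\Pi \times \tilde\sK$ — i.e., that the cocycle identity $\theta^*\sigma_H = \sigma_{H'}|_Q$ lifts to an honest isomorphism of twisted equivariant bundles, not merely an equality of cohomology classes. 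Once the identifications are pinned down this reduces to the observation that $\theta \circ (\text{translation by }t) = \theta$ for $t$ in the appropriate sense and the definitions of $\sigma_H$, $\sigma_{H'}$ via a common section $s\colon Q\to H \subset H'$. The remainder is the standard fact, already used repeatedly in Section \ref{section:3.3}, that the Bloch transform intertwines homotopies of gapped Hamiltonians with homotopies of the corresponding twisted equivariant $K$-theory classes.
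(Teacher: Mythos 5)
Your overall strategy is the paper's: pass to the Bloch--Fourier picture and check that the symbol of the induced Hamiltonian is the pullback along $\theta$ of the symbol of $h$. But the bookkeeping you set up contains errors that would derail the argument as written. First, the inclusion $\cH_N(H,\phi,\ft)\subset\cH_N(H',\phi,\ft)$ is false: an element of $\cH_N(H,\phi,\ft)$ acts on $\ell^2(H,\sK')^N$, an element of $\cH_N(H',\phi,\ft)$ acts on the strictly larger space $\ell^2(H',\sK')^N$, and a $\Sigma$-invariant operator is in any case not $\Pi$-invariant; the whole point of $\Ind_H^{H'}(h)=\bigoplus_{gH\in H'/H}U_ghU_g^*$ is to manufacture a $\Pi$-invariant operator on the larger space from a $\Sigma$-invariant one on the smaller space. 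Second, and more importantly, your identification of the internal fibers is wrong. Since $H/\Sigma=Q=H'/\Pi$, fundamental domains for $\Sigma\curvearrowright H$ and for $\Pi\curvearrowright H'$ are both copies of $Q$, so one has $\ell^2(H,\sK')\cong\ell^2(\Sigma,\tilde{\sK})$ and $\ell^2(H',\sK')\cong\ell^2(\Pi,\tilde{\sK})$ with the \emph{same} finite-dimensional fiber $\tilde{\sK}\cong\ell^2(Q,\sK')$ on both sides; this is exactly what the statement requires, since $\theta^*$ is a plain pullback with unchanged fiber. Your $\tilde{\sK}_\Sigma\cong\ell^2(\Pi/\Sigma)\otimes\tilde{\sK}$ comes from conflating the $H$-orbit $H\cdot x$, on which $h$ actually lives, with the full $G$-orbit $\bX$; with that identification your ``key computation'' equates an operator on $\tilde{\sK}^N$ with one on $(\ell^2(\Pi/\Sigma)\otimes\tilde{\sK})^N$ and does not produce the class $\theta^*[h]$.

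Third, the repeated appeal to ``the finite group $\Pi/\Sigma$'' and to $\theta$ being a covering is unjustified: $\Sigma$ has rank $d'\le d$, so $\Pi/\Sigma\cong H'/H$ is infinite whenever $d'<d$ (you yourself note that $H'/H$ is infinite, contradicting the finiteness of $\Pi/\Sigma$), $\ell^2(\Pi/\Sigma)$ is then infinite dimensional, and $\theta\colon\hat{\Pi}\to\hat{\Sigma}$ is a torus bundle with positive-dimensional fibers, not a covering. None of this machinery is needed. The correct computation, which is what the paper's one-line proof gestures at, is: writing $h_{s,s'}=a_{s-s'}\in\bB(\tilde{\sK}^N)$ for $s,s'\in\Sigma$, the matrix coefficients of $\Ind_H^{H'}(h)$ on $\ell^2(\Pi,\tilde{\sK}^N)$ are $a_{u-u'}$ when $u-u'\in\Sigma$ and $0$ otherwise, so its $\Pi$-Bloch symbol at $\chi\in\hat{\Pi}$ is $\sum_{s\in\Sigma}\chi(s)a_s=h(\chi|_\Sigma)=h(\theta(\chi))$. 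Combined with $\theta^*\sigma_H=\sigma_{H'}$ (which you correctly isolate and which the paper verifies just before the lemma) and the compatibility of the $Q$-equivariant bundle structures, this is the whole proof.
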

\begin{proof}
Through the Fourier transform, a gapped Hamiltonian $h \in \cH_N(H,\phi,\ft)$ corresponds to a continuous function on $\hat{\Sigma}$ taking values in $\bB(\tilde{\sK})$. Now it is a standard fact of the Fourier transform that $\Ind_H^{H'} h = \theta^*h$ as $\bB(\tilde{\sK})$-valued functions on $\hat{\Pi}$.
\end{proof}

In summary, we obtain the following description of the induction map. For simplicity of notation, hereafter we just write $\sigma$ instead of $\sigma_G$, $\sigma_H$ or $\sigma_{H'}$ when the subscript is clear from the context. 
\begin{prp}
Under the isomorphism in Proposition \ref{prp:TP}, the induction $\Ind_H^G$ in the sense of Definition \ref{defn:induction} corresponds to the composition
\[ \Ind_Q^P \circ \theta^* \colon  \prescript{\phi}{} \K_Q^{*,\ft + \sigma }(\hat{\Sigma}) \to \prescript{\phi}{} \K_Q^{*,\ft + \sigma }(\hat{\Pi}) \to \prescript{\phi}{} \K_P^{*,\ft + \sigma}(\hat{\Pi}). \]
\end{prp}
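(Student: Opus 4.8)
The plan is to reduce the statement to the two-step decomposition already established. First I would invoke \eqref{eq:indstep}, which factors $\Ind_H^G$ through the intermediate twisted crystallographic group $H' = q^{-1}(Q)$, namely $\Ind_H^G = \Ind_{H'}^G \circ \Ind_H^{H'}$. Each factor has already been identified individually: the preceding lemma shows that, under the isomorphism of Proposition \ref{prp:TP}, $\Ind_{H'}^G$ corresponds to $\Ind_Q^P \colon \prescript{\phi}{}\K_Q^{*,\ft+\sigma}(\hat{\Pi}) \to \prescript{\phi}{}\K_P^{*,\ft+\sigma}(\hat{\Pi})$, while $\Ind_H^{H'}$ corresponds to $\theta^* \colon \prescript{\phi}{}\K_Q^{*,\ft+\sigma}(\hat{\Sigma}) \to \prescript{\phi}{}\K_Q^{*,\ft+\sigma}(\hat{\Pi})$, where $\theta \colon \hat{\Pi} \to \hat{\Sigma}$ is the dual of the inclusion $\Sigma \hookrightarrow \Pi$. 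Composing these two identifications then yields exactly $\Ind_Q^P \circ \theta^*$.

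Second I would verify that the composition on the K-theory side is legitimate, i.e., that the twists match up along it. This amounts to recalling the two cocycle identities already checked in the proofs of the component lemmas: $\sigma_{H'}$ is the restriction of $\sigma_G$ to the $Q$-action on $\hat{\Pi}$, and $\theta^*\sigma_H = \sigma_{H'}$. Both follow directly from the defining formula \eqref{eq:sigma} for $\sigma$ together with the fact that, for a set-theoretic section $s \colon Q \to H$, one has $s(p)s(q)s(pq)^{-1} \in \Sigma$ and $\theta(\chi) = \chi|_{\Sigma}$. These are precisely the reasons the statement abbreviates all three cocycles $\sigma_G$, $\sigma_{H'}$, $\sigma_H$ by the single symbol $\sigma$.

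I do not expect any genuine obstacle here: the proposition is a formal consequence of \eqref{eq:indstep} and the two lemmas that identify its two factors, and the only delicate point — the bookkeeping of the twists $\sigma_G$, $\sigma_{H'}$, $\sigma_H$ under restriction along $Q \leq P$ and pull-back along $\theta$ — has already been dealt with in those lemmas, so the proof is a short assembly of what precedes it.
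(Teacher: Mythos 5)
Your proposal is correct and follows exactly the paper's route: the proposition is stated there as a summary of the factorization $\Ind_H^G=\Ind_{H'}^G\circ\Ind_H^{H'}$ from \eqref{eq:indstep} together with the two preceding lemmas identifying the factors with $\Ind_Q^P$ and $\theta^*$, and the cocycle compatibilities you cite are precisely the ones verified in those lemmas. Nothing further is needed.
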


\subsection{Push-forward and functoriality}
Now we demonstrate the main theorem of this section. 
The inclusion $W \to V$ extends to a $G$-equivariant continuous map
\[ \tilde{\iota} \colon G \times _H W \to V,\quad \ \ [g,w] \mapsto gw,\]
which induces the $P$-equivariant map 
\[\iota \colon P \times _Q W/\Sigma \to V/\Pi. \]
The push-forward by $\iota$ gives a group homomorphism
\begin{align}
\iota_! \colon \prescript{\phi}{} \K^{*,\ft}_Q(W/\Sigma) \cong \prescript{\phi}{} \K^{*,\ft}_P(P \times _Q W) \to \prescript{\phi}{} \K^{*,\ft}_P(V/\Pi).
\end{align}

\begin{thm}\label{thm:ind}
The diagram
\begin{align}\begin{split}
\xymatrix{
\prescript{\phi}{} \K^{*-m,\ft -\fw}_Q(W/\Sigma ) \ar[r]^{\prescript{\phi}{}{\mathrm{T}}_{H}^{\ft}} \ar[d]^{\iota_!} & \prescript{\phi}{} \K_Q^{*,\ft + \sigma  }(\hat{\Sigma }) \ar[d]^{\Ind _H^G } \\
\prescript{\phi}{} \K^{*-n,\ft -\fv}_P(V/\Pi) \ar[r]^{\prescript{\phi}{}{\mathrm{T}}_{G}^{\ft} } & \prescript{\phi}{} \K_P^{*,\ft +\sigma }(\hat{\Pi})
}
\end{split}\label{eq:ind}
\end{align}	
commutes. 
\end{thm}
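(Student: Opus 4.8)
The strategy is to reduce the commutativity of \eqref{eq:ind} to the functoriality of the partial Baum--Connes assembly map, exploiting the identification of $\prescript{\phi}{}{\mathrm{T}}_G^\ft$ with $\prescript{\phi}{}\mu_G^\Pi$ (up to the Poincar\'e duality and the $\cJ$-isomorphisms) established in Theorem \ref{thm:BC}. First I would factor the induction $\Ind_H^G$ as $\Ind_{H'}^G \circ \Ind_H^{H'}$ via \eqref{eq:indstep}, and correspondingly factor the vertical maps $\iota_!$ of \eqref{eq:ind}: the inclusion $W \hookrightarrow V$ and the covering $G\times_H W \to V$ pass through the intermediate data associated to $H'$, namely the $Q$-equivariant surjection $\theta\colon \hat\Pi \to \hat\Sigma$ on the spectral side and a push-forward $\iota'_!$ on the position side. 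It therefore suffices to prove the commutativity of the two sub-squares separately, which I would do by further translating each into a statement in $\prescript{\phi}{}\KK$.

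For the square corresponding to $\Ind_H^{H'}$ (same point group $Q$, the two lattices $\Sigma \subset \Pi$): on the spectral side this is $\theta^*$, while on the position side it is the pull-back along $W/\Sigma \to V/\Pi$ (which exists as $W\subset V$ and $\Sigma \subset \Pi$). Under the $\KK$-dictionary of Section \ref{section:5}, $\theta^*$ is a Kasparov product with a morphism $C(\hat\Sigma)_{\sigma_H}\to C(\hat\Pi)_{\sigma_{H'}}$ induced by the surjection $\Sigma_{\tau}\hookrightarrow \Pi_{\tau}$, and on the position side the relevant C*-algebraic map is the inclusion $C_0(W)\rtimes \Sigma \hookrightarrow C_0(V)\rtimes \Pi$ (compressed appropriately). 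The commutativity then follows from the naturality of the partial descent homomorphism $\prescript{\phi}{}j$ with respect to the group inclusion $H \hookrightarrow H'$ together with the functoriality of the Dirac element $\sfD$ under restriction of groups (since $W\subset V$ is $H$-invariant, $\sfD_V$ restricts, after the Morita/Thom reductions, to $\sfD_W$); this is the same mechanism as in Valette's argument, transplanted through Theorem \ref{thm:BC}.

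For the square corresponding to $\Ind_{H'}^G$ (same lattice $\Pi$, subgroup $Q\le P$): on the spectral side this is $\Ind_Q^P = \rho_! \circ \cI_Q^P$ where $\rho\colon P\times_Q\hat\Pi \to \hat\Pi$, and on the position side it is $\iota_! = \rho'_!\circ \cI_Q^P$ where $\rho'\colon P\times_Q (V/\Pi)\to V/\Pi$. Here I would use that both sides are obtained by first applying the induction isomorphism $\cI_Q^P$ of twisted equivariant K-theory — which commutes with the T-duality map because the T-duality map is built from natural operations (pull-back, internal product with the Poincar\'e bundle, push-forward) that all commute with $\cI_Q^P$, as $\cI_Q^P$ is simply the identification of $X\rtimes Q$-structures with $(P\times_Q X)\rtimes P$-structures — and then push forward along the finite covering $\rho$, resp.\ $\rho'$. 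The equality $\prescript{\phi}{}{\mathrm{T}}_G^\ft \circ \rho'_! = \rho_! \circ \prescript{\phi}{}{\mathrm{T}}_{H'}^\ft$ reduces to the compatibility of push-forward along the covering with the three building blocks of T-duality, which is standard ($\rho'_!$ commutes with pull-back of twists by base change along the Cartesian square $P\times_Q(V/\Pi\times\hat\Pi)\to V/\Pi\times\hat\Pi$, with internal product against the pulled-back Poincar\'e class, and with the fibrewise Dirac push-forward $\pi_!$ by the projection formula).

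\textbf{Main obstacle.} The serious work is the first square: checking that the Dirac element behaves well under the various reductions (Morita equivalences $C_0(V)\rtimes\Pi \sim C(V/\Pi)$, the Thom isomorphisms inside $\pi_!$, and the partial descent) \emph{compatibly} for $W$ and for $V$ simultaneously, so that the restriction-of-groups naturality of Higson--Kasparov's Dirac morphism actually descends to the diagram at the level of the crystallographic T-duality maps. In other words, one must verify that all the auxiliary choices (embeddings $W/\Sigma\hookrightarrow W'$, $V/\Pi\hookrightarrow W''$, spinor bundles, connections on $\cX$ and $\cP$) can be made $H$-equivariantly and consistently along $W\subset V$; this is a bookkeeping argument in the spirit of \cite{connesLongitudinalIndexTheorem1984}*{Section 2} and \cite{atiyahIndexEllipticOperators1968a}*{Section 3}, but the twisted $(G,\Pi)$-equivariance makes it delicate, and I expect this to be where most of the care is needed.
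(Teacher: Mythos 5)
Your overall skeleton matches the paper: the proof there also factors $\Ind_H^G=\Ind_{H'}^G\circ\Ind_H^{H'}$ through the intermediate group $H'=q^{-1}(Q)$ and proves the two resulting squares separately (Lemmas \ref{lem:ind1} and \ref{lem:ind2}), and your treatment of the second square (same lattice, $Q\le P$) is exactly the paper's: $\iota_!=\Ind_Q^P$ and the square commutes because $\Ind_Q^P$ is compatible with pull-back, internal product and push-forward.

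The first square (same point group, $\Sigma\subset\Pi$) is where your proposal goes wrong, in two concrete ways. First, the position-side vertical map there is \emph{not} a pull-back along $W/\Sigma\to V/\Pi$: it is the wrong-way map $\iota_!$ for the embedding of the lower-dimensional torus $W/\Sigma$ into $V/\Pi$, dual to the pull-back $\theta^*$ on the momentum side; correspondingly there is no $\ast$-homomorphism $C_0(W)\rtimes\Sigma\to C_0(V)\rtimes\Pi$ to descend (the relevant $\KK$-class is a Gysin element, not a morphism of algebras), so the square as you set it up does not typecheck. Second, your reduction to ``Valette-type functoriality of the assembly map, transplanted through Theorem \ref{thm:BC}'' is not available as a black box: what would be needed is functoriality of the $\phi$-twisted \emph{partial} assembly map $\prescript{\phi}{}{\mu}_G^\Pi$ under the inclusion $(H,\Sigma)\hookrightarrow(G,\Pi)$, and in this paper that statement is Corollary \ref{cor:BCind}, which is \emph{deduced from} Theorem \ref{thm:ind} together with Theorem \ref{thm:BC} --- so invoking it here is circular unless you prove it independently, which would amount to redoing the same diagram chase. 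The paper's actual argument for this square avoids the Dirac element entirely: it identifies $\Ind_H^{H'}$ with $\theta^*$ (where $\theta\colon\hat\Pi\to\hat\Sigma$ is restriction of characters, using $\theta^*\sigma_H=\sigma_{H'}$), writes out the three-step Fourier--Mukai definition of both T-duality maps in a single large diagram, and checks each sub-square directly; the inputs are that $\iota_!$, $\hat\pi^*$ and $\pi'_!$ are Kasparov products in complementary tensor factors (so they commute with $\theta^*$ and with each other), plus the compatibility of the Poincar\'e bundles $(1\times\theta)^*\cQ\cong\cP|_{W/\Sigma\times\hat\Pi}$. Your ``main obstacle'' paragraph correctly senses that the bookkeeping is the issue, but the mechanism you propose to resolve it is not the one that works here.
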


\begin{lem}\label{lem:ind1}
The diagram (\ref{eq:ind}) commutes when $\Pi = \Sigma$.
\end{lem}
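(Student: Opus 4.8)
\textbf{Proof proposal for Lemma \ref{lem:ind1}.}

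The plan is to reduce to the case $\Pi = \Sigma$, where the induction map is purely the geometric push-forward $\Ind_Q^P = \rho_! \circ \cI_Q^P$ along the multiplication map $\rho\colon P\times_Q \hat\Pi \to \hat\Pi$, and the map $\iota$ of the theorem becomes $\rho'\colon P\times_Q V/\Pi \to V/\Pi$, also multiplication. So I want to show that the diagram
\[
\xymatrix{
\prescript{\phi}{} \K^{*-m,\ft-\fv}_Q(V/\Pi) \ar[r]^{\prescript{\phi}{}{\mathrm T}_{H'}^\ft} \ar[d]^{\rho'_!} & \prescript{\phi}{}\K^{*,\ft+\sigma}_Q(\hat\Pi) \ar[d]^{\rho_!} \\
\prescript{\phi}{}\K^{*-n,\ft-\fv}_P(V/\Pi) \ar[r]^{\prescript{\phi}{}{\mathrm T}_G^\ft} & \prescript{\phi}{}\K^{*,\ft+\sigma}_P(\hat\Pi)
}
\]
commutes, where the $Q$-twist $\sigma|_Q = \sigma_{H'}$ is the restriction of the $P$-twist $\sigma_G$. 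First I would rewrite everything after applying $\cI_Q^P$, i.e.\ work over $P\times_Q$ of each space; then $\prescript{\phi}{}{\mathrm T}^\ft_{H'}$ is, by $Q$-equivariance and the naturality lemma for $\cI^P_Q$, nothing but the base change of the $G$-level operations (pull-back $\hat\pi^*$, cup product with $[\cP]$, push-forward $\pi_!$) along $P\times_Q(-)$. The Poincaré bundle $\cP$ on $V/\Pi\times\hat\Pi$ pulls back under $\rho'\times\rho$ to the Poincaré bundle on $P\times_Q(V/\Pi\times\hat\Pi)$, since $\cP$ was $G$-equivariant; this is the key compatibility, and it makes $[\cP]\otimes(-)$ commute with the induction/base-change maps.

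Concretely I would break the square into three sub-squares, one for each of the three constituent maps of $\prescript{\phi}{}{\mathrm T}^\ft$. The pull-back square commutes because $\hat\pi\circ\rho = \rho'' \circ(\id\times\hat\pi)$ up to the obvious identifications of the $P\times_Q$-constructions, and pull-back is contravariantly functorial. The cup-product square commutes by the remark above that $\cP$ is $G$-equivariant, hence $P\times_Q$-invariant, combined with the fact (Remark \ref{rmk:prod}) that the cup product is $\Delta^*$ of the external product and external products commute with base change and with $\rho_!$ by $R(P)$-linearity (Remark \ref{rmk:module}) — more precisely by the projection formula for the finite covering push-forward. The push-forward square is the assertion that $\pi_!$ commutes with induction along the finite cover $\rho$, i.e.\ $\rho_! \circ (\id\times\rho)_{\text{base change of }\pi_!} = \pi_!\circ \rho'_!$; this is functoriality of push-forward under composition of a finite-covering map with a fibre-bundle projection, which holds since push-forward is functorial (compositions of Thom isomorphisms, open embeddings, and finite-cover push-forwards compose correctly, as recorded in the discussion of Definition \ref{defn:push} and the presentation of $\rho_!$ as a direct sum over the fibre).

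The main obstacle I expect is bookkeeping the twists and degrees across the $P\times_Q(-)$ constructions: one must check that $\cI^P_Q$ carries the twist $\ft-\fv$ on $W/\Sigma$ (with the Stiefel--Whitney twist $\fw$ of the $Q$-representation) to exactly $\ft-\fv$ on $P\times_Q W/\Sigma$ (with $\fv$ the Stiefel--Whitney twist of the $P$-representation), i.e.\ that $w^Q_\bullet(W)$ pulls back to $w^P_\bullet(V)$ under $\rho'$, and the degree shift $m$ vs.\ $n$ matches the codimension of the embedding $W\hookrightarrow V$. This is a routine but delicate comparison of equivariant Stiefel--Whitney classes; once it is in place, the three sub-square commutativities above are each a formal consequence of functoriality of the operations in Freed--Moore twisted equivariant K-theory, so the lemma follows.
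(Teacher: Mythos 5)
Your proposal is correct and follows essentially the same route as the paper: the paper's proof simply observes that $\iota_!=\Ind_Q^P$ when $\Pi=\Sigma$ (so $W=V$, making your twist/degree worry vacuous since $m=n=d$ and $\fw=\fv|_Q$) and then invokes the compatibility of $\Ind_Q^P$ with pull-back, product, and push-forward along $P$-equivariant maps — exactly the three sub-squares you spell out.
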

\begin{proof}
In this case we have $\iota_! = \Ind_Q^P$ by definition. Now the lemma follows from the commutativity of $\Ind _Q^P$ with the pull-back, the product and the push-forward with respect to $P$-equivariant maps.
\end{proof}

\begin{lem}\label{lem:ind2}
The diagram (\ref{eq:ind}) commutes when $P=Q$.
\end{lem}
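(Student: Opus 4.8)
The plan is to reduce to the situation of a single lattice $\Pi=\Sigma$ and a single point group $Q=P$, handled respectively by Lemma \ref{lem:ind1} and the present Lemma \ref{lem:ind2}, and then assemble the general case via the factorization $\Ind_H^G = \Ind_{H'}^G \circ \Ind_H^{H'}$ of \eqref{eq:indstep} together with the corresponding factorization $\iota_! = (\iota')_! \circ (\iota'')_!$ of the wrong-way map through the intermediate subgroup $H'=q^{-1}(Q)$. So the content of the theorem lies entirely in the two boundary cases, and here I treat $P=Q$, i.e.\ $H = H'$, where the two twisted crystallographic groups share the point group $P$ but $\Sigma \subsetneq \Pi$; the affine subspace of Lemma \ref{lem:affsub} is then $W \subset V$ with $\Sigma \subset \Euc(W)\cap \bR^{d'}$ of full rank, and $W/\Sigma \hookrightarrow V/\Pi$ is a $P$-equivariant submanifold.

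First I would translate every map in the square \eqref{eq:ind} into the Kasparov-product language of Section \ref{section:5}. By Proposition \ref{lem:FMtoKK} and the subsequent lemmas, $\prescript{\phi}{}{\mathrm{T}}_G^{\ft}$ is the Kasparov product with the Dirac-type element $[C(\hat\Pi,\fH_\cP),m\otimes\fc,\fF_\cP]$, which by Lemma \ref{lem:Dirac} equals $[\cX^*]\otimes \prescript{\phi}{}j_{G,\Pi}(\sfD_V)$ where $\sfD_V\in\prescript{\phi}{}\KK^G(C_0(V)\hotimes\Cl(V),\bC)$ is the Dirac element of $V$; likewise $\prescript{\phi}{}{\mathrm{T}}_H^{\ft}$ corresponds to $[\cX_W^*]\otimes\prescript{\phi}{}j_{H,\Sigma}(\sfD_W)$ for the analogous data on $W$. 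On the left vertical, $\iota_!$ is (by the previous subsection's identification of $\Ind_H^{H'}$ with $\theta^*$) the pull-back $\theta^*\colon \prescript{\phi}{}\K_P^{*,\ft-\fw}(W/\Sigma)\to$ (a K-group over $V/\Pi$) — more precisely, since $P=Q$ there is no induction step and $\iota\colon W/\Sigma\to V/\Pi$ is the genuine embedding, so $\iota_!$ is a push-forward in the sense of Definition \ref{defn:push}, which under Proposition \ref{lem:FMtoKK} is the Kasparov product with a wrong-way element $[\iota_!]\in\prescript{\phi}{}\KK^P(C(W/\Sigma)\hotimes\Cl(NW),C(V/\Pi))$ built from the normal bundle $NW$ of $W$ in $V$ (here $\fv-\fw = w_*^P(NW)$). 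On the right vertical, $\Ind_H^G$ is the pull-back $\theta^*\colon \prescript{\phi}{}\K_P^{*,\ft+\sigma}(\hat\Sigma)\to\prescript{\phi}{}\K_P^{*,\ft+\sigma}(\hat\Pi)$ along the dual surjection $\theta\colon\hat\Pi\to\hat\Sigma$, equivalently Kasparov product with $[\theta^*]\otimes\id$ where $[\theta^*]=[C(\hat\Sigma),\theta^*,0]\in\prescript{\phi}{}\KK^{?}(C(\hat\Sigma),C(\hat\Pi))$ — but one must be careful about the twist: $\theta^*\sigma_H = \sigma|_{H'}$, so on the crossed-product side this is really a $(G,\Pi)$-vs-$(H,\Sigma)$ comparison and I would phrase it via the inclusion $\Sigma\hookrightarrow\Pi$ inducing $C^*\Sigma\hookrightarrow C^*\Pi$, i.e.\ $\hat\Pi\to\hat\Sigma$ dual to $\Sigma\subset\Pi$.

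With everything expressed as Kasparov products, commutativity of \eqref{eq:ind} becomes an equality of two composite $\KK$-elements in $\prescript{\phi}{}\KK^{?}\big(C(W/\Sigma)\hotimes\Cl(NW)\hotimes\Cl(TW/\Sigma)\ \text{-type algebra},\ C(\hat\Pi)_\sigma\big)$. The key geometric input is that the normal bundle of $W$ in $V$ is the restriction to $W$ of a trivial $\Sigma$-equivariant bundle with fibre $\Sigma_\bR^\perp$ (the orthogonal complement of the span of $\Sigma$), so that the Dirac element $\sfD_V$ factors, up to the natural $\Cl$-identifications, as an external product $\sfD_W \hotimes \sfD_{\Sigma_\bR^\perp}$ over the decomposition $V \cong W \times \Sigma_\bR^\perp$ — this is precisely the compatibility of the Dirac element with Cartesian products of Euclidean spaces used already in Lemma \ref{lem:BottKK}. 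Dually, the inclusion $\Sigma\subset\Pi$ and the splitting $\Pi\subset\Sigma_\bR\oplus\Sigma_\bR^\perp$ identify $C^*\Pi$ with $C^*\Sigma$ smeared over a finite set times a copy of the $\Sigma_\bR^\perp$-directions, and the partial-descent/Poincaré-duality bookkeeping from the proofs of Lemma \ref{lem:Dirac} and Lemma \ref{lem:delta} shows that pushing forward along $\iota$ and then running $\prescript{\phi}{}{\mathrm{T}}_G^{\ft}$ picks out exactly the $\theta^*$ of $\prescript{\phi}{}{\mathrm{T}}_H^{\ft}$ in the $\Sigma_\bR^\perp$-transverse directions. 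Concretely I would: (i) realize $\iota_!$ as $\Thom_{NW}$ followed by an open embedding $NW\hookrightarrow V/\Pi$ followed by nothing (since $NW$ is already half of $V/\Pi$ in the relevant chart), mirroring Definition \ref{defn:push}; (ii) use associativity of the Kasparov product to move the Thom class past the Poincaré duality/Dirac class, invoking the product formula $\Thom_E(x)\hotimes_{C(X)}\sfD = \ldots$ that is implicit in the proof of the push-forward lemma (the "$\connes$ Prop.\ 2.9" argument cited there); (iii) match the remaining transverse Dirac factor with the effect of $\theta^*$ on $C^*\Sigma\hookrightarrow C^*\Pi$, i.e.\ with the inclusion-induced map on group C*-algebras, using that the descent of a Dirac element of a vector space along a direct-sum decomposition of the lattice is multiplicative (the same Takesaki--Takai/Morita manipulation as in Lemma \ref{lem:Morita}).

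The main obstacle I expect is the careful handling of the \emph{twists} through all these identifications: the cocycle $\sigma$ changes when we pass between $\hat\Sigma$ and $\hat\Pi$ (only $\theta^*\sigma_H=\sigma_{H'}$, not literal equality), and the Clifford-degree bookkeeping involves $\fv$, $\fw$, $m$, $n$ with $n-m = \mathrm{rank}\,NW$, so one has to check that the Thom shift on the left and the partial-descent shift on the right conspire to give matching degrees and matching $(\phi,\ft)$-twists at every stage. Also, proving that the relevant operators are genuine connections (so that the Kasparov products are represented by the obvious bimodules without ambiguity) requires repeating, in the parametrized-over-$\hat\Pi$ setting, the regularity argument cited from \cite{hankeCodimensionTwoIndex2015} — this is technical but not conceptually new. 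Once the twist/degree compatibilities are nailed down, the equality of the two $\KK$-elements follows from associativity and functoriality of the (partial) descent homomorphism exactly as in the proof of Theorem \ref{thm:BC}, and then Lemma \ref{lem:ind1}, the present Lemma \ref{lem:ind2}, and the factorizations \eqref{eq:indstep} of $\Ind_H^G$ and of $\iota_!$ assemble into a proof of Theorem \ref{thm:ind}.
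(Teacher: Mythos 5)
Your reduction of Theorem \ref{thm:ind} to the two cases $\Pi=\Sigma$ and $P=Q$ via the intermediate group $H'$ is exactly the paper's strategy, and translating the maps into Kasparov products and identifying $\Ind_H^G$ with $\theta^*$ is correct. But for the case $P=Q$ itself your argument has a genuine gap: the sentence asserting that ``pushing forward along $\iota$ and then running $\prescript{\phi}{}{\mathrm{T}}_{G}^{\ft}$ picks out exactly the $\theta^*$ of $\prescript{\phi}{}{\mathrm{T}}_{H}^{\ft}$ in the $\Sigma_\bR^{\perp}$-transverse directions'' is precisely the statement to be proved, and the mechanism you propose for it is not available in general. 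You want to factor $\sfD_V$ as $\sfD_W\hotimes\sfD_{\Sigma_\bR^{\perp}}$ over $V\cong W\times\Sigma_\bR^{\perp}$ and match the transverse factor with $C^*\Sigma\hookrightarrow C^*\Pi$; this requires the lattice to respect the orthogonal splitting, but $(\Pi\cap\Sigma_\bR)\oplus(\Pi\cap\Sigma_\bR^{\perp})$ is in general a proper sublattice of $\Pi$ (e.g.\ $\Pi=\bZ^2$ and $\Sigma=\bZ(2,1)$), so $C^*\Pi$ does not factor accordingly; moreover $\Sigma$ may have index $>1$ in $\Pi\cap\Sigma_\bR$, so $\iota\colon W/\Sigma\to V/\Pi$ need not be injective and your step (i), realizing $\iota_!$ as a Thom map followed by an open embedding, does not apply as stated. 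The splitting $V\cong W\times\Sigma_\bR^{\perp}$ is also only equivariant for the smaller group, not for the one containing $\Pi$.

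For contrast, the paper's proof of this lemma never touches the Dirac element. It expands both T-duality maps into the three operations of \eqref{eq:Tdual} (pull-back $\hat{\pi}^*$, cup product with the Poincar\'e class, push-forward $\pi_!$), inserts the intermediate space $W/\Sigma\times\hat{\Pi}$ between $W/\Sigma\times\hat{\Sigma}$ and $V/\Pi\times\hat{\Pi}$, and checks that every cell of the resulting diagram commutes. The only inputs are: (a) Kasparov products with elements of the form $[a]\hotimes\id$ and $\id\hotimes[b]$ supported on distinct tensor factors commute, which disposes of the cells pairing $\iota_!$ or $\pi'_!$ against $\hat{\pi}^*$ or $\theta^*$; and (b) the two Poincar\'e bundles agree on the intermediate space, $(\id\times\theta)^*\cQ\cong(\iota\times\id)^*\cP$, which is immediate from their quotient description and is what makes the cells involving $[\![\cQ]\!]$ and $[\![\cP]\!]$ commute. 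This Poincar\'e-bundle compatibility --- not any splitting of Dirac elements --- is the idea missing from your sketch; even if you insisted on working on the Baum--Connes side, you would need it (or its avatar $\theta^*\sigma_H=\sigma_{H'}$ together with naturality of the modules $\cX$) to identify the transverse contribution with $\theta^*$.
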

\begin{proof}
In this case $\Ind _H^G = \theta^*$ holds. Let $\pi'$ and $\hat{\pi}'$ denote the projections from $W/\Sigma \times\hat{\Sigma}$ to $\hat{\Sigma}$ and $W/\Sigma$ respectively. The lemma follows from the commutativity of the diagram
\[ 
\xymatrix@C=1em{\scriptstyle
\prescript{\phi}{} \K^{*-m,\ft - \fw}_P(W/\Sigma ) \ar[dd]^{\scriptscriptstyle \iota_!} \ar[r]^{\scriptscriptstyle (\hat{\pi}')^*} \ar[rd]^{\scriptscriptstyle \hat{\pi}^*} 
&\scriptstyle \prescript{\phi}{} \K^{*-m,\ft - \fw }_P(W/\Sigma \times \hat{\Sigma}) \ar[d]^{\scriptscriptstyle  \theta^*} \ar[r]^{\scriptscriptstyle [\![ \cQ ]\!] }  
&\scriptstyle \prescript{\phi}{} \K^{*-m,\ft - \fw+\sigma }_P(W/\Sigma \times \hat{\Sigma}) \ar[r]^{\scriptscriptstyle \pi'_!} \ar[d]^{\theta^*} 
&\scriptstyle \prescript{\phi}{} \K^{*,\ft+\sigma}_P(\hat{\Sigma}) \ar[dd]^{\scriptscriptstyle  \theta^*} \\
&\scriptstyle \prescript{\phi}{} \K^{*-m,\ft - \fw}_P(W/\Sigma \times \hat{\Pi}) \ar[r]^{\scriptscriptstyle [\![ \cP ]\!]} \ar[d]^{\iota_!}  
&\scriptstyle \prescript{\phi}{} \K^{*-m,\ft - \fw + \sigma }_P(W/\Sigma \times \hat{\Pi}) \ar[d]^{\iota_!} \ar[rd]^{\pi'_!} 
 \ar@{}[ru]|{(2)} &\\
\scriptstyle \prescript{\phi}{} \K^{*-n,\ft - \fv}_P(V/\Pi ) \ar[r]^{\scriptscriptstyle \hat{\pi}^*} \ar@{}[ru]|{(1)}
&\scriptstyle \prescript{\phi}{} \K^{*-n,\ft - \fv}_P(V/\Pi \times \hat{\Pi}) \ar[r]^{ \scriptscriptstyle [\![ \cP]\!]}
&\scriptstyle \prescript{\phi}{} \K^{*-n,\ft - \fv + \sigma}_P(V/\Pi \times \hat{\Pi}) \ar[r]^{\scriptscriptstyle \pi_!}
&\scriptstyle \prescript{\phi}{} \K^{*-m,\ft - \fw}_P(\hat{\Pi}). 
}
\]
Among them, the square (1) commutes since $ \iota_! $ and $\hat{\pi}^*$ are given by the Kasparov product with the elements of the form $[\iota_!] \hotimes \id$ and $\id  \hotimes [\hat{\pi}^*]$ respectively. The square (2) also commutes by the same reason.  
\end{proof}

\begin{proof}[Proof of Theorem \ref{thm:ind}]
The theorem follows from Lemma \ref{lem:ind1} and Lemma \ref{lem:ind2} since the diagram
\[
\xymatrix{
\prescript{\phi}{} \K^{*,\ft -\fw }_Q(W/\Sigma) \ar[r]^{\iota_!} \ar[d]^{\prescript{\phi}{}{\mathrm{T}}_{H}^{\ft}} & \prescript{\phi}{} \K^{*-\fv,\ft}_Q(V/\Pi)  \ar[d]^{\prescript{\phi}{}{\mathrm{T}}_{H'}^{\ft}} \ar[r]^{\iota_!} & \prescript{\phi}{} \K_P^{*,\ft-\fv}(V/\Pi) \ar[d]^{\prescript{\phi}{}{\mathrm{T}}_{G}^{\ft}} \\
 \prescript{\phi}{} \K_Q^{-*,\ft + \sigma }(\hat{\Sigma }) \ar[r]^{\Ind _H^{H'}} & \prescript{\phi}{} \K_Q^{-*,\ft + \sigma}(\hat{\Pi}) \ar[r]^{\Ind _{H'}^{G}} & \prescript{\phi}{} \K_P^{-*,\ft + \sigma} (\hat{\Pi})
}
\]	
commutes. 
\end{proof}

Combining Theorem \ref{thm:ind} with Theorem \ref{thm:BC}, we obtain the following corollary.
\begin{cor}\label{cor:BCind}
The diagram 
\[
\xymatrix{
\prescript{\phi}{} \K_{*,\ft }^H(W) \ar[r]^{\prescript{\phi}{} \mu_H^\Sigma } \ar[d]^{\tilde{\iota}_*} & \prescript{\phi}{} \K_Q^{-*,\ft + \sigma }(\hat{\Sigma }) \ar[d]^{\Ind _H^G } \\
\prescript{\phi}{} \K_{*,\ft}^G(V) \ar[r]^{\prescript{\phi}{} \mu_G^\Pi}  & \prescript{\phi}{} \K_P^{-*,\ft + \sigma}(\hat{\Pi})
}
\]	
commutes. 
\end{cor}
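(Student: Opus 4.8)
The plan is to obtain the corollary by pasting the commutative square of Theorem \ref{thm:ind} between two instances of the identification furnished by Theorem \ref{thm:BC}, in the same spirit in which Valette's functoriality of the Baum--Connes assembly map specialises to the crystallographic setting. First I would note that, by Lemma \ref{lem:affsub}, the subgroup $H$ acting on the affine subspace $W$ with the restricted twist $(\phi,c,\tau)$ is itself a $d'$-dimensional twisted crystallographic group, with translation lattice $\Sigma$ and twisted point group $Q$, so Theorem \ref{thm:BC} applies to it verbatim. Thus, for $G$ on $V$ and for $H$ on $W$, we have the two triangles
\[ \prescript{\phi}{}\mathrm{T}_G^{*,\ft} = \prescript{\phi}{}\mu_G^\Pi \circ \cJ_\Pi \circ \PD_{V/\Pi}, \qquad \prescript{\phi}{}\mathrm{T}_H^{*,\ft} = \prescript{\phi}{}\mu_H^\Sigma \circ \cJ_\Sigma \circ \PD_{W/\Sigma}, \]
the composites $\cJ_\Pi \circ \PD_{V/\Pi}$ and $\cJ_\Sigma \circ \PD_{W/\Sigma}$ being the canonical isomorphisms (partial descent together with Poincar\'e duality, absorbing the suppressed Morita equivalences $C_0(V)\rtimes\Pi \sim C(V/\Pi)$ and $C_0(W)\rtimes\Sigma \sim C(W/\Sigma)$). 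Since $\prescript{\phi}{}\mathrm{T}_G^\ft$ is an isomorphism by Theorem \ref{thm:crystal}, feeding these two triangles into the square of Theorem \ref{thm:ind} reduces the asserted identity $\Ind_H^G \circ \prescript{\phi}{}\mu_H^\Sigma = \prescript{\phi}{}\mu_G^\Pi \circ \tilde{\iota}_*$ to a single remaining compatibility: that the isomorphisms $\cJ \circ \PD$ carry the $K$-homology push-forward $\tilde{\iota}_*$ (together with the induction identification $\prescript{\phi}{}\K^H_{*,\ft}(W) \cong \prescript{\phi}{}\K^G_{*,\ft}(G\times_H W)$ implicit in its domain) to the wrong-way map $\iota_!$; equivalently, that
\[ (\cJ_\Pi \circ \PD_{V/\Pi}) \circ \iota_! \;=\; \tilde{\iota}_* \circ (\cJ_\Sigma \circ \PD_{W/\Sigma}). \]

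This last square is precisely the naturality of Poincar\'e duality with respect to the proper $G$-equivariant map $\tilde{\iota}\colon G\times_H W \to V$ (equivalently, after quotienting, $\iota\colon P\times_Q W/\Sigma \to V/\Pi$): Poincar\'e duality converts the Gysin map $\iota_!$ of Definition \ref{defn:push} on twisted equivariant $K$-theory into the covariant push-forward on $K$-homology. I would prove it by unwinding both sides into Kasparov products of explicit representing cycles --- on the $K$-theory side $\iota_!$ is built from the Bott class $\beta_E$, the open-embedding element $\iota_*$ and the Dirac element $\alpha_W$; on the other side $\PD$ is built, as in the proof of Theorem \ref{thm:BC}, from the diagonal $[\Delta^*]$, the partial descent $\prescript{\phi}{}j_{G,\Pi}(\sfD)$ and the evaluation $[\iota_0^*]$, while $\tilde{\iota}_*$ is the Kasparov product with $[\tilde{\iota}^*]$ --- and then comparing the two product expressions by the manipulations used to identify the T-duality cycle in Lemma \ref{lem:Dirac}, reducing ultimately to the non-equivariant statement behind the push-forward (\cite{connesLongitudinalIndexTheorem1984}*{Proposition 2.9}, resting on \cite{connesLongitudinalIndexTheorem1984}*{Lemma 2.4}), together with the functoriality of $\prescript{\phi}{}j_{G,\Pi}$ and its compatibility with $\prescript{\phi}{}j_{H,\Sigma}$ under the induction $G\times_H(\blank)$. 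Once this square is in hand, the rest is a formal diagram chase pasting it onto the right-hand square of Theorem \ref{thm:ind} and onto the two triangles of Theorem \ref{thm:BC}.

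The hard part will be exactly this Poincar\'e-duality naturality square. Although conceptually classical, carrying it out here demands careful bookkeeping of: (i) the equivariant Stiefel--Whitney twists $\fv=(w_1^P(V),w_2^P(V))$ and $\fw=(w_1^Q(W),w_2^Q(W))$, and the associated Clifford-algebra degree shifts $m$ and $n$; (ii) the normal bundle of the auxiliary embedding defining $\iota_!$ and its interaction with the induction functor $P\times_Q(\blank)$; and (iii) the chain of Morita equivalences and the partial descent through which $\PD$, $\cJ$ and $\mu$ are all defined in the $\phi$-twisted Chabert--Echterhoff framework. I expect no further conceptual input beyond this verification, which is the technical core of the argument.
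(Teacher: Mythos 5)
Your proposal follows the paper's own route: the paper derives this corollary in a single line by combining Theorem \ref{thm:ind} with Theorem \ref{thm:BC}, which is exactly the pasting of the induction square with the two assembly/Poincar\'e-duality triangles that you describe, and your use of the invertibility of $\prescript{\phi}{}{\mathrm{T}}_G^{\ft}$ to transport the identity is the intended argument. The only difference is that you make explicit (and propose to verify via Kasparov-product manipulations) the compatibility $\cJ_\Pi^{-1}\circ\PD_{V/\Pi}\circ\iota_! = \tilde{\iota}_*\circ\cJ_\Sigma^{-1}\circ\PD_{W/\Sigma}$, i.e.\ the naturality of Poincar\'e duality exchanging the wrong-way map with the $K$-homology push-forward, which the paper leaves implicit in the word ``Combining''; your identification of this as the sole remaining step is correct.
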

This is a variation of the result of Valette \cite{valetteBaumConnesAssemblyMap2003} for twisted partial assembly maps.

\subsection{Atomic insulator and induction}\label{sec:atomic.insulator}
We finish the section with an explanation of the terminology `atomic insulators'. An \emph{atomic insulator} is a (possibly topological) insulator which has nearly flat energy bands with energies corresponding to the electric spectrum of an isolated atom. In \cite{shiozakiTopologicalCrystallineMaterials2017}, atomic insulators in $K$-theory are constructed from information on configurations of atoms, known as Wyckoff positions. This construction is mathematically formulated by the results in this section.

We consider Theorem \ref{thm:ind} in the case that $H$ is a finite group. In this case $W$ as in Lemma \ref{lem:affsub} is a fixed point of the $H$-action onto $V$ and $G \times _H W$ corresponds to a $G$-orbit in $V$.
\begin{defn}
A $G$-orbit in $V$ is called a \emph{Wyckoff position}. A topological insulator induced from the stabilizer subgroup $G_x$ is called an \emph{atomic insulator}.
\end{defn}
We remark that, for a space group $G$, the stabilizer group $G_x$ of any point $x \in V$ is a finite group. In \cite{hahnInternationalTablesCrystallography1987}, a Wyckoff position is defined to be a subset of $V$ consisting of all points $x \in V$ for which $G_x$ are conjugate subgroups of $G$. 
Our definition of Wyckoff positions is slightly modified from the standard definitions, which is suitable for the purpose of defining atomic insulators.

\begin{exmp}
To provide examples, let us take $G = \bZ^2 \rtimes \bZ_4$ to be the $2$-dimensional space group \textsf{p4}, where $\Pi = \bZ^2 \subset V = \bR^2$ is the standard square lattice and the point group $P = \bZ_4 \subset SO(2)$ acts on $V$ by rotation. To suppress notations, we make use of the identifications $V = \bC$, $\Pi = \bZ \oplus \bZ i$ and $P = \bZ_4 = \{ \pm 1, \pm i\}$. In \cite{shiozakiTopologicalCrystallineMaterials2017}, the $K$-theory $K^0_P(\hat{\Pi})$ is determined as the following $R(\bZ_4)$-module
\[
\mathrm{K}^0_P(\hat{\Pi}) 
\cong R(\bZ_4) \oplus R(\bZ_4) \oplus (1 - t + t ^2  - t^3),
\]
where the representation ring $R(\bZ_4) \cong \bZ[t]/(1 - t^4)$ of $\bZ_4$ is generated by the irreducible representation $\bZ_4 \subset U(1)$. The first direct summand $R(\bZ_4)$ is generated by an equivariant line bundle with non-trivial Chern class, whereas the second direct summand is $\mathrm{K}^0_P(\mathrm{pt}) \cong R(\bZ_4)$.

\begin{itemize}
\item[(a)]
For the Wyckoff position $\mathcal{W}_a = \Pi$, we have the stabilizer groups $H \cong \bZ_4$. For $\mathrm{pt} \in \mathcal{W}_a$, the image of $1 \in \mathrm{K}^0_H(\mathrm{pt}) \cong R(\bZ_4)$ under $\mathrm{Ind}^G_H \colon \mathrm{K}^0_{H}(\mathrm{pt}) \to \mathrm{K}^0_P(\hat{\Pi})$ is $1 \in R(\bZ_4) \cong K^0_P(\mathrm{pt})$.

\item[(b)]
For the Wyckoff position $\mathcal{W}_b = \frac{1 + i}{2} + \Pi$, we have the stabilizer groups $H \cong \bZ_4$. For $\mathrm{pt} \in \mathcal{W}_b$, the image of $1 \in R(\bZ_4) \cong \mathrm{K}^0_H(\mathrm{pt})$ under $\mathrm{Ind}^G_H \colon \mathrm{K}^0_{H}(\mathrm{pt}) \to \mathrm{K}^0_P(\hat{\Pi})$ is represented by the product line bundle $\hat{\Pi} \times \bC$ with the $\bZ_4$-action
\[
\Big(
\begin{pmatrix}
k_1 \\ k_2
\end{pmatrix}
_{\textstyle ,} 
z\Big)
\overset{i}{\mapsto}
\Big(
\begin{pmatrix}
-k_1 \\ k_2
\end{pmatrix}
_{\textstyle ,}
e^{-2\pi i k_1}z\Big)_{\textstyle .}
\]

\item[(c)]
For the Wyckoff position $\mathcal{W}_c = (\frac{1}{2} + \Pi) \sqcup (\frac{i}{2} + \Pi)$, we have the stabilizer groups $H \cong \bZ_2$. For $\mathrm{pt} \in \mathcal{W}_c$, the image of $1 \in R(\bZ_2) \cong \mathrm{K}^0_H(\mathrm{pt})$ under $\mathrm{Ind}^G_H \colon \mathrm{K}^0_{H}(\mathrm{pt}) \to \mathrm{K}^0_P(\hat{\Pi})$ is represented by the product vector bundle $\hat{\Pi} \times \bC^2$ with the $\bZ_4$-action
\[
\Big(
\begin{pmatrix}
k_1 \\ k_2
\end{pmatrix}
_{\textstyle ,} 
\begin{pmatrix} z \\ w \end{pmatrix} \Big)
\overset{i}{\to}
\Big(
\begin{pmatrix}
-k_2 \\ k_1
\end{pmatrix}
_{\textstyle ,}
\begin{pmatrix}
0 & 1 \\
e^{-2\pi i k_2} & 0
\end{pmatrix}
\begin{pmatrix} z \\ w \end{pmatrix} 
\Big).
\]

\item[(d)]
For any $x \not\in \mathcal{W}_a \cup \mathcal{W}_b \cup \mathcal{W}_c$, we have the Wyckoff position $\mathcal{W}_d = \bigsqcup_{g \in \bZ_4}(g x + \Pi)$, for which the stabilizer groups are $H \cong 1$. Note that the Wyckoff positions of this type are homotopic to each other. For $\mathrm{pt} = x \in \mathcal{W}_d$, the image of $1 \in \mathrm{K}^0_H(\mathrm{pt}) \cong \bZ$ under $\mathrm{Ind}^G_H \colon \mathrm{K}^0_{H}(\mathrm{pt}) \to \mathrm{K}^0_P(\hat{\Pi})$ is represented by the product bundle $\hat{\Pi} \times \ell^2(\bZ_4)$, where $\ell^2(\bZ_4)$ is the left regular representation of $\bZ_4$. 
\end{itemize}

In \cite{shiozakiAtiyahHirzebruchSpectral2018}, the atomic insulators associated to $\mathcal{W}_a$ and $\mathcal{W}_c$ are shown to generate the submodule $R(\bZ_4) \oplus (1 - t + t^2 - t^3) \subset \mathrm{K}^0_P(\hat{\Pi})$ with trivial Chern classes. 
\end{exmp}

We say that two Wyckoff positions (in our sense) $\cW$ and $\cW'$ are homotopic if there is a continuous $G$-equivariant map $f \colon [0,1] \times G/H \to V$ such that $f(t,\cdot)$ is injective, $ \mathop{\mathrm{Im}} (f(0, \cdot ))=\cW$ and $\mathop{\mathrm{Im}} (f(1, \cdot ))=\cW'$ holds. 

\begin{lem}
There is a one-to-one correspondence between homotopy classes of Wyckoff positions and finite subgroups of $G$.
\end{lem}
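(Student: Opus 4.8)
The plan is to establish a bijection between homotopy classes of Wyckoff positions $\cW = G\cdot x \subset V$ (in the paper's modified sense, i.e.\ a single $G$-orbit of a point, where by definition all stabilizers along the orbit are conjugate) and conjugacy classes of finite subgroups of $G$; since for a space group $G$ the stabilizer $G_x$ of any point of $V$ is finite (as noted in the excerpt), and since the theorem statement as phrased asks for a correspondence with finite subgroups, I read ``finite subgroup'' here as ``conjugacy class of finite subgroup'' (two points in the same orbit have conjugate stabilizers, so the orbit only sees the conjugacy class). The map in one direction sends a Wyckoff position $\cW = G\cdot x$ to the conjugacy class of $G_x$; this is well-defined because replacing $x$ by $gx$ replaces $G_x$ by $gG_xg^{-1}$.

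First I would check that this map is invariant under homotopy of Wyckoff positions. Given a homotopy $f\colon [0,1]\times G/H \to V$ as in the definition preceding the lemma, with $f(t,\cdot)$ injective and $G$-equivariant, the stabilizer of $f(t,eH)$ contains $H$ for all $t$; by injectivity of $f(t,\cdot)$ and the fact that $\#G_{f(t,eH)}$ can only jump up at special values of $t$ while $G$-equivariance forces $G_{f(t,eH)}\supseteq H$ with $[G:H]$ equal to the (constant) cardinality of $G/H$, one gets $G_{f(t,eH)} = H$ for every $t$. Hence all the orbits $\mathrm{Im}(f(t,\cdot))$ have the same stabilizer conjugacy class, so the endpoints $\cW,\cW'$ map to the same class. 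This shows the map descends to homotopy classes.

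Next, surjectivity: given a finite subgroup $H\le G$, I would produce a point $x\in V$ with $G_x = H$. This is exactly the content of the averaging argument used in the proof of Lemma~\ref{lem:affsub}: $H$ is finite, it acts affinely on $V=\bR^d$, so it has a fixed point $x_0$ (average any point over the $H$-orbit using the affine structure), giving $G_{x_0}\supseteq H$. To arrange equality one perturbs $x_0$ within the fixed-point affine subspace $V^H$: the set $\{x\in V^H : G_x = H\}$ is the complement in $V^H$ of the union of the finitely many proper affine subspaces $V^H\cap V^{H'}$ where $H'$ ranges over finite subgroups strictly containing $H$ (there are finitely many up to the relevant constraint since $G$ acts properly and these subgroups have bounded order), and this complement is nonempty and in fact dense. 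Any such $x$ gives a Wyckoff position $G\cdot x$ mapping to $[H]$.

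Finally, injectivity on homotopy classes: suppose $\cW = G\cdot x$ and $\cW' = G\cdot x'$ have $G_x$ and $G_{x'}$ conjugate in $G$; after translating $x'$ within its orbit I may assume $G_x = G_{x'} =: H$, so both $x$ and $x'$ lie in the contractible (indeed convex, since affine) space $V^H$. Then $f(t,gH) := g\bigl((1-t)x + tx'\bigr)$ defines a $G$-equivariant map $[0,1]\times G/H\to V$; I must verify it is injective for each fixed $t$, i.e.\ that $(1-t)x+tx'\in V^H$ has stabilizer exactly $H$ and that distinct cosets give distinct points. The latter is automatic once the stabilizer is $H$ (it is $G_{f(t,eH)}$-equivariant from the transitive action on $G/H$). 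The former is the only genuinely delicate point: the segment between $x$ and $x'$ stays in $V^H$, but could pass through a point with larger stabilizer. This is the main obstacle, and I would resolve it exactly as in the surjectivity step --- the ``bad'' locus in $V^H$ is a finite union of proper affine subspaces, and if the naive straight-line segment meets it, one first replaces $x'$ by another point $x''$ of $G\cdot x'$ (equivalently slides $x'$ to a generic point of $V^H$, using that $\{y\in V^H : G_y = H\}$ is connected because it is the complement of a finite union of proper affine subspaces in a real affine space of dimension $\ge 1$, hence path-connected) and then the segment from $x$ to the generic $x''$ can be chosen to avoid the bad locus, giving the homotopy. Assembling these three steps (well-defined + homotopy-invariant, surjective, injective) proves the claimed bijection.
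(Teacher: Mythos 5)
Your overall architecture (well-definedness and homotopy-invariance, then surjectivity, then injectivity) is reasonable, and your homotopy-invariance step is correct --- indeed it records more than the paper's own proof, which consists only of the identification $\mathop{\mathrm{Map}}(G/H,V)^G\cong V^H$ together with contractibility of the affine subspace $V^H$, and silently ignores the requirement that every $f(t,\cdot)$ in a homotopy be \emph{injective}. Your reading of ``finite subgroups'' as ``conjugacy classes of finite subgroups'' is also the natural one, since conjugate stabilizers give literally the same orbits. However, both of your genericity arguments rest on claims that are false as stated, and these are genuine gaps.

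First, in the injectivity step you assert that $\{y\in V^H : G_y=H\}$ is path-connected because it is the complement of a finite union of proper affine subspaces of $V^H$. A proper affine subspace can have codimension one, and a single such hyperplane already disconnects a real affine space; concretely, for $G=\bZ\rtimes O(1)$ acting on $\bR$ and $H=1$, the locus in question is $\bR\setminus\frac{1}{2}\bZ$, which is disconnected. What saves the statement is that the components of this locus are permuted transitively by $N_G(H)$ (in the example, by the translations and reflections), so that after replacing $x'$ by a suitable point of $G\cdot x'\cap V^H$ the straight-line homotopy works; proving this transitivity on components is the real content of the injectivity step, and neither your argument nor the paper's supplies it --- your fallback is exactly the false connectedness claim. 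Second, in the surjectivity step you claim each $V^H\cap V^{H'}$ with $H'\supsetneq H$ is proper in $V^H$; this fails whenever the pointwise stabilizer of $V^H$ strictly contains $H$. For instance in \textsf{p4}, taking $H\cong\bZ_2$ generated by the rotation by $\pi$ about the origin, one has $V^H=\{0\}=V^{\bZ_4}$, so no point has stabilizer exactly $H$ and the set you need to be dense is empty. (This also shows the correspondence cannot hold for \emph{all} finite subgroups; it must be restricted to those $H$ occurring as stabilizers, i.e.\ those equal to the pointwise stabilizer of $V^H$ --- a caveat that neither your proof nor the paper's makes explicit.)
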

\begin{proof}
Let $H$ be a finite subgroup. Then there is a homeomorphism between $\mathop{\mathrm{Map}} (G/H, V)^G$ and $V^{H}$ mapping $f \colon G/H \to V$ to $f(eH) \in V$. Now $V^H$ is an affine subspace of $V$, and hence is contractible.  
\end{proof}

Now let us rephrase Corollary \ref{cor:BCind} in the case where $H$ is finite.
 
\begin{cor}
the diagram 
\[
\xymatrix{
\prescript{\phi}{} \K^{*,\ft }_H( \pt ) \ar@{=}[r] \ar[d]^{\iota_!} & \prescript{\phi}{} \K_H^{*,\ft}(\pt ) \ar[d]^{\Ind _H^G } \\
\prescript{\phi}{} \K^{*-n,\ft - \fv}_P(V/\Pi) \ar[r]^{\prescript{\phi}{}{\mathrm{T}}^{\ft}_G } & \prescript{\phi}{} \K_P^{*,\ft + \sigma}(\hat{\Pi}).
}
\]
commutes. 
\end{cor}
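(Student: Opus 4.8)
The plan is to obtain the corollary as a direct specialization of Theorem~\ref{thm:ind} to the case of a \emph{finite} subgroup $H\le G$, once the degenerate data are identified.

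First I would record what happens when $H$ is finite. The translation subgroup $\Sigma=H\cap\Pi$ is then simultaneously finite and free abelian (being a subgroup of the lattice $\Pi$), so $\Sigma=0$. Hence the $H$-invariant affine subspace $W$ supplied by Lemma~\ref{lem:affsub} has rank $d'=0$, i.e.\ $W$ is a single point $x\in V^H$, and $G\times_HW$ is a Wyckoff position. With $\Sigma=0$ we get $Q=H/\Sigma=H$, $W/\Sigma=\pt$, $\hat\Sigma=\pt$, the degree shift $m=d'=0$, and the equivariant Stiefel--Whitney twist $\fw=(w_1^Q(\pt),w_2^Q(\pt))=0$. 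Moreover, by the formula~\eqref{eq:sigma} the $2$-cocycle $\sigma_H$ on $Q\curvearrowright\hat\Sigma$ is trivial, since its values $\chi(s(p)s(q)s(pq)^{-1})$ only involve elements of $\Sigma=0$. Substituting all of this into the diagram~\eqref{eq:ind} of Theorem~\ref{thm:ind}: the left vertical map becomes the composite $\prescript{\phi}{}\K_H^{*,\ft}(\pt)\xrightarrow{\ \cI_H^P\ }\prescript{\phi}{}\K_P^{*,\ft}(P\times_HW)\xrightarrow{\ \iota_!\ }\prescript{\phi}{}\K_P^{*-n,\ft-\fv}(V/\Pi)$, which is exactly the $\iota_!$ named in the corollary; the right vertical map becomes $\Ind_H^G\colon\prescript{\phi}{}\K_H^{*,\ft}(\pt)\to\prescript{\phi}{}\K_P^{*,\ft+\sigma}(\hat\Pi)$ as in Definition~\ref{defn:induction} (under the identification of Proposition~\ref{prp:TP}); and the bottom edge is $\prescript{\phi}{}{\mathrm{T}}_G^{\ft}$. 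Only the top edge needs attention, as it is $\prescript{\phi}{}{\mathrm{T}}_H^{\ft}\colon\prescript{\phi}{}\K_H^{*,\ft}(\pt)\to\prescript{\phi}{}\K_H^{*,\ft}(\pt)$ rather than literally an identity.

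The remaining step is therefore to check that $\prescript{\phi}{}{\mathrm{T}}_H^{\ft}$ is the identity endomorphism in this degenerate situation. By~\eqref{eq:Tdual}, $\prescript{\phi}{}{\mathrm{T}}_H^{\ft}=\pi_!\circ([\cP]\otimes_{W/\Sigma\times\hat\Sigma}\blank)\circ\hat\pi^*$; since $W/\Sigma\times\hat\Sigma=\pt\times\pt=\pt$, the projections $\pi$ and $\hat\pi$ are the identity map of $\pt$, so $\hat\pi^*$ and $\pi_!$ are the identity homomorphisms, while the Poincar\'e line bundle $\cP$ over $\pt$ is the trivial line bundle, so that $[\cP]=[\underline{\bC},1,-1]$ is the unit of the ring $\prescript{\phi}{}R(H)=\prescript{\phi}{}\K_H^0(\pt)$ and $[\cP]\otimes\blank=\id$ (cf.\ Lemma~\ref{lem:ring}). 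Hence $\prescript{\phi}{}{\mathrm{T}}_H^{\ft}=\id$, which is precisely the equality arrow of the corollary, and the statement follows from Theorem~\ref{thm:ind}.

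I do not anticipate a genuine obstacle here: the whole content is bookkeeping, and the only points requiring care are (i) the identification $\Sigma=0$ and the ensuing collapse of $W$, $Q$, $\fw$, $m$ and $\sigma_H$, and (ii) matching the vertical arrows of Theorem~\ref{thm:ind} with the $\iota_!$ and $\Ind_H^G$ of the corollary, which is built into their constructions ($\iota_!$ being the push-forward along $\iota\colon P\times_QW/\Sigma\to V/\Pi$, and $\Ind_H^G$ being Definition~\ref{defn:induction} via $\cI_H^P$ and Proposition~\ref{prp:TP}).
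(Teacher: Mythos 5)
Your proposal is correct and is essentially the paper's argument: the corollary is obtained by specializing Theorem \ref{thm:ind} to a finite subgroup $H$, for which $\Sigma=0$, $W=\pt$, $Q=H$, $\fw=0$, $m=0$ and $\sigma_H=0$, so that the top arrow $\prescript{\phi}{}{\mathrm{T}}_H^{\ft}$ degenerates to the identity on $\prescript{\phi}{}\K_H^{*,\ft}(\pt)$ exactly as you verify. The paper states this specialization without spelling out the bookkeeping, so your explicit checks (in particular that $[\cP]$ over $\pt\times\pt$ is the unit of $\prescript{\phi}{}R(H)$) fill in precisely the intended details.
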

The right vertical map is a mathematical formulation of the construction of atomic insulators in \cite{shiozakiTopologicalCrystallineMaterials2017}.

\appendix
\section{Equivariant cohomology}
\label{sec:equivariant cohomology}

This appendix is a brief account of equivariant cohomology. We refer to \cites{alldayCohomologicalMethodsTransformation1993,hsiangCohomologyTheoryTopological1975,tuGroupoidCohomologyExtensions2006} for more details.

Let $P$ be a finite group acting on a topological space $X$ (which we assume to be `nice' enough, like a smooth manifold or a $P$-CW complex). Then, for any $n \in \bZ$, the $n$th $P$-equivariant cohomology of $X$, in the sense of Borel, is defined to be the following (singular) cohomology 
\[
H^n_P(X; \cA) = H^n(EP \times_P X; \cA),
\]
where $\cA$ is an abelian group, and $EP \times_P X$ is the so-called the Borel construction or the homotopy quotient of $X$. This is the quotient of the product of the universal $P$-bundle $B\pi \colon EP \to BP$ and $X$ under the diagonal action of $P$. 

Being a cohomology of $EP \times_P X$, the equivariant cohomology above can be twisted by an element of $H^1_P(X; \bZ_2) = H^1(EP \times_P X ; \bZ_2)$. In particular, a homomorphism $\phi \colon P \to \bZ_2$ defines an element of $H^1(BP; \bZ_2) \cong \mathrm{Hom}(\pi_1(BP), \bZ_2)$, in view of the fact that the fundamental group of $BP$ is $P$. Then the natural projection $EP \times_P X \ \to BP$ induces an element of $H^1_P(X; \bZ_2)$ by pull-back. We write $H^n_P(X; \prescript{\phi}{}  \cA) = H^n(EP \times_P X; \prescript{\phi}{} \cA)$ for the $P$-equivariant cohomology of $X$ twisted by $\phi$.

Suppose that $X = \mathrm{pt}$ comprises a single point. In this case, the Borel construction agrees with the classifying space $BP$ of $P$. A classical fact is that the (singular) cohomology of $BP$ is isomorphic to the cohomology of the group $P$, so that we have
\[
H^n_P(\mathrm{pt}; \cA) \cong H^n(BP; \cA) \cong H^n(P; \cA).
\]
The coefficient $\cA$ can also be twisted by a homomorphism $\phi \colon  P \to \bZ_2$: 
\[
H^n_P(\mathrm{pt}; \prescript{\phi}{}{\cA}) 
\cong H^n(BP; \prescript{\phi}{}{\cA}) \cong H^n(P; \prescript{\phi}{}{\cA}).
\]
The $\phi$-twisted group cohomology with coefficients in a (right) $P$-module $\cA$ is defined as follows: Let $C^n(P; \prescript{\phi}{} \cA)$ be the group of maps $c \colon \overbrace{P \times \cdots \times P}^n \to \cA$. We define a homomorphism $\delta \colon C^n(P; \prescript{\phi}{} \cA) \to C^{n+1}(P; \prescript{\phi}{} \cA)$ by
\begin{align*}
(\delta c)(p_0, \ldots, p_n)
&= 
\phi(p_0) c(p_1, \ldots, p_n) 
+ \sum_{i = 1}^{n-1} (-1)^i c(p_0, \ldots, p_{i-1}, p_{i+1}, \ldots, p_n) \\
&\quad
+ (-1)^n c(p_0, \ldots, p_{n-1}) p_n.
\end{align*}
It turns out that $(C^*(P; \prescript{\phi}{} \cA), \delta)$ is a cochain complex, and its $n$th cohomology is $H^n(P; \prescript{\phi}{} \cA)$. 

For instance, in the simple case that $\phi$ is trivial and $\cA$ is a trivial $P$-module, we immediately see $H^1(P;\cA) \cong \Hom (P, \cA)$. In the case that $P = \bZ_2$ and $\phi \colon P \to \bZ_2$ is the identity homomorphism, we have
\[
H^2(\bZ_2; \prescript{\phi}{} \bT) \cong \bZ_2,
\]
where $\bT$ is trivial as a right $\bZ_2$-module. In the case that $P = \bZ_2 \times \bZ_2$ and $\phi \colon P \to \bZ_2$ is the projection onto the first factor, we have
\[
H^2(\bZ_2 \times \bZ_2; \prescript{\phi}{}\bT) \cong \bZ_2 \oplus \bZ_2.
\]

\section{$\phi$-twisted Chabert-Echterhoff twisted equivariant KK-theory}\label{section:app}
Here we summarize definitions and standard facts on the $\phi$-twisted Chabert-Echterhoff KK-theory \cite{chabertTwistedEquivariantKK2001} defined for a pair of $\phi$-twisted $\bZ_2$-graded $(G,\Pi)$-C*-algebras. This is a twisted version (in the sense of Chabert--Echterhoff) of $\phi$-twisted equivariant KK-theory introduced in \cite{kubotaNotesTwistedEquivariant2016}*{Section 2} (which includes the Real equivariant KK-theory of groupoids with involution introduced by Moutuou \cite{moutuouEquivariantKKTheory2014}). 
\subsection{Definitions}
Let $G$ be a discrete group and let $\Pi$ be a normal subgroup of $G$. Set $P:=G/\Pi$ and let $\phi \colon P \to \bZ_2$ be a homomorphism. 
We say that a continuous action of $G$ or $P$ on a complex Banach space (a Hilbert space, C*-algebra, Hilbert C*-module) is $\phi$-linear if each $\alpha_g$ is linear if $\phi(g)=0$ and antilinear if $\phi(g)=1$.
\begin{defn}\label{def:GCst}
A \emph{$\phi$-twisted $(G,\Pi)$-C*-algebra} is a triple $(A,\alpha, \sigma)$, where $A$ is a C*-algebra, $\alpha \colon G \curvearrowright A$ is a $\phi$-linear $G$-action and $\sigma \colon \Pi \to U(\cM(A))$ is a group homomorphism satisfying $\alpha_g(\sigma_t)=\sigma_{gtg^{-1}}$ and $\alpha_t = \Ad \sigma_t$ for $g \in G$ and $t \in \Pi$. 
\end{defn}
Moreover, we say that a $\phi$-twisted $(G,\Pi)$-C*-algebra $(A,\alpha , \sigma)$ is $\bZ_2$-graded if $A$ is equipped with a linear involutive $\ast$-automorphism which commutes with the $G$-action and preserves each $\sigma_t$ for $t \in \Pi$.

\begin{rmk}\label{rmk:twisted}
We remark that a $\phi$-twisted $P$-C*-algebra $A$ is regarded as a $\phi$-twisted $(G,\Pi)$-C*-algebra by choosing $\sigma _t=1$ for any $t\in \Pi$. 
\end{rmk}

\begin{rmk}\label{rmk:real}
We say that a Real C*-algebra is a C*-algebra equipped with an involutive antilinear automorphism (a reference is e.g., \cite{goodearlNotesRealComplex1982}). Typically, if $A$ is a Banach $\ast$-algebra over $\bR$ satisfying the C*-norm condition, then it complexification $A \otimes _\bR \bC$ is a complex C*-algebra and the complex conjugation $a \otimes \lambda \mapsto a \otimes \bar{\lambda}$ is an involutive antilinear automorphism. 
The most typical example of Real C*-algebra is the complex field $\bC$ equipped with the complex conjugation $\bar{\cdot}$. In this paper we simply write as $\bC$ for the Real C*-algebra $(\bC, \bar{\cdot})$.

If $A$ is a Real $G$-C*-algebra (i.e., a Real C*-algebra with a $G$-action commuting with the complex conjugation), then it is regarded as a $\phi'$-twisted $G \times \bZ_2$-C*-algebra, where $\phi':=\pr_{\bZ_2} \colon G \times \bZ_2 \to \bZ_2 $. Passing through $(\id , \phi) \colon G \to G \times \bZ_2$, $A$ is also regarded as a $\phi$-twisted $G$-C*-algebra. More explicitly, $G$ acts on $A$ as
\[\prescript{\phi}{} \alpha_g(a) := \begin{cases} \alpha_g(a) & \phi (g)=0, \\ \overline{\alpha_g(a)} & \phi(g) =1 .\end{cases}\]
If moreover we have $\sigma \colon \Pi \to \cM(A)$ implementing $\alpha|_\Pi$ and $\overline{\sigma_t}=\sigma_t$ holds, then $(A,\alpha, \sigma)$ is viewed as a $\phi$-twisted $(G,\Pi)$-C*-algebra. 
\end{rmk}

\begin{rmk}\label{rmk:cross}
There is a general construction of a $\phi$-twisted $(G,\Pi)$-C*-algebra associated to a $\phi$-twisted $G$-C*-algebra, under the assumption that $\phi|_\Pi$ is trivial. For a $\phi$-twisted $G$-C*-algebra $A$, let $A \rtimes \Pi$ denote the (maximal or reduced) crossed product C*-algebra. The group $G$ acts on $A \rtimes \Pi$ as 
\[ \alpha_g(\sum a_t u_t) = \sum \alpha_g(a_t) \cdot u_{gtg^{-1}} \]
and let $\sigma_t :=u_t$. Then $(A \rtimes \Pi, \alpha, \sigma)$ is a $\phi$-twisted $(G,\Pi)$-C*-algebra. The correspondence extends to a functor between equivariant KK categories in Definition \ref{defn:pdesc}.
\end{rmk}

\begin{defn}
Let $A$ be a $\bZ_2$-graded $\phi$-twisted $(G,\Pi)$-C*-algebra. A \emph{$\phi$-twisted $G$-equivariant Hilbert $A$-module} is a $\bZ_2$-graded Hilbert $A$-module $E$ with an isometric $\phi$-linear action $\rho \colon G \curvearrowright E$ preserving the $\bZ_2$-grading such that 
\begin{itemize}
\item $\langle \rho_g (\xi) , \rho_g (\eta) \rangle =\alpha _{g} (\langle \xi,\eta \rangle )$ and
\item $\rho_g(\xi a) = \rho_g(\xi) \alpha_g(a)$.
\end{itemize}
\end{defn}
We write $\bB(E)$ and $\bK(E)$ for the C*-algebra of bounded adjointable and compact operators on $E$ respectively. We write $\alpha^E$ for the $\phi$-linear $G$-action on $\bB(E)$ given by $\alpha^E_g(T)\xi=(\rho _g\circ T\circ \rho_g^{-1})(\xi)$ for any $\xi \in E$.
We remark that if $A$ is a $(G,\Pi)$-C*-algebra then $\xi\mapsto \rho_t(\xi) \cdot \sigma_t$ determines a unitary operator on $E$ for $t \in \Pi$.

\begin{defn}
Let $(A, \alpha , \sigma)$ and $(B, \beta, \theta)$ be $\bZ_2$-graded $\phi$-twisted $(G,\Pi)$-C*-algebras. A \emph{$\phi$-twisted $(G,\Pi)$-equivariant Kasparov $A$-$B$ bimodule} is a triple $(E,\varphi,F)$ where
\begin{itemize}
\item a countably generated $\phi $-twisted $G$-equivariant Hilbert $B$-module $E$,
\item a $\bZ_2$-graded $G$-equivariant $\ast$-homomorphism $\varphi:A \to \bB (E)$ satisfying 
\begin{align*}
\rho_t(\xi) &=\varphi(\sigma_t) \xi \cdot \theta_t^* ,
\end{align*}
\item an odd self-adjoint operator $F \in \bB (E)$ such that $[\varphi(a),F], \varphi (a)(F^2-1)$ and $\varphi (a)(\alpha^E_g (F) -F)$ are in $\bK (E)$ for any $a \in A$.
\end{itemize}
\end{defn}

We say that two $\phi$-twisted $G$-equivariant Kasparov $A$-$B$ bimodules $(E_i,\varphi _i,F_i)$ are homotopic if there is a $(\phi , c,\tau)$-twisted $G$-equivariant Kasparov $A$-$B[0,1]$ bimodule $(\tilde{E},\tilde{\varphi},\tilde{F})$ such that each $\ev_i ^*(\tilde{E},\tilde{\varphi},\tilde{F})$ is unitarily equivalent to $(E_i,\varphi _i,F_i)$ for $i=0,1$.

\begin{defn}
We define $\prescript{\phi}{} \KK ^{G,\Pi}(A,B)$ to be the set of homotopy classes of $\phi$-twisted $(G,\Pi)$-equivariant Kasparov $A$-$B$ bimodules. This set forms an abelian group under the summation $[E_1,\varphi_1,F_1]+[E_2,\varphi_2,F_2]=[E_1 \oplus E_2, \varphi_1 \oplus \varphi_2, F_1 \oplus F_2]$. Here the zero element is represented by any degenerate bimodule, i.e., a Kasparov $A$-$B$ bimodule $(E,\varphi,F)$ with $[\varphi(a),F]=0$, $F^2-1=0$ and $\alpha^E_g (F) - F=0$.
\end{defn}

\begin{defn}\label{defn:Kasparov}
Let $(E_1, \varphi_1, F_1)$ be a $\phi$-twisted $(G,\Pi)$-equivariant Kasparov $A$-$B$ bimodule and let $(E_2,\varphi_2,F_2)$ be a $\phi$-twisted $(G,\Pi)$-equivariant Kasparov $B$-$D$ bimodule.
Let $(E_1,\varphi_1,F_1) \sharp (E_2, \varphi_2,F_2)$ be the set of operators $F$ on $E=E_1 \hotimes_B E_2$ such that 
\begin{itemize}
\item $(E,\varphi_1 \otimes 1,F)$ is a $\phi $-twisted Kasparov bimodule,
\item $F$ is an $F_2$-connection, i.e.\ $T_xF_2-FT_x$ and $F_2T_x^*-T_x^*F$ are compact (where $T_x \in \bB(E_2,E)$ is defined by $\xi \mapsto x \otimes_B \xi $),
\item $\varphi(a)[F,F_1 \hotimes 1]\varphi(a)$ is positive modulo compacts.
\end{itemize}
It is proved in the same way as \cite{skandalisRemarksKasparovTheory1984} that the set $(E_1,\varphi_1,F_1) \sharp (E_2, \varphi_2,F_2) $ is path-connected and hence
\[[E_1,\varphi_1,F_1]\otimes [E_2, \varphi_2,F_2] \to [E,\varphi_1 \otimes 1 , F] \]
gives a well-defined twisted Kasparov product
\[ \prescript{\phi}{}\KK^{G,\Pi}(A,B) \otimes \prescript{\phi}{} \KK^{G,\Pi}(B,D) \to \prescript{\phi}{}\KK^{G,\Pi}(A,D).\]
\end{defn}
In the same way as the usual Kasparov product, this twisted Kasparov product also satisfies the associativity, which enables us to regard it as the composition of morphisms.  
We write $\prescript{\phi }{}{\mathfrak{Kas}}^{G,\Pi}$ for the additive category whose objects are separable $\phi$-twisted $(G,\Pi)$-C*-algebras, morphisms are $\prescript{\phi}{}{\KK}^{G,\Pi}$ groups and the composition is given by this Kasparov product (we call it the $\phi$-twisted $(G,\Pi)$-equivariant Kasparov category). Here, the element $\id_A:=[A,\id_A,0] \in \prescript{}{}{\KK}^{G,\Pi}(A,A)$ plays the role of identity maps. 

\begin{defn}
We say that two separable $\phi$-twisted $(G,\Pi)$-C*-algebras $A$ and $B$ are \emph{$\prescript{\phi}{}\KK^{G,\Pi}$-equivalent} if they are isomorphic in the category $\prescript{\phi}{}{\mathfrak{KK}}^{G,\Pi}$, i.e., there are $\xi \in \prescript{\phi}{}{\KK}^{G,\Pi}(A,B)$ and $\eta \in \prescript{\phi}{}{\KK}^{G,\Pi}(B,A)$ such that $\xi \hotimes _B \eta = \id_A$ and $\eta \hotimes _A \xi = \id_B$. 
\end{defn}

\begin{rmk}[{cf.~\cite{chabertTwistedEquivariantKK2001}*{Example 3.11}}]\label{rmk:twistedKK}
Following Remark \ref{rmk:twisted}, let us regard $\phi$-twisted $P$-C*-algebras $A$, $B$ as $\phi$-twisted $(G,\Pi)$-C*-algebras. Then a triple $(E,\varphi,F)$ is a $\phi$-twisted $P$-equivariant Kasparov $A$-$B$ bimodule if and only if it is a $\phi$-twisted $(G,\Pi)$-equivariant Kasparov $A$-$B$ bimodule. This shows that there is an isomorphism 
\[\prescript{\phi}{}\KK^{P}(A,B) \cong \prescript{\phi}{}\KK^{G,\Pi}(A,B). \]
For example, $\prescript{\phi}{}{\KK}^{G,\Pi}(\bC,\bC) \cong \prescript{\phi}{}{\KK}^P(\bC,\bC) \cong \prescript{\phi}{}R(P)$. 
Moreover, the Kasparov product is also consistent with this isomorphism, which induces a faithful functor $\prescript{\phi}{}{\mathfrak{KK}}^P \to \prescript{\phi}{}{\mathfrak{KK}}^{G,\Pi}$.  
In particular, if $\xi \in \prescript{\phi}{}\KK^{P}(A,B)$ is a $P$-equivariant $\KK$-equivalence, then it also gives a $\prescript{\phi}{}{\KK}^{G,\Pi}$-equivalence. 
\end{rmk}

\begin{rmk}\label{rmk:RealKK}
Following Remark \ref{rmk:real}, let us regard Real $(G,\Pi)$-C*-algebras $A$, $B$ as $\phi$-twisted $(G,\Pi)$-C*-algebras. Then, in the same way,  a Real $(G,\Pi)$-equivariant Kasparov $A$-$B$ bimodule is also regarded as a $\phi$-twisted $(G,\Pi)$-equivariant Kasparov $A$-$B$ bimodule. 
This gives rise to a homomorphism $\KKR^{(G,\Pi)}(A,B) \to \prescript{\phi}{}{\KK}^{(G,\Pi)}(A,B)$, and moreover a functor
\[ \mathfrak{KKR}^{G,\Pi} \to \prescript{\phi}{}{\mathfrak{KK}}^{G,\Pi},\]
where $\mathfrak{KKR}^{G,\Pi}$ denotes the Real $(G,\Pi)$-equivariant Kasparov category. 
In particular, if $\xi \in \KKR^{G,\Pi}(A,B)$ is a Real $(G,\Pi)$-equivariant $\KK$-equivalence, then it also gives a $\prescript{\phi}{}{\KK}^{G,\Pi}$-equivalence. 
For example, if $P$ acts on the Euclidean space $V=\bR^d$ isometrically, then $C_0(V)$ and $\Cl(V)$ are both $\bZ_2$-graded $\phi$-twisted $P$-C*-algebras (and hence $\phi$-twisted $(G,\Pi)$-C*-algebras). Now the Dirac and dual Dirac elements give their $\prescript{\phi}{}{\KK}^{G,\Pi}$-equivalence.   
\end{rmk}

\begin{exmp}\label{exmp:Morita}
Let $A$, $B$ be $\phi$-twisted $(G,\Pi)$-C*-algebras. An \emph{equivariant $A$-$B$ imprimitivity bimodule} (cf.\ \cite{raeburnMoritaEquivalenceContinuoustrace1998}*{Definition 3.1}) is a $\phi$-twisted $(G,\Pi)$-equivariant Hilbert $B$-module $E$ which is full (i.e., $\overline{\mathrm{span}} \{ \langle e_1,e_2\rangle \mid e_1,e_2 \in E\}=B$) and equipped with a $\phi$-twisted $(G,\Pi)$-equivariant $\ast$-isomorphism $\varphi \colon A \to \bK(E)$. 
We say that $A$ and $B$ are \emph{equivariantly Morita equivalent} if there is a $(G,\Pi)$-equivariant $A$-$B$ imprimitivity bimodule. If $E$ is an equivariant $A$-$B$ imprimitivity bimodule, the Kasparov bimodule $[E,\varphi,0] \in \prescript{\phi}{}\KK^{G,\Pi}(A,B)$ has a multiplicative inverse. That is, $A$ and $B$ are $\prescript{\phi}{}\KK^{G,\Pi}$-equivalent. In particular, the equivariant KK-groups $\prescript{\phi}{}\KK^{G,\Pi}(D,A)$ and $\prescript{\phi}{}\KK^{G,\Pi}(D,B)$ are isomorphic for any $D$. \end{exmp}

\subsection{Partial descent homomorphism}
We also mention that the partial descent homomorphism \cite{chabertTwistedEquivariantKK2001} is also generalized to the $\phi$-twisted setting.  Let $(A,\alpha)$ and $(B,\beta)$ be $\phi$-twisted $G$-C*-algebras and let $A \rtimes \Pi$ and $B \rtimes \Pi$ be the associated $\phi$-twisted $(G,\Pi)$-C*-algebra as in Remark \ref{rmk:cross}.
Let $(E,\varphi,F)$ be a $\phi$-twisted $G$-equivariant Kasparov $A$-$B$ bimodule. We associate to it a $(G,\Pi)$-equivariant Kasparov $A \rtimes \Pi$-$B \rtimes \Pi$ bimodule $(E \rtimes \Pi , \varphi \rtimes \Pi ,F_\Pi)$ in the following way.
\begin{itemize}
\item $E \rtimes \Pi$ is the completion of $c_c(\Pi, E)$ equipped with
\begin{itemize}
\item  the $c_c(\Pi,B)$-valued inner product $\langle \xi , \eta \rangle (t) := \sum_{s \in \Pi} \langle \xi(s) , \rho_s(\eta(s^{-1}t) \rangle$, 
\item the $c_c(\Pi, B)$-action $(\xi b)(t):= \sum_{s \in \Pi} \xi(s) \cdot \beta_s(b(s^{-1}t))$, and
\item the $\phi$-twisted $G$-action $\tilde{\rho}_g(\xi)(t):=\rho_g(\xi (g^{-1}tg))$,
\end{itemize}
with respect to the norm $\| \xi \|:=\| \langle \xi, \xi \rangle\| ^{1/2}_{B \rtimes \Pi}$,
\item $\varphi  \rtimes \Pi \colon A \rtimes \Pi \to \bB(E \rtimes \Pi)$ is given by
\[ ((\varphi \rtimes \Pi)(a)\xi)(t) := \sum_{s \in \Pi} a(s) \rho_s(\xi(s^{-1}t)) ,\]
\item $(F_\Pi\xi) (t):=F (\xi(t))$, which is a well-defined bounded operator on $E \rtimes \Pi$.
\end{itemize}
\begin{defn}[{cf. {\cite{chabertTwistedEquivariantKK2001}*{Theorem 4.5}}}]\label{defn:pdesc}
The correspondence $[E,\varphi,F] \mapsto [E \rtimes \Pi , \varphi \rtimes \Pi ,F_\Pi]$ gives a group homomorphism
\[\prescript{\phi}{} j_{G,\Pi} \colon\prescript{\phi}{} \KK^G(A,B) \to \prescript{\phi}{} \KK^{G,\Pi}(A \rtimes \Pi , B \rtimes \Pi ), \]
which we call the \emph{$\phi$-twisted partial descent map}.  
\end{defn}
It is proved in the same way as \cite{kasparovEquivariantKKTheory1988}*{Theorem 3.11} and \cite{chabertTwistedEquivariantKK2001}*{Theorem 4.5} that $\prescript{\phi}{} j_{G,\Pi}$ is functorial, that is, compatible with the Kasparov product. In particular, if $x \in \KK^G(A,B)$ is a $\KK^G$-equivalence, then $\prescript{\phi}{} j_{G,\Pi}(x)$ is a $\KK^{G,\Pi}$-equivalence. 

\begin{exmp}\label{exmp:Moritadescent}
Let $A$, $B$ be $\phi$-twisted $G$-C*-algebras and let $E$ be a $\phi$-twisted $G$-equivariant imprimitivity $A$-$B$ bimodule. Then $E \rtimes \Pi$ is a $\phi$-twisted $(G,\Pi)$-equivariant $A \rtimes \Pi$-$B\rtimes \Pi$ bimodule, and hence $A \rtimes \Pi$ and $B \rtimes \Pi$ are equivariantly Morita equivalent in the sense of Example \ref{exmp:Morita}. The KK-element $\prescript{\phi}{}j_{G,\Pi}[E]=[E \rtimes \Pi]$ induces a $\prescript{\phi}{}\KK^{G,\Pi}$-equivalence of $A \rtimes \Pi$ and $B \rtimes \Pi$.
\end{exmp}

\subsection{Twisted equivariant K-theory and Fredholm operators}
Here we give a description of the group $\prescript{\phi}{} \KK^{G,\Pi} (\bC , A)$ for a $\bZ_2$-graded $\phi$-twisted C*-algebra $A$ as the set of homotopy classes of Fredholm operators. Here we assume that the quotient group $P:=G/\Pi$ is a compact group.

We say that a $\phi$-twisted $G$-equivariant Hilbert $A$-module $\sE$ is $(G,\Pi)$-equivariant if $\rho_t\xi = \xi \cdot \sigma_t^*$ for any $t \in \Pi$ and $\xi \in \sE$. The following lemma is easily verified.

\begin{lem}
Let $(\sE , \rho)$ be a $\phi$-twisted $\bZ_2$-graded $G$-equivariant Hilbert A-module and let $\{ \theta_t \}_{t\in \Pi}$ be the collection of unitaries on $\sE$ determined by $\rho_t\xi = \theta_t \xi \sigma_t^*$. Then the right $A$-module
\[ \cI_{G,\Pi}(\sE):= \{ \xi  \in C_b(G,\sE) \mid \xi(t^{-1}g) = \theta_t^* \xi(g) \text{ for any $t \in \Pi$} \} \]
with the $A$-valued inner product 
\[ \langle \xi , \eta \rangle :=\sum_{p \in P} \xi(s(p))^*\eta(s(p)), \]
where $s \colon P \to G$ is a section, and the $G$-action is given by $\tilde{\rho}_h (\xi)(g) = \rho_h(\xi(h^{-1}g)) $, is a $\phi$-twisted $(G,\Pi)$-equivariant Hilbert $A$-module.
\end{lem}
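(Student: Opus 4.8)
The plan is to deduce every assertion from the structure of $\sE$ together with the algebraic relations among $\rho$, $\sigma$ and $\theta$; nothing beyond finite direct sums of Hilbert modules enters. \emph{Step 1: properties of $\theta$ and a reformulation of the defining condition.} Since $\phi$ is pulled back along $G \to P = G/\Pi$, the restriction $\phi|_\Pi$ is trivial, so each $\rho_t$ with $t \in \Pi$ is $\bC$-linear. Writing $\theta_t(\zeta) = \rho_t(\zeta)\sigma_t$ (equivalently $\rho_t\zeta = \theta_t\zeta\sigma_t^*$) and using $\rho_s\rho_t = \rho_{st}$, $\alpha_h(\sigma_t) = \sigma_{hth^{-1}}$ for $t \in \Pi$, and $\rho_g(\zeta a) = \rho_g(\zeta)\alpha_g(a)$, one checks that $t \mapsto \theta_t$ is a homomorphism from $\Pi$ into the group of even, $\bC$-linear, right $A$-linear unitaries of $\bB(\sE)$; in particular $\theta_{t^{-1}} = \theta_t^*$. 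Replacing $t$ by $t^{-1}$ then shows that membership in $\cI_{G,\Pi}(\sE)$ is equivalent to $\xi(tg) = \theta_t\xi(g)$ for all $t \in \Pi$, $g \in G$. Right $A$-linearity and evenness of the $\theta_t$ make it immediate that $\cI_{G,\Pi}(\sE)$ is a right $A$-submodule of $C_b(G,\sE)$, stable under the pointwise $\bZ_2$-grading inherited from $\sE$.

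\emph{Step 2: Hilbert module structure.} Fixing a section $s \colon P \to G$, the evaluation map $\xi \mapsto (\xi(s(p)))_{p \in P}$ is an $A$-linear, grading-preserving bijection $\cI_{G,\Pi}(\sE) \xrightarrow{\ \sim\ } \bigoplus_{p \in P}\sE$; injectivity and surjectivity follow from $\xi(tg) = \theta_t\xi(g)$, and any such $\xi$ is automatically bounded because the $\theta_t$ are isometries and $P$ is finite. Under this bijection the prescribed pairing $\langle\xi,\eta\rangle = \sum_{p}\xi(s(p))^*\eta(s(p))$ becomes the standard $A$-valued inner product on the finite direct sum $\bigoplus_p\sE$, hence is positive and nondegenerate and makes $\cI_{G,\Pi}(\sE)$ a complete Hilbert $A$-module (countably generated if $\sE$ is). I would also record here that the pairing is independent of $s$: any other section has the form $s'(p) = u_p s(p)$ with $u_p \in \Pi$, so $\xi(s'(p))^*\eta(s'(p)) = (\theta_{u_p}\xi(s(p)))^*(\theta_{u_p}\eta(s(p))) = \xi(s(p))^*\eta(s(p))$ by unitarity of $\theta_{u_p}$. (If one wishes to allow $P$ merely compact, the sum is replaced by a Haar integral and the same argument applies, using left-invariance of Haar measure for section-independence.)

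\emph{Step 3: the $G$-action.} For $\tilde\rho_h(\xi)(g) := \rho_h(\xi(h^{-1}g))$ one first checks $\tilde\rho_h(\xi) \in \cI_{G,\Pi}(\sE)$; this reduces to the identity $\rho_h \circ \theta_{h^{-1}th} = \theta_t \circ \rho_h$, which follows from $\rho_h\rho_{h^{-1}th} = \rho_{th} = \rho_t\rho_h$ and $\alpha_h(\sigma_{h^{-1}th}) = \sigma_t$ (normality of $\Pi$ gives $h^{-1}th \in \Pi$). The relations $\tilde\rho_h\tilde\rho_k = \tilde\rho_{hk}$, $\tilde\rho_h(\xi a) = \tilde\rho_h(\xi)\alpha_h(a)$, the (anti)linearity of $\tilde\rho_h$ according to $\phi(h)$, and preservation of the grading all pass pointwise from $\rho$. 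Compatibility with the inner product uses $\rho_h(x)^*\rho_h(y) = \alpha_h(x^*y)$, which yields $\langle\tilde\rho_h\xi,\tilde\rho_h\eta\rangle = \alpha_h\big(\sum_p \xi(h^{-1}s(p))^*\eta(h^{-1}s(p))\big)$; since $\{h^{-1}s(p)\}_{p\in P}$ is again a full set of coset representatives, the section-independence of Step 2 identifies the bracketed sum with $\langle\xi,\eta\rangle$, so $\langle\tilde\rho_h\xi,\tilde\rho_h\eta\rangle = \alpha_h(\langle\xi,\eta\rangle)$.

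\emph{Step 4: $(G,\Pi)$-equivariance.} For $t \in \Pi$ one computes $\tilde\rho_t(\xi)(g) = \rho_t(\xi(t^{-1}g)) = \rho_t(\theta_t^*\xi(g)) = \theta_t\theta_t^*\xi(g)\sigma_t^* = \xi(g)\sigma_t^*$, i.e.\ $\tilde\rho_t\xi = \xi\cdot\sigma_t^*$, which is exactly the required $(G,\Pi)$-equivariance condition. I do not expect a genuine obstacle: the content is a sequence of identities in $\bB(\sE)$ and $\cM(A)$, and the only care needed is bookkeeping — invoking section-independence twice (for well-definedness of $\langle\cdot,\cdot\rangle$ and for its $\alpha$-equivariance) and correctly using normality of $\Pi$ in the conjugation identity for $\theta$.
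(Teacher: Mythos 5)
Your proof is correct: it is exactly the direct verification that the paper leaves to the reader ("the following lemma is easily verified"), and all the key identities — that $t\mapsto\theta_t$ is a homomorphism of right $A$-linear even unitaries via $\sigma_{sts^{-1}}\sigma_s=\sigma_{st}$, the section-independence of the inner product, the intertwining relation $\rho_h\circ\theta_{h^{-1}th}=\theta_t\circ\rho_h$ from $\alpha_h(\sigma_{h^{-1}th})=\sigma_t$, and the computation $\tilde\rho_t\xi=\xi\cdot\sigma_t^*$ — are checked correctly. Nothing is missing.
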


For a $\phi$-twisted $G$-C*-algebra $A$, let $\prescript{\phi}{} \sH_A$ denote the direct sum of infinite copies of $A$ regarded as a $\phi$-twisted $G$-equivariant Hilbert $A$-module and let $\prescript{\phi}{} \sH_A^{\mathrm{op}}:= \prescript{\phi}{} \sH_A$ with the opposite $\bZ_2$-grading. Set
\begin{align}
\prescript{\phi}{} \sH_A^{G,\Pi} :=\cI_{G,\Pi}(\prescript{\phi}{} \sH_A \oplus \prescript{\phi}{} \sH_A^{\mathrm{op}}). \label{eq:Hilbmod}
\end{align}

\begin{lem}[Kasparov stabilization theorem]\label{lem:stab}
Let $A$ be a $\phi$-twisted $(G,\Pi)$-C*-algebra and let $\sE$ be a countably generated $\bZ_2$-graded $\phi$-twisted $(G,\Pi)$-equivariant Hilbert $A$-module. Then there is a $(G,\Pi)$-equivariant unitary isomorphism
\[ \sE \oplus  \prescript{\phi}{} \sH_{A}^{G,\Pi} \cong \prescript{\phi}{} \sH_{A}^{G,\Pi}.  \]
\end{lem}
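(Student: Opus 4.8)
The statement is the $\phi$-twisted $(G,\Pi)$-equivariant avatar of Kasparov's stabilization theorem, and the plan is to deduce it from the classical (non-equivariant, $\bZ_2$-graded) Kasparov stabilization theorem by ``inducing up'' along the functor $\cI_{G,\Pi}$, exactly as in the untwisted treatments of \cite{kasparovEquivariantKKTheory1988}, \cite{chabertTwistedEquivariantKK2001} and \cite{kubotaNotesTwistedEquivariant2016}, the only new inputs being the $\phi$-linearity of the action and the $\Pi$-implementing unitaries $\sigma_t$. As usual it suffices to produce a $(G,\Pi)$-equivariant adjointable isometry $V\colon \sE\to \prescript{\phi}{} \sH_A^{G,\Pi}$; then $VV^*$ is a $(G,\Pi)$-equivariant projection, so $\prescript{\phi}{} \sH_A^{G,\Pi}\cong \sE\oplus\sE^\perp$ with $\sE^\perp:=(1-VV^*)\prescript{\phi}{} \sH_A^{G,\Pi}$, and the Eilenberg swindle finishes: since $\prescript{\phi}{} \sH_A\oplus\prescript{\phi}{} \sH_A^{\mathrm{op}}\cong\bigoplus_{\bN}(\prescript{\phi}{} \sH_A\oplus\prescript{\phi}{} \sH_A^{\mathrm{op}})$ as $\phi$-twisted $G$-equivariant Hilbert $A$-modules (a mere reindexing of $\bN$) and $\cI_{G,\Pi}$ is additive, one gets $\prescript{\phi}{} \sH_A^{G,\Pi}\cong\bigoplus_{\bN}\prescript{\phi}{} \sH_A^{G,\Pi}$ $(G,\Pi)$-equivariantly, whence
\[
\sE \oplus \prescript{\phi}{} \sH_A^{G,\Pi}
\;\cong\; \sE \oplus \bigoplus_{\bN}(\sE\oplus\sE^\perp)
\;\cong\; \bigoplus_{\bN}(\sE\oplus\sE^\perp)
\;\cong\; \prescript{\phi}{} \sH_A^{G,\Pi}.
\]

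\textbf{Construction of $V$.} First apply the classical graded Kasparov stabilization theorem to the underlying $\bZ_2$-graded Hilbert $A$-module $\sE$: it realizes $\sE$ as an orthogonally complemented graded submodule of $\prescript{\phi}{} \sH_A\oplus\prescript{\phi}{} \sH_A^{\mathrm{op}}$, i.e.\ gives a grading-preserving $A$-linear adjointable isometry $j_0\colon \sE\hookrightarrow \prescript{\phi}{} \sH_A\oplus\prescript{\phi}{} \sH_A^{\mathrm{op}}$ (the $\mathrm{op}$-summand is needed precisely to accommodate the odd part of $\sE$). The point $j_0$ need not be equivariant in any sense. Now induce it: define $T\colon\sE\to C_b(G,\prescript{\phi}{} \sH_A\oplus\prescript{\phi}{} \sH_A^{\mathrm{op}})$ by
\[
T(\xi)(g) := \rho_g\bigl(j_0(\rho_{g^{-1}}\xi)\bigr), \qquad g\in G,\ \xi\in\sE,
\]
where $\rho$ denotes the relevant $G$-action on each side. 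A direct computation — using that $\sE$ is $(G,\Pi)$-equivariant (so $\rho_t\xi=\xi\sigma_t^*$, hence the induced unitaries on $\sE$ are trivial) together with the explicit form of the $\Pi$-implementing unitaries on $\prescript{\phi}{} \sH_A$ (left multiplication by $\sigma_t$) — shows that $T(\xi)$ satisfies the defining constraint of $\cI_{G,\Pi}$, so $T$ lands in $\prescript{\phi}{} \sH_A^{G,\Pi}$; one checks likewise that $T$ is $A$-linear, grading-preserving and $(G,\Pi)$-equivariant, and that it is $\bC$-linear (for $g$ with $\phi(g)=1$ the antilinearities of $\rho_g$ and $\rho_{g^{-1}}$ cancel). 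Finally, using that $j_0$ is isometric and that each $\rho_g$ intertwines inner products up to $\alpha_g$, the averaging over the (compact, in the applications finite) group $P=G/\Pi$ built into the inner product of $\prescript{\phi}{} \sH_A^{G,\Pi}$ yields $\langle T\xi,T\eta\rangle=(\mathrm{vol}\,P)\,\langle\xi,\eta\rangle$, so $V:=(\mathrm{vol}\,P)^{-1/2}T$ is the desired $(G,\Pi)$-equivariant isometry.

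\textbf{Main obstacle.} There is no deep obstruction once the Frobenius-type induction formula above is in place; the work is concentrated in the \emph{well-definedness and equivariance check for $T$}, i.e.\ verifying that the $(G,\Pi)$-constraint and the $\phi$-linear, $\bZ_2$-graded structures are all respected by $\xi\mapsto \rho_g j_0 \rho_{g^{-1}}\xi$ — this is a bookkeeping exercise in the interplay between the module's $\Pi$-action, the multiplier unitaries $\sigma_t$, and the grading, and it is the heart of the argument. (The crucial structural fact exploited here is that one only ever averages over the compact quotient $P=G/\Pi$, never over the infinite lattice $\Pi$, whose action is already absorbed into the $\cI_{G,\Pi}$-constraint.) An alternative route, should the direct computation prove awkward, is a Packer--Raeburn-type untwisting identifying $\phi$-twisted $(G,\Pi)$-equivariant Hilbert $A$-modules with $\phi$-twisted $P$-equivariant Hilbert $(A\rtimes\Pi)$-modules and then invoking the equivariant stabilization theorem for the compact group $P$; but setting up that equivalence in the graded $\phi$-twisted setting is itself comparable work.
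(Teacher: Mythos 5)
Your proposal is correct, and it follows the same broad strategy as the paper (reduce to the classical graded stabilization theorem, then pass through the induction $\cI_{G,\Pi}$), but the mechanism by which equivariance is achieved is genuinely different. The paper applies classical stabilization to $\sE^{\oplus\infty}\oplus\prescript{\phi}{}\sH_A\oplus\prescript{\phi}{}\sH_A^{\mathrm{op}}$, observes that the resulting $A$-linear unitary is automatically $\Pi$-equivariant (the $\Pi$-actions being inner via the $\sigma_t$), applies the functor $\cI_{G,\Pi}$ to promote it to a $G$-equivariant unitary, and then absorbs $\sE$ into $\cI_{G,\Pi}(\sE^{\oplus\infty})$ using the constant-function embedding $\sE\hookrightarrow\cI_{G,\Pi}(\sE)$; no averaging and no orthogonal complement ever appear. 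You instead average a non-equivariant adjointable isometry $j_0$ over the finite quotient $P$ via $T(\xi)(g)=\rho_g(j_0(\rho_{g^{-1}}\xi))$, which is the classical Kasparov/Mingo--Phillips ``make it equivariant by averaging'' route, and then finish with complementation and the Eilenberg swindle. Your equivariance, well-definedness and isometry computations for $T$ all check out (the key identities $\rho_{g^{-1}t}\xi=(\rho_{g^{-1}}\xi)\sigma_{g^{-1}tg}^{*}$ on $\sE$ and $\theta_t=\sigma_t\cdot$ on $\prescript{\phi}{}\sH_A$ do exactly what you claim), and the normalization constant is $|P|$ since $G$ is discrete and $P$ compact, hence finite, in the appendix's standing assumptions. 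Two small points you should make explicit: adjointability of $V$ (immediate, since $j_0$ is adjointable and $V^*\eta=|P|^{-1/2}\sum_{p\in P}\rho_{s(p)}j_0^*\rho_{s(p)^{-1}}(\eta(s(p)))$ is a finite sum), which is what licenses writing $VV^*$ and taking the complement; and the identification $\cI_{G,\Pi}\bigl(\bigoplus_{\bN}E\bigr)\cong\bigoplus_{\bN}\cI_{G,\Pi}(E)$ used in the swindle, which again holds because $P$ is finite. What each approach buys: the paper's version avoids any positivity/averaging argument and so would survive verbatim if $P$ were merely compact rather than finite (replacing sums by integrals only in the inner product of $\cI_{G,\Pi}$), whereas yours isolates a reusable statement --- every countably generated $(G,\Pi)$-equivariant $\sE$ embeds equivariantly and complementably into $\prescript{\phi}{}\sH_A^{G,\Pi}$ --- which is the form of the theorem most often quoted in applications.
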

\begin{proof}
The following proof is based on \cite{mingoEquivariantTrivialityTheorems1984}. 
By the (non-equivariant) Kasparov stabilization theorem \cite{kasparovHilbertAstModules1980}, there is a possibly non-equivariant $\bZ_2$-graded unitary isomorphism
\[ U \colon \sE^{\oplus \infty} \oplus \prescript{\phi}{} \sH_{A} \oplus \prescript{\phi}{} \sH_A^{\mathrm{op}} \to \prescript{\phi}{} \sH_{A} \oplus \prescript{\phi}{} \sH_A^{\mathrm{op}}. \]
This is by definition $\Pi$-equivariant. 
The induced unitary
\[ \cI_{G,\Pi} U \colon \cI_{G,\Pi} (\sE^{\oplus \infty} \oplus \prescript{\phi}{} \sH_{A} \oplus \prescript{\phi}{} \sH_A^{\mathrm{op}}) \to \cI_{G,\Pi} (\prescript{\phi}{} \sH_{A} \oplus \prescript{\phi}{} \sH_A^{\mathrm{op}}), \]
i.e., $\cI_{G,\Pi} U \colon \cI_{G,\Pi} (\sE) \oplus \prescript{\phi}{} \sH_A^{G,\Pi} \to \prescript{\phi}{} \sH_A^{G,\Pi}$, determined by $(\cI_{G,\Pi} U(\xi))(g)= U \cdot \xi(g)$, is $G$-equivariant. 
Since $\sE$ is a $(G,\Pi)$-equivariant Hilbert $A$-module, $\cI_{G,\Pi} (\sE)$ includes the submodule consisting of constant functions, which is isomorphic to $\sE$. Hence 
\[ \cI_{G,\Pi}(\sE^{\oplus \infty})  \supset  \sE^{\oplus \infty}  \]
absorbs $\sE$, i.e., there is a $(G,\Pi)$-equivariant unitary $V \colon \sE \oplus \cI_{G,\Pi}(\sE^{\oplus \infty}) \to \cI_{G,\Pi}(\sE^{\oplus \infty})$. 
Finally we obtain the desired unitary
\[ \cI_{G,\Pi}(U) \circ (V \oplus \id_{\prescript{\phi}{} \sH_A^{G,\Pi}}) \circ (\id_\sE \oplus \cI_{G,\Pi}(U)^*). \qedhere   \]
\end{proof}

Here we recall the strict (strong$\ast$) topology on the space of bounded adjointable operators on Hilbert C*-modules. A sequence of operators $T_n \in \bB(\sE )$ converges to $0$ in the strict topology if and only if $\|T_n\xi \| \to 0$ and $\| T_n^*\xi\| \to 0$ holds for any $\xi \in \sE $. Note that, for a locally compact Hausdorff space $X$, an operator $T \in \bB(\prescript{\phi}{} \sH_{A \otimes C(X)}^{G,\Pi})$ corresponds to a uniformly bounded strictly continuous function $T \colon X \to \bB(\prescript{\phi}{} \sH_{A}^{G,\Pi})$.

\begin{lem}\label{lem:conn}
Let $\prescript{\phi}{} \sH_{A}^{G,\Pi}$ be as above. 
\begin{enumerate}
\item There is a strictly continuous family of $G$-invariant even unitaries $V_s \colon \prescript{\phi}{} \sH_A^{G,\Pi} \oplus \prescript{\phi}{} \sH_{A}^{G,\Pi} \to \prescript{\phi}{} \sH_{A}^{G,\Pi}$ for $s \in [0,1)$ such that the isometry $V_s|_{\prescript{\phi}{} \sH_{A}^{G,\Pi} \oplus 0}$ strictly converges to a $G$-invariant even unitary $V_1$.
\item The space $\cU^0(\prescript{\phi}{} \sH_{A}^{G,\Pi})^G$ of $G$-equivariant even unitaries is connected with respect to the strict topology.
\end{enumerate}
\end{lem}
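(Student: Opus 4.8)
The plan is to adapt the classical ``infinite rotation'' and Eilenberg-swindle arguments of Mingo \cite{mingoEquivariantTrivialityTheorems1984} and Freed--Hopkins--Teleman \cite{freedLoopGroupsTwisted2011} (the same ones behind Lemma \ref{lem:unitary} and Lemma \ref{lem:stab}) to the $\phi$-twisted Hilbert-module setting. Write $\cE := \prescript{\phi}{}\sH_A^{G,\Pi}$. The only structural input is that $\cE$ absorbs countable direct sums of itself equivariantly and gradedly: since $\prescript{\phi}{}\sH_A \oplus \prescript{\phi}{}\sH_A^{\mathrm{op}} \cong \bigoplus_{n\in\bN}(A\oplus A^{\mathrm{op}})$ as $\bZ_2$-graded $\phi$-twisted $(G,\Pi)$-modules and $\cI_{G,\Pi}$ commutes with countable direct sums, one obtains $G$-equivariant graded unitaries $\cE \cong \bigoplus_{n\in\bN}\cE$ and in particular $\cE\cong \cE\oplus\cE$ (this also follows from Lemma \ref{lem:stab}). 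Fix once and for all such identifications; the rotations $R_\theta$ of two coordinate copies $\cE_i\oplus\cE_j$ used below are then automatically $G$-invariant and even, being given by real scalars commuting with all the structure.

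For (1) I would use a dyadic shift. Identify $\cE\oplus\cE$ with $\bigoplus_{k\ge 0}\cE_k$ ($\cE_k\cong\cE$) so that $\cE\oplus 0 = \bigoplus_{k\ge 1}\cE_k$ and $0\oplus\cE=\cE_0$, and identify the target $\cE$ with a further copy $\bigoplus_{k\ge 0}\cE_k'$. For $m\ge 0$ let $V_{1-2^{-m}}$ be the $G$-invariant even unitary of $\cE\oplus\cE$ onto $\cE$ sending $\cE_0\mapsto\cE_m'$, $\cE_k\mapsto\cE_{k-1}'$ for $1\le k\le m$, and $\cE_k\mapsto\cE_k'$ for $k>m$. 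Consecutive ones differ only in where $\cE_0$ and $\cE_{m+1}$ land (among $\cE_m',\cE_{m+1}'$), so they are joined by a strictly continuous path of $G$-invariant even unitaries through the rotations $R_\theta$ of $\cE_m'\oplus\cE_{m+1}'$; concatenating over the intervals $[1-2^{-m},1-2^{-(m+1)}]$ produces $V_s$, $s\in[0,1)$. Its restriction $V_s|_{\cE\oplus 0}$ is an isometry whose cokernel is a shrinking (rotating) copy of $\cE$ sitting in coordinates $\ge m$; the active rotation runs off to coordinate infinity as $s\to 1$, so on each finitely-supported vector the restriction is eventually the backward shift $\cE_k\mapsto\cE_{k-1}'$, which is a $G$-invariant even unitary $V_1\colon\cE\to\cE$, and a uniform-boundedness argument upgrades this to strict convergence $V_s|_{\cE\oplus 0}\to V_1$ on all of $\cE$.

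For (2) I would first record that the relevant swindle is norm-continuous. For $v\in\cU^0(\cE)^G$ the Whitehead rotation $\theta\mapsto R_\theta(v\oplus 1)R_\theta^{-1}(v^*\oplus 1)$ is a norm-continuous path of $G$-invariant even unitaries of $\cE\oplus\cE$ from $1$ to $v^*\oplus v$; performing it simultaneously in every coordinate pair of a graded $G$-equivariant decomposition $\cE\oplus\cE\cong\bigoplus_{j\ge0}\cE_j$ with $\cE\oplus 0 = \cE_0$ and $v\oplus 1_\cE=\mathrm{diag}(v,1,1,\dots)$ gives, through norm-continuous $G$-invariant homotopies, $v\oplus 1_\cE = \mathrm{diag}(v,1,1,\dots)\sim v\oplus v^*\oplus v\oplus v^*\oplus\cdots = (v\oplus v^*)\oplus(v\oplus v^*)\oplus\cdots\sim 1$, so $v\oplus 1_\cE\sim 1_{\cE\oplus\cE}$ in $\cU^0(\cE\oplus\cE)^G$. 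Now feed this into part (1): $w_s:=V_s(v\oplus 1_\cE)V_s^*$, $s\in[0,1)$, is a strictly continuous path in $\cU^0(\cE)^G$, and since $V_s|_{\cE\oplus 0}\to V_1$ strictly while $v\oplus 1_\cE$ acts as the identity on the second summand, $w_s$ converges strictly to $w_1:=V_1 v V_1^*$; thus $w_s$, $s\in[0,1]$, connects $w_0=V_0(v\oplus 1_\cE)V_0^*$ to $V_1 v V_1^*$. Conjugation by the fixed unitary $V_0$ is a homeomorphism $\cU^0(\cE\oplus\cE)^G\to\cU^0(\cE)^G$ fixing $1$, so $v\oplus 1_\cE\sim 1$ yields $w_0\sim 1$, hence $w_1\sim 1$; conjugation by the fixed unitary $V_1$ is a homeomorphism of $\cU^0(\cE)^G$ fixing $1$, so $v=V_1^* w_1 V_1\sim 1$, proving (2).

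The main obstacle is bookkeeping rather than a conceptual hurdle: one must verify that the dyadic family in (1) is strictly continuous across its gluing points and that its restriction to the first summand really has the asserted strict limit (rather than the $V_s$ themselves converging, which they do not), and one must thread $G$-equivariance, the $\bZ_2$-grading, and adjointability through every rotation, shift and identification used. All of this is forced once a graded $G$-equivariant identification $\cE\cong\bigoplus_{k\ge0}\cE_k$ is fixed, and the $\phi$-twist and the $\Pi$-compatibility remain inert throughout, so no difficulty beyond the classical Hilbert-space case arises.
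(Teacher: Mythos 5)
Your proof is correct, and part (2) is essentially the paper's argument: the same infinite Whitehead swindle showing $v\oplus 1\sim 1$ on the doubled module, transported back to $\prescript{\phi}{}\sH_A^{G,\Pi}$ by conjugating with the path $V_s$ from part (1) and using that $V_s(v\oplus 1)V_s^*$ converges strictly to $V_1vV_1^*$. Where you genuinely diverge is part (1). The paper gets the family $V_s$ for free by applying the equivariant Kasparov stabilization theorem (Lemma \ref{lem:stab}) over the algebra $A\otimes C[0,1]$ to the module $\prescript{\phi}{}\sH^{G,\Pi}_{A\otimes C_0[0,1)}$: a unitary of Hilbert $A\otimes C[0,1]$-modules is automatically a strictly continuous family over $[0,1]$, and the vanishing of the $C_0[0,1)$-summand at the endpoint forces the restriction to the first summand to converge to a unitary. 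You instead build $V_s$ by hand, as a concatenation of rotations interpolating a dyadic sequence of coordinate permutations of $\bigoplus_k\cE_k\cong\cE$, with the backward shift as the strict limit of the restrictions. Both are valid; the paper's route is shorter and hides all the bookkeeping inside the already-proved stabilization theorem, while yours is more elementary and makes the limit unitary explicit, at the cost of having to verify strict continuity across the gluing points, the uniform-boundedness upgrade from finitely supported vectors, and the equivariance/evenness of each rotation (all of which you correctly flag, and all of which do go through since the rotations are real block-scalars commuting with the $G$-action, the grading, and the $\Pi$-compatibility).
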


\begin{proof}
We apply the Kasparov stabilization theorem \ref{lem:stab} for the Hilbert $A\otimes C[0,1]$-module $\prescript{\phi}{} \sH _{A \otimes C_0[0,1)}^{G,\Pi}$ to obtain a $G$-equivariant even unitary
\[V \colon \prescript{\phi}{} \sH _{A \otimes C[0,1]}^{G,\Pi} \oplus \prescript{\phi}{} \sH _{A \otimes C_0[0,1)}^{G,\Pi}  \to \prescript{\phi}{} \sH _{A \otimes C[0,1]}^{G,\Pi}. \]
This corresponds to a strictly continuous family of $G$-equivariant even unitaries
\[ V_s \colon \prescript{\phi}{} \sH _{A}^{G,\Pi} \oplus \prescript{\phi}{} \sH _{A}^{G,\Pi} \to \prescript{\phi}{} \sH _{A}^{G,\Pi}, \]
for $s \in [0,1) $. This gives a strictly continuous path $V_s|_{\prescript{\phi}{} \sH _{A,P} \oplus 0} \in \bB(\prescript{\phi}{} \sH_A^{G,\Pi})$ of even isometries converging strictly to $V_1 \in \cU^0(\prescript{\phi}{} \sH _{A}^{G,\Pi})$. This shows (1).

To see (2) we notice that, for any $T,S \in \bB(\prescript{\phi}{} \sH _{A}^{G,\Pi})$, the family $V_s \mathop{\mathrm{diag}} (T,S) V_s^*$ converges strictly to $V_1TV_1^*$. Moreover, Lemma \ref{lem:stab} also gives a unitary
\[W \colon (\prescript{\phi}{} \sH _{A}^{G,\Pi})^{\oplus \infty} = \prescript{\phi}{} \sH _{A}^{G,\Pi} \oplus (\prescript{\phi}{} \sH _{A}^{G,\Pi})^{\oplus \infty} \to  \prescript{\phi}{} \sH _{A}^{G,\Pi}.\]
Let $U$ be an arbitrary unitary on $\prescript{\phi}{} \sH_A^{G,\Pi}$. Then the above $V$ and $W$ gives a homotopy
\begin{align*}
U&=V_1^*V_1 U V_1^*V_1 \sim V_1^* V_0 \begin{pmatrix} U & 0 \\ 0 & 1 \end{pmatrix} V_0^*V_1\\
&= V_1^*V_0 \begin{pmatrix}1 & 0 \\ 0 & W \end{pmatrix} \begin{pmatrix} U & 0  \\ 0 & 1_{\infty}  \end{pmatrix} \begin{pmatrix}1 & 0 \\ 0 & W^* \end{pmatrix}V_0^*V_1. 
\end{align*}
Also, the standard homotopy $R_\theta \mathop{\mathrm{diag}}(U,1)R_\theta^* \mathop{\mathrm{diag}}(1,U^*)$, using the rotation matrix $R_\theta = \big( \begin{smallmatrix}\cos \theta & -\sin \theta \\ \sin \theta & \cos \theta \end{smallmatrix} \big)$, connects $1$ with $\mathop{\mathrm{diag}}(U,U^*)$. Hence we obtain a homotopy
\[ \mathop{\mathrm{diag}} (U, 1_{\infty})=\mathop{\mathrm{diag}} (U, 1_{\infty}, 1_\infty ) \sim \mathop{\mathrm{diag}} (U, U_\infty, U_\infty^*) = \mathop{\mathrm{diag}} (U_\infty, U_\infty^*) \sim 1_\infty\]
of unitaries in $(\prescript{\phi}{} \sH_A^{G,\Pi})^{\oplus \infty}$. This gives a homotopy connecting $U$ with $1$. 
\end{proof}

We write $\Fred (\prescript{\phi}{} \sH_A^{G,\Pi})^G$ for the space of $G$-invariant odd self-adjoint operators $F \in \bB(\prescript{\phi}{} \sH_A^{G,\Pi}) $ such that $F^2-1 \in \bK(\prescript{\phi}{} \sH_A^{G,\Pi})$. We impose the topology on this set generated by the strict topology of $\bB(\prescript{\phi}{} \sH_A^{G,\Pi})$ and the pull-back of the norm topology of $\bK(\prescript{\phi}{} \sH_A^{G,\Pi})$ by the map $F \mapsto F^2-1$. 
The above lemmas enable us to put the abelian semigroup structure on $\pi_0(\Fred (\prescript{\phi}{} \sH_A^{G,\Pi})^G)$ as
\[[F] + [F'] := [V(\mathrm{diag}(F,F'))V^*], \]
where $V$ is an arbitrary choice of $(G,\Pi)$-equivariant unitary $\prescript{\phi}{} \sH_A^{G,\Pi} \to \prescript{\phi}{} \sH_A^{G,\Pi}  \oplus \prescript{\phi}{} \sH_A^{G,\Pi} $.

\begin{thm}\label{thm:twKK}
The $\phi$-twisted equivariant KK-group $\prescript{\phi}{} \KK^{G,\Pi} (\bC, A)$ is isomorphic to $\pi_0(\Fred (\prescript{\phi}{} \sH_A^{G,\Pi})^G)$. 
\end{thm}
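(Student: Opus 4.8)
The plan is to follow the standard route of identifying the Kasparov $\KK$-group with the group of homotopy classes of Fredholm operators over the stabilized Hilbert module, just as in Kasparov's original treatment and its equivariant refinements, but carried out for the $\phi$-twisted $(G,\Pi)$-equivariant setting developed in this appendix. Throughout one uses that $P = G/\Pi$ is compact, which is what allows the averaging and convexity arguments below to go through. The key inputs are the Kasparov stabilization theorem in the form of Lemma \ref{lem:stab} (so that every countably generated $\phi$-twisted $(G,\Pi)$-equivariant Hilbert $A$-module can be absorbed into $\prescript{\phi}{}\sH_A^{G,\Pi}$) and the connectedness/absorption properties of the even unitary group collected in Lemma \ref{lem:conn}.

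First I would construct the forward map $\Phi \colon \pi_0(\Fred(\prescript{\phi}{}\sH_A^{G,\Pi})^G) \to \prescript{\phi}{}\KK^{G,\Pi}(\bC, A)$ by sending $[F]$ to the class of the Kasparov triple $(\prescript{\phi}{}\sH_A^{G,\Pi}, \varphi_{\bC}, F)$, where $\varphi_{\bC}$ is the unital inclusion of $\bC$; one checks this is a well-defined $\phi$-twisted $(G,\Pi)$-equivariant Kasparov $\bC$-$A$ bimodule because $[\varphi_{\bC}(1),F] = 0$, $\varphi_{\bC}(1)(F^2-1) = F^2 - 1 \in \bK$, and $G$-invariance of $F$ kills the third term $\alpha^E_g(F) - F$. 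Homotopies of Fredholm sections give operator homotopies of Kasparov triples, so $\Phi$ descends to $\pi_0$; and $\Phi$ is additive because the semigroup operation on $\pi_0(\Fred)$ was defined precisely via a $(G,\Pi)$-equivariant unitary $\prescript{\phi}{}\sH_A^{G,\Pi} \cong \prescript{\phi}{}\sH_A^{G,\Pi} \oplus \prescript{\phi}{}\sH_A^{G,\Pi}$ (Lemma \ref{lem:conn}(1)), which matches the direct-sum rule for $\KK$. In particular this also shows $\pi_0(\Fred(\prescript{\phi}{}\sH_A^{G,\Pi})^G)$ is a group, not merely a semigroup, since its image under $\Phi$ will turn out to be all of a group and $\Phi$ is injective.

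Next I would construct the inverse. Given a Kasparov triple $(E,\varphi,F)$ representing a class in $\prescript{\phi}{}\KK^{G,\Pi}(\bC, A)$, one may first replace $F$ by $\varphi(1)F\varphi(1)$ (a degenerate-modulo-homotopy correction, using that $\varphi(1)$ is an even $G$-invariant projection and $\varphi(1)E$ carries all the relevant data), reducing to the case $\varphi$ unital and $E = \varphi(1)E$. Then Lemma \ref{lem:stab} gives a $(G,\Pi)$-equivariant unitary $E \oplus \prescript{\phi}{}\sH_A^{G,\Pi} \cong \prescript{\phi}{}\sH_A^{G,\Pi}$; transporting $F \oplus F_0$ (with $F_0$ a degenerate operator such as $\left(\begin{smallmatrix}0 & 1\\ 1 & 0\end{smallmatrix}\right)$ on $\prescript{\phi}{}\ell^2(P)\hotimes A \oplus (\cdots)^{\mathrm{op}}$, which exists as a $G$-invariant odd self-adjoint unitary) yields an element of $\Fred(\prescript{\phi}{}\sH_A^{G,\Pi})^G$, well-defined up to homotopy by the connectedness of $\cU^0(\prescript{\phi}{}\sH_A^{G,\Pi})^G$ (Lemma \ref{lem:conn}(2)) and the fact that adding degenerates does not change the class. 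One then checks this assignment is compatible with operator homotopies and with the passage $F \rightsquigarrow F(1+F^2)^{-1/2}$ (bounded transform / compact perturbation), so that it descends to $\prescript{\phi}{}\KK^{G,\Pi}(\bC,A)$ and is inverse to $\Phi$ on both sides.

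I expect the main obstacle to be the well-definedness of the inverse map, specifically verifying that the choice of stabilizing unitary in Lemma \ref{lem:stab} and the choice of degenerate summand do not affect the resulting homotopy class in $\pi_0(\Fred)$, and that two operator-homotopic (or merely $\KK$-equivalent via a bimodule over $A[0,1]$) Kasparov triples produce the same Fredholm class. This is where one genuinely needs Lemma \ref{lem:conn}: the strictly continuous path $V_s$ of equivariant even unitaries absorbing a copy of $\prescript{\phi}{}\sH_A^{G,\Pi}$, together with the rotation-matrix trick connecting $\mathrm{diag}(U,U^*)$ to the identity, is exactly what shows any two equivariant stabilizations are homotopic through $\Fred$, and that compact and degenerate perturbations are negligible. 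The remaining points — that the bounded transform takes Kasparov triples with possibly unbounded or non-self-adjoint $F$ to the self-adjoint picture, and that homotopy of Kasparov bimodules over $A[0,1]$ restricts to a homotopy of Fredholm sections over $[0,1]$ — are routine once the equivariant stabilization machinery is in place and parallel the non-equivariant arguments of Kasparov and of Mingo--Phillips cited here.
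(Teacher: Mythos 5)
Your proposal is correct and follows essentially the same route as the paper: the forward map is the tautological Kasparov triple $[F]\mapsto[\prescript{\phi}{}\sH_A^{G,\Pi},1,F]$, surjectivity (equivalently, the existence of your inverse) comes from the stabilization theorem of Lemma \ref{lem:stab} after cutting down by $\varphi(1)$ and absorbing degenerates, and the well-definedness/injectivity is handled by the strictly continuous absorbing unitaries and the connectedness of $\cU^0(\prescript{\phi}{}\sH_A^{G,\Pi})^G$ from Lemma \ref{lem:conn}, exactly as in the paper. The only cosmetic difference is that you package the argument as constructing a two-sided inverse rather than proving the forward map bijective, and your remark about the bounded transform is unnecessary here since the Kasparov bimodules in this appendix already have bounded self-adjoint $F$ by definition.
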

\begin{proof}
The map $\pi_0(\Fred (\prescript{\phi}{} \sH_A^{G,\Pi})^G) \to \prescript{\phi}{} \KK^{G,\Pi}(\bC, A)$ is given by
\[ [F] \mapsto [\prescript{\phi}{} \sH_A^{G,\Pi}, 1 ,F ]. \]
By definition of the equivalence relation on $\prescript{\phi}{} \KK^{G,\Pi}(\bC,A)$, this is a well-defined group homomorphism. Its surjectivity follows from Lemma \ref{lem:stab}. Hence we show its injectivity.  

Assume that $F_0, F_1 \in \Fred (\prescript{\phi}{} \sH_A^{G,\Pi})^G$ is mapped to the same element in the KK-group. Then there are degenerate triples $(\sE_i, \pi_i , F'_i)$ for $i=0,1$ and a homotopy $(\tilde{\sE}, \tilde{\pi}, \tilde{F})$ such that $(\ev_i)_*(\tilde{\sE}, \tilde{\pi}, \tilde{F}) =(\prescript{\phi}{} \sH_A^{G,\Pi} \oplus \sE_i, 1 \oplus \pi_i, F_i \oplus F'_i)$. 
By replacing $\sE_i$ with $\pi_i(1)\sE_i$ and $\tilde{\sE}_i$ with $\tilde{\pi}(1)\tilde{\sE}$, we may assume that $\pi_i=1$ and $\tilde{\pi}=1$ (and hence $F_0', F_1'$ are self-adjoint unitaries).
Moreover, by Lemma \ref{lem:stab}, we may assume that $\sE_i \cong \prescript{\phi}{} \sH_A^{G,\Pi}$ and $\widetilde{\sE}= (\prescript{\phi}{} \sH_{A[0,1]}^{G,\Pi})^{\oplus 2}$ by adding an extra degenerate triple $(\prescript{\phi}{} \sH_{A[0,1]}^{G,\Pi}, 1 , F')$ if necessary. Therefore, we get a homotopy $\mathop{\mathrm{diag}}(F_0,F_0') \sim \mathop{\mathrm{diag}} (F_1,F_1')$ in $\Fred ((\prescript{\phi}{} \sH_A^{G,\Pi})^{\oplus 2})$.  

Moreover, in the same way as the proof of Lemma \ref{lem:conn} (2), we have a homotopy 
\[F_0 = V_1^*V_1 F_0 V_1^*V_1 \sim V_1^*V_0 \begin{pmatrix}F_0 & 0 \\ 0 & F_0' \end{pmatrix} V_0^*V_1 . \]
Finally we get 
\[F_0 \sim  V_1^*V_0 \begin{pmatrix}F_0 & 0 \\ 0 & F_0' \end{pmatrix} V_0^*V_1 \sim  V_1^*V_0 \begin{pmatrix}F_1 & 0 \\ 0 & F_1' \end{pmatrix} V_0^*V_1 \sim F_1. \]
This shows the desired injectivity. 
\end{proof}

Let $\Fred_{p,q}(\prescript{\phi}{} \sH_A^{G,\Pi})$ denote the subspace of $\Fred  (\prescript{\phi}{} \sH_A^{G,\Pi} \hotimes \Delta_{p,q})$ consisting of Fredholm operators $F$ with $[F,\fc(v)]=0$ for any $v \in \Cl_{p,q}$. Then 
\[[F ] \mapsto  [\prescript{\phi}{} \sH_A^{G,\Pi} \hotimes \Delta_{p,q} , 1 , F]\]
gives a homomorphism $\pi_0 \Fred_{p,q}(\prescript{\phi}{} \sH_A^{G,\Pi}) \to \prescript{\phi}{} \KK^{G,\Pi}(\Cl_{p,q} , A)$.
\begin{cor}\label{cor:twKK}
The $\phi$-twisted equivariant KK-group $\prescript{\phi}{} \KK^{G,\Pi} (\Cl_{p,q}, A)$ is isomorphic to $\pi_0(\Fred_{p,q}(\prescript{\phi}{} \sH_A^{G,\Pi})^G)$.
\end{cor}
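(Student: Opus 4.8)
The plan is to deduce Corollary \ref{cor:twKK} from Theorem \ref{thm:twKK} by incorporating the Clifford symmetry, running through the same three steps --- well-definedness, surjectivity, injectivity --- that were used for the $p=q=0$ case. First I would record the $\Cl_{p,q}$-equivariant refinements of the two structural inputs. For the Kasparov stabilization theorem (Lemma \ref{lem:stab}), I would observe that a countably generated $\bZ_2$-graded $\phi$-twisted $(G,\Pi)$-equivariant Hilbert $A$-module $\sE$ carrying a graded $\ast$-action of $\Cl_{p,q}$ commuting with everything is the same datum as such a module over the $(G,\Pi)$-algebra $A\hotimes\Cl_{q,p}$ after tensoring with $\Delta_{p,q}$ and using the graded isomorphism $\Cl_{p,q}\hotimes\Cl_{q,p}\cong\bK(\Delta_{p,q})$; hence Lemma \ref{lem:stab} applied to $A\hotimes\Cl_{q,p}$ yields a $(G,\Pi)$- and $\Cl_{p,q}$-equivariant isomorphism $\sE\oplus(\prescript{\phi}{}\sH_A^{G,\Pi}\hotimes\Delta_{p,q})\cong\prescript{\phi}{}\sH_A^{G,\Pi}\hotimes\Delta_{p,q}$. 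In the same way Lemma \ref{lem:conn} produces a strictly continuous family $V_s$ of $G$-invariant even unitaries of $\prescript{\phi}{}\sH_A^{G,\Pi}\hotimes\Delta_{p,q}$ that in addition graded-commute with $\fc(\Cl_{p,q})$, together with the connectedness of the corresponding unitary group.

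Granting these, the proof of Theorem \ref{thm:twKK} applies essentially verbatim: the map $[F]\mapsto[\prescript{\phi}{}\sH_A^{G,\Pi}\hotimes\Delta_{p,q},1,F]$ is a homomorphism by the definition of the equivalence relation on $\prescript{\phi}{}\KK^{G,\Pi}$; it is surjective because the $\Cl_{p,q}$-equivariant stabilization lets one replace any Kasparov $\Cl_{p,q}$-$A$ bimodule $(E,\varphi,F)$ by one whose underlying module is $\prescript{\phi}{}\sH_A^{G,\Pi}\hotimes\Delta_{p,q}$ with $\varphi=1$ (first cut down by $\varphi(1)$); and it is injective by the identical homotopy argument --- pass to a homotopy of degenerate triples, cut down by $\tilde\pi(1)$, stabilize, and conjugate by $V_1^*V_0$ --- with all operators now required to graded-commute with $\fc(\Cl_{p,q})$.

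Equivalently, and perhaps more cleanly, one can phrase the whole reduction through Example \ref{exmp:Morita}: the bimodule $\Delta_{p,q}$ gives a $\phi$-twisted $(G,\Pi)$-equivariant Morita equivalence $\Cl_{p,q}\hotimes\Cl_{q,p}\sim\bC$, the $G$-action on the Clifford algebras being by complex conjugation through $\phi$ as in Remark \ref{rmk:real}, so that tensoring by $\Cl_{q,p}$ and reducing identifies $\prescript{\phi}{}\KK^{G,\Pi}(\Cl_{p,q},A)\cong\prescript{\phi}{}\KK^{G,\Pi}(\bC,A\hotimes\Cl_{q,p})$; then Theorem \ref{thm:twKK} applied to $B=A\hotimes\Cl_{q,p}$ finishes, once one checks that under the Morita identification $\prescript{\phi}{}\sH_B^{G,\Pi}$ corresponds to $\prescript{\phi}{}\sH_A^{G,\Pi}\hotimes\Delta_{p,q}$ (up to the absorbing isomorphism of Lemma \ref{lem:stab}) and that ``$\Fred$ over $B$'' corresponds to ``$\Fred$ over $A$ graded-commuting with $\fc(\Cl_{p,q})$'', i.e.\ to $\Fred_{p,q}(\prescript{\phi}{}\sH_A^{G,\Pi})^G$.

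The main obstacle I anticipate is precisely the bookkeeping in this last identification: making sure the $\Cl_{p,q}$-action adjoined by hand is compatible with the grading conventions and with the left-superscript $\phi$-twist on $\Cl_{p,q}$ coming from Remark \ref{rmk:Cliffordalgebra}, so that the notions of $G$-invariance of Fredholm sections on the two sides match exactly. Everything else --- the $\Cl_{p,q}$-equivariant stabilization and connectedness lemmas, and the three steps imported from Theorem \ref{thm:twKK} --- is then routine repetition.
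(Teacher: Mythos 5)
Your proposal is correct, and your second formulation is essentially the paper's own proof: the paper reduces to Theorem \ref{thm:twKK} applied to $A\hotimes\Cl_{q,p}$ via the rank-one even projection $P\in\Cl_{p,q}\hotimes\Cl_{q,p}\cong\bK(\Delta_{p,q})$, checking that the Fredholm-picture map $F\mapsto P(F\hotimes\id_{\Cl_{q,p}})P$ is a homeomorphism compatible with the KK-isomorphism $\theta=([P]\hotimes\blank)\circ(\blank\hotimes\id_{\Cl_{q,p}})$, which is exactly your Morita-equivalence bookkeeping. Your first variant (rerunning stabilization, connectedness, and the three steps with Clifford symmetry carried along) would also work but is not needed.
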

\begin{proof}
Note that $\prescript{\phi}{} \sH_{A \hotimes \Cl_{q,p}}^{G,\Pi}$ is isomorphic to $P(\prescript{\phi}{} \sH_A^{G,\Pi } \hotimes \Delta_{p,q} \hotimes \Cl_{q,p})$, where $P$ is a rank $1$ even projection of $\Cl_{p,q} \hotimes \Cl_{q,p}$.  
Now the map
\[ \vartheta \colon \Fred_{p,q}(\prescript{\phi}{} \sH_A^{G,\Pi})^G \to \Fred(\prescript{\phi}{} \sH_{A \hotimes \Cl_{q,p}}^{G,\Pi})^G\]
given by $F \mapsto P(F \hotimes \id_{\Cl_{q,p}})P$ is a homeomorphism. This is compatible with the isomorphism of equivariant KK-theory
\begin{align*} 
\theta \colon \prescript{\phi}{} \KK^{G,\Pi} (\Cl_{p,q},A) \xrightarrow{\blank \hotimes_{\bC} \id_{\Cl_{q,p}}} & \prescript{\phi}{} \KK^{G,\Pi} (\Cl_{p,q} \hotimes \Cl_{q,p},A \hotimes \Cl_{q,p})\\\xrightarrow{[P] \hotimes_{\Cl_{p,q} \hotimes \Cl_{q,p}} \blank } & \prescript{\phi}{} \KK^{G,\Pi} (\bC ,A \hotimes\Cl_{q,p} ). 
\end{align*}
This finishes the proof since the diagram
\[
\xymatrix{
\pi_0\Fred_{p,q}(\prescript{\phi}{} \sH_A^{G,\Pi})^G \ar[r] \ar[d]^{\vartheta}  & \prescript{\phi}{} \KK^{G,\Pi}(\Cl_{p,q},A) \ar[d]^\theta \\ 
\pi_0\Fred(\prescript{\phi}{} \sH_{A \hotimes \Cl_{q,p}}^{G,\Pi})^G \ar[r]^\cong  & \prescript{\phi}{} \KK^{G,\Pi}(\bC, A \hotimes \Cl_{q,p})
}
\]
commutes and the second horizontal map is an isomorphism by Theorem \ref{thm:twKK}. 
\end{proof}

\bibliographystyle{abbrv}
\bibliography{ref.bib}
\end{document}